\documentclass[11pt]{article}
\usepackage{lmodern} 
\usepackage{microtype}
\usepackage[top=25mm,bottom=25mm,inner=29mm,outer=29mm,headheight=14pt]{geometry}
\usepackage{amsmath,amssymb,amsthm,mathtools}
\usepackage{bm}%$\bm{\alpha}$  % 有效，且更稳健
\usepackage{enumitem}
\usepackage{xcolor} 
\usepackage{hyperref}
\usepackage[nameinlink,capitalise,noabbrev]{cleveref}
\crefname{equation}{}{}
\Crefname{equation}{}{}
\usepackage{authblk}
\usepackage{orcidlink}
\allowdisplaybreaks[3] 
\newtheorem{theorem}{Theorem}[section]
\newtheorem{lemma}[theorem]{Lemma}
\newtheorem{proposition}[theorem]{Proposition}
\newtheorem{corollary}[theorem]{Corollary}
\newtheorem{definition}[theorem]{Definition}

\newtheorem{remark}[theorem]{Remark}
\theoremstyle{definition}   
\theoremstyle{remark} 
\numberwithin{equation}{section} 
\title{\bfseries Boundary Layers and Initial–Boundary Corner Asymptotics for the 2D Navier–Stokes Equations with Vanishing Vertical Viscosity}
\author[1]{\rm Siwei Chen\orcidlink{0009-0008-2999-5255}}
\author[2]{\rm Yinghui Wang\orcidlink{0000-0002-7565-5525}\footnote{Corresponding author.\\ $\quad\quad\quad$ E-mail addresses: yhwangmath@hunnu.edu.cn, yhwangmath@163.com (Y. Wang), swchenmath@outlook.com (S. Chen), weihao-zhang@qq.com (W. Zhang).}}
\author[3]{\rm Weihao Zhang\orcidlink{0000-0002-1361-2108}}
\affil[1,2]{\footnotesize MOE-LCSM, School of Mathematics and Statistics, Hunan Normal University, Changsha, Hunan 410081, P. R. China}
\affil[3]{\footnotesize Zhuhai NO.1 High School, Zhuhai 519000, China.}
 
\date{}

\begin{document}
\maketitle
\begin{abstract}
Motivated by anisotropic viscosity in geophysical fluid dynamics, where vertical momentum diffusion is often much weaker than horizontal diffusion and may be negligible in the interior yet remains essential near a solid wall, we study the vanishing vertical viscosity limit for the two-dimensional incompressible Navier--Stokes equations in the upper half-plane, with horizontal viscosity fixed at one and vertical viscosity \(\varepsilon^2\). For arbitrary divergence-free \(H^4\) no-slip initial data, with no time-differentiated compatibility conditions required, we construct the limiting horizontally viscous flow and the boundary-layer profiles on every prescribed finite interval \([0,T]\). The leading-order corrected approximation has \(O(\varepsilon)\) error in \(L^\infty\), and the full finite-order expansion reduces this error to \(O(\varepsilon^{3/2})\), uniformly down to \(t=0\). We further describe the development of the layer from data that vanish initially at the wall. At the initial--boundary corner, we determine the first two self-similar coefficients and characterize matching through exact profile tails, thereby separating the fixed-time scale \(\varepsilon\) from the short-time scale \(\varepsilon\sqrt t\). When the initial wall acceleration is nonzero, both the leading correction rate and the first normalized corner remainder are sharp. Finally, continuity estimates for fixed data yield joint small-time and small-viscosity limits for the exact solution, as well as finite-\(L^p\) asymptotics.
\end{abstract}

\noindent\textbf{Key Words:} Anisotropic Navier–Stokes equations; boundary layer; initial–boundary corner;
vanishing vertical viscosity limit 
\par\medskip
\noindent\textbf{MSC 2020:} 35Q30; 76D05; 76D10; 76M45
\section{Introduction}

\subsection{Background and the limiting problem}
Anisotropic momentum diffusion is a common feature of many fluid models. In geophysical and multi-scale flows, unresolved mixing is often represented by effective, or eddy, viscosities whose horizontal and vertical strengths differ substantially \cite{CheminEtAlBook,Ericksen1960,Pedlosky_1978}. A weak vertical diffusion may be negligible in the the interior of fluids, yet it can remain essential near a solid boundary wall, where the no-slip condition must be enforced. This creates a singular perturbation problem in which a thin boundary layer adjusts the interior motion to the wall. The present paper studies this mechanism in the simplest anisotropic setting: the two-dimensional incompressible Navier--Stokes equations in the upper half-plane, with horizontal viscosity fixed and vertical viscosity tending to zero \cite{CheminEtAl2000}. More precisely, let
\( 
 \Omega:=\mathbb{R}_+^2=\{(x,y)\in\mathbb{R}^2:y>0\}\). For \(0<\varepsilon\leq1\), the velocity \(u^\varepsilon:=(u_1^\varepsilon,u_2^\varepsilon)(x,y,t)\) and the pressure \(p^\varepsilon:=p^\varepsilon(x,y,t)\) satisfy
\begin{equation}
\label{eq:1.1}
\left\{
\begin{aligned}
 &\partial_tu^\varepsilon+(u^\varepsilon \cdot \nabla) u^\varepsilon +\nabla p^\varepsilon-\partial_x^2u^\varepsilon-\varepsilon^2\partial_y^2u^\varepsilon&=0,\\[0.2cm]
 &\mathrm{div}u^\varepsilon=0,
\end{aligned}
\right.
\end{equation}
in \(\Omega\times(0,\infty)\). Here the horizontal viscosity is normalized to one, and \(\varepsilon^2\) measures the relative strength of vertical diffusion. The divergence-free condition $\mathrm{div}u^\varepsilon = \partial_xu_1^\varepsilon+\partial_yu_2^\varepsilon$ expresses incompressibility. We equip \eqref{eq:1.1} with the no-slip initial-boundary value conditions
\begin{equation}
\label{eq:1.2}
 u^\varepsilon(x,y,0)=\tilde u(x,y),
 \qquad
 u^\varepsilon(x,0,t)=0.
\end{equation} 
Formally, letting \(\varepsilon\to0\) in \eqref{eq:1.1}, one can obtain the following zero vertical viscosity model,
\begin{equation}
\label{eq:1.3}
\left\{
\begin{aligned}
 &\partial_tu^{I,0}+(u^{I,0}\!\cdot\nabla)u^{I,0}
 +\nabla p^{I,0}-\partial_x^2u^{I,0}=0,\\[0.2cm]
 &\operatorname{div}u^{I,0}=0,
\end{aligned}
\right.
\end{equation}
As usual, we supplement \eqref{eq:1.3} with the following initial-boundary conditions:
\begin{equation}
\label{eq:1.4}
    u^{I,0}(x,y,0)=\tilde u(x,y),\quad u^{I,0}_2(x,0,t)=0.
\end{equation}
The boundary condition in \eqref{eq:1.4} is the impermeability condition, which does not prescribe the tangential velocity. The latter is the boundary trace
\[
 a(x,t)=u^{I,0}_1(x,0,t).
\]
It need not remain zero for \(t>0\), even though the initial datum satisfies no slip and \(a(x,0)=0\). Indeed, evaluating the tangential component of \eqref{eq:1.3} at \(y=0\) gives the wall equation
\[
 a_t-a_{xx}+aa_x=-\partial_xp^{I,0}|_{y=0},
\]
so the tangential pressure gradient can accelerate the limiting flow at the wall. The wall acceleration
\[
 \alpha_0(x):=\partial_ta(x,0)=-\partial_xp^{I,0}(x,0,0)
\]
is therefore generally nonzero. The viscous velocity \(u^\varepsilon\), by contrast, must vanish on \(\{y=0\}\) in view of \eqref{eq:1.2}.
 Hence
\[
 u_1^\varepsilon(x,0,t)-u_1^{I,0}(x,0,t)=-a(x,t),
\]
and the tangential velocity defect at the wall is of order one in \(\varepsilon\) whenever \(a\not\equiv0\). This mismatch cannot be removed by the interior flow alone and is the source of the boundary layer.

The normal scale of the layer is dictated by the balance between vertical diffusion and time evolution. Across a strip of width \(\delta\), the vertical diffusion term \(\varepsilon^2\partial_y^2\) acts at the rate \(\varepsilon^2/\delta^2\). Balancing this rate with \(\partial_t\) on a fixed time interval gives \(\delta=O(\varepsilon)\), so we introduce the stretched normal variable
\[
 z=\frac{y}{\varepsilon},\qquad \Omega_{\mathrm b}:=\mathbb{R}_x\times(0,\infty)_z.
\]
On this scale, the leading boundary-layer profile satisfies a two-dimensional parabolic equation on \(\Omega_{\mathrm b}\) with tangential diffusion retained; see \eqref{eq:3.3} below. This is the same normal scale as in the classical Prandtl expansion for isotropic vanishing viscosity, but the presence of horizontal dissipation in the limiting system changes the structure of the layer and supplies the regularity needed to avoid analyticity or monotonicity assumptions on the data.

\subsection{Related results on vanishing viscosity limits and boundary layers}

When viscosity tends to zero in all directions, the interior equation is the Euler system and the leading correction is described by Prandtl's boundary-layer equations \cite{Prandtl1905}. Kato's criterion \cite{Kato1984} characterizes energy convergence to a smooth Euler flow through viscous dissipation in a strip whose width is proportional to the viscosity. Related criteria involve vorticity or selected velocity derivatives \cite{ConstantinEtAl2015,Kelliher2007,TemamWang1997,Wang2001}. Such criteria decide whether energy convergence occurs, but do not by themselves construct the pointwise correction or determine its shape; see \cite{MaekawaMazzucato2018} for a survey.

The construction and stability of the Euler--Prandtl expansion require additional control of the boundary dynamics. Sammartino and Caflisch \cite{SammartinoCaflischI,SammartinoCaflischII} justified the expansion for analytic data on a short time interval. Maekawa \cite{Maekawa2014} treated the case where the initial vorticity is supported away from the boundary. Wang, Wang, and Zhang \cite{WangWangZhang2017} developed an energy method based on the vorticity formulation and conormal derivatives to justify the zero-viscosity limit in the analytic setting. Subsequent results treat data analytic only near the wall and of Sobolev regularity in the interior \cite{KukavicaVicolWang2020,KukavicaEtAl2022}; a direct analytic-data approach to the inviscid limit is given in \cite{NguyenNguyen2018}. The classical monotone theory \cite{OleinikSamokhin1999,XinZhang2004}, Gevrey well-posedness results \cite{GerardVaretMasmoudi2015,DietertGerardVaret2019}, and Sobolev ill-posedness and instability mechanisms \cite{GerardVaretDormy2010,Grenier2000} illustrate the sensitivity of the Prandtl equation to its structural and regularity assumptions. The well-posedness of that equation and the stability of a Navier--Stokes expansion are separate issues.

Compared with the case in which viscosity vanishes in all directions, keeping the horizontal viscosity fixed while letting the vertical viscosity tend to zero yields completely different limiting equations and boundary-layer equations. For the Cauchy problem, the anisotropic framework was developed in \cite{CheminEtAl2000,Iftimie2002}; global two-dimensional results for the horizontally dissipative equation include \cite{LiangZhangZhu2021,CaoGuoHorizontal2025}. 

When considering the initial-boundary problem,
in contrast to the classical Prandtl equation, the leading layer considered here retains tangential diffusion. This supplies the dissipation needed for a Sobolev construction without analyticity or monotonicity. Nevertheless, the tangential trace of the interior flow is not constrained to vanish, so a no-slip boundary layer is still present. 
The closest predecessors already establish the vanishing vertical viscosity limit with a boundary-layer correction. Liu and Wang \cite{LiuWang2013} considered the three-dimensional half-space, derived a nonlinear parabolic--elliptic layer system, and justified the expansion in energy and supremum norms. In the two-dimensional half-plane, Tao \cite{Tao2018} proved corrected convergence in \(L^\infty(0,T;L^2\cap L^\infty)\), including an optimal convergence rate, for sufficiently smooth data. These works establish the boundary-layer mechanism and corrected convergence that form the starting point of the present study. 

A different regularity regime is addressed by Cao and Guo \cite{CaoGuo2025}. In a horizontally periodic finite channel, with no slip at the lower wall and impermeability and tangential free slip at the upper wall, they prove convergence for \(H^2\) initial data in every finite \(L^p\) space, \(2\le p<\infty\), on each fixed interval \([0,T]\). For identical initial data, their \(L^2\) rate is \(O(\nu_2^{1/8})\), where \(\nu_2\) is the vertical viscosity, hence \(O(\varepsilon^{1/4})\) when \(\nu_2=\varepsilon^2\). Their result has a lower initial regularity requirement, but does not give the boundary-layer-corrected \(L^\infty\) endpoint studied here. Anisotropic limits for plane-parallel, pipe-parallel, and circularly symmetric flows are considered in \cite{GalbiatiEtAl2026}; those reductions provide further information on viscosity-dependent rates and boundary layers under symmetry assumptions.

Our contribution is to resolve the regularity and initial-time structure of this pointwise expansion together. We work with arbitrary \(H^4\) no-slip data, whose spatial derivatives through order four are square integrable, and justify the approximation on any prescribed finite interval, including its initial endpoint. No smallness, symmetry, analyticity, monotonicity, or time-differentiated compatibility condition is imposed. We estimate the leading corrected approximation and the full finite expansion separately, identify the first two corner coefficients, and characterize matching through the exact tails of the leading profile. Thus the result combines finite-regularity pointwise accuracy with a quantitative description of how the layer forms and where the interior approximation becomes valid. This is the distinction from the smooth-data pointwise theory and the lower-regularity finite-\(L^p\) theory discussed above.
\subsection{The boundary layer and the initial corner}\label{subsec:corner-scales}

The correction must cancel the tangential trace \(a\) at the wall and decay toward the interior.  With \(a=u_1^{I, 0}(x, 0, t)\), the leading tangential profile satisfies  
\begin{equation}
\label{eq:3.3}
\left\{
\begin{aligned}
 &\partial_tu^{b,0}_1-\partial_x^2u^{b,0}_1-\partial_z^2u^{b,0}_1
 +(a+u^{b,0}_1)\partial_xu^{b,0}_1+u^{b,0}_1\partial_xa\\
 &\qquad-\left(z\partial_xa+\int_0^z\partial_xu^{b,0}_1(x,s,t)\,ds\right)
            \partial_zu^{b,0}_1=0,\\
 &u^{b,0}_1(x,0,t)=-a(x,t),\qquad
   \lim_{z\to\infty}u^{b,0}_1(x,z,t)=0,\\
 &u^{b,0}_1(x,z,0)=0.
\end{aligned}
\right.
\end{equation}
%The corresponding decaying normal profile is $u^{b,1}_2(x,z,t)=\int_z^\infty\partial_xu^{b,0}_1(x,s,t)\,ds$; thus $\partial_xu^{b,0}_1+\partial_zu^{b,1}_2=0$, and its physical amplitude is $\varepsilon$.
The leading approximation to the velocity is therefore
\[
 u^{I,0}(x,y,t)+\bigl(u^{b,0}_1(x,y/\varepsilon,t),0\bigr)^{\top}.
\]
The higher profiles restore incompressibility and the boundary condition at successive orders; their construction is given in Section~\ref{sec:Construction}.

At the regularity used below, the wall equation and $a(0)=0$ give $a(t)=t\alpha_0+O_{H_x^1}(t^2)$. Consequently, the boundary forcing starts at order \(t\), while normal diffusion spreads over a distance \(z=O(\sqrt t)\). This produces the corner variable
\[
 \eta=\frac{z}{\sqrt t}=\frac{y}{\varepsilon\sqrt t}.
\]
The fixed-positive-time layer has normal scale \(\varepsilon\), whereas its initial diffusion front has scale \(\varepsilon\sqrt t\). Here ``corner'' refers to the intersection of the initial surface \(t=0\) with the boundary \(y=0\), not to a spatial corner of the domain.

This onset differs from that caused by incompatible initial and boundary values, for which the leading short-time layer can have order-one amplitude; see \cite{CannoneEtAl2013,ArgenzianoEtAl2024}. Here the velocities agree initially, but their time derivatives need not satisfy the corresponding wall conditions. Higher-order compatibility conditions are obtained by differentiating the no-slip condition in time and using the equations to impose additional identities on the initial velocity and pressure; see \cite{Temam1982}. We do not impose these identities. Instead, we quantify the initial adjustment and control the velocity approximation on the closed interval \([0,T]\), even though higher derivatives may be singular at its initial endpoint.

\subsection*{Notation} 
We introduce the following notation conventions. We denote by $C$ a generic constant which may change from line to line, independent of $\varepsilon$ but dependent on $T$. Some other notations are defined as follows:
\begin{itemize}
	\item $A \lesssim B \Longleftrightarrow A \le C B.$
\end{itemize}
\begin{itemize}
	\item For a scalar-value function $p$, two vector-valued functions $u$ and $v$, we use the following notations:
	
	$(\nabla p)_i:=\partial_i p,(\nabla u)_{i j}:=\partial_j u_i ,u \cdot \nabla:=u_{1}\partial_{x}+u_{2}\partial_{y},(u \otimes v)_{i j}:=u_i v_j$. We set $\nabla^\perp=(\partial_y,-\partial_x)$, so that $\operatorname{curl}(\nabla^\perp\psi)=-\Delta\psi$.
\end{itemize}
\begin{itemize}
	\item $\langle\cdot\rangle:=\sqrt{1+|\cdot|^2}$.
\end{itemize}
\begin{itemize}
	\item $L_{xy}^{p}$ and $H_{xy}^{s}$ denote the usual Lebesuge and Sobolev space over $\mathbb{R}_{+}^2$ with corresponding norms $\|\cdot\|_{L_{x y}^p}$ and $\|\cdot\|_{H_{x y}^s}$, respectively.
\end{itemize}
\begin{itemize}
	\item Unsubscripted norms and the inner product $\langle\cdot,\cdot\rangle$ are taken in $L^2$ over the spatial variables of the functions involved.
\end{itemize}
\begin{itemize}
    \item The anisotropic Sobolev space is denoted as
    $$
    H_x^m H_y^{\ell}:=\left\{f \in L^2\left(\mathbb{R}_{+}^2\right) \mid \sum_{0 \leq i \leq m, 0 \leq j \leq \ell}\left\|\partial_x^i \partial_y^j f(x, y)\right\|_{L_{x y}^2}<\infty\right\},
    $$
    with norm $\|\cdot\|_{H_x^m H_y^{\ell}}$. Mixed norms involving $L^\infty$ retain the indicated order; for example,
    \[
    \|h\|_{H_x^mL_z^\infty}^2
      :=\sum_{j=0}^m\int_{\mathbb R}
          \operatorname*{ess\,sup}_{z>0}|\partial_x^jh(x,z)|^2\,dx.
    \]
\end{itemize}
\begin{itemize}
	\item $\bar{f}:=f(x, 0, t)$.
\end{itemize}
\begin{itemize}
	\item $z:=\frac{y}{\varepsilon}$ for $\varepsilon >0$. The notations $L_{xz}^{p}$ and $H_{xz}^{s}$ denote that their components are functions of $(x,z)$. For an inner profile $h$, we write $h^\varepsilon(x,y,t):=h(x,y/\varepsilon,t)$; in particular, $\partial_yh^\varepsilon=\varepsilon^{-1}(\partial_zh)^\varepsilon$.
\end{itemize}
\begin{itemize}
	\item For vector-valued functions, all $L^p$ norms use the Euclidean pointwise norm. For an $L^2$-based Hilbert space $X$, we write $\|(u,v)\|_X^2:=\|u\|_X^2+\|v\|_X^2$. The norm of $L^q(0,T;X)$, $1\leq q\leq\infty$, is denoted by $\|\cdot\|_{L_T^qX}$. When the time interval is fixed, $L_t^qX$ has the same meaning; shorter time intervals are written explicitly.
\end{itemize}
\begin{itemize}
    \item The complementary error function and Gamma function are normalized by
    $$
    \operatorname{erfc}(r):=\frac{2}{\sqrt{\pi}} \int_r^{\infty} e^{-s^2} \mathrm{~d} s, \quad \Gamma(q):=\int_0^{\infty} s^{q-1} e^{-s} \mathrm{~d} s \quad(q>0).
    $$
\end{itemize}
\begin{itemize}
    \item For $s\geq 0$ and $\ell \geq 0$, we use the mixed weighted norm
    $$
    \|f\|_{H_x^s L_{z, \ell}^2}:=\left\|\langle z\rangle^{\ell} f\right\|_{H_x^s L_z^2}.
    $$
\end{itemize}
\begin{itemize}
    \item For integers $N,\ell \geq 0$, the weighted energy used for a boundary-layer scalar $f$ is
    $$
    E_{N,\ell}[f](t) = \sum_{j=0}^{N}\left\|\langle z\rangle^{\ell+N-j} \partial_x^j f(t)\right\|_{L_{x, z}^2}^2, \quad D_{N,\ell}[f](t) = \sum_{j=0}^{N}\left\|\langle z\rangle^{\ell+N-j} \nabla_{x,z}\partial_x^j f(t)\right\|_{L_{x, z}^2}^2.
    $$
\end{itemize}
\begin{itemize}
	\item Let $\varphi$ be a smooth non-negative function defined on $[0,+\infty)$ satisfying
	\begin{equation}
		\label{eq:1.16}
		\varphi(0)=1, \varphi^{\prime}(0)=0, \varphi(s)=0 \text { for } s>1 .
	\end{equation}
\end{itemize}

\subsection{Main results}
Write \(L^2_\sigma(\Omega)\) for the \(L^2\) closure of smooth compactly supported divergence-free fields in \(\Omega\). 
\begin{proposition}[Solutions at fixed vertical viscosity $\varepsilon^2>0$]\label{prop:exact-solution}
Let \(\varepsilon>0\) and let \(\tilde{u}\in H^4(\Omega)^2\cap H^1_0(\Omega)^2\) be divergence free. The problem \eqref{eq:1.1}--\eqref{eq:1.2} has a unique global  solution satisfying, for every \(T>0\),
\[
 u^\varepsilon\in C\bigl([0,T];H^2(\Omega)^2\cap H_0^1(\Omega)^2\cap L^2_\sigma(\Omega)\bigr).
\]
For every \(0<\delta<T\), it is a strong solution with
\begin{align}
 u^\varepsilon&\in L^\infty(\delta,T;H^3)\cap L^2(\delta,T;H^4),\notag\\
 \partial_tu^\varepsilon&\in L^\infty(\delta,T;H^1)\cap L^2(\delta,T;H^2).
 \label{r6:finite-viscous-regularity}
\end{align}
\end{proposition}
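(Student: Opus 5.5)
The plan is to reduce to the classical theory of the two-dimensional Navier--Stokes equations by an anisotropic rescaling, and then to recover the regularity in \eqref{r6:finite-viscous-regularity} by standard parabolic arguments away from $t=0$.

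\emph{Reduction.} For fixed $\varepsilon>0$, set $y=\varepsilon Y$ and
\[
v_1(x,Y,t)=u_1^\varepsilon(x,\varepsilon Y,t),\qquad v_2(x,Y,t)=\varepsilon^{-1}u_2^\varepsilon(x,\varepsilon Y,t),\qquad q(x,Y,t)=p^\varepsilon(x,\varepsilon Y,t).
\]
Then $\partial_xv_1+\partial_Yv_2=0$, and the first momentum equation becomes $\partial_tv_1+v\cdot\nabla_{x,Y}v_1+\partial_xq-\Delta_{x,Y}v_1=0$. The second component picks up the factor $\varepsilon^{-2}$ on the pressure gradient, i.e.\ $\partial_tv_2+v\cdot\nabla v_2+\varepsilon^{-2}\partial_Yq-\Delta v_2=0$. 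This is not quite isotropic Navier--Stokes, so it is cleaner to avoid the rescaling altogether. Instead I would treat the operator $A_\varepsilon=-P(\partial_x^2+\varepsilon^2\partial_y^2)$ directly: it is a strongly elliptic, constant-coefficient Stokes-type operator in the half-plane with Dirichlet data, and $P$ denotes the Leray projection. Its Stokes resolvent theory follows from the isotropic one by the linear change of variables $y\mapsto y/\varepsilon$, applied to the streamfunction. Under this change $\partial_x^2+\varepsilon^2\partial_y^2$ becomes $\Delta$, and the divergence-free constraint is preserved when one uses the pair $(v_1,v_2)$ above. Hence the linear problem has maximal regularity and $H^2$ elliptic regularity of the form $\|u\|_{H^2}\le C_\varepsilon\|A_\varepsilon u\|$.

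\emph{Existence, uniqueness, $H^2$ continuity.} Next I run the standard Galerkin or energy scheme. The energy estimate is
\[
\tfrac12\tfrac{d}{dt}\|u\|^2+\|\partial_xu\|^2+\varepsilon^2\|\partial_yu\|^2=0.
\]
The $H^1$ estimate is obtained by testing with $A_\varepsilon u$ and using Ladyzhenskaya's inequality, which is two-dimensional, so it is global. For $H^2$ continuity, I differentiate in time and test $\partial_tu$ equations. The initial value $\partial_tu(0)=-P(\tilde u\cdot\nabla\tilde u)+P(\partial_x^2+\varepsilon^2\partial_y^2)\tilde u$ lies in $L^2_\sigma$ since $\tilde u\in H^4$. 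This gives $\partial_tu\in L^\infty(0,T;L^2)\cap L^2(0,T;H^1_0)$, and the elliptic estimate then yields $u\in L^\infty H^2$. Continuity in $H^2$ follows from $\partial_t u\in C([0,T];L^2)$, which holds by the Lions--Magenes lemma, together with the Stokes estimate. Uniqueness is the usual two-dimensional $L^2$ difference estimate with Gronwall.

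\emph{Higher regularity for $t\ge\delta$.} Here no compatibility conditions are available: $\partial_tu(0)$ need not vanish on $y=0$, so $\partial_tu(0)\notin H^1_0$. I would therefore use the time weights $t$ and $t^2$. Testing the $\partial_tu$ equation with $tA_\varepsilon\partial_tu$ gives $t\partial_tu\in L^\infty H^1\cap L^2H^2$, using $\partial_t u\in L^2H^1$ from the previous step. The Stokes estimate for $A_\varepsilon u=-\partial_tu-P(u\cdot\nabla u)$ then gives $u\in L^\infty(\delta,T;H^3)\cap L^2(\delta,T;H^4)$, by $H^{k+2}$ Stokes regularity with $\partial_tu$ in $H^1$ and $H^2$ respectively. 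The nonlinear terms are handled by $H^2$ being an algebra.

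The main obstacle is the linear step: establishing the half-plane Stokes regularity for the anisotropic operator uniformly in time, with pressure. I would handle it by the streamfunction change of variables, which turns the problem into the isotropic bi-Laplacian with Dirichlet data, so that the classical result (e.g.\ Temam) applies with $\varepsilon$-dependent constants. This is acceptable because the proposition is stated for fixed $\varepsilon$.
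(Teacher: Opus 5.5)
Your outline matches the standard argument the paper relies on when it calls the proposition standard, with one exception. The steps that work are: energy and $H^1$ bounds; the $\partial_tu$ estimate started from $\tilde u\in D(A_\varepsilon)=H^2\cap V_\sigma$, which needs no compatibility; Lions--Magenes plus the Stokes estimate for $u\in C([0,T];H^2)$; the time-weighted test of the differentiated equation with $tA_\varepsilon\partial_tu$; and the $H^3$, $H^4$ Stokes estimates for $A_\varepsilon u=-\partial_tu-\mathbb P(u\cdot\nabla u)$. The weighted test actually gives $\sqrt t\,\partial_tu\in L^\infty H^1\cap L^2H^2$ rather than the bound on $t\,\partial_tu$ you wrote, but either weight is harmless on $[\delta,T]$.

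The step that fails is the one you single out as the main obstacle. The anisotropic Stokes operator is not conjugate to the isotropic one under $y\mapsto y/\varepsilon$. You computed this yourself: the rescaled pressure gradient is $(\partial_xq,\varepsilon^{-2}\partial_Yq)$. Passing to the stream function does not remove the problem. With $v=\nabla^\perp\psi$, the stationary problem $A_\varepsilon v=f$ becomes
\[
 (\partial_x^2+\varepsilon^2\partial_y^2)\Delta\psi=\operatorname{curl}f,
 \qquad \psi|_{y=0}=\partial_y\psi|_{y=0}=0,
\]
because the curl removes the pressure but the relation $\operatorname{curl}v=-\Delta\psi$ is isotropic. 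Under $y=\varepsilon Y$ the first factor becomes $\Delta_{x,Y}$, while the second becomes $\partial_x^2+\varepsilon^{-2}\partial_Y^2$. The symbol $(\xi^2+\varepsilon^2\zeta^2)(\xi^2+\zeta^2)$ is a product of two non-proportional quadratic forms, and linear changes of variables preserve non-proportionality. So no change of variables produces $\Delta^2$ when $\varepsilon\neq1$, and the isotropic result cannot simply be quoted.

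The estimates you need are nevertheless true; the paper records them as Lemma \ref{lem:r7-stokes}. To prove them, note that the operator above is properly elliptic and that the Dirichlet conditions satisfy the complementing (Lopatinskii--Shapiro) condition. After a Fourier transform in $x$, the decaying homogeneous solutions are $Ae^{-|\xi|y}+Be^{-|\xi|y/\varepsilon}$. The two boundary conditions give $A+B=0$ and $A+B/\varepsilon=0$, hence $A=B=0$. Agmon--Douglis--Nirenberg theory, or a direct estimate from this explicit two-rate representation, then gives the $H^{j+2}$ bounds for $v$.

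One further correction: on the half-plane the estimate must read $\|u\|_{H^2}\le C_\varepsilon(\|A_\varepsilon u\|+\|u\|)$, or $\|\nabla^2u\|\le C_\varepsilon\|A_\varepsilon u\|$. The form $\|u\|_{H^2}\le C_\varepsilon\|A_\varepsilon u\|$ is false, as the dilations $u(\lambda\,\cdot)$ with $\lambda\to0$ show, since $0$ lies in the spectrum of $A_\varepsilon$. In your argument the extra $\|u\|$ term is controlled by the energy identity, so with these corrections the rest of the proof goes through.
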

\begin{remark}
    No time-differentiated compatibility condition is required in Proposition \ref{prop:exact-solution}. The strong-regularity bounds may depend on \(\varepsilon\), \(\delta\), \(T\), and \(\tilde{u}\). The proof of Proposition \ref{prop:exact-solution} is standard, one can refer to \cite{Sohr2001} for some related discuss. 
\end{remark} 
\begin{theorem}[Pointwise vanishing vertical viscosity limit]\label{thm:main}  
  Let the initial datum satisfy the assumptions of Proposition
\ref{prop:exact-solution}, and fix $T>0$. For every $0<\varepsilon\leq1$,
problems \eqref{eq:1.1}--\eqref{eq:1.2},
\eqref{eq:1.3}--\eqref{eq:1.4}, and \eqref{eq:3.3}
admit velocities $u^\varepsilon$, $u^{I,0}$, and $u_1^{b,0}$ on $[0,T]$,
respectively, unique in the classes specified in Propositions
\ref{prop:exact-solution}, \ref{prop:outer-flow}, and
\ref{pro:boundary-layer equation}.  The viscous velocity has the regularity
stated in Proposition \ref{prop:exact-solution}, and
\[
 \begin{aligned}
 &u^{I,0}\in C([0,T];H^4(\Omega)^2),\quad
       \partial_xu^{I,0}\in L^2(0,T;H^4(\Omega)^2),\\
 &u_1^{b,0}\in C([0,T];L^2(\Omega_{\mathrm b}))
       \cap L^2(0,T;H^1(\Omega_{\mathrm b})).
 \end{aligned}
\]
The associated pressures are unique up to functions of time. The higher profiles constructed from \(\tilde{u}\) give the divergence-free, no-slip approximation \(u^a\) in \eqref{eq:3.70}, with
\begin{gather}
 \sup_{0\le t\le T}
 \left\|u^\varepsilon(t)-u^{I,0}(t)
       -\bigl(u_1^{b,0}(x,y/\varepsilon,t),0\bigr)\right\|_{L^\infty(\Omega)}
  \le C_T\varepsilon,
 \label{eq:leading-uniform-main}\\*
 \sup_{0\le t\le T}\|u^\varepsilon(t)-u^a(t)\|_{L^\infty(\Omega)}
  \le C_T\varepsilon^{3/2}.
 \label{eq:full-error-uniform}
\end{gather}
The constant \(C_T\) depends only on \(T\) and \(\|\tilde{u}\|_{H^4}\), with the cutoff fixed. Both bounds hold on the closed interval \([0,T]\), and both errors vanish at \(t=0\).
\end{theorem}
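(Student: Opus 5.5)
The plan is to take existence and uniqueness of $u^\varepsilon$, $u^{I,0}$ and $u_1^{b,0}$ from Propositions \ref{prop:exact-solution}, \ref{prop:outer-flow} and \ref{pro:boundary-layer equation}, respectively. Granting these, the main work is the two pointwise estimates. These rest on an energy estimate for the remainder $w:=u^\varepsilon-u^a$, where $u^a$ is the composite approximation \eqref{eq:3.70}. Before estimating $w$, one must build $u^a$ with enough accuracy. It will contain:
\begin{itemize}
\item the outer terms $u^{I,0}$ and $\varepsilon u^{I,1}$;
\item the boundary-layer terms $u_1^{b,0}$, $\varepsilon u^{b,1}$ and $\varepsilon u_2^{b,1}=\varepsilon\int_z^\infty\partial_x u_1^{b,0}\,ds$;
\item a cutoff corrector built with $\varphi(y)$, which restores $\operatorname{div}u^a=0$ and $u^a|_{y=0}=0$ exactly, typically via a stream function $\nabla^\perp(\varphi\,\cdot)$.
\end{itemize}
Substituting $u^a$ into \eqref{eq:1.1} gives
\[
\partial_t u^a+u^a\cdot\nabla u^a+\nabla p^a-\partial_x^2u^a-\varepsilon^2\partial_y^2u^a=R^\varepsilon .
\]
The goal is to show $\|R^\varepsilon\|_{L^2_TL^2}\lesssim\varepsilon^{3/2}$ and $\|\partial_xR^\varepsilon\|_{L^2_TL^2}\lesssim\varepsilon^{3/2}$. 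The profile terms are functions of $(x,y/\varepsilon)$, and each $L^2_{xy}$ norm gains a factor $\varepsilon^{1/2}$ from the Jacobian. The residual should therefore be $O(\varepsilon)$ in the $z$ variable. This requires the weighted bounds $E_{N,\ell}$ and $D_{N,\ell}$ on the profiles. These bounds also let us absorb Taylor-expansion terms, such as $u^{I,0}_1(x,y)-a-y\,\partial_yu_1^{I,0}|_{y=0}$, which in layer variables read $\varepsilon z\,\partial_y u_1^{I,0}|_{y=0}+\varepsilon^2 z^2(\dots)$ and are controlled using the polynomial $z$-weights.

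For the remainder, $w$ satisfies
\[
\partial_tw+u^\varepsilon\cdot\nabla w+w\cdot\nabla u^a+\nabla q-\partial_x^2w-\varepsilon^2\partial_y^2w=-R^\varepsilon ,
\]
together with $w|_{y=0}=0$, $\operatorname{div}w=0$ and $w(0)=0$. The $L^2$ energy estimate needs a bound on the term $\langle w\cdot\nabla u^a,w\rangle$, and the dangerous piece is $w_2\,\partial_yu_1^{b,0}\,w_1=\varepsilon^{-1}w_2(\partial_zu_1^{b,0})^\varepsilon w_1$. To handle it we use the Hardy-type bound $|w_2|\le y\,\|\partial_yw_2\|_{L^\infty_y}$, together with $\partial_yw_2=-\partial_xw_1$. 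This gives
\[
\varepsilon^{-1}|w_2|\,|\partial_zu^{b,0}_1|\lesssim \Bigl|\int_0^y\partial_xw_1\Bigr|\,\frac{|z\,\partial_zu^{b,0}_1|}{y},
\]
and the weighted Hardy inequality in $y$ then bounds the term by $\|\langle z\rangle\partial_zu_1^{b,0}\|_{L^\infty}\|\partial_xw\|\,\|w\|$. This is absorbed by the horizontal dissipation $\|\partial_xw\|^2$, which is the decisive structural advantage over the Prandtl case. Gronwall then yields
\[
\|w\|_{L^\infty_TL^2}+\|\partial_xw\|_{L^2_TL^2}\lesssim\varepsilon^{3/2}.
\]
Differentiating the equation in $x$ and repeating the argument gives the same bound for $\|\partial_xw\|_{L^\infty_TL^2}$. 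Because the vertical dissipation is only $\varepsilon^2$, the normal derivative is controlled by $\varepsilon\|\partial_yw\|\lesssim\varepsilon^{3/2}$, i.e.\ $\|\partial_yw\|\lesssim\varepsilon^{1/2}$ in $L^2_T$. An $L^\infty_T$ version of this bound comes from the vorticity or $\partial_t w$ estimate.

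To reach $L^\infty$ we use the anisotropic interpolation
\[
\|w\|_{L^\infty}^2\lesssim\|w\|^{1/2}\|\partial_xw\|^{1/2}\|\partial_yw\|^{1/2}\|\partial_x\partial_yw\|^{1/2}.
\]
This requires $\|\partial_x\partial_yw\|\lesssim\varepsilon^{1/2}$ as well, so the bound for the exact remainder alone is of order $\varepsilon$. To get $\varepsilon^{3/2}$, the construction should be pushed to one further order: build $u^a_{N}$ with residual $O(\varepsilon^{5/2})$, bound $\|u^\varepsilon-u^a_N\|_{L^\infty}\lesssim\varepsilon^{2}$ by the same argument, and note that $\|u^a_N-u^a\|_{L^\infty}\lesssim\varepsilon^{3/2}$ by the explicit profile bounds. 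This higher construction is presumably what the paper means by the finite-order expansion. Estimate \eqref{eq:leading-uniform-main} then follows from \eqref{eq:full-error-uniform}, because the omitted terms $\varepsilon u^{I,1}$, $\varepsilon u^{b,1}$ and the corrector are $O(\varepsilon)$ in $L^\infty$ by Sobolev embedding of the profiles. Both errors vanish at $t=0$, since all correctors have zero initial data and $u^\varepsilon(0)=u^{I,0}(0)=\tilde u$.

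The main obstacle I expect is uniformity down to $t=0$ without compatibility conditions. The profile norms used above, especially $\partial_t$ and higher $z$-derivatives of $u^{b,j}$, blow up like negative powers of $t$ near the initial–boundary corner, since $a(t)\sim t\alpha_0$ diffuses on the scale $z\sim\sqrt t$. I would handle this by estimating the residual only in time-integrated norms. Self-similar bounds of the form $\|\partial_z^k u^{b,0}_1(t)\|_{L^2_z}\lesssim t^{1-k/2+1/4}$ make the relevant time integrals finite near $t=0$. I would also avoid $\partial_tR^\varepsilon$ entirely, using $\|w(t)\|\to0$ from the continuity in Proposition \ref{prop:exact-solution}. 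Checking the exponent bookkeeping near $t=0$ at the $H^4$ regularity level is the step most likely to fail.
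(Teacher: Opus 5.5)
Your architecture matches the paper's: composite approximation, residual bounds, an $L^2$ estimate closed by Hardy's inequality and the horizontal dissipation, a $\partial_x$ estimate, normal-derivative estimates, and anisotropic interpolation. But the step that carries the whole proof is only asserted, namely the $L^\infty_TL^2$ bounds for $\partial_yw_1$ and $\partial_x\partial_yw_1$. Neither route you suggest works as stated.
\begin{itemize}
\item The vorticity $\omega$ of $w$ has no boundary condition at the no-slip wall. Evaluating the tangential equation at $y=0$ turns the boundary term $\varepsilon^2\int_{y=0}\partial_y\omega\,\omega\,dx$ into $\int_{y=0}(\partial_xq+R_1^\varepsilon)\,\partial_yw_1\,dx$. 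So you face a pressure-trace problem that you never address.
\item A $\partial_tw$ estimate needs $\partial_tR^\varepsilon$, i.e.\ second time derivatives of the layer profiles. These are not available near $t=0$ at the $H^4$ level without the time-differentiated compatibility that the theorem excludes. You yourself propose to avoid $\partial_tR^\varepsilon$.
\end{itemize}

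The paper instead tests the tangential equation with $-\partial_y^2w_1$ and with $\partial_x^2\partial_y^2w_1$, keeps the pressure, and splits it into three parts. One part is driven by the error terms, one is a residual part $\pi_R$, and one is a harmonic part $\pi_s$ with Neumann data $-\varepsilon^2\partial_x\partial_yw_1|_{y=0}$. The harmonic part obeys $\|\partial_x\pi_s\|^2\le\varepsilon^4\|\partial_x\partial_yw_1\|\,\|\partial_y^2w_1\|$ and is absorbed.

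Controlling $\partial_x^2\pi_R$ needs residual information that is absent from your plan: $\|\partial_y\mathcal R_2^\varepsilon\|_{L^2_tL^2}\lesssim\varepsilon^{3/2}$ and $\int_0^T\!\int_{\mathbb R}|\xi|\,|\widehat{\mathcal R_2^\varepsilon|_{y=0}}|^2\,d\xi\,dt\lesssim\varepsilon^4$. The latter rests on the wall cancellation $(\partial_yp^{I,1}+\partial_zp^{b,2}+\partial_x\partial_zu_1^{b,0})|_{y=0}=0$. The layer terms $\varepsilon^{-1}w_1\partial_x\partial_zu_1^b$ and $\varepsilon^{-2}w_2\partial_z^2u_1^b$ are handled with $|w_1|\le y^{1/2}\|\partial_yw_1\|_{L^2_y}$ and $|w_2|\lesssim y^{3/2}\|\partial_x\partial_yw_1\|_{L^2_y}$.

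Finally, $t=0$ is reached not through $\|w(t)\|\to0$. The paper proves both estimates on $[t_0,T]$ with $t_0$-independent constants and lets $t_0\downarrow0$. For $\partial_yw_1$ this is done along times with $\|\partial_yw_1(t_0)\|\to0$. For $\partial_x\partial_yw_1$ it uses the fixed-$\varepsilon$ $H^2$ continuity of $u^\varepsilon$ and the initial continuity of $\partial_x\partial_zu_1^{b,j}$.

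Your exponent bookkeeping is also off, because derivatives are not free. In the $\partial_x$ estimate, the term with $\partial_xu_2^a=O(1)$ must be integrated by parts in $y$ against the weak dissipation $\varepsilon^2\|\partial_x\partial_yw\|^2$. This produces $\varepsilon^{-2}\|\partial_xu_2^a\|_\infty^2\|w\|^2$, and the normal estimates carry similar $\varepsilon^{-2}$ factors. For an approximation with residual $O(\varepsilon^{5/2})$ the paper obtains $\|w\|\lesssim\varepsilon^{5/2}$, $\|\partial_xw\|,\|\partial_yw_1\|\lesssim\varepsilon^{3/2}$ and $\|\partial_x\partial_yw_1\|\lesssim\varepsilon^{1/2}$. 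Interpolation then gives exactly $\varepsilon^{3/2}$, not your $\varepsilon^2$. Likewise, your claim that a first-order construction already gives an $O(\varepsilon)$ remainder is not what these estimates yield.

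The detour through a first-order $u^a$ plus a triangle inequality is therefore both unsupported and unnecessary. The theorem's $u^a$ in \eqref{eq:3.70} is already the second-order construction, with $u^{I,2}$, $u^{b,2}$, $u_2^{b,3}$ and $S$. The bound \eqref{eq:full-error-uniform} is the direct output of the four estimates for it.

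For $w_2$, do not use the four-factor inequality, since $\partial_x\partial_yw_2=-\partial_x^2w_1$ is controlled only in $L^2_T$. Use instead $\|w_2\|_\infty\lesssim\|w_2\|^{1/4}\|\partial_xw_2\|^{1/2}\|\partial_y^2w_2\|^{1/4}$ with $\partial_y^2w_2=-\partial_x\partial_yw_1$.
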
 

\begin{remark}
No smallness or additional time-differentiated
compatibility condition is imposed in Theorem \ref{thm:main}.
 If \(\alpha_0\not\equiv0\), the order-\(\varepsilon\) error in \eqref{eq:leading-uniform-main} is sharp at sufficiently small fixed positive times; see Corollary \ref{cor:corrected-rate-sharp}.  
\end{remark}

\begin{remark}[The role of the correction]\label{rem:corrector-necessary}
At the boundary, \(u_1^\varepsilon-u_1^{I,0}=-u_1^{I, 0}(x, 0, t)\). Hence uniform convergence to the interior flow alone fails whenever \(a\) is nonzero. The tangential layer cancels this order-one mismatch in \(\varepsilon\); the normal layer has amplitude \(\varepsilon\) and is included in the error in \eqref{eq:leading-uniform-main}.
\end{remark}

\begin{remark}[The initial endpoint]\label{rem:main-time-weight}
The uniform error estimates up to \(t=0\) do not require unweighted high-order parabolic regularity there. The sharper initial continuity modulus \eqref{r6:first-correction-onset} for the first correction also permits simultaneous limits \(t\downarrow0\) and \(\varepsilon\downarrow0\). This modulus is associated with a fixed initial datum, not asserted uniformly over bounded \(H^4\) sets.
\end{remark}

\begin{definition}[Thickness on a fixed time interval]
\label{def:thickness}
 Under the assumptions of Theorem \ref{thm:main},   write \(\Omega_\delta=\{(x,y)\in\Omega:y\geq\delta\}\). A nonnegative scale \(\delta_\varepsilon\to0\) as $\varepsilon\downarrow0$ is called an admissible thickness on \([0,T]\) if
\[
 \liminf_{\varepsilon\downarrow0}
   \left\lVert u^\varepsilon-u^{I,0}\right\rVert_{{L^\infty([0,T]\times\Omega)}}>0,
 \qquad
 \left\lVert 
 u^\varepsilon-u^{I,0}\right\rVert_{{L^\infty([0,T]\times\Omega_{\delta_\varepsilon})}}\longrightarrow0.
\]
\end{definition}

\begin{theorem}[Exact tails and the matching scale on a fixed time interval]
\label{thm:thickness}
Under the assumptions of Theorem \ref{thm:main},  define
\[
 F_T(\rho)
 =\sup_{0\le t\le T}
     \|u^{b,0}_1(t)\|_{L^\infty(\mathbb R\times[\rho,\infty))}.
\]
For every $\delta_\varepsilon\ge0$,
\begin{equation}
 \left|\|u_1^\varepsilon-u^{I,0}_1\|_{L^\infty([0,T]\times\Omega_{\delta_\varepsilon})}
       -F_T(\delta_\varepsilon/\varepsilon)\right|\le C_T\varepsilon .
 \label{eq:interval-tail}
\end{equation}
The function $F_T$ is continuous, nonincreasing, and tends to zero at infinity. Hence $\delta_\varepsilon/\varepsilon\to\infty$ implies uniform convergence to $u^{I,0}$ outside the thin strip. If $\delta_\varepsilon/\varepsilon\to\ell<\infty$, the norm in \cref{eq:interval-tail} converges to $F_T(\ell)$. Under the same condition $\delta_\varepsilon/\varepsilon\to\ell\in[0,\infty)$, for each fixed $t_0\in[0,T]$,
\begin{equation}
 \lim_{\varepsilon\downarrow0}
 \|u_1^\varepsilon(t_0)-u^{I,0}_1(t_0)\|_{L^\infty(\mathbb R\times[\delta_\varepsilon,\infty))}
 =\|u^{b,0}_1(t_0)\|_{L^\infty(\mathbb R\times[\ell,\infty))}.
 \label{eq:tail-limit}
\end{equation}
If the leading boundary layer is nonzero on $[0,T]$, a strip of width $o(\varepsilon)$ cannot remove its uniform defect. For a finite limiting ratio $\ell$, convergence holds if and only if $F_T(\ell)=0$.
\end{theorem}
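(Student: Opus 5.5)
The plan is to deduce everything from the leading-order estimate \eqref{eq:leading-uniform-main} of Theorem \ref{thm:main}, combined with qualitative properties of the profile $u^{b,0}_1$. Write
\[
 R^\varepsilon:=u_1^\varepsilon-u_1^{I,0}-u_1^{b,0}(x,y/\varepsilon,t),
\]
so that $\|R^\varepsilon\|_{L^\infty([0,T]\times\Omega)}\le C_T\varepsilon$. On $\Omega_{\delta_\varepsilon}$ the change of variables $z=y/\varepsilon$ maps $\{y\ge\delta_\varepsilon\}$ onto $\{z\ge\delta_\varepsilon/\varepsilon\}$. Hence
\[
 \sup_{0\le t\le T}\|u_1^{b,0}(\cdot,\cdot/\varepsilon,t)\|_{L^\infty(\Omega_{\delta_\varepsilon})}=F_T(\delta_\varepsilon/\varepsilon).
\]
The reverse triangle inequality in $L^\infty([0,T]\times\Omega_{\delta_\varepsilon})$ then gives \eqref{eq:interval-tail} immediately. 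The same argument at a fixed time $t_0$ gives
\[
 \Bigl|\|u_1^\varepsilon(t_0)-u_1^{I,0}(t_0)\|_{L^\infty(\mathbb R\times[\delta_\varepsilon,\infty))}-\|u_1^{b,0}(t_0)\|_{L^\infty(\mathbb R\times[\delta_\varepsilon/\varepsilon,\infty))}\Bigr|\le C_T\varepsilon.
\]
So the remaining work concerns only the profile.

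Monotonicity of $F_T$ is trivial, since the sets $[\rho,\infty)$ shrink as $\rho$ grows.

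The substantive step, and the one I expect to be the main obstacle, is to show that $F_T$ is continuous and tends to zero at infinity. Both follow if $u_1^{b,0}$ is uniformly continuous on $\mathbb R\times[0,\infty)\times[0,T]$ and
\[
 \sup_{t\le T}\sup_x|u_1^{b,0}(x,z,t)|\to0\quad\text{as } z\to\infty.
\]
For the decay I would use the weighted energies $E_{N,\ell}$ from Proposition \ref{pro:boundary-layer equation} with $N\ge1$. Uniformly in $t$, these give $\langle z\rangle^{\ell}\partial_x^j u_1^{b,0}\in L^2$ and, from the dissipation $D_{N,\ell}$ together with the equation, control of $\partial_z$. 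An anisotropic embedding then yields
\[
 |f(x,z)|^2\lesssim \|f\|_{L^2_x L^2_{z}(z'\ge z)}^{1/2}\cdots,
\]
and more precisely $\|f\|_{L^\infty(\{z'\ge\rho\})}\lesssim \langle\rho\rangle^{-\ell}\,\|\langle z\rangle^{\ell}f\|_{H^1_xH^1_z}$. Uniform-in-time bounds therefore give $F_T(\rho)\lesssim\langle\rho\rangle^{-\ell}\to0$. If $H^1_z$ control is only available in $L^2_t$, I would instead use the explicit sup-norm bounds that the paper establishes for the profile. One candidate is a maximum-principle or heat-kernel comparison: the nonlinear transport terms are bounded, and the boundary data $-a$ vanish suitably, so an erfc-type barrier gives the decay. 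For continuity in $\rho$, uniform continuity in $z$, uniformly in $(x,t)$, suffices. This comes from a Hölder bound in $z$, such as $\|\partial_z u_1^{b,0}\|_{L^\infty_t H^1_xL^2_z}$, which gives $C^{1/2}$ in $z$. Near $t=0$ there may be a singularity, but $u_1^{b,0}$ is small there (of order $t$, from the onset bound \eqref{r6:first-correction-onset}), so one can handle $[0,\tau]$ separately.

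With these properties in hand, the consequences follow. If $\delta_\varepsilon/\varepsilon\to\infty$, then $F_T(\delta_\varepsilon/\varepsilon)\to0$, and the $O(\varepsilon)$ error gives convergence of the tangential component. The normal component is handled by the fact that $u_2^\varepsilon-u_2^{I,0}$ is $O(\varepsilon)$, since the normal layer has amplitude $\varepsilon$ by Theorem \ref{thm:main}. If $\delta_\varepsilon/\varepsilon\to\ell$, continuity of $F_T$ gives the limit $F_T(\ell)$. For \eqref{eq:tail-limit}, we need continuity of $\rho\mapsto\|u_1^{b,0}(t_0)\|_{L^\infty(\mathbb R\times[\rho,\infty))}$ at fixed $t_0$, which follows from the same uniform continuity in $z$.

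Finally, if $\delta_\varepsilon=o(\varepsilon)$ then $\ell=0$. In that case $F_T(0)=\sup_t\|a(t)\|_{L^\infty}$, because the profile equals $-a$ at the wall. This is positive when the layer is nonzero, so the defect persists. The iff statement for finite $\ell$ is then just the limit $F_T(\ell)$, together with the $O(\varepsilon)$ closeness of the normal component.
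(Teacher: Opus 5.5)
Your proposal is correct and follows essentially the paper's route. You use the reverse triangle inequality from \eqref{eq:leading-uniform-main} after the rescaling $z=y/\varepsilon$, a uniform $h^{1/2}$ modulus in $z$ from $\sup_t\|\partial_z u_1^{b,0}\|_{L^\infty_xL^2_z}$, and the decay $F_T(\rho)\lesssim\langle\rho\rangle^{-1}$ from the weighted sup bounds. The uniform-in-time control you hedge about is available directly from \eqref{eq:5.27}. So neither the barrier argument nor the small-time splitting is needed; in any case, your citation of \eqref{r6:first-correction-onset} there concerns the first-order correction, not $u_1^{b,0}$.

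One small correction: you only have $F_T(0)\ge\sup_t\|a(t)\|_{L^\infty}$, not equality, since the supremum of the profile need not be attained at the wall. Positivity is all you use, so the argument is unaffected.
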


\begin{remark}[Meaning of the matching scale]
For a nonzero layer, a strip of width $o(\varepsilon)$ cannot remove the uniform defect, while a vanishing width $\delta_\varepsilon$ with $\delta_\varepsilon/\varepsilon\to\infty$ does remove it. At a finite limiting ratio \(\ell\), the exact condition is \(F_T(\ell)=0\); continuity and decay alone do not imply a nonzero tail beyond every finite \(\ell\). The matching length is \(\varepsilon\), not the viscosity coefficient \(\varepsilon^2\).
\end{remark}

For the corner estimates, $x\in\mathbb R$ and $\eta\ge0$. We use the mixed norms
\begin{equation}
\begin{aligned}
 \|f\|_{H_\eta^1(H_x^1)}^2
 &=\sum_{j,k=0}^1
    \|\partial_x^j\partial_\eta^k f\|_{L^2(\mathbb R\times\mathbb R_+)}^2,\\
 \|f\|_{L_\eta^\infty(H_x^1)}
 &=\operatorname*{ess\,sup}_{\eta\ge0}\|f(\cdot,\eta)\|_{H^1(\mathbb R)}.
\end{aligned}
\label{eq:corner-mixed-norms}
\end{equation}

\begin{theorem}[The first corner profile]
\label{thm:corner}
Under the assumptions of Theorem \ref{thm:main}, the initial wall acceleration
\[
 \alpha_0(x)=\partial_tu_1^{I,0}(x,0,0)\in H^3(\mathbb R)
\]
is well defined. Set
\begin{equation}
 \Phi_1(\eta)=
 \left(1+\frac{\eta^2}{2}\right)\operatorname{erfc}\!\left(\frac\eta2\right)
 -\frac{\eta}{\sqrt\pi}\mathrm{e}^{-\eta^2/4}.
\label{eq:Phi1-explicit}
\end{equation}
Then there exist \(0<t_c\le\min\{1,T\}\) and \(C>0\) such that
\begin{equation}
 \left\lVert 
 t^{-1}u^{b,0}_1(x,\sqrt t\,\eta,t)+\alpha_0(x)\Phi_1(\eta)\right\rVert_{H_\eta^1(H_x^1)}
 \leq Ct,
 \qquad 0<t\leq t_c.
\label{eq:corner-limit}
\end{equation}
Consequently,
\begin{equation}
 \left\lVert 
 t^{-1}u^{b,0}_1(x,\sqrt t\,\eta,t)+\alpha_0(x)\Phi_1(\eta)\right\rVert_{L_\eta^\infty(H_x^1)}
 +\left\lVert 
 t^{-1}u^{b,0}_1(x,\sqrt t\,\eta,t)+\alpha_0(x)\Phi_1(\eta)\right\rVert_{L^\infty_{x,\eta}}
 \leq Ct.
\label{eq:corner-limit-Linf}
\end{equation}
Thus the first corner term is $-t\alpha_0(x)\Phi_1(z/\sqrt t)$, with physical scale $y=O(\varepsilon\sqrt t)$. The normalized profile is defined for $t>0$ and converges to \(-\alpha_0\Phi_1\) as \(t\downarrow0\); the unnormalized profile has zero initial value.
\end{theorem}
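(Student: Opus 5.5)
The plan is to compare \(u^{b,0}_1\) with the explicit solution of the linearized heat problem with boundary forcing \(-t\alpha_0\), and then to control the nonlinear and higher-order remainder in self-similar variables.

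\emph{Step 1: the wall acceleration.} First I check that \(\alpha_0\) is well defined. Proposition~\ref{prop:outer-flow} gives \(u^{I,0}\in C([0,T];H^4)\), and the wall equation gives \(\alpha_0=\partial_x^2\bar{\tilde u}_1-\bar{\tilde u}_1\partial_x\bar{\tilde u}_1-\partial_x p^{I,0}(\cdot,0,0)\). Since \(\bar{\tilde u}_1=0\) by no slip, this reduces to \(\alpha_0=-\partial_xp^{I,0}(\cdot,0,0)\). The pressure solves a Neumann problem with \(H^4\) data, so the trace of \(\nabla p^{I,0}\) lies in \(H^{3}(\mathbb R)\), or at least in \(H^{5/2}\). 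The claim \(H^3\) should follow from the structure of \(\partial_x^2\) in the equation. I will take \(\alpha_0\in H^3\) from the paper's construction of \(p^{I,0}\).

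\emph{Step 2: the explicit profile.} Next I identify \(\Phi_1\). The function \(w=-t\alpha_0(x)\Phi_1(z/\sqrt t)\) solves \(\partial_tw-\partial_z^2w=0\) with \(w(0)=0\), \(w|_{z=0}=-t\alpha_0\) and \(w\to0\) as \(z\to\infty\). Indeed, \(\Phi_1=4i^2\operatorname{erfc}(\eta/2)\) solves \(\Phi''+\tfrac\eta2\Phi'-\Phi=0\) with \(\Phi(0)=1\) and \(\Phi(\infty)=0\), which I verify by direct differentiation.

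\emph{Step 3: the remainder equation.} Let \(R=u^{b,0}_1-w\). Subtracting, \(R\) satisfies
\[
\partial_tR-\partial_z^2R=\partial_x^2u^{b,0}_1-(a+u^{b,0}_1)\partial_xu^{b,0}_1-u^{b,0}_1\partial_xa+\Bigl(z\partial_xa+\int_0^z\partial_xu^{b,0}_1\Bigr)\partial_zu^{b,0}_1,
\]
with \(R|_{z=0}=-(a-t\alpha_0)=O_{H^1_x}(t^2)\) and \(R(0)=0\). I first lift the boundary datum using \(-(a-t\alpha_0)\varphi(z)\), or better its self-similar heat extension. Then I rescale \(R(x,z,t)=t^2\tilde R(x,z/\sqrt t,t)\). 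The goal is \(\|\tilde R\|_{H^1_\eta H^1_x}\lesssim1\), which is exactly \eqref{eq:corner-limit}.

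\emph{Step 4: sizes of the forcing terms.} With \(u^{b,0}_1=O(t)\), every forcing term is \(O(t)\) in self-similar norms. The term \(\partial_x^2 u^{b,0}_1\approx t\alpha_0''\Phi_1\) needs \(\alpha_0\in H^3\) so that \(H^1_x\) control survives. Terms like \(a\partial_xu^{b,0}_1\) are \(O(t^2)\). The transport term \(z\partial_xa\,\partial_zu^{b,0}_1\) is also \(O(t^2)\), since \(z\partial_z\) is scale invariant. A forcing of size \(t\) in \(\eta\)-variables produces, via Duhamel, a solution of size \(t^2\). Hence \(\tilde R=O(1)\).

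\emph{Step 5: the energy estimate in self-similar variables.} I carry this out by energy estimates for \(\tilde R\) in the variables \((x,\eta,\tau=\log t)\). The rescaled operator \(t\partial_t-\tfrac\eta2\partial_\eta-\partial_\eta^2+2\) is coercive: the zeroth-order term \(+2\) coming from the \(t^2\) normalization, together with the \(-\tfrac14\) from the drift on \(L^2(d\eta)\), gives a positive damping constant. I take an \(L^2\) estimate, then differentiate in \(x\) and \(\eta\) and take \(H^1\) estimates, integrating \(\tau\) from \(-\infty\). The nonlinear terms are absorbed for \(t\le t_c\) small, using the a priori bound \(\|u^{b,0}_1\|\lesssim t\) from Proposition~\ref{pro:boundary-layer equation} and a continuity argument. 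This is why \(t_c\) appears.

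\emph{Step 6: the \(L^\infty\) consequences.} Finally, \eqref{eq:corner-limit-Linf} follows from the one-dimensional embedding \(H^1_\eta\subset L^\infty_\eta\) with values in \(H^1_x\), and then \(H^1_x\subset L^\infty_x\).

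\emph{Main obstacle.} The hardest part is the initial-time degeneracy. The a priori bounds on \(u^{b,0}_1\) hold only in weak norms near \(t=0\), since no compatibility is assumed. So the bound \(\|u^{b,0}_1(t)\|_{H^2_xH^1_z}\lesssim t\cdot t^{1/4}\) needed for the forcing must be derived inside the bootstrap itself: estimate \(\tilde R\) and the \(H^1_x\) norm of \(\partial_x^2 u^{b,0}_1\) simultaneously. I would first try to close the estimate for \(\tilde R\) in \(H^1_x\) using only \(\alpha_0\in H^3\) and \(a-t\alpha_0=O_{H^3_x}(t^2)\). In that version the \(\partial_x^2\) term becomes part of the linear operator for \(\tilde R\), with tangential dissipation, rather than being treated as forcing. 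This avoids needing extra derivatives of \(u^{b,0}_1\).
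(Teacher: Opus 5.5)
Your argument is sound in outline, but it takes a different route from the paper. The paper's proof of this theorem needs no new energy estimate. It uses the splitting $u_1^{b,0}=b_a+r_a$ from Proposition~\ref{pro:boundary-layer equation}. There $b_a$ is the full $(x,z)$ Dirichlet heat lifting of $-a$, so horizontal diffusion and the whole wall history are included. The bound \eqref{eq:5.14} already gives $r_a=O(t^{13/4})$, so after the normalization $t^{-1}(\cdot)(x,\sqrt t\,\eta,t)$ the nonlinear part contributes only $O(t^2)$. For $b_a$ the paper integrates by parts in the Duhamel time, writing $b_a=-\int_0^te^{h\partial_x^2}c(t-h)\operatorname{erfc}(z/2\sqrt h)\,dh$ with $c=a_t-a_{xx}$ and $c(0)=\alpha_0$. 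After the substitution $h=t\theta$ this gives
\[
t^{-1}b_a(x,\sqrt t\,\eta,t)+\alpha_0\Phi_1(\eta)=-\int_0^1Q_\theta(\eta)\bigl[e^{t\theta\partial_x^2}c(t(1-\theta))-\alpha_0\bigr]\,d\theta .
\]
The bracket is $O_{H^1_x}(t)$, by $\partial_tc\in C([0,T];H^1_x)$ and $\alpha_0\in H^3$. The weights $\|Q_\theta\|_{L^2_\eta}\sim\theta^{1/4}$ and $\|\partial_\eta Q_\theta\|_{L^2_\eta}\sim\theta^{-1/4}$ are integrable, and Minkowski's inequality finishes the proof.

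You subtract only the one-dimensional profile $w=-t\alpha_0\Phi_1(z/\sqrt t)$. Your remainder $R$ therefore lumps together the linear corrections and the nonlinear terms. The linear corrections are the horizontal diffusion, entering through $\partial_x^2w=-t\alpha_0''\Phi_1$, and the wall defect $a-t\alpha_0$. This $R$ has size $t^{9/4}$ in $H^1_xL^2_z$, which is exactly the order the theorem needs. You must prove that bound by weighted energy estimates for $R$ and $\partial_zR$ at the $H^1_x$ level. At that level the nonlocal term $(\int_0^z\partial_xR)\,\partial_zw$ costs one $x$-derivative, and only the horizontal dissipation can absorb it. So keeping $\partial_x^2$ in the operator, as you finally propose, is necessary, not optional.

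Your route avoids the kernel representation and the integration by parts in time, at the price of a full energy argument. The paper reduces everything to an explicit formula. Because its nonlinear remainder is two orders smaller, one more Taylor step in the same formula gives $\Psi_2$ (Corollary~\ref{cor:second-corner}). Your remainder sits exactly at the threshold and does not identify $\Psi_2$.

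A few corrections:
\begin{itemize}
\item The obstacle you describe is already removed by \eqref{eq:5.13}--\eqref{eq:5.14}, which you cite. With those bounds, write the $R$-equation as linear in $R$, with coefficients built from $a$, $u_1^{b,0}$ and $w$. No bootstrap, smallness or continuity argument is then needed.
\item The boundary remainder must be lifted by a heat extension. A cutoff lifting $-(a-t\alpha_0)\varphi(z)$ is itself only $O(t^{3/4})$ in the normalized $L^2_\eta(H^1_x)$ norm, so the estimate would not close at order $t$.
\item For the heat extension it suffices to have $\|a_t-\alpha_0\|_{H^1_x}\lesssim t$ and $\|a-t\alpha_0\|_{H^3_x}\lesssim t$, and \eqref{eq:4.7} supplies both. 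The bound $a-t\alpha_0=O_{H^3_x}(t^2)$ that you invoke is not provided by Proposition~\ref{prop:outer-flow}, since $\partial_t^2a$ is only known to lie in $C([0,T];H^1_x)$.
\item The drift $-\tfrac\eta2\partial_\eta$ contributes $+\tfrac14\|\tilde R\|^2$ at the $L^2_\eta$ level. At the $\dot H^1_\eta$ level it contributes $-\tfrac14\|\partial_\eta\tilde R\|^2$, which the $+2$ term still dominates.
\item Integrating from $\tau=-\infty$ is legitimate because $R(0)=0$ and $\partial_zR(0)=0$ in $L^2$, by \eqref{eq:5.27}. It is equivalent to the physical Duhamel bounds driven by $\|F(s)\|_{H^1_xL^2_z}\lesssim s^{5/4}$.
\end{itemize}
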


\begin{remark}[Profile asymptotics and the exact solution]\label{rem:profile-versus-exact}
The order-\(t\) amplitude reflects \(a(0)=0\) and a possibly nonzero \(a_t(0)\). The estimate \eqref{eq:corner-limit} describes the boundary-layer profile, not yet the exact viscous solution in a joint limit. That passage additionally uses the initial modulus of the first correction. For a fixed datum, sufficient ranges are \(t_\varepsilon\ge c\varepsilon^{4/3}\) for the first coefficient and \(t_\varepsilon\ge c\varepsilon^{4/7}\) for the second, with \(t_\varepsilon\to0\) and fixed \(c>0\); see Corollaries \ref{cor:exact-corner-coupled} and \ref{cor:r7-exact-second-corner}. These ranges are not asserted to be necessary.
\end{remark}

\begin{corollary}[Second corner coefficient and optimality of the normalized remainder]
\label{cor:second-corner}
Under the assumptions of Theorem \ref{thm:main}, the second wall acceleration $\beta_0=a_{tt}(0)$ belongs to $H_x^1$. Let $\Phi_1$ be as in \eqref{eq:Phi1-explicit}, and let $\Phi_2$ be the positive decaying solution of
\[
 \Phi_2''+\tfrac\eta2\Phi_2'-2\Phi_2=0,\qquad
 \Phi_2(0)=1,\qquad \lim_{\eta\to\infty}\Phi_2(\eta)=0.
\]
Set
\begin{equation}
 \Psi_2(x,\eta)=\tfrac12\beta_0(x)\Phi_2(\eta)
      +\alpha_{0,xx}(x)(\Phi_1(\eta)-\Phi_2(\eta)).
 \label{r6:second-corner-coefficient}
\end{equation}
Then
\begin{equation}
 u_1^{b,0}(x,\sqrt t\eta,t)=-t\alpha_0(x)\Phi_1(\eta)
                    -t^2\Psi_2(x,\eta)+o_{H_\eta^1(H_x^1)}(t^2).
 \label{r6:second-corner-expansion}
\end{equation}
If $\alpha_0\not\equiv0$, then $\Psi_2\not\equiv0$, and for both $Y=H_\eta^1(H_x^1)$ and $Y=L^\infty_{x,\eta}$,
\begin{equation}
 \lim_{t\downarrow0}t^{-1}
 \|t^{-1}u_1^{b,0}(x,\sqrt t\eta,t)+\alpha_0\Phi_1\|_Y=\|\Psi_2\|_Y>0.
 \label{r6:sharp-corner-remainder}
\end{equation}
In particular, $t^{-2}[u_1^{b,0}(x,\sqrt t\eta,t)+t\alpha_0\Phi_1]\to-\Psi_2$.
\end{corollary}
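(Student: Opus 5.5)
Our plan is to push the self-similar expansion of Theorem~\ref{thm:corner} one order further. Write $a(t)=t\alpha_0+\tfrac{t^2}{2}\beta_0+r(t)$ with $r=o_{H_x^1}(t^2)$. The regularity $\beta_0=a_{tt}(0)\in H^1_x$ comes from differentiating the wall equation $a_t=a_{xx}-aa_x-\partial_xp^{I,0}|_{y=0}$ in time at $t=0$. Since $a(0)=0$, this gives $\beta_0=\alpha_{0,xx}-\partial_t\partial_xp^{I,0}(x,0,0)$. The time derivative of the pressure trace is controlled through the elliptic problem for $\partial_tp^{I,0}$ together with $\partial_tu^{I,0}(0)\in H^2$, which holds for $H^4$ data. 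One derivative is lost relative to $\alpha_0\in H^3$, and a trace costs a further half derivative, so we only claim $H^1_x$.

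In self-similar variables, set $v(x,\eta,t)=u_1^{b,0}(x,\sqrt t\eta,t)$. Equation \eqref{eq:3.3} becomes
\[
 t\partial_tv-\tfrac\eta2\partial_\eta v-\partial_\eta^2v=t\bigl(\partial_x^2v-(a+v)\partial_xv-v\partial_xa\bigr)+\sqrt t\bigl(\sqrt t\eta\,\partial_xa+\textstyle\int_0^{\sqrt t\eta}\partial_xu_1^{b,0}\bigr)t^{-1/2}\partial_\eta v .
\]
We make the ansatz $v=-t\alpha_0\Phi_1-t^2\Psi_2+w$. At order $t$ this recovers $\Phi_1''+\tfrac\eta2\Phi_1'-\Phi_1=0$ with $\Phi_1(0)=1$. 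At order $t^2$, every nonlinear and transport term carries a factor $a$ or $v$, so each is $O(t^2)$ and contributes only at order $t^3$. Hence only the linear part survives:
\[
 \Psi_2''+\tfrac\eta2\Psi_2'-2\Psi_2=\alpha_{0,xx}\Phi_1,\qquad \Psi_2(0)=\tfrac12\beta_0 .
\]
Using $\Phi_1''+\tfrac\eta2\Phi_1'-2\Phi_1=-\Phi_1$, the particular solution is $-\alpha_{0,xx}\Phi_1$, giving $\Psi_2=\tfrac12\beta_0\Phi_2+\alpha_{0,xx}(\Phi_1-\Phi_2)$, which is \eqref{r6:second-corner-coefficient}. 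The boundary condition checks out since $\Phi_1(0)=\Phi_2(0)=1$. The function $\Phi_2$ is explicit: it is the repeated integral $i^4\operatorname{erfc}$ normalized to value one at $\eta=0$. Both $\Phi_1$ and $\Phi_2$ have Gaussian tails, so $\Psi_2\in H^1_\eta(H^1_x)\cap L^\infty_{x,\eta}$.

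The remainder $w$ satisfies the same linearized self-similar operator with homogeneous boundary data $w(0)=-r$. Its forcing is $o(t^2)$ in $L^2_\eta H^1_x$; this forcing consists of the $t^3$ nonlinear terms and the term $t^3\partial_x^2\Psi_2$. We would redo the energy estimate used for \eqref{eq:corner-limit}: multiply by $w$ and $-\partial_\eta^2w$ (and the corresponding $x$-derivatives), and use the coercivity of $-\partial_\eta^2-\tfrac\eta2\partial_\eta$. The dilation $t\partial_t$ then yields a Grönwall inequality in the variable $\log t$, giving $\|w\|_{H^1_\eta H^1_x}=o(t^2)$.

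\textbf{This is the main obstacle.} Three terms need care. The first is $t^3\partial_x^2\Psi_2$, which requires $\beta_0\in H^3_x$ in $H^1_x$, whereas we only have $\beta_0\in H^1_x$. The second is the $x$-regularity of the remainder $r$. The third is the transport term's $\eta$-weights. To get around the missing regularity, we would work with $\partial_x^2v$ at the level $H^{-1}_x$, mollifying $\beta_0$ with a cutoff frequency $\lambda(t)\to\infty$ slowly. This accepts an $o(1)$ rather than $O(t)$ rate, which is why the statement only claims $o(t^2)$. The $L^\infty_{x,\eta}$ version follows by the embedding $H^1_\eta(H^1_x)\hookrightarrow L^\infty$ used for \eqref{eq:corner-limit-Linf}.

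Finally, non-degeneracy. Suppose $\Psi_2\equiv0$. Since $\Phi_1-\Phi_2$ and $\Phi_2$ are linearly independent, their coefficients must vanish, giving $\alpha_{0,xx}=0$ and $\beta_0=0$. But $\alpha_{0,xx}=0$ with $\alpha_0\in L^2(\mathbb R)$ forces $\alpha_0\equiv0$, a contradiction. Independence holds because they solve different ODEs: $\Phi_2$ solves the homogeneous equation and $\Phi_1-\Phi_2$ does not.

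For the limit, divide the expansion by $t^2$:
\[
 t^{-1}\bigl(t^{-1}v+\alpha_0\Phi_1\bigr)=-\Psi_2+o_Y(1).
\]
Taking norms gives \eqref{r6:sharp-corner-remainder}, and the final convergence statement is the same identity.
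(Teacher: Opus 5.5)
\textbf{Verdict: genuine gap.} Your identification of $\Psi_2$, the nondegeneracy argument, and the passage from the expansion to \eqref{r6:sharp-corner-remainder} are fine. There are two small slips. The order-$t^2$ equation is $\Psi_2''+\frac\eta2\Psi_2'-2\Psi_2=-\alpha_{0,xx}\Phi_1$, coming from $t\partial_x^2(-t\alpha_0\Phi_1)$, and its particular solution is $+\alpha_{0,xx}\Phi_1$; your two sign errors cancel. The last term of your self-similar equation should carry $t\cdot t^{-1/2}$, not $\sqrt t\cdot t^{-1/2}$.

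The gap is the remainder estimate $\|w\|_{H^1_\eta(H^1_x)}=o(t^2)$, which carries all of the analytic content and is not proved. As you observe, the forcing is \emph{not} $o(t^2)$ in $L^2_\eta H^1_x$:
\begin{itemize}
\item $t^3\partial_x^2\Psi_2$ involves $\partial_x^2\beta_0$ and $\partial_x^4\alpha_0$, so it needs $\beta_0\in H^3_x$ and $\alpha_0\in H^5_x$, not just a truncation of $\beta_0$.
\item Any lifting of $w|_{\eta=0}=-r$ produces $t\partial_x^2r$ with $r$ only in $H^1_x$.
\end{itemize}
Frequency truncation might be made to work, but it creates further error terms, for example from $t\partial_t$ acting on a $t$-dependent cutoff. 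It must also be combined with a full nonlinear energy estimate for $w$ in self-similar variables, where the horizontal dissipation is only $t\partial_x^2$. There is no such estimate to ``redo'': the paper does not prove \eqref{eq:corner-limit} by a self-similar energy method.

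The missing idea is to avoid substituting the ansatz into the PDE at all. The paper uses the splitting $u_1^{b,0}=b_a+r_a$ of \eqref{eq:5.12}. Here $b_a$ is the exact Dirichlet heat lifting of $-a$, and $r_a$ already satisfies \eqref{eq:5.14}: $\|\langle z\rangle^\ell r_a\|_{H^2_xL^2_z}\le Ct^{13/4}$ and $\|\langle z\rangle^\ell\partial_zr_a\|_{H^1_xL^2_z}\le Ct^{11/4}$. Under $z=\sqrt t\,\eta$ this gives $\|t^{-2}r_a(x,\sqrt t\eta,t)\|_{H^1_\eta(H^1_x)}\le Ct$. So every nonlinear and transport effect is already of order $t^3$ and needs no new estimate.

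For the linear part, the paper uses the Duhamel formula \eqref{r5:corner-heat-history}, with $c=a_t-a_{xx}$:
\[
t^{-1}b_a(x,\sqrt t\eta,t)=-\int_0^1Q_\theta(\eta)\,e^{t\theta\partial_x^2}c\bigl(x,t(1-\theta)\bigr)\,d\theta .
\]
It then expands $e^{t\theta\partial_x^2}c(t(1-\theta))=\alpha_0+t\bigl((1-\theta)\partial_tc(0)+\theta\alpha_{0,xx}\bigr)+o_{H^1_x}(t)$ uniformly in $\theta$. This needs only $\partial_tc\in C([0,T];H^1_x)$, $\alpha_0\in H^3_x$, and strong continuity of the horizontal heat semigroup, so the regularity deficit never arises.

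With $\partial_tc(0)=\beta_0-\alpha_{0,xx}$ and the identities $\int_0^1(1-\theta)Q_\theta\,d\theta=\frac12\Phi_2$ and $\int_0^1\theta Q_\theta\,d\theta=\Phi_1-\frac12\Phi_2$, $\Psi_2$ follows directly. Minkowski's inequality, together with $\|Q_\theta\|_{L^2_\eta}\sim\theta^{1/4}$ and $\|\partial_\eta Q_\theta\|_{L^2_\eta}\sim\theta^{-1/4}$ (both integrable on $(0,1)$), lifts the $o_{H^1_x}(t)$ remainder to $H^1_\eta(H^1_x)$.
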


\begin{remark}[The corner coefficient]
\label{rem:corner-coefficient}
Evaluating the tangential equation in \cref{eq:1.3} at \(y=t=0\) gives
\[
 \alpha_0(x)=-\partial_xp^{I,0}(x,0,0),
\]
because the initial no-slip trace and all its tangential derivatives vanish. Thus \(\alpha_0\) is determined nonlocally by the initial pressure. In Proposition \ref{prop:Gaussian-data}, we construct smooth divergence-free initial data for which \(\alpha_0\not\equiv0\). If $\alpha_0=0$, the order-$t$ corner profile identified above vanishes. The theorem then gives
$\|u^{b,0}_1(x,\sqrt t\,\eta,t)\|_{H_\eta^1(H_x^1)}\le Ct^2$.
The estimate alone does not imply a nonzero higher-order coefficient.

In \cref{r6:second-corner-coefficient}, the term $\alpha_{0,xx}(\Phi_1-\Phi_2)$ records horizontal diffusion during the wall heat history. The second coefficient therefore depends on both that diffusion and the second wall acceleration $\beta_0$. When $\alpha_0\not\equiv0$, the corollary proves $\Psi_2\not\equiv0$ without a separate nondegeneracy assumption on $\beta_0$.
\end{remark}

\begin{remark}[The corner estimate in physical variables]
\label{rem:corner-physical}
After the common rescaling \(z=\sqrt t\,\eta\), the error in \cref{eq:corner-limit-Linf} is pointwise \(O(t^2)\). The leading term has pointwise amplitude \(t\), normal \(L^2_z\) size \(t^{5/4}\), and normal-derivative size \(t^{3/4}\). Under \(y=\varepsilon z\), these quantities yield the exact powers of \(\varepsilon\) and \(t\) listed in \cref{eq:corner-physical-Lp}, \cref{eq:corner-physical-gradient}. Together with \(\alpha_0\neq0\), these identities give upper and lower bounds for the corner width of the leading profile. The transfer of these conclusions to the exact tangential defect in a simultaneous limit is stated separately in Corollary \ref{cor:exact-corner-coupled}.
\end{remark}

\subsection{Main ideas}\label{sec:main-ideas}

The main difficulty is to control the approximation as both the vertical viscosity and the time from initialization become small. Normal derivatives grow as the layer becomes thinner, while repeated time differentiation at the wall would impose the compatibility conditions excluded here. Three features of the equations allow the expansion to be controlled at the \(H^4\) level. First, the pressure-driven wall equation supplies more tangential regularity than a direct bulk trace estimate. Starting from \(a(0)=0\), horizontal heat estimates applied to
\[
 a_t-a_{xx}+aa_x=-\partial_xp^{I,0}|_{y=0}
\]
give the wall derivatives needed by the profile hierarchy. At the highest order, one horizontal derivative is transferred to the heat dissipation rather than demanded from the initial datum. The interior corrections obey analogous wall equations.

Second, the leading layer has a cancellation specific to two dimensions. Set
\[
 v=a+u_1^{b,0},\qquad
 w=-\int_0^z\partial_xv(x,s,t)\,ds,\qquad q=\partial_zv.
\]
Then $\partial_xv+\partial_zw=0$, and the shear satisfies
\[
 q_t-\Delta_{x,z}q+v\partial_xq+w\partial_zq=0,
 \qquad \partial_zq|_{z=0}=\partial_xp^{I,0}|_{y=0}.
\]
There is no stretching term. We first control the velocity dissipation, then the weighted shear and its tangential derivatives, and finally recover the velocity by integration in $z$. The full normal moments
$M_j=\int_0^\infty u_1^{b,j}\,dz$ determine the boundary data of the interior corrections through $u_2^{b,j+1}|_{z=0}=\partial_xM_j$. A final divergence-free lifting makes the approximation exactly no-slip.

Third, the remainder $E^\varepsilon=u^\varepsilon-u^a$ is estimated by spatial energies for
$E^\varepsilon$, $\partial_xE^\varepsilon$,
$\varepsilon\partial_yE_1^\varepsilon$, and
$\varepsilon\partial_x\partial_yE_1^\varepsilon$.
The pressure boundary terms are kept in these estimates; the velocity dissipation controls the pressure quantities needed for the shear energies.  For each fixed positive viscosity, the strong solution is continuous at the initial time in $H^2$. The zero-data heat estimates give the corresponding initial continuity of the mixed profile derivatives. These facts allow the spatial error estimates to pass to $t=0$ without time-differentiated boundary compatibility.

For the corner expansion, we separate the leading profile into the heat lifting of the complete wall history and a higher-order nonlinear remainder. The first two coefficients arise from the wall acceleration and its time derivative, together with horizontal diffusion. Initial continuity of the higher profiles then transfers these asymptotics to the exact solution in the joint limits stated above.

The rest of the paper is organized as follows.
Section \ref{sec:Preliminaries} collects the analytic tools.
Section \ref{sec:Construction} constructs the profiles and the composite
approximation. Section \ref{sr:finite-criterion} justifies the vanishing
vertical viscosity limit. Section \ref{sec:thickness-sharpness} establishes
the tail identities, corner expansions, sharpness, and finite-$L^p$
asymptotics. The appendices

\section{Preliminaries}\label{sec:Preliminaries}
In this section, we recall some useful results which will be used later. We begin with the following inequalities.
\begin{lemma}[Hardy's inequality {\cite{HardyLittlewoodPolya}}]
\label{lemma:2.1}
Let $1<p<\infty$ and $f\in L^p(0,\infty)$. Then
\begin{equation}
\label{eq:2.1}
 \int_0^\infty
 \left|\frac1y\int_0^y f(t)\,dt\right|^p\,dy
 \leq C_p\int_0^\infty |f(y)|^p\,dy.
\end{equation}
\end{lemma}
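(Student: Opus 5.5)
We prove Hardy's inequality, Lemma \ref{lemma:2.1}, in the form \eqref{eq:2.1} with the sharp constant $C_p=(p/(p-1))^p$. By density, and because both sides are controlled by $|f|$, we may assume that $f\ge0$ is continuous with compact support in $(0,\infty)$; the general case then follows by monotone convergence together with Fatou's lemma. For such $f$ we set
\[
F(y)=\frac1y\int_0^y f(t)\,dt .
\]
Then $F$ is smooth on $(0,\infty)$ and vanishes near $0$. For large $y$ we have $F(y)=c/y$, so $F\in L^p$ because $p>1$. Differentiating $yF(y)=\int_0^y f$ gives the identity
\[
f=F+yF'.
\]

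Our main approach is integration by parts. For $0<R<\infty$ we write
\[
\int_0^R F^p\,dy=\bigl[yF^p\bigr]_0^R-p\int_0^R yF^{p-1}F'\,dy .
\]
The lower boundary term vanishes because $F\equiv0$ near $0$. The upper one, $RF(R)^p=c^pR^{1-p}$, tends to zero as $R\to\infty$. Substituting $yF'=f-F$ yields
\[
\int_0^\infty F^p=-p\int_0^\infty F^{p-1}(f-F),
\]
and therefore
\[
(p-1)\int F^p=p\int F^{p-1}f .
\]
Applying H\"older's inequality with exponents $p/(p-1)$ and $p$ gives
\[
(p-1)\|F\|_p^p\le p\|F\|_p^{p-1}\|f\|_p .
\]
Since $\|F\|_p<\infty$, we can divide by $\|F\|_p^{p-1}$ (the case $\|F\|_p=0$ is trivial) to obtain $\|F\|_p\le \frac{p}{p-1}\|f\|_p$.

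An alternative route avoids boundary terms altogether. Write $F(y)=\int_0^1 f(sy)\,ds$ and apply Minkowski's integral inequality:
\[
\|F\|_p\le\int_0^1\|f(s\,\cdot)\|_p\,ds=\|f\|_p\int_0^1 s^{-1/p}\,ds=\frac{p}{p-1}\|f\|_p .
\]
This bound holds directly for every nonnegative measurable $f$, which removes the need for the approximation step. We therefore regard it as the cleanest proof.

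The only delicate point in the first approach is justifying finiteness and the boundary terms. The reduction to compactly supported $f$ handles this, and the Minkowski argument sidesteps it entirely.
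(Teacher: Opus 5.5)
The paper gives no proof of this lemma. It is quoted from Hardy--Littlewood--P\'olya and used only with $p=2$ and an unspecified constant, in forms such as $\|z^{-1}\int_0^z h\,ds\|_{L^2}\le C\|h\|_{L^2}$ and $\|g/y\|_{L^2}\le C\|\partial_y g\|_{L^2}$ for $g|_{y=0}=0$. Your proposal is correct and makes the lemma self-contained, with the sharp constant $C_p=(p/(p-1))^p$; in the case the paper needs this reads $\|F\|_2\le 2\|f\|_2$.

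Your first route is the classical integration-by-parts proof. Its only cost is the reduction to nice $f$, which you need so that $\|F\|_p<\infty$ before you divide. You can lighten that step. Since $\frac{d}{dy}(yF^p)=(1-p)F^p+pF^{p-1}f$, you get $(p-1)\int_0^RF^p=p\int_0^RF^{p-1}f-RF(R)^p$, so the boundary term has a favourable sign. You may drop it on $(0,R)$, apply H\"older there, and let $R\to\infty$; neither $F(R)=c/R$ nor compact support near infinity is then needed. In the density step, H\"older on $(0,y)$ gives $|F_n(y)-F(y)|\le y^{-1/p}\|f_n-f\|_p$. Hence $F_n\to F$ pointwise whenever $f_n\to f$ in $L^p$, and Fatou's lemma finishes the argument. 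Also, for merely continuous $f$ the function $F$ is $C^1$ on $(0,\infty)$ rather than smooth, which is all you use.

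Your second route is the more robust one. You write the Hardy average as $\int_0^1 f(s\,\cdot)\,ds$ and use $\|f(s\,\cdot)\|_p=s^{-1/p}\|f\|_p$ in Minkowski's integral inequality. This needs no approximation and no boundary analysis; you only need a Borel representative of $|f|$, so that $(s,y)\mapsto f(sy)$ is jointly measurable. The same computation bounds any operator with a nonnegative kernel $K$ homogeneous of degree $-1$, with constant $\int_0^\infty K(1,s)s^{-1/p}\,ds$.
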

\begin{lemma}[\cite{CaoWu2011,WangWen2024}]
	\label{lemma:2.2}
	Assume that $f,g,\partial_{y}g,h,\partial_{x}h \in L^{2}(\mathbb{R}_{+}^{2})$. Then the inequality
	\begin{equation}
		\label{eq:2.2}
		\iint_{\mathbb{R}_{+}^{2}}|f g h| d x d y \leqslant C\|f\|_{L^{2}}\|g\|_{L^{2}}^{\frac{1}{2}}\left\|\partial_{y}g\right\|_{L^{2}}^{\frac{1}{2}}\|h\|_{L^{2}}^{\frac{1}{2}}\left\|\partial_{x}h\right\|_{L^{2}}^{\frac{1}{2}} ,
	\end{equation}
	hold.
\end{lemma}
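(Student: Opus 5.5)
The plan is to reduce the trilinear bound to one-dimensional Agmon-type inequalities applied separately in each variable, and then to use Cauchy--Schwarz. The key observation is that \(g\) only needs control of its \(y\)-derivative and \(h\) only needs control of its \(x\)-derivative, so the two sup-norms will be taken in different directions.

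\emph{Step 1: one-dimensional Agmon bounds.} For fixed \(x\) and \(g(x,\cdot)\in H^1(0,\infty)\), the identity \(g^2(x,y)=-\int_y^\infty \partial_s(g^2)(x,s)\,ds\) together with Cauchy--Schwarz gives
\[
\sup_{y>0}|g(x,y)|^2\le 2\|g(x,\cdot)\|_{L^2_y}\|\partial_yg(x,\cdot)\|_{L^2_y}.
\]
Similarly, for fixed \(y\) one has
\[
\sup_{x\in\mathbb R}|h(x,y)|^2\le \|h(\cdot,y)\|_{L^2_x}\|\partial_xh(\cdot,y)\|_{L^2_x}.
\]
No boundary trace is needed, because in each case we integrate toward infinity. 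By density, it suffices to argue for smooth functions with decay.

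\emph{Step 2: splitting the integral.} Bound
\[
\iint|fgh|\le \iint |f|\,\sup_{y'}|g(x,y')|\,\sup_{x'}|h(x',y)|\,dx\,dy.
\]
Applying Cauchy--Schwarz in \((x,y)\) gives the bound
\[
\|f\|_{L^2}\Big(\iint \sup_{y'}|g(x,y')|^2\,\sup_{x'}|h(x',y)|^2\,dx\,dy\Big)^{1/2}.
\]
The integrand in the last factor is a product of a function of \(x\) alone and a function of \(y\) alone, so the integral factorizes as
\[
\Big(\int_{\mathbb R}\sup_{y'}|g|^2\,dx\Big)\Big(\int_0^\infty\sup_{x'}|h|^2\,dy\Big).
\]

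\emph{Step 3: conclusion.} Insert the Agmon bounds from Step 1 and apply Cauchy--Schwarz once more in \(x\) and in \(y\) respectively:
\[
\int\|g\|_{L^2_y}\|\partial_yg\|_{L^2_y}\,dx\le\|g\|_{L^2}\|\partial_yg\|_{L^2},
\]
and likewise for \(h\) with \(\partial_x\). Taking square roots yields \eqref{eq:2.2} with \(C=\sqrt2\).

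The only delicate point is the density and measurability justification for the suprema. This is routine: the bound holds for smooth compactly supported functions and extends by Fatou's lemma, since the right-hand side is continuous in the relevant norms.
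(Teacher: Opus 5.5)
Your argument is correct: one-dimensional Agmon bounds for $g$ in $y$ and for $h$ in $x$, followed by Cauchy--Schwarz with the factorization of $\iint G(x)^2H(y)^2$, is the standard proof of this inequality and yields the explicit constant $C=\sqrt{2}$. The paper itself gives no proof but cites Cao--Wu and Wang--Wen, and your argument is essentially the one found there; the density/Fatou step is harmless, and could even be skipped, since by Fubini the slice-wise bounds already hold for almost every $x$ and almost every $y$.
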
 
\begin{lemma}
\label{lem:one-dimensional-Sobolev}
Let \(I\) be either \(\mathbb{R}\) or \(\mathbb{R}_+\), and let \(X\) be a real Hilbert space. If \(f\in H^1(I;X)\), then its continuous representative satisfies
\begin{equation}
 \left\lVert f\right\rVert_{L^\infty(I;X)}^{2}
 \leq C\left\lVert f\right\rVert_{L^2(I;X)}\left\lVert \partial_sf\right\rVert_{L^2(I;X)}.
\label{eq:one-dimensional-Sobolev}
\end{equation}
In particular, for a scalar-valued function \(f\),
\begin{equation}
 \left\lVert f\right\rVert_{L^4(I)}
 \leq C\left\lVert f\right\rVert_{L^2(I)}^{3/4}
                    \left\lVert \partial_sf\right\rVert_{L^2(I)}^{1/4}.
\label{eq:one-dimensional-L4}
\end{equation}
The numerical constants $C$ above are independent of the choice of \(I\) and of \(f\).
\end{lemma}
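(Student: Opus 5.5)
We prove the vector-valued estimate \eqref{eq:one-dimensional-Sobolev} by density and the fundamental theorem of calculus, and then deduce \eqref{eq:one-dimensional-L4} by interpolation.

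\emph{Reduction to smooth functions.} Since \(X\) is a Hilbert space, functions \(f\in C_c^\infty(\bar I;X)\) are dense in \(H^1(I;X)\). In the case \(I=\mathbb R_+\) we do not require vanishing at \(0\). Elements of \(H^1(I;X)\) have absolutely continuous representatives, and \(H^1\)-convergence implies uniform convergence by the estimate below. It therefore suffices to treat smooth \(f\) that vanish for large \(|s|\).

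\emph{The Agmon-type bound.} For such \(f\), the map \(s\mapsto\|f(s)\|_X^2\) is differentiable, with derivative \(2(f(s),\partial_sf(s))_X\). Fix \(s_0\in I\). The half-line \([s_0,\infty)\) is always contained in \(I\), and \(f\) vanishes at \(+\infty\). Hence
\[
 \|f(s_0)\|_X^2=-\int_{s_0}^\infty 2(f,\partial_sf)_X\,ds\le 2\|f\|_{L^2(I;X)}\|\partial_sf\|_{L^2(I;X)},
\]
by Cauchy--Schwarz in \(X\) and then in \(s\). Taking the supremum over \(s_0\) gives \eqref{eq:one-dimensional-Sobolev} with \(C=2\), the same constant for both choices of \(I\). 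Using only the right half-line is what makes the constant independent of \(I\): we never need a value of \(f\) at a boundary point.

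\emph{The \(L^4\) bound.} For scalar \(f\) we interpolate:
\[
 \|f\|_{L^4}^4\le\|f\|_{L^\infty}^2\|f\|_{L^2}^2\le C\|f\|_{L^2}^3\|\partial_sf\|_{L^2}.
\]
Taking fourth roots gives \eqref{eq:one-dimensional-L4}.

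The only delicate point is the density and representative step for Hilbert-valued Sobolev functions on the half-line. This is standard: extend \(f\) by reflection to \(\mathbb R\), then mollify and truncate. Nothing further is needed.
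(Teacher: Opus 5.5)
Your argument is correct and complete. Integrating $\frac{d}{ds}\|f\|_X^2=2(f,\partial_sf)_X$ over $[s_0,\infty)\subset I$ gives \eqref{eq:one-dimensional-Sobolev} with $C=2$ for both choices of $I$. Applying that estimate to differences of smooth approximants transfers it to the continuous representative, and the interpolation $\|f\|_{L^4}^4\le\|f\|_{L^\infty}^2\|f\|_{L^2}^2$ yields \eqref{eq:one-dimensional-L4} with $C=2^{1/4}$.

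The paper states this lemma without proof as a standard preliminary, and your argument is the standard one it implicitly relies on.
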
 
\begin{lemma}
\label{lem:one-dimensional-H2-interpolation}
If \(f\in H^2(\mathbb{R}_+)\), then
\begin{equation}
 \left\lVert \partial_zf\right\rVert_{L^2(\mathbb{R}_+)}^{2}
 \leq C\left\lVert f\right\rVert_{L^2(\mathbb{R}_+)}
 \left(\left\lVert \partial_z^2f\right\rVert_{L^2(\mathbb{R}_+)}+\left\lVert f\right\rVert_{L^2(\mathbb{R}_+)}\right).
\label{eq:one-dimensional-H2-interpolation}
\end{equation}
The same estimate holds on \(\mathbb{R}\), in which case the last term
\(\left\lVert f\right\rVert_{L^2(\mathbb{R})}\) in the parentheses can be omitted. No trace condition is imposed a priori on either \(f\) or \(\partial_z f\).
\end{lemma}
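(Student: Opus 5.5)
The plan is to prove the interpolation inequality on $\mathbb{R}_+$ by reducing it to $\mathbb{R}$ with an extension operator. The case of $\mathbb{R}$ is handled first, by integration by parts.

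\emph{The case of $\mathbb{R}$.} For $f\in H^2(\mathbb{R})$, approximate by $C_c^\infty$ functions. Integration by parts with no boundary terms gives
\[
\|\partial_zf\|^2=-\int f\,\partial_z^2f ,
\]
and Cauchy--Schwarz yields $\|\partial_zf\|^2\le\|f\|\,\|\partial_z^2f\|$. This is the claimed estimate with $C=1$ and without the lower-order term, so the last assertion of the statement is already settled.

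\emph{The half-line.} On $\mathbb{R}_+$ the same integration by parts produces the boundary term $-f(0)\partial_zf(0)$. Since no trace condition is assumed, I would avoid estimating this term and instead use a bounded extension $E:H^k(\mathbb{R}_+)\to H^k(\mathbb{R})$ that works simultaneously for $k=0$ and $k=2$. I take the higher-order reflection
\[
Ef(z)=f(z)\ \ (z>0),\qquad Ef(z)=3f(-z)-2f(-2z)\ \ (z<0).
\]
The coefficients are chosen so that $Ef$ and $\partial_z Ef$ match at $0$. With this choice:
\begin{itemize}
\item $Ef\in H^2(\mathbb{R})$;
\item $\|Ef\|_{L^2(\mathbb{R})}\le C\|f\|_{L^2(\mathbb{R}_+)}$;
\item $\|\partial_z^2Ef\|_{L^2(\mathbb{R})}\le C\|\partial_z^2f\|_{L^2(\mathbb{R}_+)}$, because on $z<0$ we have $\partial_z^2 Ef=3f''(-z)-8f''(-2z)$.
\end{itemize}
Applying the $\mathbb{R}$ estimate to $Ef$ gives
\[
\|\partial_zf\|_{L^2(\mathbb{R}_+)}^2\le\|\partial_zEf\|_{L^2(\mathbb{R})}^2\le\|Ef\|\,\|\partial_z^2Ef\|\le C\|f\|\,\|\partial_z^2f\|.
\]
This is in fact stronger than needed, since it does not even require the extra $\|f\|$ term. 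Density of smooth functions in $H^2(\mathbb{R}_+)$ justifies both the matching conditions and the integration by parts.

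\emph{Main obstacle.} The only delicate point is checking that the reflection glues in $H^2$ and that its second derivative involves only $f''$, with no lower-order remainders. The derivative matching $-3f'(0)+4f'(0)=f'(0)$ gives the $C^1$ gluing, which is what makes the second distributional derivative contain no delta masses at $0$.

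\emph{Fallback.} If I preferred not to use a reflection, I would estimate the boundary term directly. Lemma \ref{lem:one-dimensional-Sobolev} gives
\[
|f(0)\partial_zf(0)|\le C\|f\|^{1/2}\|\partial_zf\|^{1/2}\,\|\partial_zf\|^{1/2}\|\partial_z^2f\|^{1/2}.
\]
Young's inequality then absorbs $\|\partial_zf\|^2$ only if the exponents balance, and they do not balance cleanly. That mismatch is why the version with $+\|f\|$ is stated, and why the extension route is the one I would use.
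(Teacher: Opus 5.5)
The paper states this lemma without proof, so there is no argument to match line by line. Your extension proof is correct, and it uses exactly the second-order reflection $3f(-z)-2f(-2z)$ that the paper itself employs in the proof of the anisotropic $L^\infty$ bound $\|f\|_{L^\infty}\le C\|f\|^{1/4}\|\partial_xf\|^{1/2}\|\partial_y^2f\|^{1/4}$ in Section~\ref{sec:Preliminaries}. The gluing works because $f\in C^1([0,\infty))$ and the values and first derivatives match at $0$. Moreover, $\partial_z^2Ef=3f''(-z)-8f''(-2z)$ involves only $f''$. Together these make the extension bounded on $L^2$ and homogeneous at the level of second derivatives. You therefore do obtain the scale-invariant bound $\|\partial_zf\|^2\le C\|f\|\,\|\partial_z^2f\|$ on $\mathbb{R}_+$, and the extra $\|f\|$ in the statement is harmless slack.

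One correction to your fallback remark: the direct route also closes. You have $\|\partial_zf\|^2=-f(0)\partial_zf(0)-\int_0^\infty f\,\partial_z^2f$. The one-dimensional trace bounds $f(0)^2\le2\|f\|\,\|\partial_zf\|$ and $(\partial_zf(0))^2\le2\|\partial_zf\|\,\|\partial_z^2f\|$ then give $|f(0)\partial_zf(0)|\le2\|f\|^{1/2}\|\partial_zf\|\,\|\partial_z^2f\|^{1/2}\le\frac12\|\partial_zf\|^2+2\|f\|\,\|\partial_z^2f\|$. Absorbing yields $\|\partial_zf\|^2\le6\|f\|\,\|\partial_z^2f\|$ with no reflection at all. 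The exponents balance exactly, as they must, since both sides scale identically under $f(z)\mapsto f(\lambda z)$. So the $+\|f\|$ term in the statement is not explained by any mismatch. This does not affect your main argument, which stands.
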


\begin{lemma}
\label{lem:two-dimensional-Sobolev}
Let \(Q\) be \(\mathbb{R}^2\), \(\Omega\), or \(\Omega_{\mathrm b}\). If \(f\in H^1(Q)\), then, for \(2\leq p<\infty\),
\begin{equation}
 \left\lVert f\right\rVert_{L^p(Q)}
 \leq C_p\left\lVert f\right\rVert_{L^2(Q)}^{2/p}
              \left\lVert \nabla f\right\rVert_{L^2(Q)}^{1-2/p}.
\label{eq:two-dimensional-GN}
\end{equation}
In particular, the Ladyzhenskaya inequality takes the form
\begin{equation}
 \left\lVert f\right\rVert_{L^4(Q)}^{2}
 \leq C\left\lVert f\right\rVert_{L^2(Q)}\left\lVert \nabla f\right\rVert_{L^2(Q)}.
\label{eq:Ladyzhenskaya-prelim}
\end{equation}
If \(f\in H^2(Q)\), then
\begin{equation}
 \left\lVert f\right\rVert_{L^\infty(Q)}\leq C\left\lVert f\right\rVert_{H^2(Q)}.
\label{eq:H2-Linfty-prelim}
\end{equation}
\end{lemma}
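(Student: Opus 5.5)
Since $Q$ is either $\mathbb{R}^2$ or a half-plane ($\Omega$ and $\Omega_{\mathrm b}$ are both copies of $\mathbb{R}\times(0,\infty)$), I will prove all three estimates on $\mathbb{R}^2$ and transfer them to the half-plane by even reflection. For $f\in H^1(\mathbb{R}\times\mathbb{R}_+)$, set $Ef(x,y)=f(x,|y|)$. Then $Ef\in H^1(\mathbb{R}^2)$, with $\|Ef\|_{L^p}^p=2\|f\|_{L^p}^p$ and $\|\nabla Ef\|_{L^2}^2=2\|\nabla f\|_{L^2}^2$. Applying the whole-plane inequality to $Ef$, the factors of $2$ combine into a constant depending only on $p$. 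For the $H^2$ statement, even reflection does not preserve $H^2$ unless $\partial_yf|_{y=0}=0$. There I will instead use a higher-order reflection, such as $Ef(x,-y)=3f(x,y)-2f(x,2y)$, or any bounded extension operator $H^2(\mathbb{R}^2_+)\to H^2(\mathbb{R}^2)$. Alternatively, \eqref{eq:H2-Linfty-prelim} follows directly on the half-plane from the embedding $H^2\hookrightarrow C_b$ for Lipschitz domains.

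\emph{Whole-plane Ladyzhenskaya.} For $f\in C_c^\infty(\mathbb{R}^2)$, write $f^2(x,y)\le 2\int_{\mathbb{R}}|f\,\partial_xf|(s,y)\,ds$, and similarly in the $y$ direction. Multiplying the two bounds and integrating gives
\[
\|f\|_{L^4}^4\le 4\,\|f\partial_xf\|_{L^1}\,\|f\partial_yf\|_{L^1}\le 4\|f\|_{L^2}^2\|\nabla f\|_{L^2}^2 ,
\]
which is \eqref{eq:Ladyzhenskaya-prelim}. Density then extends it to all of $H^1$.

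\emph{General $p\in[2,\infty)$.} I apply the same Gagliardo--Nirenberg argument to $|f|^{\gamma}$ with $\gamma=p/2$. This gives
\[
\|f\|_{L^{p}}^{p}=\||f|^{\gamma}\|_{L^2}^2\le C\,\||f|^{\gamma}\|_{L^1}\,\|\nabla |f|^\gamma\|_{L^1}
\le C\gamma\, \|f\|_{L^{\gamma}}^{\gamma}\,\|f\|_{L^{2(\gamma-1)}}^{\gamma-1}\|\nabla f\|_{L^2}.
\]
I then interpolate the intermediate Lebesgue norms between $L^2$ and $L^p$ and absorb the $L^p$ factor into the left-hand side, which produces exactly the exponents $2/p$ and $1-2/p$. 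The scaling $f(\lambda\cdot)$ confirms these are the only admissible exponents. A cleaner alternative is the Fourier route. By Sobolev embedding, $\|f\|_{L^p}\le C\|f\|_{\dot H^{1-2/p}}$ on $\mathbb{R}^2$, and by Plancherel and Hölder, $\|f\|_{\dot H^{s}}\le\|f\|_{L^2}^{1-s}\|\nabla f\|_{L^2}^{s}$ with $s=1-2/p$. I would present this version, since it handles every $p$ at once.

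\emph{The $L^\infty$ bound.} On $\mathbb{R}^2$, I estimate $\|f\|_{L^\infty}\le (2\pi)^{-2}\|\hat f\|_{L^1}\le C\|\langle\xi\rangle^{-2}\|_{L^2}\,\|\langle\xi\rangle^2\hat f\|_{L^2}$, and $\langle\xi\rangle^{-2}\in L^2(\mathbb{R}^2)$. This gives \eqref{eq:H2-Linfty-prelim} with the continuous representative, and the extension operator transfers it to $\Omega$ and $\Omega_{\mathrm b}$.

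The only point needing care is this half-plane extension at the $H^2$ level, since even reflection fails there; the higher-order reflection above resolves it. All other steps are routine.
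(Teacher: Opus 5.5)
The argument you say you would actually present is correct. It uses even reflection for the $H^1$ estimates, the slicing proof of \eqref{eq:Ladyzhenskaya-prelim}, the Fourier chain $\|f\|_{L^p}\le C\|f\|_{\dot H^{1-2/p}}\le C\|f\|_{L^2}^{2/p}\|\nabla f\|_{L^2}^{1-2/p}$, and $\hat f\in L^1$ via $\langle\xi\rangle^{-2}\in L^2(\mathbb{R}^2)$ together with the reflection $3f(x,-y)-2f(x,-2y)$ for \eqref{eq:H2-Linfty-prelim}. The paper gives no proof of this lemma and quotes it as standard. Your $L^\infty$ step is the same device the paper uses in Section~\ref{sec:Preliminaries} for the anisotropic bound $\|f\|_{L^\infty}\le C\|f\|_{L^2}^{1/4}\|\partial_xf\|_{L^2}^{1/2}\|\partial_y^2f\|_{L^2}^{1/4}$: the same reflection, then Cauchy--Schwarz on the Fourier side against an integrable weight. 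Be aware that the Fourier route for general $p$ hands the real content to the fractional embedding $\dot H^{s}(\mathbb{R}^2)\hookrightarrow L^{2/(1-s)}$, i.e.\ Hardy--Littlewood--Sobolev, which is a deeper theorem than the lemma itself.

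Your first, ``elementary'' sketch for general $p$ is wrong as written and should be removed or repaired.
\begin{itemize}
\item The inequality $\||f|^{\gamma}\|_{L^2}^2\le C\||f|^{\gamma}\|_{L^1}\|\nabla|f|^{\gamma}\|_{L^1}$ fails the scaling test you yourself invoke. Under $f\mapsto f(\lambda\,\cdot)$ the left side scales like $\lambda^{-2}$ and the right side like $\lambda^{-3}$.
\item Carrying it through your interpolation gives, for $p\ge4$, $\|f\|_{L^p}\le C\|f\|_{L^2}^{1/2+1/p}\|\nabla f\|_{L^2}^{1/2-1/p}$, not the claimed exponents.
\item For $p<4$ one has $\gamma<2$, so $\|f\|_{L^\gamma}$ cannot be interpolated between $L^2$ and $L^p$ on an infinite-measure domain.
\end{itemize}
The correct slicing bound is $\|g\|_{L^2}^2\le\|\partial_xg\|_{L^1}\|\partial_yg\|_{L^1}$, which is exactly what you used for Ladyzhenskaya with $g=f^2$. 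With $g=|f|^{p/2}$ it gives
\[
\|f\|_{L^p}^p\le \tfrac{p^2}{4}\,\|f\|_{L^{p-2}}^{p-2}\,\|\nabla f\|_{L^2}^2 .
\]
For $p\ge4$, interpolate $L^{p-2}$ between $L^2$ and $L^p$ and absorb. This yields $\|f\|_{L^p}^{2p/(p-2)}\le C\|f\|_{L^2}^{4/(p-2)}\|\nabla f\|_{L^2}^2$, which is exactly \eqref{eq:two-dimensional-GN}. For $2\le p\le4$, H\"older between $L^2$ and $L^4$ plus \eqref{eq:Ladyzhenskaya-prelim} already gives it. This corrected version is self-contained and avoids the fractional Sobolev embedding.
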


\begin{lemma}
\label{lem:wall-trace-prelim}
Let \(\Omega=\mathbb{R}\times\mathbb{R}_+\), with \(r\) denoting the normal variable. If \(f,\partial_r f\in L^2(\Omega)\), then the trace of \(f\) on the boundary \(r=0\) is well defined in \(L_x^2(\mathbb{R})\) and satisfies
\begin{equation}
 \left\lVert f(\cdot,0)\right\rVert_{L_x^2}^{2}
 \leq 2\left\lVert f\right\rVert_{L^2(\Omega)}\left\lVert \partial_r f\right\rVert_{L^2(\Omega)}.
\label{eq:wall-trace-L2}
\end{equation}
More generally,
\begin{equation}
 \|f(\cdot,0)\|_{H_x^m}^2
 \leq 2\|f\|_{L_r^2H_x^m}\|\partial_r f\|_{L_r^2H_x^m},
 \qquad m\in\mathbb N_0.
\label{eq:wall-trace-general}
\end{equation}
In particular,
\[
 \|f(\cdot,0)\|_{H_x^m}
 \leq C_m\|f\|_{H^{m+1}(\Omega)}.
\]
\end{lemma}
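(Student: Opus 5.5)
The plan is to prove \eqref{eq:wall-trace-L2} by the fundamental theorem of calculus in the normal variable, applied fiberwise in \(x\), and to obtain \eqref{eq:wall-trace-general} by applying the same argument to \(\partial_x^j f\). We work first with \(f\in C_c^\infty(\overline{\Omega})\), that is, smooth and compactly supported up to the wall but not necessarily vanishing there. For each fixed \(x\) we write
\[
|f(x,0)|^2=-\int_0^\infty \partial_r\bigl(|f(x,r)|^2\bigr)\,dr=-2\int_0^\infty f(x,r)\,\partial_rf(x,r)\,dr .
\]
Integrating in \(x\) and applying Cauchy--Schwarz on \(\Omega\) gives \(\|f(\cdot,0)\|_{L^2_x}^2\le 2\|f\|_{L^2}\|\partial_rf\|_{L^2}\). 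This is exactly \eqref{eq:wall-trace-L2}, with the constant \(2\).

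For \eqref{eq:wall-trace-general} we apply the identity to \(\partial_x^jf\) for \(0\le j\le m\) and sum over \(j\). Cauchy--Schwarz for the sum gives
\[
\sum_j\|\partial_x^jf\|\,\|\partial_x^j\partial_rf\|\le\Bigl(\sum_j\|\partial_x^jf\|^2\Bigr)^{1/2}\Bigl(\sum_j\|\partial_x^j\partial_rf\|^2\Bigr)^{1/2}.
\]
The right side is \(\|f\|_{L^2_rH^m_x}\|\partial_rf\|_{L^2_rH^m_x}\), provided the \(H^m_x\) norm is taken as the sum of squared \(L^2\) norms of the derivatives, which is the convention in the Notation. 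The final consequence then follows from \(2AB\le A^2+B^2\le C\|f\|_{H^{m+1}}^2\), since both factors involve at most \(m+1\) derivatives.

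The main point requiring care is the density step that extends these inequalities from smooth functions to the stated class. We consider the space \(W=\{f: f,\partial_rf\in L^2(\Omega)\}\) with its graph norm. We approximate in this norm by smooth functions compactly supported in \(\overline\Omega\), in three stages:
\begin{itemize}
\item first truncate with a cutoff \(\chi(x/R,r/R)\), whose derivative costs only \(O(1/R)\) in the \(\partial_r\)-norm;
\item then mollify in \(x\), which commutes with \(\partial_r\);
\item then mollify in \(r\) after translating inward, replacing \(f(x,r)\) by \(f(x,r+h)\), which converges in \(W\) as \(h\to0\).
\end{itemize}
The bilinear inequality shows that the trace map is Cauchy in \(L^2_x\) along the approximating sequence. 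Hence the trace extends uniquely and continuously to \(W\), and the inequality passes to the limit. For \(f\in W\) the trace also agrees with the value at \(r=0\) of the \(x\)-a.e. absolutely continuous representative \(r\mapsto f(x,r)\); this holds because \(f(x,\cdot)\in H^1(\mathbb R_+)\) for a.e. \(x\) by Fubini, and \(f(x,r)\to0\) along a sequence \(r\to\infty\).

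The same density argument applied to \(\partial_x^jf\) handles the general-\(m\) case. Alternatively, one can use the Fourier transform in \(x\): applying the one-dimensional identity to \(\hat f(\xi,\cdot)\) with weight \(\langle\xi\rangle^{2m}\) and using Cauchy--Schwarz in \((\xi,r)\) gives the \(H^m_x\) bound directly.
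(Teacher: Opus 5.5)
Your proof is correct and is the standard argument. The fiberwise identity $|f(x,0)|^2=-2\int_0^\infty f\,\partial_rf\,dr$ with Cauchy--Schwarz gives \eqref{eq:wall-trace-L2} with the constant $2$. The discrete Cauchy--Schwarz inequality over $0\le j\le m$ then gives \eqref{eq:wall-trace-general} for the sum-of-squares $H^m_x$ norm. Your cutoff/translation/mollification scheme is the usual density proof for the class $\{f,\partial_rf\in L^2(\Omega)\}$.

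The paper states this lemma without proof, so there is no written argument to compare against. The nearest thing it does is the wall-bound step in the proof of Proposition~\ref{prop:outer-flow}, which controls $\|\bar h\|_{H^k_x}$ through the fiberwise embedding $H^1_y\hookrightarrow L^\infty_y$. That is the same mechanism, in additive rather than product form.

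One streamlining you could adopt uses the fact that Lemma~\ref{lem:one-dimensional-Sobolev} is formulated for Hilbert-space-valued functions. Run your identity in $X=H^m_x$, i.e.\ $\|f(0)\|_X^2=-2\int_0^\infty\langle f,\partial_rf\rangle_X\,dr$ for the continuous representative of $f\in H^1(\mathbb R_+;X)$. This yields \eqref{eq:wall-trace-general} in one step and places the trace in $H^m_x$ automatically.

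In your version, the sentence ``the same density argument applied to $\partial_x^jf$'' tacitly uses that the trace of $\partial_x^jf$ is the distributional derivative $\partial_x^j$ of the trace of $f$. This is true. For instance, write the trace of $g$ as $-\int_0^\infty(\chi'g+\chi\,\partial_rg)\,dr$ with a cutoff satisfying $\chi(0)=1$, and integrate by parts in $x$ against a test function. It deserves an explicit line, and the vector-valued formulation makes it unnecessary.
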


\begin{lemma}
\label{lem:Sobolev-products-prelim}
Let \(Q\) be \(\mathbb{R}^2\), \(\Omega\), or \(\Omega_{\mathrm b}\), and let \(m\geq1\) be an integer. For smooth functions \(f,g\) and every spatial multi-index \(\alpha\) with \(|\alpha|\leq m\), we have
\begin{align}
 \left\lVert \partial^\alpha(fg)\right\rVert_{L^2(Q)}
 &\leq C_m\left(
   \left\lVert f\right\rVert_{L^\infty(Q)}\left\lVert g\right\rVert_{H^m(Q)}
  +\left\lVert g\right\rVert_{L^\infty(Q)}\left\lVert f\right\rVert_{H^m(Q)}\right),
 \label{eq:tame-product-prelim}\\
 \left\lVert \partial^\alpha(fg)-f\partial^\alpha g\right\rVert_{L^2(Q)}
 &\leq C_m\left(
   \left\lVert \nabla f\right\rVert_{L^\infty(Q)}\left\lVert g\right\rVert_{H^{m-1}(Q)}
  +\left\lVert f\right\rVert_{H^m(Q)}\left\lVert g\right\rVert_{L^\infty(Q)}\right).
 \label{eq:tame-commutator-prelim}
\end{align}
In particular, since \(H^2(Q)\hookrightarrow L^\infty(Q)\), it follows that
\begin{equation}
 \left\lVert fg\right\rVert_{H^m(Q)}
 \leq C_m\left\lVert f\right\rVert_{H^m(Q)}\left\lVert g\right\rVert_{H^m(Q)},
 \qquad m\geq2.
\label{eq:Hm-algebra-prelim}
\end{equation}
The constants depend only on \(m\) and the fixed domain \(Q\), and are independent of the functions \(f,g\) and the parameter \(\varepsilon\).
\end{lemma}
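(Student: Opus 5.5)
The three estimates follow by applying Leibniz's rule and reducing to Sobolev or Gagliardo--Nirenberg inequalities on \(Q\). For \(Q=\mathbb{R}^2\) this is classical. For the half-spaces \(\Omega\) and \(\Omega_{\mathrm b}\) I would pass through a bounded extension operator, for instance reflection of sufficiently high order in the normal variable, \(E:H^k(Q)\to H^k(\mathbb{R}^2)\) for \(0\le k\le m\), which also preserves \(L^\infty\) bounds. The inequalities are not invariant under extension, since \(\partial^\alpha(fg)\) on \(Q\) is the restriction of \(\partial^\alpha(Ef\,Eg)\). Still, every norm on the right-hand side is controlled by the corresponding norm of the extension, and the left side is controlled by the restriction. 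So it suffices to prove everything on \(\mathbb{R}^2\) for \(Ef,Eg\) and restrict. The constants then depend only on \(m\) and the fixed domain. Because \(\Omega_{\mathrm b}\) is a half-plane in the variables \((x,z)\), there is no dependence on \(\varepsilon\).

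\textbf{Product estimate \eqref{eq:tame-product-prelim}.} I would write \(\partial^\alpha(fg)=\sum_{\beta\le\alpha}\binom{\alpha}{\beta}\partial^\beta f\,\partial^{\alpha-\beta}g\). Set \(k=|\beta|\) and \(|\alpha|=\ell\le m\). By Hölder with exponents \(p=2\ell/k\) and \(q=2\ell/(\ell-k)\), each term is bounded by \(\|\partial^\beta f\|_{L^{p}}\|\partial^{\alpha-\beta}g\|_{L^{q}}\). The Gagliardo--Nirenberg interpolation inequality \(\|D^kf\|_{L^{2\ell/k}}\le C\|f\|_{L^\infty}^{1-k/\ell}\|D^\ell f\|_{L^2}^{k/\ell}\), applied together with the analogous bound for \(g\), reduces the term to \(\bigl(\|f\|_{L^\infty}\|g\|_{H^\ell}\bigr)^{1-k/\ell}\bigl(\|g\|_{L^\infty}\|f\|_{H^\ell}\bigr)^{k/\ell}\). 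Young's inequality then gives the sum of the two products. The endpoint terms \(k=0\) and \(k=\ell\) are simply Hölder with \(L^\infty\times L^2\).

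\textbf{Commutator estimate \eqref{eq:tame-commutator-prelim}.} In the commutator \(\partial^\alpha(fg)-f\partial^\alpha g\) only the terms with \(|\beta|\ge1\) survive. I would write \(\partial^\beta f=\partial^{\beta-e_i}(\partial_i f)\) and treat each term as a product of a derivative of order \(k-1\) of \(\nabla f\) with a derivative of order \(\ell-k\) of \(g\), where the total order is \(\ell-1\). The interpolation above, applied at total order \(\ell-1\) to the pair \((\nabla f,g)\), gives the bound \(\|\nabla f\|_{L^\infty}\|g\|_{H^{\ell-1}}+\|g\|_{L^\infty}\|\nabla f\|_{H^{\ell-1}}\). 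Since \(\|\nabla f\|_{H^{\ell-1}}\le\|f\|_{H^m}\), this is the Kato--Ponce type bound as stated.

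\textbf{Algebra estimate \eqref{eq:Hm-algebra-prelim}.} This follows by summing \eqref{eq:tame-product-prelim} over \(|\alpha|\le m\) and inserting \(\|f\|_{L^\infty}\le C\|f\|_{H^2}\le C\|f\|_{H^m}\), which is valid for \(m\ge2\) by \eqref{eq:H2-Linfty-prelim}.

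\textbf{Main obstacle.} The only delicate point is the Gagliardo--Nirenberg inequality with the \(L^\infty\) endpoint on half-spaces. The extension argument handles it, provided \(E\) is chosen bounded simultaneously on \(L^\infty\) and on \(H^\ell\) for all \(\ell\le m\); a finite-order reflection \(\sum_j c_jf(x,-jy)\) with matched moments is enough. Finally, the estimates are stated for smooth \(f,g\), and extension to Sobolev functions follows by density.
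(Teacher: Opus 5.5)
Your argument is correct. The paper does not prove this lemma; it is listed among the preliminaries as the standard Moser product and commutator inequalities, so there is no proof to compare against. What you wrote is the textbook proof:
\begin{itemize}
\item Leibniz's rule, then H\"older with exponents \(2\ell/k,\ 2\ell/(\ell-k)\).
\item The Gagliardo--Nirenberg bound \(\|D^ku\|_{L^{2\ell/k}}\le C\|u\|_{L^\infty}^{1-k/\ell}\|D^\ell u\|_{L^2}^{k/\ell}\) on \(\mathbb{R}^2\), followed by Young's inequality.
\item For the commutator, the same computation at total order \(\ell-1\) applied to the pair \((\nabla f,g)\).
\item Transfer to the half-planes by a Hestenes-type reflection.
\end{itemize}

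One point should be stated explicitly. For \eqref{eq:tame-commutator-prelim} you need \(\|\nabla(Ef)\|_{L^\infty(\mathbb{R}^2)}\le C\|\nabla f\|_{L^\infty(Q)}\). This does not follow just from boundedness of \(E\) on \(L^\infty\) and on \(H^\ell\). It does hold for your reflection:
\begin{itemize}
\item \(\partial_x\) commutes with \(E\).
\item For \(y<0\), \(\partial_y(Ef)(x,y)=\sum_j(-j)c_j(\partial_yf)(x,-jy)\).
\item The condition \(\sum_jc_j=1\) makes \(Ef\) continuous across \(y=0\), so the distributional gradient of \(Ef\) has no singular part on the wall.
\end{itemize}
With this added, the reduction to \(\mathbb{R}^2\) covers all three estimates.
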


\begin{lemma}
\label{lem:time-interpolation-prelim}
Let \(X\) be a Hilbert space, let \(t_0<t_1\), and suppose that
\(h,\partial_t h\in L^2(t_0,t_1;X)\). Then
\begin{equation*}
 \sup_{t_0\leq t\leq t_1}\left\lVert h(t)\right\rVert_{X}^{2}
 \leq \frac{2}{t_1-t_0}\left\lVert h\right\rVert_{L^2(t_0,t_1;X)}^{2}
 +2\left\lVert h\right\rVert_{L^2(t_0,t_1;X)}
    \left\lVert \partial_t h\right\rVert_{L^2(t_0,t_1;X)}.
\end{equation*}
The constant depends only on the length of the time interval.
\end{lemma}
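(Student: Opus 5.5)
The claim is a standard time-interpolation (Agmon-type) inequality for vector-valued functions. The plan is to prove it by averaging the fundamental theorem of calculus over the base point. Since \(h,\partial_t h\in L^2(t_0,t_1;X)\), \(h\) has a representative in \(H^1(t_0,t_1;X)\), which is absolutely continuous with values in \(X\). The scalar function \(\phi(t)=\|h(t)\|_X^2\) is then absolutely continuous, with
\[
 \phi'(t)=2\langle h(t),\partial_t h(t)\rangle_X \quad\text{for a.e. } t.
\]
I would verify this chain rule first for smooth \(X\)-valued functions (dense in \(H^1(t_0,t_1;X)\) by mollification after extension) and then pass to the limit. Passing to the limit only requires uniform convergence on \([t_0,t_1]\) and \(L^1\) convergence of the products \(\langle h_n,\partial_t h_n\rangle_X\).

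Next, fix \(t\in[t_0,t_1]\). For every \(s\in[t_0,t_1]\),
\[
 \phi(t)=\phi(s)+\int_s^t 2\langle h,\partial_t h\rangle_X\,d\tau .
\]
The Cauchy–Schwarz inequality in \(X\) and then in time bounds the integral by \(2\|h\|_{L^2(t_0,t_1;X)}\|\partial_t h\|_{L^2(t_0,t_1;X)}\), uniformly in \(s\) and \(t\). Averaging in \(s\) over \([t_0,t_1]\) gives
\[
 \phi(t)\le \frac{1}{t_1-t_0}\|h\|_{L^2(t_0,t_1;X)}^2+2\|h\|_{L^2(t_0,t_1;X)}\|\partial_t h\|_{L^2(t_0,t_1;X)}.
\]
This is even stronger than claimed: the coefficient \(1\) can be replaced by \(2\) trivially. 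Taking the supremum over \(t\) finishes the proof.

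The only step requiring care is the justification of the absolute continuity and chain rule for \(\|h\|_X^2\) in the Bochner setting, since everything else is elementary. If one prefers to avoid the chain rule, an alternative is to approximate \(h\) in \(H^1(t_0,t_1;X)\) by \(C^1([t_0,t_1];X)\) functions. One proves the inequality for those, and then uses the continuous embedding \(H^1(t_0,t_1;X)\hookrightarrow C([t_0,t_1];X)\) to pass to the limit on both sides. Either way, the constant depends only on \(t_1-t_0\), as asserted.
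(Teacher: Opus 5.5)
Your argument is correct and complete. For the continuous representative you have $\|h(t)\|_X^2=\|h(s)\|_X^2+2\int_s^t\langle h,\partial_t h\rangle_X\,d\tau$. Bounding the integral by Cauchy--Schwarz and averaging over $s\in[t_0,t_1]$ gives the claim. Your density argument correctly handles the only delicate point, namely the meaning of the supremum and the chain rule in the Bochner setting: prove the identity for $C^1([t_0,t_1];X)$ functions, then pass to the limit using $H^1(t_0,t_1;X)\hookrightarrow C([t_0,t_1];X)$ and the $L^1$ convergence of $\langle h_n,\partial_t h_n\rangle_X$.

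The paper states this lemma without proof as a standard fact, so there is no competing route to compare against. Your version even yields the coefficient $1/(t_1-t_0)$ in place of $2/(t_1-t_0)$, so the factor $2$ in the statement is just slack.
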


\begin{lemma}[\cite{Sohr2001}]
\label{lem:divcurl}
Let \(s\geq0\) be an integer. Suppose that
\(v\in H^{s+1}(\Omega)^2\), \(\operatorname{div} v=0\), and
\(v_2|_{y=0}=0\), and assume that \(v\to0\) at spatial infinity.
If
\[
 \omega=\partial_xv_2-\partial_yv_1,
\]
then
\begin{equation}
 \left\lVert v\right\rVert_{H^{s+1}(\Omega)}
 \leq C_s\bigl(\left\lVert v\right\rVert_{L^2(\Omega)}
 +\left\lVert \omega\right\rVert_{H^s(\Omega)}\bigr).
\label{eq:divcurl}
\end{equation}
\end{lemma}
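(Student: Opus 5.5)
The div-curl estimate \eqref{eq:divcurl} is an elliptic estimate in the half-plane, and I will prove it by induction on \(s\), reducing the normal derivatives to the vorticity through the two algebraic identities
\[
 \partial_y v_1=\partial_xv_2-\omega,\qquad \partial_yv_2=-\partial_xv_1 .
\]
Together these give \(\Delta v_1=-\partial_y\omega\) and \(\Delta v_2=\partial_x\omega\).

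\textbf{Base case \(s=0\).} For \(v\in H^1\) with \(\operatorname{div}v=0\) and \(v_2|_{y=0}=0\), I expand \(\|\omega\|^2=\|\partial_xv_2\|^2+\|\partial_yv_1\|^2-2\langle\partial_xv_2,\partial_yv_1\rangle\). Integrating by parts (first for smooth compactly supported \(v\), then by density) gives
\[
 \langle\partial_xv_2,\partial_yv_1\rangle=\langle\partial_yv_2,\partial_xv_1\rangle .
\]
The boundary term \(\int v_2\partial_xv_1\,dx\) at \(y=0\) vanishes because \(v_2|_{y=0}=0\); equivalently, after moving \(\partial_x\), the trace term carries \(v_2\). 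Using \(\partial_yv_2=-\partial_xv_1\), this yields \(\|\omega\|^2=\|\nabla v\|^2\). Hence
\[
 \|v\|_{H^1}\le\|v\|+\|\omega\| .
\]
The density step needs care: I will approximate by fields that keep \(v_2|_{y=0}=0\), for instance by mollifying tangentially and truncating at infinity, which is where the hypothesis \(v\to0\) is used.

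\textbf{Induction step.} Assume the estimate holds up to level \(s\), and let \(v\in H^{s+2}\). Tangential derivatives preserve all the hypotheses: \(\partial_x^kv\) is divergence free, has vanishing normal trace, and has vorticity \(\partial_x^k\omega\). Applying the base case to \(\partial_x^kv\) and summing, I obtain control of \(\partial_x^k\nabla v\) by \(\|\partial_x^kv\|+\|\partial_x^k\omega\|\). For derivatives containing at least one \(\partial_y\), I substitute the identities above. Each \(\partial_y\) acting on \(v\) is traded for a \(\partial_x\) derivative of \(v\) plus one derivative of \(\omega\), and a total of \(s+1\) derivatives is needed. Repeated substitution therefore expresses every \(\partial^\alpha v\) with \(|\alpha|=s+1\) as a combination of \(\partial_x^{s+1}v\) and derivatives of \(\omega\) of order at most \(s\). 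This gives
\[
 \|v\|_{H^{s+1}}\le C\bigl(\|v\|_{H^{s}}+\|\partial_x^{s+1}v\|+\|\omega\|_{H^s}\bigr).
\]
The term \(\partial_x^{s+1}v\) is bounded by the \(\partial_x^{s}\) base-case estimate, and the lower-order norm \(\|v\|_{H^{s}}\) is absorbed using the inductive hypothesis.

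\textbf{Main obstacle.} The only delicate point is the integration by parts in the base case: one has to justify the vanishing of the boundary and infinity contributions with \(v_2\) having zero trace but \(v_1\) not. I would handle this by working with \(v\in H^1\) and using Lemma~\ref{lem:wall-trace-prelim} for the trace. An alternative approach through the stream function, with \(\psi|_{y=0}=0\) and \(\Delta\psi=-\omega\), leads to a Dirichlet Laplacian regularity estimate and would give the same result.
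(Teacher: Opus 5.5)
Your argument is correct. It necessarily takes a different route from the paper's, because the paper gives no proof: Lemma~\ref{lem:divcurl} is simply quoted from \cite{Sohr2001}. There, estimates of this type come from general elliptic theory. For instance, one writes $v=\nabla^\perp\psi$ with $\psi|_{y=0}=0$ and $-\Delta\psi=\omega$, and invokes $H^{s+2}$ regularity of the Dirichlet Laplacian, or the Helmholtz/Stokes machinery.

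Your proof instead exploits the flat boundary directly. It has two ingredients. The first is the exact identity $\|\nabla v\|=\|\omega\|$; your sign bookkeeping is right, and the wall term $\int v_2\partial_xv_1\,dx$ does vanish. You apply this identity to the tangentially differentiated fields $\partial_x^kv$. The second is the algebraic trade $\partial_yv_1=\partial_xv_2-\omega$, $\partial_yv_2=-\partial_xv_1$. It turns every $\partial_x^a\partial_y^bv$ with $a+b=k$ into $\pm\partial_x^kv_j$ plus derivatives of $\omega$ of order $k-1$.

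This is elementary and self-contained, avoiding both the stream function and elliptic regularity. It is also exactly the mechanism the paper uses itself in Step 2 of the proof of Proposition~\ref{prop:outer-flow}, through the relations $\partial_y^2u_2=-\partial_x\partial_yu_1$ and $\partial_y^2u_1=\partial_x\partial_yu_2-\partial_y\omega$. The price is that it is tied to the half-plane, whereas the cited theory also covers curved boundaries.

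Three small remarks. First, no induction is needed. Your steps give the homogeneous bound $\|\nabla^kv\|\le C_k\|\nabla^{k-1}\omega\|$ for each $1\le k\le s+1$ directly, since $\|\partial_x^kv\|\le\|\nabla\partial_x^{k-1}v\|=\|\partial_x^{k-1}\omega\|$. The term $\|v\|$ enters only through the zeroth-order part of the norm. This also removes the index slip in your induction step, where you announce $v\in H^{s+2}$ but then prove the level-$(s+1)$ estimate.

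Second, the density step is easier than you suggest. The identity $\langle\partial_xf,\partial_yg\rangle=\langle\partial_yf,\partial_xg\rangle$ for $f\in H^1_0(\Omega)$ and $g\in H^1(\Omega)$ is a continuous bilinear identity. It extends by density from $f\in C_c^\infty(\Omega)$ and $g\in C_c^\infty(\overline\Omega)$, and $\operatorname{div}v=0$ is used only afterwards. So no divergence-free-preserving approximation is needed.

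Third, the hypothesis $v\to0$ at infinity is redundant once $v\in H^{s+1}$.
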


\begin{lemma}[\cite{Sohr2001}]
\label{lem:r7-stokes}
Fix $\varepsilon>0$ and set $V_\sigma=H_0^1(\Omega)^2\cap L^2_\sigma(\Omega)$. Let $\mathbb P$ be the orthogonal projection onto $L^2_\sigma(\Omega)$. The positive self-adjoint operator on $L^2_\sigma(\Omega)$ associated with the quadratic form of domain $V_\sigma$
\[
 \|v_x\|_2^2+\varepsilon^2\|v_y\|_2^2
\]
has domain and action given by
\[
 D(A_\varepsilon)=H^2(\Omega)^2\cap V_\sigma,\qquad
 A_\varepsilon v=-\mathbb P(\partial_x^2+\varepsilon^2\partial_y^2)v.
\]
If \(v\in D(A_\varepsilon)\) and \(A_\varepsilon v\in H^j\), then
\begin{equation}
 \|v\|_{H^{j+2}}\le C_{\varepsilon,j}
       \bigl(\|A_\varepsilon v\|_{H^j}+\|v\|_2\bigr),
 \qquad j=0,1,2.
 \label{r7:stokes-elliptic}
\end{equation}
Moreover, the following homogeneous Hessian estimate holds:
\begin{equation}
 \|\nabla^2v\|_2\le C_\varepsilon\|A_\varepsilon v\|_2,
 \qquad v\in D(A_\varepsilon).
 \label{r8:homogeneous-stokes}
\end{equation}
The constants can be chosen uniformly when \(\varepsilon\) ranges over a compact subset of \((0,\infty)\).
\end{lemma}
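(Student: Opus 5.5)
The statement to prove is Lemma~\ref{lem:r7-stokes}, the anisotropic Stokes operator on the half-plane. My plan is to remove the anisotropy by rescaling the normal variable. That reduces everything to the classical isotropic Stokes operator on the same half-plane, for which \cite{Sohr2001} supplies the elliptic theory.

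\textbf{Step 1: the operator.} The form $\mathfrak a(v,v)=\|v_x\|_2^2+\varepsilon^2\|v_y\|_2^2$ is closed, symmetric and nonnegative on $V_\sigma$. Adding $\|v\|_2^2$ makes it equivalent to the $H^1$ norm, with constants depending only on $\min\{1,\varepsilon\}$. Kato's representation theorem then gives a unique self-adjoint operator $A_\varepsilon\ge0$ on $L^2_\sigma$. It is positive, in the sense of injective, because $\mathfrak a(v,v)=0$ forces $v$ to be constant, and an $L^2$ constant is zero.

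\textbf{Step 2: the dilation.} Define $(Sv)(x,r)=(v_1(x,\varepsilon r),\,\varepsilon^{-1}v_2(x,\varepsilon r))$. The factor $\varepsilon^{-1}$ on the normal component is what preserves the divergence-free and impermeability conditions.
\begin{itemize}
\item $S$ is a bounded isomorphism of $L^2$, of $H^k$, of $V_\sigma$ and of $L^2_\sigma$, with bounds that are powers of $\varepsilon$.
\item It maps $\partial_x^2+\varepsilon^2\partial_y^2$ to $\Delta_{x,r}$.
\item The form satisfies $\mathfrak a(v,w)=\langle \nabla S v,\nabla S w\rangle_{\mathrm{weighted}}$, where the weights are $\varepsilon$ on the first component and $\varepsilon^{3}$ on the second.
\end{itemize}
That componentwise weight is the nuisance. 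Because it does not commute with $\mathbb P$, I would not transfer the form directly. Instead I characterize the domain through the resolvent problem
\[
A_\varepsilon v=f\iff -(\partial_x^2+\varepsilon^2\partial_y^2)v+\nabla\pi=f,\quad \operatorname{div}v=0,\quad v|_{y=0}=0,
\]
for some pressure $\pi$. Setting $\tilde\pi(x,r)=\varepsilon^{-2}\pi(x,\varepsilon r)$, this becomes a genuine Stokes system in $(x,r)$:
\[
-\Delta\tilde v+\nabla\tilde\pi=\tilde f,\qquad \tilde f=(f_1,\ \varepsilon^{-1}f_2)\circ(\text{dilation}).
\]
Now the classical half-space Stokes estimates $\|\nabla^{2+j}\tilde v\|_{H^0}+\|\nabla^{1+j}\tilde\pi\|\le C\|\tilde f\|_{H^j}$ from \cite{Sohr2001} apply. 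Undoing $S$ gives $D(A_\varepsilon)=H^2\cap V_\sigma$ and the action formula. Mapping $\mathbb P f$ versus $f$ is not an issue, because $A_\varepsilon v\in L^2_\sigma$ by definition; the pressure simply absorbs the gradient part.

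\textbf{Step 3: the estimates.} Estimate \eqref{r7:stokes-elliptic} for $j=0,1,2$ follows from the inhomogeneous Stokes estimate plus $\|v\|_2$ for the low-order part. The homogeneous Hessian bound \eqref{r8:homogeneous-stokes} comes from the scale-invariant half-space estimate $\|\nabla^2\tilde v\|\le C\|\tilde f\|$. All constants are explicit powers of $\varepsilon$ and $\varepsilon^{-1}$, so they are uniform on compact subsets of $(0,\infty)$.

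\textbf{Main obstacle.} The delicate point is the identification in Step 2. Once the form is known to be coercive on $V_\sigma$, one must show that the weak solution of the form problem carries a pressure, via de Rham's lemma, and that the dilated pair is a weak Stokes solution, so that the regularity theory applies. I would check this by testing against solenoidal test functions and applying de Rham's lemma in the rescaled variables.
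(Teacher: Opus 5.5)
\textbf{Gap in Step 2.} After your dilation the problem is \emph{not} the classical Stokes system, so the half-space estimates of \cite{Sohr2001} cannot be applied to it.

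With $\tilde\pi(x,r)=\varepsilon^{-2}\pi(x,\varepsilon r)$ you have $\partial_r\tilde\pi=\varepsilon^{-1}(\partial_y\pi)(x,\varepsilon r)$, so the normal equation comes out right. But $\partial_x\tilde\pi=\varepsilon^{-2}(\partial_x\pi)(x,\varepsilon r)$, and the rescaled system actually reads
\[
 -\Delta\tilde v_1+\varepsilon^{2}\partial_x\tilde\pi=\tilde f_1,\qquad
 -\Delta\tilde v_2+\partial_r\tilde\pi=\tilde f_2,\qquad
 \partial_x\tilde v_1+\partial_r\tilde v_2=0 .
\]
Changing the constant in front of $\pi(x,\varepsilon r)$ does not help. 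The two pressure derivatives always enter with relative factor $\varepsilon^{-2}$, and the velocity map is fixed up to a scalar by the requirement that it preserve $\operatorname{div}v=0$. This is exactly the weight $W=\operatorname{diag}(\varepsilon,\varepsilon^{3})$ you wanted to avoid; it reappears as $-\Delta\tilde v+W^{-1}\nabla\tilde p=\tilde f$.

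The failure is not an artifact of your particular scalings. A linear change of variables $y=Ly'$ turns $\partial_x^2+\varepsilon^2\partial_y^2$ into $\Delta$ only if $LL^{T}=\operatorname{diag}(1,\varepsilon^{2})$. Solenoidality then forces the velocity map to be a multiple of $L$, and the pressure gradient picks up the matrix $(L^{T}L)^{-1}$, which is scalar only when $\varepsilon=1$. Treating $(1-\varepsilon^{2})\partial_x\tilde\pi$ as a perturbation of the Stokes system closes only for $\varepsilon$ near $1$.

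Consequently Steps 2--3 prove neither $D(A_\varepsilon)\subset H^2$ nor \eqref{r7:stokes-elliptic} or \eqref{r8:homogeneous-stokes}. The de Rham step you single out is routine; the coupling between anisotropic viscosity and isotropic pressure is the real obstacle. The paper itself gives no argument beyond the reference, so this has to be done by hand.

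\textbf{A direct route.} Treat the anisotropic system directly, using only that the boundary is flat. Let $f=A_\varepsilon v$.
\begin{enumerate}
\item The form and $V_\sigma$ are invariant under $x$-translations. Testing $\|v_x\|_2^2+\varepsilon^2\|v_y\|_2^2$ (polarized) against $x$-difference quotients $-D_{-h}D_hv$ gives $\|\nabla\partial_xv\|_2\le C_\varepsilon\|f\|_2$.
\item Incompressibility gives $\partial_y^2v_2=-\partial_x\partial_yv_1$.
\item The de Rham pressure is harmonic, because $\operatorname{div}f=\operatorname{div}v=0$ and the coefficients are constant. It satisfies $\partial_y\pi=f_2+\partial_x^2v_2+\varepsilon^2\partial_y^2v_2\in L^2$.
\item A partial Fourier transform in $x$ then yields $\|\partial_x\pi\|_2=\|\partial_y\pi\|_2$. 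A constant is excluded because $\partial_x\pi\in H^{-1}$.
\item Finally $\varepsilon^2\partial_y^2v_1=\partial_x\pi-\partial_x^2v_1-f_1$.
\end{enumerate}
This chain never uses $\|v\|_2$, so it gives \eqref{r8:homogeneous-stokes} and the case $j=0$ of \eqref{r7:stokes-elliptic} at once. For $j=1,2$, apply the same argument to $x$-difference quotients of $v$ (translations commute with $\mathbb P$ and $A_\varepsilon$), then recover the remaining normal derivatives from $\operatorname{div}v=0$ and the equation.

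Alternatively, invoke Agmon--Douglis--Nirenberg theory for the anisotropic system. The complementing condition can be checked through the energy identity of the Fourier-transformed boundary problem.
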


\begin{lemma}
\label{lem:zero-data-weighted-heat}
Fix a nonnegative integer \(m\) and an integer \(r\geq1\). Let \(f\) be an energy solution on the half-plane
\(\Omega_{\mathrm b}\) of
\[
 f_t-\Delta f=H,\qquad
 f|_{z=0}=0,\qquad
 f(0)=0,
\]
where the zero initial condition is understood in the sense of the strong
\(L^2\)-trace. If the full right-hand side satisfies
\[
 \langle z\rangle^mH\in L^2(0,T;H_x^{r-1}L_z^2),
\]
then
\begin{equation}
 \nabla f\in L_T^\infty H_x^{r-1}L_{z,m}^2,\qquad
 f_t,\nabla^2f\in L_T^2H_x^{r-1}L_{z,m}^2.
 \label{r4:gradient-energy}
\end{equation}
More precisely, there exists a constant \(C_{T,m,r}>0\) such that
\begin{align*}
 &\sup_{0\le s\le t}\|\langle z\rangle^m\nabla f(s)\|_{H_x^{r-1}L_z^2}^2\\
 &\quad+\int_0^t\bigl(
       \|\langle z\rangle^mf_t\|_{H_x^{r-1}L_z^2}^2
       +\|\langle z\rangle^m\nabla^2f\|_{H_x^{r-1}L_z^2}^2
       \bigr)\,ds
 \leq C_{T,m,r}
       \int_0^t\|\langle z\rangle^mH\|_{H_x^{r-1}L_z^2}^2\,ds .
\end{align*}
The weighted gradient admits a continuous \(L^2\)-valued representative,
from which \(\nabla f(0)=0\) follows. 
\end{lemma}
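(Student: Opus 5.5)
We prove Lemma~\ref{lem:zero-data-weighted-heat} by weighted energy estimates after differentiating tangentially. Since $\partial_x$ commutes with $\partial_t-\Delta$, with the Dirichlet condition and with the weight $\langle z\rangle^m$, it suffices to treat $r=1$ and apply the result to $\partial_x^jf$, $j\le r-1$. For these, $\partial_x^jf$ solves the same problem with right-hand side $\partial_x^jH$. We work first with smooth, compactly supported approximations of $H$, for which $f$ is smooth on $(0,T]$ and decays rapidly in $z$; we then pass to the limit using the linear bound. Set $\rho=\langle z\rangle^{m}$ and note that $|\rho'|\le m\rho/\langle z\rangle\le m\rho$ and $|\rho''|\le C\rho$.

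\textbf{Step 1 (weighted $L^2$ bound).} Test the equation with $\rho^2 f$. The wall term $\int\rho^2 f\partial_zf\,dx|_{z=0}$ vanishes because $f|_{z=0}=0$. The cross term from the weight is bounded by $\int 2\rho\rho'|f||\partial_zf|\le \tfrac14\|\rho\nabla f\|^2+C\|\rho f\|^2$. Young's inequality on $\langle H,\rho^2f\rangle$ and Grönwall with $f(0)=0$ then give
\[
\sup_t\|\rho f\|^2+\int_0^t\|\rho\nabla f\|^2\le C_T\int_0^t\|\rho H\|^2 .
\]

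\textbf{Step 2 (weighted gradient and $f_t$).} Test with $\rho^2 f_t$. Because $f_t|_{z=0}=0$, the wall term again vanishes, and
\[
\|\rho f_t\|^2+\tfrac12\tfrac{d}{dt}\|\rho\nabla f\|^2=\langle \rho H,\rho f_t\rangle-2\langle \rho'\partial_zf,\rho f_t\rangle .
\]
The two right-hand terms are absorbed by $\tfrac12\|\rho f_t\|^2+C\|\rho H\|^2+C\|\rho\nabla f\|^2$. Grönwall, with $\nabla f(0)=0$ justified in the approximating class, gives the $L^\infty_T$ bound on $\rho\nabla f$ and the $L^2_T$ bound on $\rho f_t$.

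\textbf{Step 3 (Hessian).} Set $g=\rho f$. It satisfies $g|_{z=0}=0$ and
\[
-\Delta g=\rho(H-f_t)-2\rho'\partial_zf-\rho''f .
\]
On the half-plane with Dirichlet data, odd reflection in $z$ combined with Plancherel gives $\|\nabla^2 g\|\le\|\Delta g\|$. Hence $\|\nabla^2 g\|$ is controlled by quantities already estimated in Steps 1--2. Writing $\rho\nabla^2 f=\nabla^2 g-(\text{terms with }\rho',\rho'')$ recovers $\rho\nabla^2f\in L^2_TL^2$. For general $r$, apply Steps 1--3 to each $\partial_x^jf$ and sum over $j\le r-1$.

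\textbf{Step 4 (continuity and the initial value).} Since $\rho\nabla f\in L^2_TH^1$ and $\partial_t(\rho\nabla f)\in L^2_TH^{-1}$, the Lions--Magenes lemma gives a continuous $L^2$-valued representative. The bounds pass to the limit in the approximation. Because the approximants vanish at $t=0$, uniform convergence in $C([0,T];L^2)$ yields $\nabla f(0)=0$. Uniqueness of energy solutions identifies the limit with the given $f$.

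\textbf{Main obstacle.} The delicate point is justifying $\nabla f(0)=0$ and the Step~2 identity for the given energy solution, since its zero initial trace is assumed only in $L^2$. We plan to handle this entirely through the approximation argument combined with uniqueness of energy solutions, rather than by working with the given solution directly. A secondary check is that the weight commutators never produce a boundary contribution. This holds because every wall term carries a factor $f$ or $f_t$, both of which vanish at $z=0$.
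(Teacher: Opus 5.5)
Your argument is correct, and it is a mild rearrangement of the only proof the paper indicates. The lemma is stated without proof. The sole hint, in the proof of Lemma~\ref{lemma:2.13}, is to regard $g=\langle z\rangle^m\partial_x^jf$ as a zero-data Dirichlet heat solution with source $\langle z\rangle^m\partial_x^jH-2\rho'\partial_z\partial_x^jf-\rho''\partial_x^jf$. Testing with $-\Delta g$ then produces $\sup_t\|\nabla g\|^2+\int_0^t\|\Delta g\|^2$ in one step; $g_t$ is read off from the equation, and $\nabla^2g$ from $\|\nabla^2g\|=\|\Delta g\|$. You instead test with $\rho^2f_t$ and then recover the Hessian elliptically, which gives the same bound. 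In either version, a weighted $L^2$ estimate like your Step~1 is what places the commutator terms $\rho'\partial_zf$ and $\rho''f$ in $L^2$.

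One small correction in Step~4: Lions--Magenes cannot be applied in the triple $H_0^1\subset L^2\subset H^{-1}$, because $\partial_zf$ does not vanish at $z=0$ and so $\rho\nabla f\notin L^2(0,T;H_0^1)$. You would need the triple $H^1(\Omega_{\mathrm b})\subset L^2\subset(H^1(\Omega_{\mathrm b}))'$. The step is unnecessary anyway. By linearity, your bound applied to $f_n-f_k$ shows that $\rho\nabla f_n$ is Cauchy in $C([0,T];H_x^{r-1}L_z^2)$. That already yields the continuous representative and $\nabla f(0)=0$.
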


\begin{lemma}
Let $\Omega=\mathbb{R}\times (0,\infty)$, If $f,\partial_{x}f,\partial_{y}^{2}f\in L^{2}(\Omega)$. Then the following inequality holds,
$$
\|f\|_{L^{\infty}(\Omega)} \leq C\| f\|_{L^2(\Omega)}^{1 / 4}\| \partial_x f\|_{L^2(\Omega)}^{1 / 2}\| \partial_y^2 f \|_{L^2(\Omega)}^{1 / 4}. 
$$ 
\end{lemma}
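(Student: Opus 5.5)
We reduce the estimate to one-dimensional inequalities in each variable separately, combining the one-dimensional Sobolev inequality of Lemma \ref{lem:one-dimensional-Sobolev} in \(x\) with the interpolation inequality of Lemma \ref{lem:one-dimensional-H2-interpolation} in \(y\). By density we may assume \(f\) is smooth and compactly supported in \(\overline{\Omega}\); all constants below are scale invariant, so the general case follows by approximation. For a.e. fixed \(y\), Lemma \ref{lem:one-dimensional-Sobolev} applied on \(I=\mathbb R\) in the variable \(x\), with \(X=\mathbb R\), gives
\[
 \sup_x|f(x,y)|^2\le C\|f(\cdot,y)\|_{L^2_x}\|\partial_xf(\cdot,y)\|_{L^2_x}.
\]

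Taking the supremum over \(y\) requires control of the two factors uniformly in \(y\). We will use the vector-valued version of Lemma \ref{lem:one-dimensional-Sobolev} in \(y\), with values in \(X=L^2_x\), and the interpolation inequality \eqref{eq:one-dimensional-H2-interpolation}.

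The product \(\|f(\cdot,y)\|_{L^2_x}\|\partial_x f(\cdot,y)\|_{L^2_x}\) is not directly the sup of a single \(H^1_y\) function. We bound it by \(\sup_y\|f(\cdot,y)\|_{L_x^2}\cdot\sup_y\|\partial_xf(\cdot,y)\|_{L^2_x}\), and the \(y\)-sup of \(\partial_x f\) would require \(\partial_x\partial_y f\), which is not in the hypothesis. So we avoid that splitting. Instead we use the exact identity
\[
 \sup_x|f(x,y)|^2\le \int_{\mathbb R}|\partial_x(f^2)|\,dx=2\int|f\,\partial_xf|\,dx,
\]
and then estimate \(\sup_y\) of \(g(y):=\int |f\partial_x f|\,dx\). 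Write \(|g(y)|\le\int_y^\infty |g'(s)|\,ds\). This requires \(\partial_y(f\partial_xf)\), again involving \(\partial_x\partial_y f\).

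To get around this, we move to the Fourier side in \(x\), where the mixed derivative can be handled by interpolation. Let \(\hat f(\xi,y)\) denote the Fourier transform in \(x\), so that \(|f(x,y)|\le\int|\hat f(\xi,y)|\,d\xi\). For each \(\xi\), Lemma \ref{lem:one-dimensional-Sobolev} in \(y\), combined with Lemma \ref{lem:one-dimensional-H2-interpolation}, gives
\[
 \sup_y|\hat f(\xi,y)|^2\le C\|\hat f(\xi)\|_{L^2_y}^{3/2}\bigl(\|\partial_y^2\hat f(\xi)\|_{L^2_y}+\|\hat f(\xi)\|_{L^2_y}\bigr)^{1/2}.
\]
The lower-order term inside the parentheses is removed by scaling, replacing \(y\) by \(\lambda y\) and optimizing over \(\lambda\); equivalently, we use the homogeneous Gagliardo–Nirenberg inequality on \(\mathbb R_+\) obtained via an extension argument. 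Setting \(A(\xi)=\|\hat f(\xi)\|_{L^2_y}\) and \(B(\xi)=\|\partial_y^2\hat f(\xi)\|_{L^2_y}\), this yields
\[
 \|f\|_{L^\infty}\le C\int A^{3/4}B^{1/4}\,d\xi .
\]

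We split the \(\xi\)-integral at a frequency \(K\). On \(|\xi|\le K\), Hölder's inequality with exponents \((2,\tfrac83,8)\) applied to \(1\cdot A^{3/4}\cdot B^{1/4}\) gives
\[
 \int_{|\xi|\le K}A^{3/4}B^{1/4}\,d\xi\le CK^{1/2}\|A\|_2^{3/4}\|B\|_2^{1/4}.
\]
On \(|\xi|>K\), we insert \(|\xi|^{3/4}|\xi|^{-3/4}\) and obtain
\[
 \int_{|\xi|>K}A^{3/4}B^{1/4}\,d\xi\le\bigl\||\xi|A\bigr\|_2^{3/4}\|B\|_2^{1/4}\Bigl(\int_{|\xi|>K}|\xi|^{-3/2}\,d\xi\Bigr)^{1/2}\approx K^{-1/4}\|\partial_xf\|^{3/4}\|\partial_y^2f\|^{1/4}.
\]
Optimizing in \(K\), with \(K^{3/4}=\|\partial_xf\|^{3/4}/\|f\|^{3/4}\), gives the bound
\[
 \|f\|_{L^\infty}\le C\,\|f\|^{1/4}\,\|\partial_xf\|^{1/2}\,\|\partial_y^2f\|^{1/4},
\]
which is exactly the claimed estimate, since \(\|A\|_2=\|f\|\), \(\||\xi|A\|_2=\|\partial_xf\|\) and \(\|B\|_2=\|\partial_y^2 f\|\) by Plancherel.

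The main obstacle is the homogeneous form of the one-dimensional estimate \(\sup_y|g|^2\le C\|g\|^{3/2}\|g''\|^{1/2}\) on \(\mathbb R_+\) with no trace conditions. It is needed because Lemma \ref{lem:one-dimensional-H2-interpolation} contains an inhomogeneous term. We obtain the homogeneous form by applying the inhomogeneous inequality to \(g(\lambda\cdot)\) and minimizing over \(\lambda>0\): this turns \(\|g'\|^2\le C\|g\|(\|g''\|+\|g\|)\) into \(\|g'\|^2\le C\|g\|\,\|g''\|\). When \(g''=0\) and \(g\in L^2\), we have \(g=0\), so this degenerate case causes no trouble.
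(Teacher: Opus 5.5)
Your argument is correct, and it takes a genuinely different route from the paper's.

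The paper first extends $f$ across $y=0$ by the reflection $\widetilde f(x,y)=3f(x,-y)-2f(x,-2y)$. This matches the value and the first normal derivative at the wall, so $\partial_y^2\widetilde f\in L^2(\mathbb R^2)$. It then takes the full two-dimensional Fourier transform and applies Cauchy--Schwarz against the integrable weight $(1+\xi^2+\zeta^4)^{-1}$. This gives the inhomogeneous bound $\|f\|_{L^\infty}\le C(\|f\|_2+\|\partial_xf\|_2+\|\partial_y^2f\|_2)$. Finally it rescales $f(ax,by)$ in both variables to obtain the multiplicative form.

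You instead transform only in $x$, where the domain is already the whole line. You treat the normal variable frequency by frequency using the half-line inequalities of Lemmas~\ref{lem:one-dimensional-Sobolev} and~\ref{lem:one-dimensional-H2-interpolation}, made homogeneous by the dilation $g(\lambda\cdot)$ with $\lambda\to\infty$. This yields $\sup_y|\hat f(\xi,y)|\le CA^{3/4}B^{1/4}$. You then replace the $x$-rescaling by a low/high frequency split at $K=\|\partial_xf\|_2/\|f\|_2$. The H\"older exponents $(2,\tfrac83,8)$ and the tail integral $\int_{|\xi|>K}|\xi|^{-3/2}\,d\xi=4K^{-1/2}$ are correct, and at this $K$ both pieces equal $\|f\|^{1/4}\|\partial_xf\|^{1/2}\|\partial_y^2f\|^{1/4}$.

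Your route needs no extension operator and has no interface issue at $y=0$. It reuses lemmas already in the paper and shows where each exponent comes from: the $3/4,1/4$ split from the normal Gagliardo--Nirenberg inequality, and the tangential balance from the Bernstein-type frequency split. The paper's route is shorter and handles both directions at once with a single weighted Cauchy--Schwarz.

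In a written version, drop your first three exploratory paragraphs, which are abandoned attempts. Also either justify the density claim (for example, inward translation in $y$ followed by mollification and cutoff) or note that it is unnecessary. It is unnecessary because, for a.e.\ $\xi$, the hypotheses already give $\hat f(\xi,\cdot)\in H^2(\mathbb R_+)$.
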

\begin{proof}
The assumptions and one-dimensional interpolation imply $f_y\in L^2$. Define 
$$
\widetilde f(x,y)=
\begin{cases}
f(x,y),& y\geq0,\\
3f(x,-y)-2f(x,-2y),& y<0.
\end{cases}
$$
The value and the first normal derivative agree at $y=0$, since $3-2=1$ and $-3+4=1$. Thus
$$
\|\widetilde f\|_2\leq C\|f\|_2,\qquad
\|\partial_x\widetilde f\|_2\leq C\|\partial_xf\|_2,\qquad
\|\partial_y^2\widetilde f\|_2\leq C\|\partial_y^2f\|_2.
$$
Since
\[
\int_{\mathbb R^2}\frac{1}{1+\xi^2+\zeta^4}\,d\xi\,d\zeta<\infty,
\]
the Cauchy--Schwarz inequality in Fourier space gives
\[
\|f\|_{L^\infty}
\leq C\bigl(\|f\|_{L^2}+\|\partial_{x}f\|_{L^2}+\|\partial_{y}^{2}f\|_{L^2}\bigr).
\]
Applying this estimate to \(g(x,y)=f(ax,by)\) yields
\[
\|f\|_{L^\infty}
\leq C\left(
a^{-1/2}b^{-1/2}\|f\|_{L^2}
+a^{1/2}b^{-1/2}\|\partial_{x}f\|_{L^2}
+a^{-1/2}b^{3/2}\|\partial_{y}^{2}f\|_{L^2}
\right).
\]
Choosing
\[
a=\frac{\|f\|_{L^2}}{\|\partial_{x}f\|_{L^2}},
\qquad
b^2=\frac{\|f\|_{L^2}}{\|\partial_{y}^{2}f\|_{L^2}},
\]
gives the desired inequality. The degenerate cases follow by a standard regularization argument.
\end{proof}
Next, we introduce a Dirichlet problem for a linear parabolic equation involving an integral term, which will be employed to analyze the well-posedness of the boundary layer profiles:
\begin{equation}
\label{eq:2.18}
\left\{\begin{aligned}
&\partial_t\theta-\partial_x^2\theta-\partial_z^2\theta
 +A\partial_x\theta+B\partial_z\theta+C\theta
 +D\int_0^z\partial_x\theta(x,r,t)\,\mathrm{d}r=F,\\
&\theta(x,z,0)=0,\\
&\theta(x,0,t)=0,\qquad \lim_{z\to\infty}\theta(x,z,t)=0.
\end{aligned}\right.
\end{equation}
The coefficients are given by
\[
A=\mathfrak{a}+\mathfrak{b},\qquad
B=-z\partial_x\mathfrak{a}
 -\int_0^z\partial_x\mathfrak{b}(x,r,t)\,\mathrm{d}r,
\qquad
C=\partial_x\mathfrak{a}+\partial_x\mathfrak{b},\qquad
D=-\partial_z\mathfrak{b},
\]
where $\mathfrak{a}$ is independent of $z$,
$\mathfrak{b}|_{z=0}=-\mathfrak{a}$, and $\mathfrak{b}$ decays as
$z\to\infty$. In particular,
$\partial_xA+\partial_zB=0$ and $B|_{z=0}=0$.

\begin{lemma}[Existence and uniqueness for a linear boundary layer equation]
\label{lemma:2.13}
Let $s\in\{2,3\}$ and $\ell\geq0$. Assume that
\[
\mathfrak{a}\in C([0,T];H_x^5),\qquad
\partial_x\mathfrak{a}\in L^2(0,T;H_x^5),
\]
and
\begin{gather*}
\mathfrak{b}\in C([0,T];H_x^5L_{z,\ell+s+3}^2),\qquad
\partial_x\mathfrak{b}\in L^2(0,T;H_x^5L_{z,\ell+s+3}^2),\\
\partial_z\mathfrak{b}
\in L^\infty(0,T;H_x^4L_{z,\ell+s+3}^2)
\cap L^2(0,T;H_x^5L_{z,\ell+s+3}^2).
\end{gather*}
If $F\in L^2(0,T;H_x^{s-1}L_{z,\ell+s+2}^2)$, then
problem \eqref{eq:2.18} has a unique solution satisfying
\begin{gather*}
\theta\in C([0,T];H_x^sL_{z,\ell}^2),\qquad
\nabla\theta\in L^2(0,T;H_x^sL_{z,\ell}^2),\\
\partial_z\theta\in C([0,T];H_x^{s-1}L_{z,\ell}^2),\qquad
\partial_t\theta,\ \partial_z^2\theta
\in L^2(0,T;H_x^{s-1}L_{z,\ell}^2).
\end{gather*}
Moreover, $\partial_z\theta(0)=0$ in $H_x^{s-1}L_{z,\ell}^2$.
\end{lemma}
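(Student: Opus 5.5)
Our plan is the standard Galerkin/energy route for this linear parabolic problem with a nonlocal term, organized around weighted energies $E_{s,\ell}[\theta]$ and $D_{s,\ell}[\theta]$. We first derive a priori estimates for smooth solutions and then pass to the limit in a Galerkin or mollified scheme. Uniqueness will follow from the same $L^2$ estimate applied to the difference of two solutions, which solves \eqref{eq:2.18} with $F=0$.

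\textbf{Basic weighted estimate.} For $0\le j\le s$ we apply $\partial_x^j$ to \eqref{eq:2.18}, multiply by $\langle z\rangle^{2(\ell+s-j)}\partial_x^j\theta$ and integrate over $\Omega_{\mathrm b}$. The boundary terms vanish because $\theta|_{z=0}=0$.
\begin{itemize}
\item The diffusion produces $D_{s,\ell}$, up to weight-derivative terms $\langle z\rangle^{2k-1}|\partial_z\partial_x^j\theta||\partial_x^j\theta|$, which Young's inequality absorbs.
\item In the transport part, the identities $\partial_xA+\partial_zB=0$ and $B|_{z=0}=0$ reduce $A\partial_x+B\partial_z$ to a commutator with the weight, bounded by $|B|\langle z\rangle^{-1}$. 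Since $|B|\lesssim z\|\partial_x\mathfrak a\|_{L^\infty}+\|\partial_x\mathfrak b\|_{L^1_z}$, this is $\lesssim\|\partial_x\mathfrak a\|_{H^1_x}+\|\langle z\rangle\partial_x\mathfrak b\|_{H^1_xL^2_z}$, which is integrable in time.
\item Commutators from $\partial_x^j$ hitting $A,B,C$ are handled by the tame product estimate (Lemma \ref{lem:Sobolev-products-prelim}) and by Lemma \ref{lemma:2.2}. The $H_x^5$ assumptions on $\mathfrak a,\mathfrak b$ leave room to put the coefficient in $L^\infty$.
\item The nonlocal term $D\int_0^z\partial_x^{j+1}\theta$ costs one extra $x$-derivative. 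We bound it by $\|\langle z\rangle^{\ell+s-j+1}D\|_{L^\infty_xL^2_z}\|\partial_x^{j+1}\theta\|_{L^2}$, using $|\int_0^z g|\le z^{1/2}\|g\|_{L^2_z}$ (or Hardy, Lemma \ref{lemma:2.1}). When $j<s$ the factor $\partial_x^{j+1}\theta$ sits in $E_{s,\ell}$ with one lower weight. That weight deficit is exactly why the weights in $E_{N,\ell}$ decrease in $j$, and why $\mathfrak b$ needs the extra weight $\ell+s+3$. When $j=s$ we write $\partial_x^{s+1}\theta$ as part of $D_{s,\ell}$ and absorb it with Young's inequality.
\item The forcing is handled by duality: we move one $\partial_x$ onto the test function, which requires only $F\in L^2_tH_x^{s-1}$, and absorb the result into the dissipation.
\end{itemize}
Gronwall's inequality then gives $\theta\in L^\infty_tH_x^sL^2_{z,\ell}$ and $\nabla\theta\in L^2_tH^s_xL^2_{z,\ell}$.

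\textbf{Normal-derivative estimates.} Next we take $\partial_x^j$ with $j\le s-1$, test the equation against $-\langle z\rangle^{2\ell}\partial_z^2\partial_x^j\theta$ (equivalently against $\partial_t$), and integrate by parts. The boundary term at $z=0$ involves $\partial_t\partial_x^j\theta|_{z=0}=0$, so it vanishes. The time derivative yields $\frac{d}{dt}\|\langle z\rangle^\ell\partial_z\partial_x^j\theta\|^2$, plus $\|\langle z\rangle^\ell\partial_z^2\partial_x^j\theta\|^2$ on the left. The lower-order terms were already controlled in the previous step, using $|B|\lesssim\langle z\rangle$ against the extra weights. The equation then gives $\partial_t\theta\in L^2_tH^{s-1}_xL^2_{z,\ell}$.

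\textbf{Time continuity and the initial value.} Continuity in time follows from the Lions–Magenes lemma. The identity $\partial_z\theta(0)=0$ comes from passing to the limit from approximations with zero data, exactly as in Lemma \ref{lem:zero-data-weighted-heat}. Alternatively, since $\theta(0)=0$ we may write $\theta$ as the solution of the heat equation with right-hand side $H=F-(\text{lower-order terms})$; Lemma \ref{lem:zero-data-weighted-heat} then applies directly once $\langle z\rangle^\ell H\in L^2_tH^{s-1}_xL^2_z$.

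\textbf{Main obstacle.} We expect the hardest step to be closing the nonlocal term at top order, $j=s$, with the derivative loss, in combination with the growing weights. Our first attempt is the absorption into $D_{s,\ell}$ described above. If that fails, we will trade the top-order $\partial_x^{s+1}\theta$ onto $D$ by integrating by parts in $x$, which uses $\partial_x\mathfrak b\in L^2_tH^5_x$.
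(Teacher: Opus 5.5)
Your proposal is correct and follows essentially the paper's own proof. The shared steps are: the decreasing-weight tangential energy tested against $\langle z\rangle^{2(\ell+s-j)}\partial_x^j\theta$; cancellation of the transport via $\partial_xA+\partial_zB=0$, $B|_{z=0}=0$; absorption of the single top-order nonlocal term ($j=s$, no derivative on $D$) into the horizontal dissipation; one $x$-integration by parts for the source at $j=s$; and the zero-data weighted heat estimate for $\partial_z\theta,\partial_t\theta,\partial_z^2\theta$ and for $\partial_z\theta(0)=0$.

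One correction of attribution. The decreasing weights are really needed for the commutators $z\,\partial_x^{k+1}\mathfrak{a}\,\partial_z\partial_x^{j-k}\theta$ coming from the affine part of $B$, since $\partial_x^kB$ is unbounded in $z$ and cannot be placed in $L^\infty$. There one uses $z\langle z\rangle^{\ell+s-j}\le\langle z\rangle^{\ell+s-(j-k)}$ together with the dissipation at index $j-k$. The nonlocal term, by contrast, only requires the unweighted norm $\|\partial_x^{j+1}\theta\|$. Your ``main obstacle'' therefore closes by the direct absorption you propose, and no fallback is needed.
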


\begin{proof}
The existence and uniqueness part of Lemma \ref{lemma:2.13} can be proved by the standard theory of parabolic problem, one
can refer to \cite{Evans2010} for instance. Here, we only give the uniform regularity estimates.
For $0\leq j\leq s$, apply $\partial_x^j$ to \eqref{eq:2.18} and
test by $\langle z\rangle^{2(\ell+s-j)}\partial_x^j\theta$.
The identity $\partial_xA+\partial_zB=0$ gives
\begin{align*}
&\int_{\Omega_{\mathrm{b}}}
\langle z\rangle^{2(\ell+s-j)}
\bigl(A\partial_x^{j+1}\theta
+B\partial_z\partial_x^j\theta\bigr)
\partial_x^j\theta\,\mathrm{d}x\,\mathrm{d}z\\
&\qquad=-\frac12\int_{\Omega_{\mathrm{b}}}
B\,\partial_z\!\left(\langle z\rangle^{2(\ell+s-j)}\right)
|\partial_x^j\theta|^2\,\mathrm{d}x\,\mathrm{d}z.
\end{align*}
This weight contribution is controlled by
$\|B/\langle z\rangle\|_\infty
\|\langle z\rangle^{\ell+s-j}\partial_x^j\theta\|^2$, since
\[
\left\|\frac{B}{\langle z\rangle}\right\|_\infty
\leq C\left(\|\mathfrak{a}\|_{H_x^2}
+\|\mathfrak{b}\|_{H_x^2L_z^2}\right).
\]
Integration by parts in the diffusion terms similarly yields
\begin{align*}
&\int_{\Omega_{\mathrm{b}}}
\bigl(-\partial_x^{j+2}\theta-\partial_z^2\partial_x^j\theta\bigr)
\langle z\rangle^{2(\ell+s-j)}\partial_x^j\theta
\,\mathrm{d}x\,\mathrm{d}z\\
&\qquad\geq
\|\langle z\rangle^{\ell+s-j}\nabla\partial_x^j\theta\|^2
-C\|\langle z\rangle^{\ell+s-j}\partial_x^j\theta\|^2.
\end{align*}

For $1\leq k\leq j$, differentiation of the affine part of $B$
produces $z\partial_x^{k+1}\mathfrak{a}\,
\partial_z\partial_x^{j-k}\theta$. The factor $z$ is absorbed by
the weight at the lower tangential index because
\[
z\langle z\rangle^{\ell+s-j}
\leq\langle z\rangle^{\ell+s-(j-k)}.
\]
Thus these commutator terms are estimated by Sobolev embedding in $x$
and Young's inequality. For the primitive part of $B$, we use
\[
\left\|\int_0^z h(x,r)\,\mathrm{d}r\right\|_{L_x^\infty L_z^\infty}
\leq C\|\langle z\rangle h\|_{H_x^1L_z^2},
\]
which follows from Cauchy--Schwarz in $z$ and Sobolev embedding in $x$.
Since $\langle z\rangle^{\ell+s-j}
\leq\langle z\rangle^{\ell+s-(j-k)}$, its commutators are controlled
in the same way. The commutators with $A\partial_x$ and the derivatives
of $C\theta$ contain at most $s$ tangential derivatives of $\theta$;
their coefficient factors are bounded using
$H_x^1H_z^1\hookrightarrow L^\infty$ and the stated assumptions.

For the nonlocal term, the pointwise estimate
\[
\left|\int_0^z h(x,r)\,\mathrm{d}r\right|
\leq\sqrt z\,\|h(x,\cdot)\|_{L_z^2}
\]
and Sobolev embedding in $x$ give, for $0\leq k\leq j$,
\begin{align*}
&\left|\int_{\Omega_{\mathrm{b}}}
\langle z\rangle^{2(\ell+s-j)}
\partial_x^k\partial_z\mathfrak{b}
\left(\int_0^z\partial_x^{j-k+1}\theta(x,r,t)\,\mathrm{d}r\right)
\partial_x^j\theta\,\mathrm{d}x\,\mathrm{d}z\right|\\
&\qquad\leq
C\|\partial_x^k\partial_z\mathfrak{b}\|_{H_x^1L_{z,\ell+s-j+1}^2}
\|\partial_x^{j-k+1}\theta\|
\|\langle z\rangle^{\ell+s-j}\partial_x^j\theta\|.
\end{align*}
Only $j=s$, $k=0$ involves $\partial_x^{s+1}\theta$.
Since $\ell\geq0$, its unweighted norm is controlled by
$\|\langle z\rangle^\ell\partial_x^{s+1}\theta\|$, and Young's
inequality absorbs this term into the horizontal dissipation.
All other cases involve at most $s$ tangential derivatives of $\theta$.
The source is estimated directly when $j<s$; at $j=s$, we use
\begin{align*}
&\int_{\Omega_{\mathrm{b}}}
\langle z\rangle^{2\ell}\partial_x^sF\,\partial_x^s\theta
\,\mathrm{d}x\,\mathrm{d}z\\
&\qquad=-\int_{\Omega_{\mathrm{b}}}
\langle z\rangle^{2\ell}\partial_x^{s-1}F\,
\partial_x^{s+1}\theta\,\mathrm{d}x\,\mathrm{d}z.
\end{align*}

Summing over $0\leq j\leq s$ and applying Young's inequality yields
\begin{align*}
&\frac{\mathrm{d}}{\mathrm{d}t}
\sum_{j=0}^s
\|\langle z\rangle^{\ell+s-j}\partial_x^j\theta\|^2
+\sum_{j=0}^s
\|\langle z\rangle^{\ell+s-j}\nabla\partial_x^j\theta\|^2\\
&\quad\leq
C\Bigl(1+\|\mathfrak{a}\|_{H_x^5}^2
+\|\mathfrak{b}\|_{H_x^5L_{z,\ell+s+3}^2}^2
+\|\partial_z\mathfrak{b}\|_{H_x^5L_{z,\ell+s+3}^2}^2\Bigr)\\
&\qquad\qquad\times\sum_{j=0}^s
\|\langle z\rangle^{\ell+s-j}\partial_x^j\theta\|^2
+C\|F\|_{H_x^{s-1}L_{z,\ell+s+2}^2}^2.
\end{align*}
The coefficient multiplying the sum on the right belongs to
$L^1(0,T)$ by the assumptions. Gronwall's inequality and the zero
initial data therefore give
\begin{align*}
&\sup_{0\leq t\leq T}\sum_{j=0}^s
\|\langle z\rangle^{\ell+s-j}\partial_x^j\theta(t)\|^2\\
&\quad+\int_0^T\sum_{j=0}^s
\|\langle z\rangle^{\ell+s-j}\nabla\partial_x^j\theta(t)\|^2
\,\mathrm{d}t\\
&\qquad\leq C\int_0^T
\|F(t)\|_{H_x^{s-1}L_{z,\ell+s+2}^2}^2\,\mathrm{d}t,
\end{align*}
where $C$ depends only on $T,s,\ell$ and the coefficient norms in the
assumptions.

To estimate the normal and time derivatives, write
\[
\partial_t\theta-\partial_x^2\theta-\partial_z^2\theta
=F-A\partial_x\theta-B\partial_z\theta-C\theta
-D\int_0^z\partial_x\theta(x,r,t)\,\mathrm{d}r.
\]
The preceding product estimates imply
\begin{align*}
&\left\|A\partial_x\theta+B\partial_z\theta+C\theta
+D\int_0^z\partial_x\theta(x,r,t)\,\mathrm{d}r
\right\|_{H_x^{s-1}L_{z,\ell}^2}\\
&\quad\leq C\Bigl(
\|\mathfrak{a}\|_{H_x^5}
+\|\mathfrak{b}\|_{H_x^5L_{z,\ell+s+3}^2}
+\|\partial_z\mathfrak{b}\|_{H_x^4L_{z,\ell+s+3}^2}\Bigr)\\
&\qquad\qquad\times
\Bigl(\|\theta\|_{H_x^sL_{z,\ell}^2}
+\|\partial_z\theta\|_{H_x^{s-1}L_{z,\ell+1}^2}\Bigr).
\end{align*}
Here the extra weight needed for $B\partial_z\theta$ is available
because $\ell+s-j\geq\ell+1$ for $j\leq s-1$.
The coefficient norms in this estimate are bounded in time, whereas
the solution norms on its right belong to $L^2(0,T)$ by the tangential
estimate. Hence the right-hand side of the heat equation belongs to
$L^2(0,T;H_x^{s-1}L_{z,\ell}^2)$.

For $0\leq j\leq s-1$, the function
$\langle z\rangle^\ell\partial_x^j\theta$ consequently satisfies
a zero-data Dirichlet heat equation with an $L^2$ right-hand side.
Indeed, commuting the weight with $\partial_x^2+\partial_z^2$ adds only
\[
-2\partial_z\!\left(\langle z\rangle^\ell\right)
\partial_z\partial_x^j\theta
-\partial_z^2\!\left(\langle z\rangle^\ell\right)
\partial_x^j\theta,
\]
which belongs to $L^2((0,T)\times\Omega_{\mathrm{b}})$ by the estimate
above. The zero-data Dirichlet heat estimate, obtained by testing with
$-(\partial_x^2+\partial_z^2)(\langle z\rangle^\ell\partial_x^j\theta)$ and then using
the equation, gives
\[
\partial_t\theta,\ \nabla^2\theta
\in L^2(0,T;H_x^{s-1}L_{z,\ell}^2),\qquad
\langle z\rangle^\ell\partial_x^j\theta
\in C([0,T];H_0^1(\Omega_{\mathrm{b}})),
\]
with zero initial value in $H_0^1(\Omega_{\mathrm{b}})$.
Taking tangential and normal derivatives in this last statement yields
\[
\theta\in C([0,T];H_x^sL_{z,\ell}^2),\qquad
\partial_z\theta\in C([0,T];H_x^{s-1}L_{z,\ell}^2),\qquad
\partial_z\theta(0)=0.
\] 
The proof is complete.
\end{proof}
We also need the following linearized problem to analyze the
well-posedness of the higher-order outer layer profiles:
\begin{equation}
\label{eq:2.19}
    \left\{\begin{array}{l}
    \partial_t v+(v\cdot\nabla)\mathfrak{d}
    +(\mathfrak{d}\cdot\nabla)v+\nabla p-\partial_x^2v=F,
    \\[0.2cm]
    \operatorname{div}v=0,
    \\[0.2cm]
    v_2(x,0,t)=0,\quad v(x,y,0)=0.
    \end{array}\right.
\end{equation}
The well-posedness of \eqref{eq:2.19} is stated as follows.

\begin{lemma}[Existence and uniqueness for a linear outer equation]
\label{lemma:2.14}
Let $s\geq2$ be an integer. Assume that
$$
\mathfrak{d}\in C([0,T];H^{s+1}),
\qquad
\operatorname{div}\mathfrak{d}=0,
\qquad
\mathfrak{d}_2(x,0,t)=0.
$$
If $F\in L^2(0,T;H^s)$, then problem \eqref{eq:2.19}
has a unique solution $v$ satisfying
$$
v\in C([0,T];H^s),
\qquad
\partial_xv\in L^2(0,T;H^s),
$$
and
$$
\partial_tv\in L^2(0,T;H^{s-1}),
\qquad
\nabla p\in L^2(0,T;H^s).
$$
Moreover, the trace of the first component $v_1$ on $y=0$
satisfies
$$
v_1|_{y=0}\in C([0,T];H_x^s),
\qquad
\partial_xv_1|_{y=0}\in L^2(0,T;H_x^s),
\qquad
\partial_tv_1|_{y=0}\in L^2(0,T;H_x^{s-1}).
$$
\end{lemma}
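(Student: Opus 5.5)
The plan is to treat \eqref{eq:2.19} as a linearized Oseen-type problem with horizontal dissipation only. We obtain a priori estimates at the $H^s$ level by energy methods, construct solutions by a Galerkin or parabolic regularization ($-\mu\partial_y^2 v$ added with slip/impermeability conditions, then $\mu\downarrow0$), and get uniqueness from the $L^2$ estimate applied to the difference. The key structural facts are that $\operatorname{div}v=\operatorname{div}\mathfrak d=0$ and $v_2=\mathfrak d_2=0$ on $y=0$. Together these make the transport term $(\mathfrak d\cdot\nabla)v$ skew-symmetric in $L^2$ and make the pressure drop out when tested against divergence-free fields with zero normal trace.

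\emph{Step 1: $L^2$ and tangential estimates.} Testing with $v$ gives
\[
\tfrac{d}{dt}\|v\|^2+2\|\partial_xv\|^2\le C\|\nabla\mathfrak d\|_\infty\|v\|^2+2\|F\|\|v\|.
\]
For $\partial_x^k v$ with $k\le s$ we differentiate tangentially, which preserves both the boundary condition $\partial_x^kv_2|_{y=0}=0$ and the divergence-free structure. So the pressure again vanishes, and the commutators are bounded by Lemma \ref{lem:Sobolev-products-prelim} using $\mathfrak d\in H^{s+1}$.

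\emph{Step 2: normal derivatives via vorticity.} Normal derivatives cannot be handled by direct differentiation, because $\partial_yv$ has no boundary condition and the pressure would not vanish. We therefore work with $\omega=\partial_xv_2-\partial_yv_1$, which satisfies
\[
\omega_t+\mathfrak d\cdot\nabla\omega+v\cdot\nabla\omega_{\mathfrak d}-\partial_x^2\omega=\operatorname{curl}F .
\]
This equation has no pressure and needs no boundary condition for $\omega$, since only $\partial_x^2$ appears and its integrations by parts are in $x$. The transport term $\mathfrak d\cdot\nabla$ is skew because $\mathfrak d_2|_{y=0}=0$. We estimate $\partial^\alpha\omega$ for $|\alpha|\le s-1$ in $L^2$. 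The commutator $[\partial^\alpha,\mathfrak d\cdot\nabla]\omega$ costs $\|\mathfrak d\|_{H^{s}}\|v\|_{H^s}$, and $v\cdot\nabla\omega_{\mathfrak d}$ costs $\|\mathfrak d\|_{H^{s+1}}\|v\|_{H^{s}}$ when the derivatives fall on $\omega_{\mathfrak d}$. This is where $\mathfrak d\in H^{s+1}$ is needed. The term with $\operatorname{curl}F$ requires $F\in H^{s}$, which is given. Lemma \ref{lem:divcurl} then gives $\|v\|_{H^s}\lesssim\|v\|+\|\omega\|_{H^{s-1}}$. The horizontal dissipation supplies $\partial_x\omega\in L^2_TH^{s-1}$. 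Combined with Step 1 and the div–curl lemma applied to $\partial_xv$, this yields $\partial_xv\in L^2_TH^s$, and Gronwall closes the estimate with zero data.

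\emph{Step 3: pressure, time derivative, traces.} The pressure is recovered from the Neumann problem $\Delta p=\operatorname{div}(F-(v\cdot\nabla)\mathfrak d-(\mathfrak d\cdot\nabla)v)$, with $\partial_yp|_{y=0}$ given by the normal component of the equation (using $v_2=0$). Equivalently, we apply the Leray projection to $F-\dots+\partial_x^2v$. The term $\partial_x^2v$ is only in $L^2_TH^{s-1}$. However $\partial_x^2v_2|_{y=0}=0$, and its gradient part can be handled by writing $\nabla p$ through $\operatorname{div}$ of terms that include $\partial_x(\partial_xv)$, with $\partial_xv\in L^2H^s$. This gives $\nabla p\in L^2_TH^s$ once we check the harmonic part. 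The main obstacle is exactly this step: gaining the full $H^s$ for $\nabla p$, one order more than $\partial_t v$, requires exploiting $\operatorname{div}\partial_x^2v=0$ and $\partial_x^2v_2|_{y=0}=0$, so that $\mathbb P\partial_x^2v=\partial_x^2v$. Then $\nabla p=(I-\mathbb P)(F-\text{transport})\in L^2H^s$. The bound $\partial_tv\in L^2H^{s-1}$ follows from the equation. For the traces, Lemma \ref{lem:wall-trace-prelim} gives $v_1|_{y=0}\in C_TH_x^{s-1}$, and a naive trace loses half a derivative. To reach $C_TH^s_x$ we instead use the tangential equation at $y=0$,
\[
\partial_t\bar v_1-\partial_x^2\bar v_1=\overline{F_1}-\overline{(v\cdot\nabla)\mathfrak d_1}-\overline{\mathfrak d_1}\,\partial_x\bar v_1-\partial_x\bar p .
\]
The right-hand side lies in $L^2_TH_x^{s-1}$ by traces of $H^s$ functions, so the 1D heat estimate with zero data gives $\bar v_1\in C_TH^s_x$, $\partial_x\bar v_1\in L^2_TH^s_x$, and $\partial_t\bar v_1\in L^2_TH^{s-1}_x$.
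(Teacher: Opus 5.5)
Your overall route matches the paper's. You estimate the pressure-free vorticity equation $\partial_t\omega-\partial_x^2\omega+\mathfrak d\cdot\nabla\omega=\operatorname{curl}F-v\cdot\nabla\operatorname{curl}\mathfrak d$ in $H^{s-1}$, using only $x$-integrations by parts and skew transport from $\mathfrak d_2|_{y=0}=0$. You combine this with the $L^2$ energy and the div--curl lemma for $v$ and $\partial_xv$, then solve a Neumann problem for $p$ and a one-dimensional heat equation for $\bar v_1$. Your Step~1 tangential velocity estimates are redundant, since Step~2 already controls $\|v\|_{H^s}$. Treating $\overline{\mathfrak d_1}\,\partial_x\bar v_1$ as a known forcing in $L^2_TH^{s-1}_x$, via the trace of $\partial_xv\in L^2_TH^s$, is a harmless simplification of the paper's conservative form $\partial_x(\overline{\mathfrak d_1}h)$ plus Gronwall.

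The step whose justification, as written, does not go through is the pressure bound $\nabla p\in L^2_TH^s$. You place the difficulty in $\partial_x^2v$, but $\partial_x^2v$ and $\partial_tv$ are divergence free with zero normal trace, so $I-\mathbb P$ kills them trivially. The real obstruction is the transport term. The piece $\mathfrak d_1\partial_xv$ lies in $L^2_TH^s$, but $\mathfrak d_2\partial_yv$ lies only in $H^{s-1}$. Hence $H^s$-boundedness of $I-\mathbb P$ applied to $F$ minus the transport gives only $\nabla p\in L^2_TH^{s-1}$. That is half a derivative short of the trace $\overline{\partial_xp}\in L^2_TH^{s-1}_x$ your last step needs. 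The missing observation is the cancellation
\[
\operatorname{div}\bigl[(v\cdot\nabla)\mathfrak d+(\mathfrak d\cdot\nabla)v\bigr]=2\,\partial_i\mathfrak d_j\,\partial_jv_i\in H^{s-1},
\]
which uses $\operatorname{div}v=\operatorname{div}\mathfrak d=0$ and needs no more than $v\in H^s$. Combined with your normal-component computation $\partial_yp|_{y=0}=F_2|_{y=0}$, it gives $-\Delta p=2\partial_i\mathfrak d_j\partial_jv_i-\operatorname{div}F$. Neumann regularity then yields $\|\nabla p\|_{H^s}\lesssim\|\nabla p\|_2+\|\Delta p\|_{H^{s-1}}+\|F_2|_{y=0}\|_{H_x^{s-1/2}}\lesssim\|F\|_{H^s}+\|\mathfrak d\|_{H^{s+1}}\|v\|_{H^s}$. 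This is exactly the paper's argument. Your first sentence of Step~3 already sets up this Neumann problem, so the fix is only to carry out the divergence computation instead of appealing to the projection acting on the transport term.
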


\begin{proof}
Owing to the linearity of problem \eqref{eq:2.19}, the existence and uniqueness part Lemma \ref{lemma:2.14} can be proved by standard argument similar to the one in \cite{CaoWu2011} and \cite{CheminEtAlBook}. Here, we only sketch the uniform regularity estimates. 
We combine a bulk div--curl estimate with a parabolic estimate
for the tangential velocity on the boundary.  
Put $\zeta=\partial_xv_2-\partial_yv_1$.
Since both $v$ and $\mathfrak{d}$ are divergence-free, the
two-dimensional identity
$$
\operatorname{curl}\bigl(
(\mathfrak{d}\cdot\nabla)v+(v\cdot\nabla)\mathfrak{d}
\bigr)
=
\mathfrak{d}\cdot\nabla\zeta
+v\cdot\nabla\operatorname{curl}\mathfrak{d}
$$
gives
$$
\partial_t\zeta-\partial_x^2\zeta
+\mathfrak{d}\cdot\nabla\zeta
=
\operatorname{curl}F
-v\cdot\nabla\operatorname{curl}\mathfrak{d}.
$$
The div--curl estimate, applied also to $\partial_xv$, yields
$$
\begin{aligned}
\|v\|_{H^s}
&\leq C\bigl(\|v\|_2+\|\zeta\|_{H^{s-1}}\bigr),\\
\|\partial_xv\|_{H^s}
&\leq C\bigl(
\|\partial_xv\|_2+\|\partial_x\zeta\|_{H^{s-1}}
\bigr).
\end{aligned}
$$
For $s\geq2$, the integer Sobolev product estimates imply
$$
\|v\cdot\nabla\operatorname{curl}\mathfrak{d}\|_{H^{s-1}}
\leq
C\|\mathfrak{d}\|_{H^{s+1}}\|v\|_{H^s},
$$
while the transport commutators contribute at most
$C\|\mathfrak{d}\|_{H^{s+1}}\|v\|_{H^s}^2$.
Combining the $H^{s-1}$ vorticity energy with the velocity
$L^2$ energy, we obtain
$$
\begin{aligned}
&\frac{d}{dt}
\bigl(\|v\|_2^2+\|\zeta\|_{H^{s-1}}^2\bigr)
+c\|\partial_xv\|_{H^s}^2\\
&\qquad\leq
C\bigl(1+\|\mathfrak{d}\|_{H^{s+1}}^2\bigr)
\|v\|_{H^s}^2
+C\|F\|_{H^s}^2.
\end{aligned}
$$
No vorticity boundary condition is needed: diffusion is
tangential, and the boundary contribution from transport
vanishes because $\mathfrak{d}_2|_{y=0}=0$.
Gronwall's inequality proves the bulk spatial bounds.

Taking the divergence of the velocity equation and evaluating
its normal component on the boundary gives
$$
-\Delta p
=
2\partial_i\mathfrak{d}_j\,\partial_jv_i
-\operatorname{div}F,
\qquad
\partial_yp|_{y=0}=F_2|_{y=0}.
$$
The finite-energy formulation of this Neumann problem yields
$$
\|\nabla p\|_2
\leq
\|F-(v\cdot\nabla)\mathfrak{d}
-(\mathfrak{d}\cdot\nabla)v\|_2.
$$
Combining this bound with the integer elliptic estimates and
the trace theorem, we obtain
$$
\begin{aligned}
\|\nabla p\|_{H^s}
&\leq C\bigl(
\|\nabla p\|_2
+\|\Delta p\|_{H^{s-1}}
+\|F_2|_{y=0}\|_{H_x^{s-\frac12}}
\bigr)\\
&\leq C\bigl(
\|F\|_{H^s}
+\|\mathfrak{d}\|_{H^{s+1}}\|v\|_{H^s}
\bigr).
\end{aligned}
$$
Hence $\nabla p\in L^2(0,T;H^s)$.
Since $\partial_x^2v\in L^2(0,T;H^{s-1})$,
the velocity equation also gives
$\partial_tv\in L^2(0,T;H^{s-1})$.

The improved tangential trace is obtained from its own wall
equation rather than from the bulk trace theorem.
For $h=v_1|_{y=0}$, using an overbar to denote boundary traces,
we have
$$
h_t-h_{xx}
+\partial_x\bigl(\overline{\mathfrak{d}_1}h\bigr)
=
\bar F_1-\overline{\partial_xp},
\qquad h(0)=0.
$$
The trace theorem and the pressure estimate imply
$$
\overline{\mathfrak{d}_1}
\in L^\infty(0,T;H_x^s),
\qquad
\bar F_1-\overline{\partial_xp}
\in L^2(0,T;H_x^{s-1}).
$$
Moreover, the one-dimensional Sobolev product estimate gives
$$
\|\overline{\mathfrak{d}_1}h\|_{H_x^s}
\leq
C\|\overline{\mathfrak{d}_1}\|_{H_x^s}
\|h\|_{H_x^s}.
$$
Testing the wall equation with its tangential derivatives
through order $s$, and integrating both the flux term and
the highest-order source term once in $x$, yields
$$
\begin{aligned}
\frac{d}{dt}\|h\|_{H_x^s}^2
+c\|\partial_xh\|_{H_x^s}^2
&\leq
C\bigl(1+\|\overline{\mathfrak{d}_1}\|_{H_x^s}^2\bigr)
\|h\|_{H_x^s}^2\\
&\quad+
C\|\bar F_1-\overline{\partial_xp}\|_{H_x^{s-1}}^2.
\end{aligned}
$$
Gronwall's inequality therefore gives
$h\in L^\infty(0,T;H_x^s)$ and
$\partial_xh\in L^2(0,T;H_x^s)$.
The wall equation then implies
$h_t\in L^2(0,T;H_x^{s-1})$.
Together with $h\in L^2(0,T;H_x^{s+1})$, this yields
$h\in C([0,T];H_x^s)$. 
\end{proof}

\section{Construction of an approximate solution}\label{sec:Construction}

In this section, we present the equations of outer and inner (i.e. boundary) layer profiles via asymptotic analysis. The derivations is given in Appendix \ref{sec:appendixA}. Based on the outer and inner layer profiles, we can construct an approximate solution to the problem \eqref{eq:1.1}-\eqref{eq:1.2}.

\subsection{Asymptotic analysis}
\label{subsec:Asymptotic analysis}

In this subsection, we derive the equations of the outer and inner layer profiles by the asymptotic analysis(see,e.g.\cite{Holmes2013,WangWen2024}). To begin with, we introduce the following formal Prandtl type boundary layer expansions:
\begin{equation}
	\label{eq:3.1}
	\left\{\begin{array}{l}
		u^\varepsilon(x, y, t)\sim\displaystyle \sum_{j=0}^{+\infty} \varepsilon^{j}\left(u^{I, j}(x, y, t)+u^{b, j}(x, z, t)\right), \\[0.2cm]
		p^\varepsilon(x, y, t)\sim\displaystyle \sum_{j=0}^{+\infty} \varepsilon^{j}\left(p^{I, j}(x, y, t)+p^{b, j}(x, z, t)\right),
	\end{array}\right.
\end{equation}
where $z=\frac{y}{\varepsilon}$. We assume that
$$
u^{b, j}(x, z, t) \rightarrow 0, \quad p^{b, j}(x, z, t) \rightarrow 0,
$$
fast enough as $z \rightarrow +\infty$. Substituting \eqref{eq:3.1} in to \eqref{eq:1.1}-\eqref{eq:1.2}, and applying the matched asymptotic method, we deduce the equations of outer and inner layer profiles in sequence, see Appendix \ref{sec:appendixA} for the details.

\subsubsection{The leading order inner and outer profiles}
Due to the analysis in Lemma \ref{lemma:A1} (see Appendix \ref{sec:appendixA}), we find that the leading order outer layer profile $(u^{I,0},p^{I,0})$ satisfies problem \eqref{eq:1.3}-\eqref{eq:1.4}. Moreover,the leading boundary layer profiles of vertical velocity and pressure satisfy
$$
u_{2}^{b,0}=0,p^{b,0}=0.
$$ 
For the horizontal velocity, a nontrivial boundary-layer profile \(u_{1}^{b,0}\) arises, which satisfies \eqref{eq:3.3}.

\subsubsection{The first order inner and outer profiles}
From Corollary \ref{corollary:A2} and Lemma \ref{lemma:A3}, we find that
\begin{equation}
	\label{eq:3.4}
	u_{2}^{b,1}=\int_{z}^{+\infty} \partial_{x}u_{1}^{b,0}(x,s,t)ds,\quad p^{b,1}=0.
\end{equation}
Then, the outer profiles $(u^{I,1},p^{I,1})$ satisfy the following problem:
\begin{equation}
	\label{eq:3.5}
	\left\{\begin{array}{l}
		\partial_t u^{I, 1}+u^{I, 1} \cdot \nabla u^{I,0}+u^{I,0} \cdot \nabla u^{I,1}+\nabla p^{I, 1}-\partial_{x}^{2} u^{I, 1}=0, \\[0.2cm]
		\partial_x u_1^{I, 1}+\partial_y u_2^{I, 1}=0, \\[0.2cm]
		u_{2}^{I,1}(x,0,t)=-\int_{0}^{+\infty} \partial_{x}u_{1}^{b,0}(x,s,t)ds, \\[0.2cm]
		u^{I, 1}(x, y, 0)=0 .
	\end{array}\right.
\end{equation}
Furthermore, $u_{1}^{b,1}$ satisfies
\begin{equation}
	\label{eq:3.6}
	\left\{\begin{array}{l}
		\partial_t u_{1}^{b,1} - \partial_{x}^{2} u_{1}^{b,1} - \partial_{z}^{2} u_{1}^{b,1} \\[0.2cm]
		\quad+\left(a+u_{1}^{b,0}\right) \partial_x u_{1}^{b,1}+ \left(-z\partial_{x}a-\int_{0}^{z}\partial_{x}u_{1}^{b,0}(x,s,t)ds  \right) \partial_z u_{1}^{b,1}\\[0.2cm]
		\qquad+\left( \partial_x u_{1}^{b,0}+\partial_{x}a\right) u_{1}^{b,1}-\left(\int_{0}^{z}\partial_{x}u_{1}^{b,1}(x,s,t)ds\right) \partial_z u_{1}^{b,0}=g^{b,1} \\[0.2cm]
		u_{1}^{b,1}(x, z,0)=0, \\[0.2cm]
		\displaystyle \lim _{z \rightarrow \infty} u_{1}^{b,1}(x,z,t)=0, \quad u_{1}^{b,1}(x,0,t)=-u_{1}^{I,1}(x,0,t).
	\end{array}\right.
\end{equation}
where
$$
\begin{aligned}
	-g^{b, 1}&=\left(z \overline{\partial_{y} \partial_{x} u_{1}^{I,0}} u_{1}^{b,0} +u_{1}^{b,0} \overline{\partial_x u_{1}^{I,1}} +u_{2}^{b,1} \overline{\partial_y u_{1}^{I,0}} \right)+\left( \overline{u_{1}^{I,1}} + z \overline{\partial_y u_{1}^{I,0}}\right) \partial_x u_{1}^{b,0} \\[0.2cm]
	&\quad+\left(z \overline{\partial_y u_{2}^{I,1}} + \frac{z^2}{2} \overline{\partial_{y}^{2} u_{2}^{I,0}}\right) \partial_z u_{1}^{b,0}.
\end{aligned}
$$
See Step 3 in Appendix \ref{sec:appendixA} for details.

\subsubsection{The second order inner and outer profiles}
From Corollary \ref{corollary:A4} and Lemma \ref{lemma:A5}, we find the second order boundary layer profile $u_{2}^{b,2}$ and $p^{b,2}$ satisfy
\begin{equation}
	\label{eq:3.7}
	u_{2}^{b,2}=\int_{z}^{+\infty}\partial_{x}u_{1}^{b,1}(x,s,t)ds,\quad p^{b,2}=-\int_{z}^{+\infty}\mathcal{P}_{2}(x,s,t)ds.
\end{equation}
where
$$
\begin{aligned}
	\mathcal{P}_{2}(x,z,t)&=-\partial_{t}u_{2}^{b,1}+\partial_{x}^{2}u_{2}^{b,1}+\partial_{z}^{2}u_{2}^{b,1}-\overline{u_{1}^{I,0}}\partial_{x}u_{2}^{b,1} \\[0.2cm]
	&-(\partial_{x}u_{2}^{b,1}+\overline{\partial_{x}u_{2}^{I,1}})u_{1}^{b,0}-\overline{\partial_{y}u_{2}^{I,0}}u_{2}^{b,1}-(u_{2}^{b,1}+\overline{u_{2}^{I,1}})\partial_{z}u_{2}^{b,1} \\[0.2cm]
	&-z\overline{\partial_{y}\partial_{x}u_{2}^{I,0}}u_{1}^{b,0}-z\overline{\partial_{y}u_{2}^{I,0}}\partial_{z}u_{2}^{b,1}.
\end{aligned}
$$
The second order outer profiles $(u^{I,2},p^{I,2})$ satisfy the following problem:
\begin{equation}
	\label{eq:3.8}
	\left\{\begin{array}{l}
		\partial_t u^{I,2}+u^{I,2} \cdot \nabla u^{I,0}+u^{I,0} \cdot \nabla u^{I,2}+\nabla p^{I,2}-\partial_{x}^{2} u^{I,2}=g^{I,2}, \\[0.2cm]
		\partial_x u_{1}^{I,2}+\partial_y u_{2}^{I,2}=0, \\[0.2cm]
		u_{2}^{I,2}(x,0,t) = - \int_{0}^{+\infty}\partial_{x}u_{1}^{b,1}(x,s,t)ds, \\[0.2cm]
		u^{I,2}(x,y,0)= 0 .
	\end{array}\right.
\end{equation}
where
$$
g^{I,2}=\partial_{y}^{2}u^{I,0}-u^{I,1} \cdot \nabla u^{I,1}.
$$
Furthermore, $u_{1}^{b,2}$ satisfies
\begin{equation}
	\label{eq:3.9}
	\left\{\begin{array}{l}
		\partial_t u_{1}^{b,2} - \partial_{x}^{2} u_{1}^{b,2} - \partial_{z}^{2} u_{1}^{b,2} \\[0.2cm]
		\quad+\left(a + u_{1}^{b,0}\right) \partial_x u_{1}^{b,2} + \left(-z\partial_{x}a-\int_{0}^{z}\partial_{x}u_{1}^{b,0}(x,s,t)ds \right) \partial_z u_{1}^{b,2} \\[0.2cm]
		\quad+\left( \partial_x u_{1}^{b,0}+\partial_{x}a\right) u_{1}^{b,2}-\left(\int_{0}^{z}\partial_{x}u_{1}^{b,2}(x,s,t)ds \right) \partial_{z} u_{1}^{b,0}=g^{b,2} \\[0.2cm]
		u_{1}^{b,2}(x, z,0)=0, \\[0.2cm]
		\displaystyle \lim _{z \rightarrow \infty} u_{1}^{b,2}(x,z,t)=0, \quad u_{1}^{b,2}(x,0,t)=-u_{1}^{I,2}(x,0,t).
	\end{array}\right.
\end{equation}
where
$$
\begin{aligned}
	-g^{b,2}=&\left(\overline{u_{1}^{I,1}}+u_{1}^{b,1}+z\overline{\partial_{y}u_{1}^{I,0}}\right) \partial_{x} u_{1}^{b,1} +\left(z \overline{\partial_{y}u_{2}^{I,1}} +\frac{z^2}{2}\overline{\partial_{y}^{2}u_{2}^{I,0}}\right)\partial_{z}u_{1}^{b,1} \\[0.2cm] 
	&+\left(-\int_{0}^{z}\partial_{x}u_{1}^{b,1}(x,s,t)ds \right) \partial_{z}u_{1}^{b,1} + \left(\overline{u_{1}^{I,2}}+z \overline{\partial_{y}u_{1}^{I,1}} + \frac{z^2}{2} \overline{\partial_{y}^{2}u_{1}^{I,0}}\right) \partial_{x} u_{1}^{b,0} \\[0.2cm]
	&+ \left( z \overline{\partial_{y} u_{2}^{I,2}} + \frac{z^2}{2}\overline{\partial_{y}^{2}u_{2}^{I,1}} + \frac{z^3}{6} \overline{\partial_{y}^{3}u_{2}^{I,0}}\right) \partial_{z} u_{1}^{b,0} \\[0.2cm]
	&+\frac{z^2}{2}\overline{\partial_{y}^{2}\partial_{x}u_{1}^{I,0}} u_{1}^{b,0} + z \overline{\partial_{y}\partial_{x}u_{1}^{I,1}} u_{1}^{b,0} + \overline{\partial_{x} u_{1}^{I,2}} u_{1}^{b,0}+ z \overline{\partial_{y}^{2}u_{1}^{I,0}} u_{2}^{b,1} \\[0.2cm]
	&+ \overline{\partial_{y}u_{1}^{I,1}} u_{2}^{b,1} +z \overline{\partial_{y} \partial_{x} u_{1}^{I,0}} u_{1}^{b,1}+ \overline{\partial_{x}u_{1}^{I,1}} u_{1}^{b,1}+\overline{\partial_{y} u_{1}^{I,0}} u_{2}^{b,2} + \partial_{x} p^{b,2}.
\end{aligned}
$$
See Step 4 in Appendix \ref{sec:appendixA} for details.
\subsubsection{Some higher order profiles}
We also need the following higher order profiles of velocity:
\begin{equation}
	\label{eq:3.10}
	u_{2}^{b,3}=\int_{z}^{+\infty}\partial_{x}u_{1}^{b,2}(x,s,t)ds.
\end{equation}
See Lemma \ref{lemma:A6} for details.

\subsection{Regularity of the outer and boundary layer profiles}
\label{sec:profile-regularity}
\subsubsection{Regularity of the limiting flow }

In order to use the outer and inner layer profiles deduced in Section \ref{subsec:Asymptotic analysis}. we prove the well-posedness of those profiles. For the initial data of system \eqref{eq:1.3}, we do not impose any higher-order boundary compatibility conditions.  
In particular, the initial wall acceleration $\partial_tu_1^{I,0}(x,0,0)=\alpha_0(x)$ need not vanish. Then, for $u^{I,0}$, we have the following results.
\begin{proposition}
\label{prop:outer-flow}
    Under the assumptions of Theorem \ref{thm:main}. For every $T> 0$, the limit problem \eqref{eq:1.3} has a unique solution satisfying 
    \begin{equation}
    \label{eq:4.2}
        u^{I,0}\in C([0,T];H^{4}),\quad \partial_x u^{I,0}\in L^2(0,T;H^{4}).
    \end{equation}
    The pressure is defined modulo functions of time, and its gradient is uniquely determined by the finite-energy Neumann problem. Moreover,
    \begin{equation}
    \label{eq:4.3}
        \begin{gathered}
            \nabla p^{I,0}\in C([0,T];H^4), \quad \partial_x\nabla p^{I,0}\in L^2(0,T;H^4), \\[0.2cm]
            \partial_t u^{I,0},\ \nabla\partial_t p^{I,0} \in C([0,T];H^2),\quad \partial_x\partial_t u^{I,0},\ \partial_x\nabla\partial_t p^{I,0} \in L^2(0,T;H^2), \\[0.2cm]
            \partial_t^2u^{I,0}\in C([0,T];L^2), \quad \partial_x\partial_t^2u^{I,0}
            \in L^2(0,T;L^2).
        \end{gathered}
    \end{equation}
    The boundary quantities required below are already well defined at the $H^4$ level. More precisely, set
    \begin{equation}
    a(x,t):=u_1^{I,0}(x,0,t),\qquad \alpha_0(x):=\partial_t a(x,0)=-\partial_xp^{I,0}(x,0,0),
    \end{equation}
    where the initial pressure is determined by the Neumann problem. On every finite interval $[0,T]$,
    \begin{equation}
    \label{eq:4.5}
        \begin{gathered}
            -\partial_x p^{I,0}(x,0,t) \in C([0,T];H_x^3) \cap L^2(0,T;H_x^4), \\[0.2cm]
            -\partial_x\partial_t p^{I,0}(x,0,t) \in C([0,T];H_x^1) \cap L^2(0,T;H_x^2).
        \end{gathered}
    \end{equation}
    Furthermore,
    \begin{equation}
    \label{eq:4.6}
        \begin{gathered}
            a\in C([0,T];H_x^5), \partial_x a\in L^2(0,T;H_x^5), \\[0.2cm]
            \partial_t a\in C([0,T];H_x^3), \partial_x\partial_t a \in L^2(0,T;H_x^3), \partial_t^2a\in C([0,T];H_x^1).
        \end{gathered}
    \end{equation}
    In particular,
    \begin{equation}
    \label{eq:4.7}
        \begin{gathered}
            \alpha_0\in H_x^3,\quad \|a(t)\|_{H_x^3} \le C_T t, \quad \|a(t)-t\alpha_0\|_{H_x^1} \le C_T t^2, \quad \|\partial_t a(t)-\alpha_0\|_{H_x^1} \le C_T t.
        \end{gathered}
    \end{equation}
    The ordinary wall jets needed below satisfy
    \[
    \begin{aligned}
    &\overline{\partial_yu_1^{I,0}}\in C([0,T];H_x^2),\qquad
      \overline{\partial_x\partial_yu_1^{I,0}}\in L^2(0,T;H_x^2),\\[0.2cm]
    &\overline{\partial_y^2u_1^{I,0}}\in C([0,T];H_x^1),\qquad
      \overline{\partial_x\partial_y^2u_1^{I,0}}\in L^2(0,T;H_x^1).
    \end{aligned}
    \]
    All constants depend only on $T$ and $\|\tilde u\|_{H^4}$. No higher-order boundary compatibility condition is imposed.
\end{proposition}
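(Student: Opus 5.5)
We proceed in four steps, mirroring the structure of the proof of Lemma \ref{lemma:2.14}: a bulk vorticity/div--curl energy for $u^{I,0}$, a Neumann problem for the pressure, a wall equation for the trace $a$, and time-derivative estimates obtained from the equation rather than from compatibility conditions.

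\textbf{Step 1: bulk estimate.} Set $\omega=\partial_xu_2^{I,0}-\partial_yu_1^{I,0}$. In two dimensions the vorticity solves
\[
\omega_t+u^{I,0}\cdot\nabla\omega-\partial_x^2\omega=0 .
\]
Because the diffusion is purely tangential and $u_2^{I,0}|_{y=0}=0$, energy estimates for $\partial^\alpha\omega$ with $|\alpha|\le3$ produce no boundary terms. The commutators are bounded by $\|\nabla u\|_{L^\infty}\|\omega\|_{H^3}^2$, up to lower-order terms. We combine this with the $L^2$ energy and Lemma \ref{lem:divcurl}, and use the Brezis--Gallouet/BKM-type bound
\[
\|\nabla u\|_{L^\infty}\lesssim \|\omega\|_{L^\infty}\log(e+\|u\|_{H^4})+\|u\|_{L^2},
\]
where $\|\omega\|_{L^\infty}$ is bounded by the maximum principle (transport plus tangential diffusion, initial $\omega\in H^3\subset L^\infty$). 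A log-Gronwall argument then gives global $u^{I,0}\in L^\infty(0,T;H^4)$ and $\partial_xu^{I,0}\in L^2(0,T;H^4)$. Existence follows by a Galerkin or horizontal-viscosity regularization and compactness, and uniqueness from an $L^2$ difference estimate, since $\nabla u\in L^1_tL^\infty$. Continuity in $H^4$ comes from the standard weak-continuity-plus-norm-continuity argument.

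\textbf{Step 2: pressure.} The pressure solves the Neumann problem
\[
-\Delta p=\partial_iu_j\partial_ju_i,\qquad \partial_yp|_{y=0}=\partial_x^2u_2|_{y=0}-(u\cdot\nabla u_2)|_{y=0}=0 .
\]
The boundary datum vanishes because $u_2|_{y=0}=0$ and $\partial_yu_2=-\partial_xu_1$, so $u\cdot\nabla u_2=u_1\partial_xu_2+u_2\partial_yu_2=0$ on the wall. Elliptic regularity with the algebra estimate \eqref{eq:Hm-algebra-prelim} then gives $\nabla p\in C([0,T];H^4)$, since the source lies in $H^3$. The $L^2_tH^4$ bound for $\partial_x\nabla p$ follows by differentiating the source in $x$. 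Next, $\partial_tu=-u\cdot\nabla u-\nabla p+\partial_x^2u$ lies in $C([0,T];H^2)$. Differentiating the Neumann problem in time gives $\nabla\partial_tp$, and $\partial_t^2u\in C(L^2)$ follows by differentiating the equation once more, with $\partial_x$ versions in $L^2_t$. None of this uses compatibility, since every quantity is expressed through spatial derivatives.

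\textbf{Step 3: the wall equation (main obstacle).} The bulk trace theorem only gives $a\in C(H_x^{7/2})$, while \eqref{eq:4.6} requires $H_x^5$. Following the last part of Lemma \ref{lemma:2.14}, we regard
\[
a_t-a_{xx}+aa_x=-\partial_xp|_{y=0},\qquad a(0)=0,
\]
as a one-dimensional heat equation. The forcing $-\partial_xp|_{y=0}$ lies in $C(H_x^3)\cap L^2(H_x^4)$ by Lemma \ref{lem:wall-trace-prelim} applied to $\partial_x\nabla p\in L^2H^4$ and $\nabla p\in CH^4$, which gives \eqref{eq:4.5}. The half-integer trace gain has to be arranged carefully, and we expect this to be the hard point: if the trace only yields $H^{3}$ uniformly, we would instead use the interpolation estimate \eqref{eq:wall-trace-general}, which gives $H_x^m$ with $m=4$ in $L^2_t$ from $\|\cdot\|_{L^2H^4}\|\partial_y\cdot\|_{L^2H^4}$ bounds. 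Maximal regularity for the heat equation with zero data and $L^2_tH^4_x$ forcing then gives $a\in C(H^5)$ and $a_x\in L^2H^5$. Here the Burgers term $aa_x$ is tame because $a\in C(H^{7/2})\subset W^{1,\infty}$ from the bulk.

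\textbf{Step 4: time derivatives and the small-time bounds.} Time-differentiating the wall equation, the function $b=a_t$ satisfies
\[
b_t-b_{xx}+(ab)_x=-\partial_x\partial_tp|_{y=0},\qquad b(0)=\alpha_0=-\partial_xp(x,0,0)\in H^3 .
\]
The initial value is read off from the equation at $t=0$ using $a(0)=0$. Heat estimates give $a_t\in C(H^3)$ and $\partial_xa_t\in L^2H^3$. Since $a_t\in C(H^3)$ and $a(0)=0$,
\[
\|a(t)\|_{H^3}\le t\sup_s\|a_t(s)\|_{H^3}\le C_Tt .
\]
Then $\|a_t(t)-\alpha_0\|_{H^1}\le\int_0^t\|a_{tt}\|_{H^1}\le C_Tt$, using $a_{tt}=b_{xx}-(ab)_x-\partial_x\partial_tp|_{y=0}\in C(H^1)$. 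Integrating once more gives $\|a-t\alpha_0\|_{H^1}\le C_Tt^2$. Finally, the wall jets $\overline{\partial_yu_1}$ and $\overline{\partial_y^2u_1}$ follow from Lemma \ref{lem:wall-trace-prelim} applied to $u\in CH^4$ and $\partial_xu\in L^2H^4$, losing one derivative for each normal derivative.
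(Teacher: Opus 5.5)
Your outline is correct. From the pressure onward it is essentially the paper's proof: the Neumann problem with $\partial_yp|_{y=0}=0$, time derivatives read off from the equation, the wall equation tested up to $H_x^5$ with the top source integrated by parts once, the Taylor bounds, and the wall jets. The genuine differences are the bulk $H^4$ bound and the construction.

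\textbf{Bulk estimate.} The paper uses no logarithmic inequality. It exploits the tangential dissipation through the anisotropic trilinear bound of Lemma~\ref{lemma:2.2}, closing an $H^1$ vorticity estimate whose Gronwall coefficient is integrable by \eqref{eq:4.8} and \eqref{eq:4.10', } and then inducting up to $H^3$. It constructs the solution by iterating a linear transport--diffusion problem for $\omega^{(n+1)}$ with lagged drift $u^{(n)}$, setting $u^{(n+1)}=K_D\omega^{(n+1)}$, and contracting in a joint $L^2$--$\dot H^{-1}_D$ norm. You instead treat the system as 2D Euler with a harmless extra dissipation: maximum principle for $\omega$, BKM, log-Gronwall. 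That route is more robust, since it would survive with no horizontal viscosity at all, but it gives double-exponential constants and rests on two facts you should justify.
\begin{itemize}
\item The commutator bound $\|\nabla u\|_{L^\infty}\|u\|_{H^4}$ needs Gagliardo--Nirenberg interpolation on the products $D^ku\,D^{5-k}u$, $1\le k\le3$. The Kato--Ponce form in Lemma~\ref{lem:Sobolev-products-prelim} leaves a factor $\|\nabla\omega\|_{L^\infty}$ and would not close globally.
\item The half-plane BKM inequality does not follow by reflecting $\omega$, because its odd extension jumps across $y=0$. It does follow from $\|\nabla u\|_{L^p}\le Cp\|\omega\|_{L^p}$, obtained by odd reflection of the Dirichlet stream function, together with interpolation against $\|u\|_{H^4}$ and the choice $p\sim\log(e+\|u\|_{H^4})$.
\end{itemize}

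\textbf{Uniqueness and existence.} Your $L^2$ velocity-difference uniqueness argument, using $\nabla u\in L^1_tL^\infty$, is simpler than the paper's. For existence, do not regularize the diffusion. Adding vertical viscosity forces a vorticity boundary condition and creates a layer that destroys the uniform $H^4$ bound. Iterate or truncate in the transport instead, as the paper does.

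\textbf{Two unnecessary steps.} First, no half-integer trace gain is needed in Step 3. The anisotropic trace bound $\|\bar h\|_{H_x^4}\lesssim\|h\|_{H_x^4H_y^1}$ with $h=\partial_xp$ gives $\overline{\partial_xp}\in L^2H_x^4$ directly from $\nabla p\in CH^4$ and $\partial_x\nabla p\in L^2H^4$. The same argument also gives the second line of \eqref{eq:4.5}, which your Step 4 uses without deriving. Second, once $a\in CH_x^5$ and $\partial_xa\in L^2H_x^5$, the identity $a_t=a_{xx}-aa_x-\overline{\partial_xp}$ already gives $a_t\in CH_x^3$ and $\partial_xa_t\in L^2H_x^3$. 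The time-differentiated wall equation in Step 4 is therefore not needed.
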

\begin{proof}
    For this proof write $u=u^{I,0}$, $\omega=\partial_x u_2-\partial_y u_1$ and $p=p^{I,0}$. We first derive the velocity and vorticity estimates and their spatial induction. We then construct the solution and prove uniqueness, before obtaining the pressure, time-derivative, and wall estimates.
    
    \textit{Step 1: the two basic energies.} Taking the $L^2$ product of \eqref{eq:1.3} with $u$, and using $\operatorname{div} u=0$ and $u_{2}|_{y=0}=0$, gives
    \begin{equation}
    \label{eq:4.8}
        \|u(t)\|_2^2+2 \int_0^t\left\|\partial_x u(s)\right\|_2^2 \mathrm{~d} s=\|\tilde{u}\|_2^2.
    \end{equation}
    Taking the curl of \eqref{eq:1.3}, we obtain 
    \begin{equation}
    \label{eq:4.9}
        \partial_t \omega+u \cdot \nabla \omega-\partial_x^2 \omega=0 .
    \end{equation}
    Multiplying \eqref{eq:4.9} $\omega$ and integrating by parts, we obtain
    \begin{equation}
    \label{eq:4.10}
        \|\omega(t)\|_2^2+2 \int_0^t\left\|\partial_x \omega(s)\right\|_2^2 \mathrm{~d} s=\|\operatorname{curl} \tilde{u}\|_2^2 .
    \end{equation}
    Equations\eqref{eq:4.8}-\eqref{eq:4.10} and Lemma \ref{lem:divcurl} control $u$ in $L^{\infty}H^{1}$ and $\partial_{x}u$ in $L^{2}H^{1}$.
    
    \textit{Step 2: the $H^{1}$ vorticity estimate.} Acting $\partial_{x}$ and $\partial_{y}$ to \eqref{eq:4.9}, testing the resulting equations with $\partial_{x}\omega$ and $\partial_{y}\omega$, respectively, summing up and integrating by parts, we obtain 
    \begin{equation}
    \label{eq:4.11}
        \begin{aligned}
			\frac{1}{2}\frac{d}{dt}&\left(\left\|\partial_{x}\omega\right\|^{2}+\left\|\partial_{y}\omega\right\|^{2} \right)+\left\|\partial_{x}^{2}\omega\right\|^{2}+\left\|\partial_{x}\partial_{y}\omega\right\|^{2} \\[0.2cm]
			&=-\iint \partial_{x}u\cdot\nabla \omega\partial_{x}\omega dxdy -\iint \partial_{y}u\cdot\nabla \omega\partial_{y}\omega dxdy.
		\end{aligned}
    \end{equation}
    Incompressibility and $u_2|_{y=0}=0$ give
    \[
    \|\nabla u\|_2=\|\omega\|_2,\qquad
    \|\nabla\partial_xu\|_2=\|\partial_x\omega\|_2,\qquad
    \|\partial_y^2u\|_2\leq C\|\nabla\omega\|_2,
    \]
    where the last inequality also uses
    $\partial_y^2u_2=-\partial_x\partial_yu_1$ and
    $\partial_y^2u_1=\partial_x\partial_yu_2-\partial_y\omega$.
    Using Lemma \ref{lemma:2.2}, we can bound the terms on the right as follows:
    $$
	\begin{aligned}
		\iint& |\partial_{x}u\cdot\nabla \omega\partial_{x}\omega|dxdy \\[0.2cm]
		\leq& C\left\|\partial_{x}u_{1} \right\|^{\frac{1}{2}}\left\|\partial_{y}\partial_{x}u_{1} \right\|^{\frac{1}{2}}\left\|\partial_{x}\omega \right\|^{\frac{1}{2}}\left\|\partial_{x}^{2}\omega \right\|^{\frac{1}{2}}\left\|\partial_{x}\omega \right\| \\[0.2cm]
		&+C\left\|\partial_{x}u_{2} \right\|^{\frac{1}{2}}\left\|\partial_{y}\partial_{x}u_{2} \right\|^{\frac{1}{2}}\left\|\partial_{y}\omega \right\|^{\frac{1}{2}}\left\|\partial_{x}\partial_{y}\omega \right\|^{\frac{1}{2}}\left\|\partial_{x}\omega \right\| \\[0.2cm]
		\leq& \frac{1}{8}\left(\left\|\partial_{x}^{2}\omega\right\|^{2}+\left\|\partial_{x}\partial_{y}\omega \right\|^{2}\right)+C\left\|\omega \right\|^{\frac{2}{3}}\left\|\partial_{x}\omega \right\|^{\frac{2}{3}}\left\|\partial_{x}\omega \right\|^{2} \\[0.2cm]
		&+C\left\|\omega \right\|^{2}\left\|\partial_{y}\omega\right\|^{2}+C\left\|\partial_{x}\omega\right\|\left\|\partial_{x}\omega \right\|^{2},
	\end{aligned}
	$$
	and
	$$
	\begin{aligned}
		\iint & |\partial_{y}u\cdot\nabla \omega\partial_{y}\omega|dxdy \\[0.2cm]
		\leq&C\left\|\partial_{y}u_{1} \right\|^{\frac{1}{2}}\left\|\partial_{y}^{2}u_{1} \right\|^{\frac{1}{2}}\left\|\partial_{x}\omega \right\|^{\frac{1}{2}}\left\|\partial_{x}^{2}\omega \right\|^{\frac{1}{2}}\left\|\partial_{y}\omega \right\| \\[0.2cm]
		&+C\left\|\partial_{y}u_{2} \right\|^{\frac{1}{2}}\left\|\partial_{y}^{2}u_{2} \right\|^{\frac{1}{2}}\left\|\partial_{y}\omega \right\|^{\frac{1}{2}}\left\|\partial_{x}\partial_{y}\omega \right\|^{\frac{1}{2}}\left\|\partial_{y}\omega \right\| \\[0.2cm]
		\leq& \frac{1}{8}\left(\left\|\partial_{x}^{2}\omega \right\|^{2}+\left\|\partial_{x}\partial_{y}\omega \right\|^{2}\right)+C\left\|\omega \right\|^{\frac{2}{3}}\left\|\partial_{x}\omega \right\|^{\frac{2}{3}}\left\|\nabla\omega \right\|^{2}.
	\end{aligned}
	$$
	Combining these estimates, we have
	$$
	\begin{aligned}
		\frac{d}{dt}&\left(\left\|\partial_{x}\omega \right\|^{2}+\left\|\partial_{y}\omega \right\|^{2} \right)+\left\|\partial_{x}^{2}\omega\right\|^{2}+\left\|\partial_{x}\partial_{y}\omega \right\|^{2} \\[0.2cm]
		&\leq C\left(\left\|\omega\right\|^{\frac{2}{3}}\left\|\partial_{x}\omega\right\|^{\frac{2}{3}}+\left\|\omega\right\|^{2}+\left\|\partial_{x}\omega\right\| \right) \left( \left\|\partial_{x}\omega\right\|^{2}+\left\|\partial_{y}\omega\right\|^{2}\right).
	\end{aligned}
	$$
    which, together with \eqref{eq:4.10} and Gronwall inequality, we have
	\begin{equation}
    \label{eq:4.12}
	    \sup _{0 \leq t \leq T}\|\omega(t)\|_{H^1}^2+\int_0^T\left\|\partial_x \omega(t)\right\|_{H^1}^2 \mathrm{~d} t \leq C \|\tilde{u}\|_{H^2}^2.
	\end{equation}

    \textit{Step 3: the general spatial induction.} For $m\geq 2$, put 
    $$
    E_m(t)=\sum_{|\alpha| \leq m}\left\|D^\alpha \omega(t)\right\|_2^2, \quad D_m(t)=\sum_{|\alpha| \leq m}\left\|\partial_x D^\alpha \omega(t)\right\|_2^2 .
    $$  
    For $|\alpha|=m$, the differentiated identity is
    \begin{equation}
    \label{eq:4.13}
        \frac{1}{2} \frac{\mathrm{~d}}{\mathrm{~d} t}\left\|D^\alpha \omega\right\|_2^2+\left\|\partial_x D^\alpha \omega\right\|_2^2=-\sum_{0<\beta \leq \alpha}\binom{\alpha}{\beta} \int_{\Omega} D^\beta u \cdot \nabla D^{\alpha-\beta} \omega D^\alpha \omega \mathrm{d} x \mathrm{~d} y .
    \end{equation}
    By expanding the right-hand side of the above equation and using Lemma \ref{lemma:2.2}, we obtain
    $$
	\begin{aligned}
		&\sum_{0<\beta \leq \alpha}\binom{\alpha}{\beta} \int_{\Omega} D^\beta u \cdot \nabla D^{\alpha-\beta} \omega D^\alpha \omega \mathrm{d} x \mathrm{~d} y \\[0.2cm]
		&\quad\lesssim \sum_{0<\beta \leq \alpha} \left\|D^{\beta}u_{1}\right\|^{\frac{1}{2}} \left\|\partial_{y}D^{\beta}u_{1}\right\|^{\frac{1}{2}} \left\|\partial_{x}D^{\alpha-\beta}\omega\right\|^{\frac{1}{2}} \left\|\partial_{x}^{2}D^{\alpha-\beta}\omega\right\|^{\frac{1}{2}} \left\|D^{\alpha}\omega\right\| \\[0.2cm]
		&\quad\quad+\sum_{0<\beta \leq \alpha} 
        \left\|D^{\beta}u_{2}\right\|^{\frac{1}{2}} \left\|\partial_{y}D^{\beta}u_{2}\right\|^{\frac{1}{2}} \left\|\partial_{y}D^{\alpha-\beta}\omega\right\|^{\frac{1}{2}} \left\|\partial_{x}\partial_{y}D^{\alpha-\beta}\omega\right\|^{\frac{1}{2}} \left\|D^{\alpha}\omega\right\|.
	\end{aligned}
	$$
    Notice that
    $$
	\begin{aligned}
		&\sum_{|\beta|=1} 
        \left\|D^{\beta}u_{1}\right\|^{\frac{1}{2}} \left\|\partial_{y}D^{\beta}u_{1}\right\|^{\frac{1}{2}} \left\|\partial_{x}D^{\alpha-\beta}\omega\right\|^{\frac{1}{2}} \left\|\partial_{x}^{2}D^{\alpha-\beta}\omega\right\|^{\frac{1}{2}} \left\|D^{\alpha}\omega\right\| \\[0.2cm]
		&\quad\leq \sum_{|\beta|=1} 
        \left\|\omega\right\|^{\frac{1}{2}} \left\|\omega\right\|_{H^{1}}^{\frac{1}{2}} 
        \left\|\partial_{x}D^{\alpha-\beta}\omega\right\|^{\frac{1}{2}} \left\|\partial_{x}^{2}D^{\alpha-\beta}\omega\right\|^{\frac{1}{2}} \left\|D^{\alpha}\omega\right\| \\[0.2cm]
		&\quad\leq 
        \frac{1}{16}D_{m}(t)
        +C\left(1+\left\|\omega\right\|^{2} \left\|\omega\right\|_{H^{1}}^{2} \right) E_{m}(t).
	\end{aligned}
	$$
    Moreover,
    $$
	\begin{aligned}
		&\sum_{2\leq |\beta| \leq m-1} 
        \left\|D^{\beta}u_{1}\right\|^{\frac{1}{2}} \left\|\partial_{y}D^{\beta}u_{1}\right\|^{\frac{1}{2}} \left\|\partial_{x}D^{\alpha-\beta}\omega\right\|^{\frac{1}{2}} \left\|\partial_{x}^{2}D^{\alpha-\beta}\omega\right\|^{\frac{1}{2}} \left\|D^{\alpha}\omega\right\| \\[0.2cm]
		&\quad\leq 
        C\left(1+E_{m-1}(t)+\|u_{1}\|^{2} \right) E_{m}(t)+E_{m-1}(t)\left(1+E_{m-1}(t)+\|u_{1}\|^{2} \right).
	\end{aligned}
	$$
    Finally,
    $$
    \begin{aligned}
    &\sum_{|\beta|=m} 
    \left\|D^{\beta}u_{1}\right\|^{\frac{1}{2}} \left\|\partial_{y}D^{\beta}u_{1}\right\|^{\frac{1}{2}} \left\|\partial_{x}\omega\right\|^{\frac{1}{2}} \left\|\partial_{x}^{2}\omega\right\|^{\frac{1}{2}} \left\|D^{\alpha}\omega\right\| \\[0.2cm]
	&\quad\leq C\left(1+E_{m-1}(t)+\|u_{1}\|^{2} \right) E_{m}(t)+\|\partial_{x}\omega\|^{2}\|\partial_{x}^{2}\omega\|^{2}+\|u_{1}\|^{2},
    \end{aligned}
    $$
    thus, we get
    $$
	\begin{aligned}
		&\sum_{0<\beta \leq \alpha} \left\|D^{\beta}u_{1}\right\|^{\frac{1}{2}} \left\|\partial_{y}D^{\beta}u_{1}\right\|^{\frac{1}{2}} \left\|\partial_{x}D^{\alpha-\beta}\omega\right\|^{\frac{1}{2}} \left\|\partial_{x}^{2}D^{\alpha-\beta}\omega\right\|^{\frac{1}{2}} \left\|D^{\alpha}\omega\right\| \\[0.2cm]
		&\quad\leq 
        \frac{1}{16}D_{m}(t)+C \left(1+\left\|\omega\right\|^{2}\left\|\omega\right\|_{H^{1}}^{2}+E_{m-1}(t)+\|u_{1}\|^{2} \right)E_{m}(t)  \\[0.2cm]
		&\quad\quad+E_{m-1}(t) \left(1+E_{m-1}(t)+\|u_{1}\|^{2}\right)+\|\partial_{x}\omega\|^{2}\|\partial_{x}^{2}\omega\|^{2}+\|u_{1}\|^{2}.
	\end{aligned}
	$$
	Similarly, we have
	$$
	\begin{aligned}
		&\sum_{0<\beta \leq \alpha} 
        \left\|D^{\beta}u_{2}\right\|^{\frac{1}{2}} \left\|\partial_{y}D^{\beta}u_{2}\right\|^{\frac{1}{2}} \left\|\partial_{y}D^{\alpha-\beta}\omega\right\|^{\frac{1}{2}} \left\|\partial_{x}\partial_{y}D^{\alpha-\beta}\omega\right\|^{\frac{1}{2}} \left\|D^{\alpha}\omega\right\| \\[0.2cm]
		&\quad\leq 
        \frac{1}{16}D_{m}(t)+C \left(1+\left\|\omega\right\|^{2}\left\|\omega\right\|_{H^{1}}^{2}+E_{m-1}(t)+\|u_{2}\|^{2} \right)E_{m}(t)  \\[0.2cm]
		&\quad\quad+E_{m-1}(t) \left(1+E_{m-1}(t)+\|u_{2}\|^{2}\right)+\|\partial_{y}\omega\|^{2}\|\partial_{x}\partial_{y}\omega\|^{2}+\|u_{2}\|^{2}.
	\end{aligned}
	$$
	Combining these estimates, we have
	$$
	\begin{aligned}
		&\sum_{0<\beta \leq \alpha}\binom{\alpha}{\beta} \int_{\Omega} D^\beta u \cdot \nabla D^{\alpha-\beta} \omega D^\alpha \omega \mathrm{d} x \mathrm{~d} y \\[0.2cm]
		&\quad\leq 
        \frac{2}{16}D_{m}(t)+C \left(1+\left\|\omega\right\|^{2}\left\|\omega\right\|_{H^{1}}^{2}+E_{m-1}(t)+\|u\|^{2} \right)E_{m}(t)  \\[0.2cm]
		&\quad\quad+E_{m-1}(t) \left(1+E_{m-1}(t)+\|u\|^{2}\right)+\|\omega\|_{H^{1}}^{2}\|\partial_{x}\omega\|_{H^{1}}^{2}+\|u\|^{2}.
	\end{aligned}
	$$
    Thus, we obtain
    $$
    \begin{aligned}
        \frac{d}{dt}E_{m}(t)+D_{m}(t)&\leq C\left(1+\|\omega\|^{2}\left\|\omega\right\|_{H^{1}}^{2}+E_{m-1}(t)+\|u\|^{2}\right)E_{m}(t) \\[0.2cm]
		&\quad+ E_{m-1}(t)\left(1+E_{m-1}(t)+\|u\|^{2}\right) +\|\omega\|_{H^{1}}^{2}\|\partial_{x}\omega\|_{H^{1}}^{2}+\|u\|^{2} .
	\end{aligned}
	$$
    The base $m=1$ is \eqref{eq:4.12}. Induction through $m=3$ and Lemma \ref{lem:divcurl} yield
    $$
    u^{I,0}\in L^{\infty}(0,T;H^{4}),\quad \partial_x u^{I,0}\in L^2(0,T;H^{4}).
    $$
    
    \textit{Step 4: Existence and Uniqueness.} The preceding estimates were obtained for smooth solutions. We now construct such a solution without regularizing the two velocity factors in different ways. For a scalar $\zeta$ in the homogeneous Dirichlet negative space, put
    $$
    \|\zeta\|_{\dot{H}_D^{-1}}^2:=\left\langle\zeta,\left(-\Delta_D\right)^{-1} \zeta\right\rangle, \quad K_D \zeta=\nabla^{\perp}\left(-\Delta_D\right)^{-1} \zeta .
    $$
    We first record the linear estimate used in the iteration. Let $a$ be divergence free and tangent, with 
    $$
    a \in C\left(\left[0, T_0\right] ; H^{4}\right), \quad \partial_x a \in L^2\left(0, T_0 ; H^{4}\right),
    $$ 
    and consider
    \begin{equation}
    \label{eq:4.14}
        \begin{gathered}
        \partial_t \zeta+a \cdot \nabla \zeta-\partial_x^2 \zeta=0, \quad \zeta(0)=\zeta_0.
        \end{gathered}
    \end{equation}
    A standard Friedrichs construction for this linear transport–diffusion equation gives a unique solution in
    $$
    \zeta \in L^{\infty}\left(0, T_0 ; H^{3} \cap \dot{H}_D^{-1}\right), \quad \partial_x \zeta \in L^2\left(0, T_0 ; H^{3}\right) .
    $$

    Set 
    $$
    \omega_0=\operatorname{curl} \tilde{u}, \quad u^{(0)}=e^{t \partial_x^2} \widetilde{u}, \quad \omega^{(0)}=e^{t \partial_x^2} \omega_0 .
    $$
    By the strong continuity of the heat semigroup, we obtain $u^{(0)} \in C\left(\left[0, T_0\right] ; H^{4}\right)$ and $\partial_{x}u^{(0)} \in L^{2}\left(0, T_0 ; H^{4}\right)$. Given $u^{(n)}$, solve 
    \begin{equation}
    \label{eq:4.15}
        \begin{gathered}
            \partial_t \omega^{(n+1)}+u^{(n)} \cdot \nabla \omega^{(n+1)}-\partial_x^2 \omega^{(n+1)}=0, \quad \omega^{(n+1)}(0)=\omega_0,
        \end{gathered}
    \end{equation}
    and define $u^{(n+1)}=K_D \omega^{(n+1)}$. We next verify in detail that the iterative equations satisfy the conditions required for applying \eqref{eq:4.14}, thereby ensuring that the iteration scheme is well defined.

    Noting that
    $$
    \|u^{(n)}\cdot\nabla \omega^{(n+1)} \|_{H^{2}} \lesssim \|u^{(n)}\|_{H^{4}} \|\omega^{(n+1)}\|_{H^{3}}.
    $$
    It follows directly from \eqref{eq:4.15} that 
    $$
    \partial_{t}\omega^{(n+1)} \in L^{2}(0,T_{0};H^{2}).
    $$ 
    Combining this with 
    $$
    \omega^{(n+1)} \in L^{\infty}\left(0, T_0 ; H^3\right),
    $$
    and using the standard weak-continuity argument, we obtain 
    \begin{equation}
    \label{eq:4.16}
    \begin{gathered}
        \omega^{(n+1)} \in C_w\left(\left[0, T_0\right] ; H^3\right).
    \end{gathered}
    \end{equation}
    Acting $D^{\alpha}$ on both sides of \eqref{eq:4.15}, then taking $L^{2}$ inner product with $D^{\alpha}\omega^{(n+1)}$, integrate by parts and summing about $|\alpha|\leq 3$, we get
    $$
    \frac{1}{2}\frac{d}{dt}\|\omega^{(n+1)}\|_{H^{3}}^{2}+\|\partial_{x}\omega^{(n+1)}\|_{H^{3}}^{2}=-\sum_{|\alpha|\leq3} <[D^{\alpha},u^{(n)}\cdot\nabla]\omega^{(n+1)},D^{\alpha}\omega^{(n+1)}>:=F(t).
    $$
    Using Lemma \ref{lem:Sobolev-products-prelim}, we get
    $$
    \begin{aligned}
    F(t)
    &\lesssim \sum_{|\alpha|\leq 3}
    \|[D^{\alpha},u^{(n)}\cdot\nabla]\omega^{(n+1)}\|_{2}
    \|D^{\alpha}\omega^{(n+1)}\|_{2} \\[0.2cm]
    &\lesssim
    \left(
     \|\nabla u^{(n)}\|_{L^{\infty}}
    \|\omega^{(n+1)}\|_{H^{3}}
    +
     \|u^{(n)}\|_{H^{3}}
    \|\nabla\omega^{(n+1)}\|_{L^{\infty}}
    \right)
    \|\omega^{(n+1)}\|_{H^{3}}.
    \end{aligned}
    $$
    Thus, we get
    $$
    \|\omega^{(n+1)}(t)\|_{H^{3}}^{2}=\|\omega_{0}\|_{H^{3}}^{2}+2\int_{0}^{t}\left( F(s)-\|\partial_{x}\omega^{(n+1)}(s)\|_{H^{3}}^{2}\right)ds \in C([0,T_{0}]).
    $$
    Combining with \eqref{eq:4.16}, we get
    $$
    \omega^{(n+1)} \in C([0,T_{0}];H^{3}).
    $$
    Noting that
    $$
    \|u^{(n)} \cdot \nabla \omega^{(n+1)}\|_{\dot{H}_D^{-1}} \lesssim \|u^{(n)} \omega^{(n+1)}\|_2 \lesssim \|u^{(n)}\|_{\infty}\|\omega^{(n+1)}\|_2 \in L^2\left(0, T_0\right),
    $$
    and
    $$
    \left\|\partial_x^2 \omega^{(n+1)}\right\|_{\dot{H}_D^{-1}} \lesssim \left\|\partial_x \omega^{(n+1)}\right\|_2 \in L^2\left(0, T_0\right) .
    $$
    Thus, we obtain 
    $$
    \partial_{t}\omega^{(n+1)} \in L^2\left(0, T_0 ; \dot{H}_D^{-1}\right).
    $$
    Combining with $\omega^{(n+1)} \in L^{\infty}\left(0, T_0 ; \dot{H}_D^{-1}\right)$, we get 
    $$
    \omega^{(n+1)} \in C\left(\left[0, T_0\right] ; \dot{H}_D^{-1}\right) .
    $$
    Since $\left\|K_D \zeta\right\|_2=\|\zeta\|_{\dot{H}_D^{-1}}$, we obtain
    $$
    u^{(n+1)} \in C\left(\left[0, T_0\right] ; L^2\right) .
    $$
    Using Lemma \ref{lem:divcurl}, we get
    $$
    u^{(n+1)} \in C\left(\left[0, T_0\right] ; H^{4}\right).
    $$
    However, the regularity
    $$
    \partial_x u^{(n+1)} \in L^2\left(0, T_0 ; H^4\right)
    $$
    follows directly from the regularity of $\omega^{(n+1)}$. Hence, the iterative system satisfies the assumptions required for applying \eqref{eq:4.14}.

    Choose $T_{0}>0$, depending only on $\|\tilde{u}\|_{H^{4}}$, and $R<\infty$ such that
    \begin{equation}
    \label{eq:4.17}
        \begin{aligned}
        & \sup _n \sup _{0 \leq t \leq T_0}\left(\left\|u^{(n)}(t)\right\|_{H^{4}}^2+\left\|\omega^{(n)}(t)\right\|_{H^{3}}^2\right) \\[0.2cm]
        & \quad+\sup _n \int_0^{T_0}\left(\left\|\partial_x u^{(n)}\right\|_{H^{4}}^2+\left\|\partial_x \omega^{(n)}\right\|_{H^{3}}^2\right) \mathrm{d} t \leq R .
        \end{aligned}
    \end{equation}
    Put 
    $$
    \delta \omega^{(n+1)}=\omega^{(n+1)}-\omega^{(n)}, \quad \delta u^{(n+1)}=K_D \delta \omega^{(n+1)}.
    $$
    For $n \geq 1$, 
    \begin{equation}
        \begin{aligned} 
        \partial_t \delta \omega^{(n+1)}+u^{(n)} \cdot \nabla \delta \omega^{(n+1)}-\partial_x^2 \delta \omega^{(n+1)} & =-\delta u^{(n)} \cdot \nabla \omega^{(n)} \\[0.2cm] 
        \delta \omega^{(n+1)}(0) & =0 .
        \end{aligned}
    \end{equation}
    Testing first by $\delta \omega^{(n+1)}$, and then by $\delta \psi^{(n+1)}=\left(-\Delta_D\right)^{-1} \delta \omega^{(n+1)}$, gives
    \begin{equation}
        \begin{aligned}
            \frac{1}{2} \frac{\mathrm{~d}}{\mathrm{~d} t}\left\|\delta \omega^{(n+1)}\right\|_2^2+\left\|\partial_x \delta \omega^{(n+1)}\right\|_2^2 &= <-\delta u^{(n)}\cdot\nabla \omega^{(n)},\delta\omega^{(n+1)}> \\[0.2cm]
            &\leq \|\nabla\omega^{(n)}\|_{L^{\infty}} \|\delta u^{(n)}\|_{2}\|\delta \omega^{(n+1)}\|_{2} \\[0.2cm]
            &\leq C_R\left(\left\|\delta u^{(n)}\right\|_2^2+\left\|\delta \omega^{(n+1)}\right\|_2^2\right).
        \end{aligned}
    \end{equation}
    Noting that 
     $$
    \begin{aligned}
    \left\langle u^{(n)}\cdot\nabla\delta\omega^{(n+1)},\delta\psi^{(n+1)}\right\rangle
    &=-\sum_{i,j}\int_\Omega
     \partial_i\delta\psi^{(n+1)}\,\partial_i u_j^{(n)}\,
     \partial_j\delta\psi^{(n+1)}\\[0.2cm]
    &\leq C\|\nabla u^{(n)}\|_{L^\infty}\|\delta u^{(n+1)}\|_2^2.
    \end{aligned}
    $$
    and
    $$
    \begin{gathered}
        <\delta u^{(n)}\cdot\nabla \omega^{(n)},\delta \psi^{(n+1)}> \lesssim \|\omega^{(n)}\|_{L^{\infty}}\|\delta u^{(n)}\|_{2} \|\nabla\delta\psi^{(n+1)}\|_{2}.
    \end{gathered}
    $$
    Thus, we have
    \begin{equation}
        \begin{gathered}
            \frac{1}{2} \frac{\mathrm{~d}}{\mathrm{~d} t}\left\|\delta u^{(n+1)}\right\|_2^2+\left\|\partial_x \delta u^{(n+1)}\right\|_2^2 \leq C_R\left(\left\|\delta u^{(n+1)}\right\|_2^2+\left\|\delta u^{(n)}\right\|_2^2\right) .
        \end{gathered}
    \end{equation}
    Put $X_{n+1}(t)=\left\|\delta u^{(n+1)}(t)\right\|_2^2+\left\|\delta \omega^{(n+1)}(t)\right\|_2^2$, Gronwall inequality gives
    $$
    \sup _{0 \leq t \leq T_0} X_{n+1}(t) \leq C_R T_0 e^{C_R T_0} \sup _{0 \leq t \leq T_0} X_n(t) .
    $$
    After decreasing $T_{0}$, the factor is below one. Hence both $u^{(n)}$ and $\omega^{(n)}$ are Cauchy in $C\left(\left[0, T_0\right] ; L^2\right)$. Let $(u,\omega)$ be the limit. By the weak formulation, we obtain 
    $$
    \partial_t \omega+u \cdot \nabla \omega-\partial_x^2 \omega=0, \quad u=K_D \omega,\left.\quad u_2\right|_{y=0}=0, \quad u(0)=\widetilde{u} .
    $$
    The curl of $\partial_t u+u \cdot \nabla u-\partial_x^2 u$ is zero. Since the half-plane is simply connected, a pressure $p$, unique up to a function of time, recovers \eqref{eq:1.3}-\eqref{eq:1.4}. By the above a priori estimate, the $H^{4}$-norm of the solution remains bounded on every finite time interval. Since the local existence time depends only on the $H^{4}$-norm of the solution, no finite maximal existence time can occur. Therefore, the local solution can be continued successively to any prescribed time interval $[0,T]$. The uniqueness follows from the same joint $L^2-\dot{H}_D^{-1}$ difference estimate as above, applied to two nonlinear solutions.

    By \eqref{eq:4.17}, together with the standard weak and weak-* compactness arguments, we obtain
    $$
    u\in L^{\infty}(0,T;H^{4}), \omega \in L^{\infty}(0,T;H^{3}),\quad \partial_{x}u\in L^{2}(0,T;H^{4}), \partial_{x}\omega \in L^{2}(0,T;H^{3}).
    $$
    Proceeding as in the proof of $u^{(n+1)} \in C\left(\left[0, T_0\right] ; H^4\right)$, we further obtain
    $$
    u \in C\left(\left[0, T_0\right] ; H^4\right),
    $$
    which is \eqref{eq:4.2}.

    \textit{Step 5: Actual Neumann pressure and regularity of time derivatives.} The pressure solves 
    \begin{equation}
    \label{eq:4.21}
        \begin{gathered}
            -\Delta p=\partial_i u_j \partial_j u_i =\operatorname{div} \left(u\cdot\nabla u \right) , \quad \partial_{y}p|_{y=0}=0 .
        \end{gathered}
    \end{equation}
    The energy estimate for the elliptic equation with Neumann boundary conditions yields
    $$
    \|\nabla p \|_{H^{4}} \lesssim \left(\|\partial_i u_j \partial_j u_i\|_{H^{3}}+\|\nabla p\|_{2} \right)
    $$
    Multiplying \eqref{eq:4.21} by $p$ and then integrating by parts, we obtain  
    $$
     \|\nabla p\|_{2}^{2} \leq \|u\cdot\nabla u\|_{2} \|\nabla p\|_{2}.
    $$
    Thus, we get
    $$
    \|\nabla p\|_{H^{4}} \lesssim \|u\|_{H^{4}}^{2} \in C([0,T]).
    $$
    A similar treatment yields that
    $$
    \|\partial_{x}\nabla p\|_{H^{4}} \lesssim \|\partial_{x}u\|_{H^{4}}\|u\|_{H^{4}} \in L^{2}(0,T). 
    $$
    A direct calculation yields
    $$
    \begin{aligned}
        \partial_{t}u &=\partial_{x}^{2}u-u\cdot\nabla u -\nabla p \in C([0,T];H^{2}),\\[0.2cm]
        \partial_{x}\partial_{t}u &= \partial_{x}^{3}u-\partial_{x}u\cdot\nabla u-u\cdot\nabla \partial_{x}u -\nabla \partial_{x}p \in L^{2}(0,T;H^{2}).
    \end{aligned}
    $$
    Acting $\partial_{t}$ on both sides of \eqref{eq:4.21}, a similar argument yields that
    $$
    \begin{aligned}
        \|\nabla \partial_{t}p\|_{H^{2}} &\lesssim \|\partial_{t}u\|_{H^{2}}\|u\|_{H^{4}} \in C([0,T]), \\[0.2cm]
        \|\partial_{x}\nabla\partial_{t}p\|_{H^{2}} &\lesssim \|\partial_{x}\partial_{t}u\|_{H^{2}}\|u\|_{H^{4}}+\|\partial_{t}u\|_{H^{2}}\|\partial_{x}u\|_{H^{4}} \in L^{2}(0,T).
    \end{aligned}
    $$
    A direct calculation yields
    $$
    \begin{aligned}
        \partial_{t}^{2}u=\partial_{x}^{2}\partial_{t}u -\partial_{t}u\cdot\nabla u -u\cdot\nabla \partial_{t}u -\nabla \partial_{t}p \in C([0,T];L^{2}), \\[0.2cm]
        \partial_{x}\partial_{t}^{2}u=\partial_{x}^{3}\partial_{t}u -\partial_{x}\partial_{t}\left(u\cdot\nabla u\right) - \partial_{x}\nabla \partial_{t}p \in L^{2}(0,T;L^{2}),
    \end{aligned}
    $$
    which is \eqref{eq:4.3}.

    \textit{The wall bounds.} Noticing that, for $h(xy)\in H_{x}^{k}H_{y}^{1}$ with fixed $k\in \mathbb{N}$, we have 
    $$
    \begin{aligned}
    \|\bar{h}\|_{H_x^k}^2 & =\sum_{i=0}^k \int_{\mathbb{R}}\left|\partial_x^i h(x, 0)\right|^2 \mathrm{~d} x \leq \sum_{i=0}^k \int_{\mathbb{R}}\left\|\partial_x^i h(x, y)\right\|_{L_y^{\infty}}^2 \mathrm{~d} x \\[0.2cm]
    & \lesssim \sum_{i=0}^k \int_{\mathbb{R}}\left\|\partial_x^i h(x, y)\right\|_{H_y^1}^2 \mathrm{~d} x \lesssim\left\|h(x, y)\right\|_{H_{x}^{k}H_{y}^{1}}^2.
    \end{aligned}
    $$
    Thus we have
    $$
    \begin{aligned}
    \|\overline{\partial_xp^{I,0}}\|_{H_x^3}
      &\leq C\|\partial_xp^{I,0}\|_{H_x^3H_y^1}
       \leq C\|\nabla p^{I,0}\|_{H^4},\\[0.2cm]
    \|\overline{\partial_xp^{I,0}}\|_{H_x^4}
      &\leq C\bigl(\|\nabla p^{I,0}\|_{H^4}
                      +\|\partial_x\nabla p^{I,0}\|_{H^4}\bigr).
    \end{aligned}
    $$
    The first bound is continuous in time, and the second is square-integrable. A similar treatment yields that
    $$
    \begin{aligned}
        -\partial_x\partial_t p^{I,0}(x,0,t) \in C([0,T];H_x^1) \cap L^2(0,T;H_x^2),\\[0.2cm]
        a\in C([0,T];H_{x}^{3}),\quad \partial_{x}a\in L^{2}(0,T;H_{x}^{3}).
    \end{aligned}
    $$
    Noting that the wall equation is
    \begin{equation}
    \label{eq:4.22}
        \begin{gathered}
            \partial_{t}a-\partial_{x}^{2}a+a\partial_{x}a=-\partial_{x}p^{I,0}(x,0,t),\quad a(x,0)=0.
        \end{gathered}
    \end{equation}
    Applying $\partial_x^j$ to \eqref{eq:4.22}, testing by $\partial_x^ja$, and summing over $0\leq j\leq5$, we obtain
    $$
    \begin{gathered}
         \frac{1}{2}\frac{d}{dt}\|a\|_{H_{x}^{5}}^{2}+\|\partial_{x}a\|_{H_{x}^{5}}^{2}=-\sum_{0\leq j\leq5}\left\langle\partial_{x}^{j}\left(a\partial_{x}a \right),\partial_{x}^{j}a\right\rangle-\sum_{0\leq j\leq 5} \left\langle \overline{\partial_{x}^{j+1}p^{I,0}},\partial_{x}^{j}a\right\rangle.
    \end{gathered}
    $$
    Noting that
    $$
    \begin{aligned}
        -\sum_{0\leq j\leq5}\left\langle \partial_{x}^{j}\left(a\partial_{x}a \right),\partial_{x}^{j}a\right\rangle \lesssim \left(1+\|a\|_{H_{x}^{2}}+\|\partial_{x}a\|_{H_{x}^{3}}^{2}+\|a\|_{H_{x}^{3}}^{2} \right) \|a\|_{H_{x}^{5}}^{2}, \\[0.2cm]
         -\sum_{0\leq j\leq5}\left\langle\overline{\partial_x^{j+1}p^{I,0}},\partial_x^ja\right\rangle
        \leq \frac14\|\partial_xa\|_{H_x^5}^2
        +C\|\overline{\partial_xp^{I,0}}\|_{H_x^4}^2+C\|a\|_2^2.
    \end{aligned}
    $$
    Combining these estimates and applying Gronwall's inequality gives
    $$
    a\in C([0,T];H_{x}^{5}),\quad \partial_{x}a \in L^{2}(0,T;H_{x}^{5}).
    $$
    A direct calculation yields
    $$
    \begin{aligned}
        \partial_{t}a & =\partial_{x}^{2}a-a\partial_{x}a-\overline{\partial_{x}p^{I,0}} \in C([0,T];H_{x}^{3}),\quad \partial_{x}\partial_{t}a \in L^{2}([0,T];H_{x}^{3}),\\[0.2cm]
        \partial_{t}^{2}a & =\partial_{x}^{2}\partial_{t}a-\partial_{t}\left(a\partial_{x}a \right)-\overline{\partial_{x}\partial_{t}p^{I,0}} \in C([0,T];H_{x}^{1}).
    \end{aligned}
    $$
    which is \eqref{eq:4.6}.
    
    It follows directly from the regularity of $\partial_{t}a$ that
    $$
    \alpha_0\in H_x^3.
    $$
    Using Minkowski’s inequality for integrals and Taylor’s formula:
    $$
    f(t)=\sum_{k=0}^{n}\frac{f^{(k)}(0)}{k!}t^k
       +\frac{1}{n!}\int_0^t f^{(n+1)}(s)(t-s)^n\,ds,
    $$
    we obtain
    $$
    \begin{aligned}
        \|a(t)\|_{H_{x}^{3}} &\lesssim \int_{0}^{t}\|\partial_{t}a\|_{H_{x}^{3}}ds \leq C t, \\[0.2cm]
        \|a(t)-t\alpha_0\|_{H_{x}^{1}} &\lesssim \int_{0}^{t} (t-s)\|\partial_{t}^{2}a\|_{H_{x}^{1}}ds \leq C t^{2}, \\[0.2cm]
        \|\partial_{t}a(t)-\alpha_0\|_{H_{x}^{1}} &\lesssim \int_{0}^{t} \|\partial_{t}^{2}a\|_{H_{x}^{1}}ds \leq C t.
    \end{aligned}
    $$
    A direct calculation yields
    $$
    \begin{aligned}
        \|\overline{\partial_yu_1^{I,0}}\|_{H_x^2} \lesssim \|u_{1}^{I,0}\|_{H^{4}} \in C([0,T]),\quad \|\overline{\partial_x\partial_yu_1^{I,0}}\|_{H_x^2} \lesssim \|\partial_{x}u_{1}^{I,0}\|_{H^{4}} \in L^{2}(0,T), \\[0.2cm]
        \|\overline{\partial_y^2u_1^{I,0}}\|_{H_x^1}\lesssim \|u_{1}^{I,0}\|_{H^{4}} \in C([0,T]),\quad \|\overline{\partial_x\partial_y^2u_1^{I,0}}\|_{H_x^1}\lesssim \|\partial_{x}u_{1}^{I,0}\|_{H^{4}} \in L^{2}(0,T). 
    \end{aligned}
    $$
    The proof is complete.
\end{proof}

\subsubsection{Regularity of the leading boundary-layer equation}
In this section, we present the full heat lifting and the self-similar profiles required for the corner analysis. The following finite-order estimates in the normal direction follow directly from the heat-kernel representation.
\begin{lemma}
    Let $g(x,0)=0$ and let $b$ solve 
    \begin{equation}
    \label{eq:5.1}
        \begin{gathered}
            b_t-b_{x x}-b_{z z}=0,\left.\quad b\right|_{z=0}=g(x, t),\left.\quad b\right|_{t=0}=0, \quad b \rightarrow 0 \quad(z \rightarrow \infty) .
        \end{gathered}
    \end{equation}
    Then
    \begin{equation}
    \label{eq:5.2}
        \begin{gathered}
            b(x, z, t)=\int_0^t K(z, t-s) e^{(t-s) \partial_x^2} g(x, s) \mathrm{d} s, \quad K(z, r)=\frac{z}{2 \sqrt{\pi} r^{3 / 2}} e^{-z^2 /(4 r)} .
        \end{gathered}
    \end{equation}
    If $g(x,t)=t^\lambda c(x)$, where $\lambda>0$, $m\geq0$ is an integer, and $c\in H_x^{m+2}$, set $L(x,z,t)=t^\lambda c(x)\Phi_\lambda(z/\sqrt t)$. Then, $0<t\leq T$, 
    \begin{equation}
    \label{eq:5.3}
        \sup_{z\geq0}\|b(\cdot,z,t)-L(\cdot,z,t)\|_{H_x^m}
          \leq C_{\lambda,m}t^{\lambda+1}\|c\|_{H_x^{m+2}},
    \end{equation}
    where
    \begin{equation}
    \label{eq:5.4}
        \begin{gathered}
            \Phi_\lambda(\eta)=\frac{\eta}{2 \sqrt{\pi}} \int_0^1 \sigma^\lambda(1-\sigma)^{-3 / 2} e^{-\eta^2 /[4(1-\sigma)]} \mathrm{d} \sigma,
        \end{gathered}
    \end{equation}
    \begin{equation}
    \label{eq:5.5}
        \begin{gathered}
            \Phi_\lambda^{\prime \prime}+\frac{\eta}{2} \Phi_\lambda^{\prime}-\lambda \Phi_\lambda=0, \quad \Phi_\lambda(0)=1, \quad \Phi_\lambda(\infty)=0,
        \end{gathered}
    \end{equation}
    \begin{equation}
    \label{eq:5.6}
        \begin{gathered}
            \int_0^{\infty} \Phi_\lambda(\eta) \mathrm{d} \eta=\frac{\Gamma(\lambda+1)}{\Gamma(\lambda+3 / 2)}.
        \end{gathered}
    \end{equation}
\end{lemma}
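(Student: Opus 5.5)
The proof splits into five parts: the representation formula, the two facts about the profile $\Phi_\lambda$, and the comparison estimate \eqref{eq:5.3}.

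\emph{Representation \eqref{eq:5.2}.} Take the partial Fourier transform in $x$, writing $\hat b(\xi,z,t)$. Each mode then solves the one-dimensional Dirichlet problem
\[
\hat b_t-\hat b_{zz}+\xi^2\hat b=0,\qquad \hat b|_{z=0}=\hat g,\qquad \hat b|_{t=0}=0 .
\]
The substitution $\hat b=e^{-\xi^2 t}\beta$ removes the damping term and reduces this to the half-line heat equation for $\beta$ with boundary datum $e^{\xi^2 s}\hat g(\xi,s)$. The classical double-layer (Duhamel) formula for that problem has kernel $K(z,t-s)=-2\partial_z G(z,t-s)$, where $G$ is the one-dimensional Gaussian. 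Transforming back gives \eqref{eq:5.2}. Uniqueness in the class of solutions decaying as $z\to\infty$ follows from the zero-data energy estimate, for instance Lemma \ref{lem:zero-data-weighted-heat} applied to the difference of two solutions with $H=0$.

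\emph{Similarity profile \eqref{eq:5.4}--\eqref{eq:5.5}.} Take $c$ constant, i.e.\ drop the horizontal semigroup. Substitute $s=t\sigma$ into \eqref{eq:5.2}. This gives exactly $t^\lambda\Phi_\lambda(z/\sqrt t)$ with $\Phi_\lambda$ as in \eqref{eq:5.4}, because $K(z,t(1-\sigma))\,t\,d\sigma$ scales as stated. Since $t^\lambda\Phi_\lambda(z/\sqrt t)$ solves the one-dimensional heat equation, inserting the ansatz yields the ODE \eqref{eq:5.5}. The boundary value $\Phi_\lambda(0)=1$ follows from $\int_0^t K(z,r)\,dr\to1$ as $z\downarrow0$. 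The decay of $\Phi_\lambda$ at infinity is immediate from \eqref{eq:5.4}.

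\emph{Moment identity \eqref{eq:5.6}.} Integrate \eqref{eq:5.4} in $\eta$ first. For each fixed $\sigma$,
\[
\int_0^\infty \frac{\eta}{2\sqrt\pi}(1-\sigma)^{-3/2}e^{-\eta^2/[4(1-\sigma)]}\,d\eta=\frac{(1-\sigma)^{-1/2}}{\sqrt\pi}.
\]
Hence
\[
\int_0^\infty\Phi_\lambda=\pi^{-1/2}B\bigl(\lambda+1,\tfrac12\bigr)=\frac{\Gamma(\lambda+1)}{\Gamma(\lambda+3/2)} .
\]
Alternatively, integrate the ODE over $(0,\infty)$ and use $\Phi_\lambda'(0)$.

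\emph{Estimate \eqref{eq:5.3}.} With $g=s^\lambda c$, the difference is
\[
b-L=\int_0^t K(z,t-s)\,s^\lambda\bigl(e^{(t-s)\partial_x^2}-I\bigr)c\,ds,
\]
since $\int_0^t K(z,t-s)\,s^\lambda\,ds=L/c$. The semigroup error satisfies
\[
\bigl\|\bigl(e^{r\partial_x^2}-I\bigr)c\bigr\|_{H^m_x}\le r\,\|c\|_{H^{m+2}_x},
\]
which follows on the Fourier side from $|e^{-r\xi^2}-1|\le r\xi^2$. Minkowski's inequality then gives
\[
\sup_z\|b-L\|_{H^m_x}\le\|c\|_{H^{m+2}_x}\sup_{z}\int_0^t K(z,t-s)\,(t-s)\,s^\lambda\,ds .
\]
After the scaling $s=t\sigma$, the last integral equals $t^{\lambda+1}\Psi(z/\sqrt t)$, where
\[
\Psi(\eta)=\int_0^1 \frac{\eta}{2\sqrt\pi}(1-\sigma)^{-1/2}\sigma^\lambda e^{-\eta^2/[4(1-\sigma)]}\,d\sigma .
\]

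\emph{Main obstacle.} The step needing care is the uniform bound $\sup_\eta\Psi(\eta)<\infty$, with the supremum over $z$ taken outside the time integral. It holds because
\[
\eta\,(1-\sigma)^{-1/2}e^{-\eta^2/4(1-\sigma)}\le C
\]
uniformly in $\eta$ and $\sigma$, so that $\Psi(\eta)\le C\int_0^1\sigma^\lambda\,d\sigma$. The Minkowski step is justified because $K\ge0$. This gives \eqref{eq:5.3} with a constant $C_{\lambda,m}$ independent of $T$.
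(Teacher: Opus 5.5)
Your proposal is correct and follows essentially the same route as the paper. Both arguments use the Poisson-kernel representation, the scaling $s=t\sigma$ to identify $\Phi_\lambda$, Minkowski's inequality with the semigroup error bound $\|(e^{r\partial_x^2}-I)c\|_{H_x^m}\le r\|c\|_{H_x^{m+2}}$ (which yields the same auxiliary profile $\Psi$), and Fubini for the moment identity \eqref{eq:5.6}. The differences are cosmetic. The paper proves the semigroup bound via $e^{r\partial_x^2}c-c=r\int_0^1 e^{\theta r\partial_x^2}\partial_x^2c\,d\theta$ and contractivity rather than on the Fourier side, bounds $\Psi$ using $e^{-\eta^2/[4(1-\sigma)]}\le e^{-\eta^2/4}$ rather than $\sup_u ue^{-u^2/4}$, and obtains $\Phi_\lambda(0)=1$ and the decay from the explicit form $\Phi_\lambda(\eta)=\frac{2}{\sqrt\pi}\int_{\eta/2}^\infty(1-\eta^2/(4s^2))^\lambda e^{-s^2}\,ds$ rather than an approximate-identity argument.
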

\begin{proof}
    The theory of the two-dimensional heat equation yields the Dirichlet boundary Poisson kernel
    $$
    K(z, r)=\frac{z}{2 \sqrt{\pi} r^{3 / 2}} e^{-z^2 /(4 r)},
    $$
    and hence \eqref{eq:5.2}. Putting $s=t \sigma, z=\sqrt{t} \eta$ gives
    $$
    \int_{0}^{t}K(z,t-s)s^{\lambda}ds=\frac{\eta t^{\lambda}}{2\sqrt{\pi}} \int_{0}^{1} \sigma^{\lambda}(1-\sigma)^{-\frac{3}{2}}e^{-\frac{\eta^{2}}{4(1-\sigma)}}d \sigma.
    $$
    Thus, we obtain \eqref{eq:5.4}.
    Noting that
    $$
    e^{(t-s) \partial_x^2}c(x)-c(x)=\int_{0}^{t-s}\frac{d}{d\rho}e^{\rho\partial_{x}^{2}}c(x)d\rho=(t-s)\int_{0}^{1}e^{\theta(t-s)\partial_{x}^{2}}\partial_{x}^{2}c(x)d\theta.
    $$
    Then, we have
    $$
    b(x,z,t)-L(x,z,t)=\int_{0}^{t}K(z,t-s)s^{\lambda}(t-s)\int_{0}^{1}e^{\theta(t-s)\partial_{x}^{2}}\partial_{x}^{2}c(x)d\theta ds
    $$
    Putting $s=t \sigma, z=\sqrt{t} \eta$ gives
    $$
    b(x,z,t)-L(x,z,t)=t^{\lambda+1}\frac{\eta}{2\sqrt{\pi}}\int_{0}^{1}\sigma^{\lambda}(1-\sigma)^{-\frac{1}{2}} e^{-\frac{\eta^{2}}{4(1-\sigma)}} \int_{0}^{1}e^{\theta t (1-\sigma)\partial_{x}^{2}} \partial_{x}^{2} c(x) d\theta d\sigma. 
    $$
    By the contractivity of the heat semigroup, we obtain
    $$
    \|e^{\theta t (1-\sigma) \partial_{x}^{2}} \partial_{x}^{2}c(x)\|_{H_{x}^{m}} \leq \|\partial_{x}^{2}c(x)\|_{H_{x}^{m}} \leq \|c\|_{H_{x}^{m+2}},\quad \forall t\geq0.
    $$
    Thus, Using Minkowski’s inequality for integrals, we have
    $$
    \|(b-L)(x,z,t)\|_{H_{x}^{m}} \leq t^{\lambda+1} \Psi_{\lambda}(\eta)\|c\|_{H_{x}^{m+2}},\quad \Psi_{\lambda}(\eta)= \frac{\eta}{2\sqrt{\pi}} \int_{0}^{1} \sigma^{\lambda}(1-\sigma)^{-\frac{1}{2}} e^{-\frac{\eta^{2}}{4(1-\sigma)}} d\sigma
    $$
    Noting that 
    $$
    | \Psi_{\lambda}(\eta) | \leq \frac{\eta}{2\sqrt{\pi}} \int_{0}^{1} (1-\sigma)^{-\frac{1}{2}}e^{-\frac{\eta^{2}}{4}}d\sigma \leq \frac{\eta e^{-\frac{\eta^{2}}{4}}}{\sqrt{\pi}}.
    $$
    Thus, we have
    $$
    \sup_{\eta \geq 0} |\Psi_{\lambda}(\eta)| < +\infty, \quad \sup_{z\geq 0} \|(b-L)(x,z,t)\|_{H_{x}^{m}} \leq C t^{\lambda+1}\|c\|_{H_{x}^{m+2}},
    $$
    which is \eqref{eq:5.3}. Since $L(x,z,t)=c(x)\int_{0}^{t}K(z,t-s)s^{\lambda}ds$, it satisfies
    $$
    \partial_{t}L(x,z,t)=\partial_{z}^{2}L(x,z,t).
    $$
    A direct calculation yields:
    $$
    \Phi_\lambda^{\prime \prime}+\frac{\eta}{2} \Phi_\lambda^{\prime}-\lambda \Phi_\lambda=0.
    $$
    With the successive changes of variables $q=1-\sigma$ and $\frac{\eta}{2\sqrt{q}}=s$, we have
    $$
    \Phi_{\lambda}(\eta)=\frac{2}{\sqrt{\pi}}\int_{\eta/2}^{\infty}(1-\frac{\eta^{2}}{4s^{2}})^{\lambda} e^{-s^{2}}ds.
    $$
    Thus, we obtain $\Phi_{\lambda}(0)=1$. Noting that
    $$
    0\leq \Phi_{\lambda}(\eta) \leq \frac{2}{\sqrt{\pi}} \int_{\eta/2}^{\infty} e^{-s^{2}}ds, 
    $$
    Thus, we obtain $\Phi_{\lambda}(\infty)=0$. A direct calculation yields:
    $$
    \begin{aligned}
        \int_{0}^{\infty}\Phi_{\lambda}(\eta)d\eta &=\frac{1}{2\sqrt{\pi}} \int_{0}^{\infty}\int_{0}^{1} \eta \sigma^{\lambda}(1-\sigma)^{-\frac{3}{2}}e^{-\frac{\eta^{2}}{4(1-\sigma)}} d\sigma d\eta \\[0.2cm]
        &=\frac{1}{2\sqrt{\pi}} \int_{0}^{1}\sigma^{\lambda}(1-\sigma)^{-\frac{3}{2}} \left(\int_{0}^{\infty}\eta e^{-\frac{\eta^{2}}{4(1-\sigma)}} d\eta \right) d\sigma \\[0.2cm]
        &=\frac{1}{\sqrt{\pi}}\int_{0}^{1}\sigma^{\lambda}(1-\sigma)^{-\frac{1}{2}}d\sigma=\frac{\Gamma(\lambda+1)}{\Gamma(\lambda+3 / 2)}.
    \end{aligned}
    $$
    The proof is complete.
\end{proof}

\begin{corollary}
    The solution of \eqref{eq:5.4} for $\lambda=1$ is
    $$
    \Phi_1(\eta)=\left(1+\frac{\eta^2}{2}\right) \operatorname{erfc}\left(\frac{\eta}{2}\right)-\frac{\eta}{\sqrt{\pi}} \mathrm{e}^{-\eta^2 / 4} .
    $$
    It satisfies
    $$
    \Phi_1^{\prime}(\eta)=\eta \operatorname{erfc}\left(\frac{\eta}{2}\right)-\frac{2}{\sqrt{\pi}} \mathrm{e}^{-\eta^2 / 4}, \quad \Phi_1^{\prime \prime}(\eta)=\operatorname{erfc}\left(\frac{\eta}{2}\right),
    $$
    and
    $$
    \int_0^{\infty} \Phi_1(\eta) \mathrm{d} \eta=\frac{4}{3 \sqrt{\pi}}.
    $$
\end{corollary}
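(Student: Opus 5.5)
The corollary has three parts: the closed form, the two derivative formulas, and the integral. I will verify each by direct calculus, taking the characterization \eqref{eq:5.5} with $\lambda=1$ as the definition of $\Phi_1$, namely
\[
\Phi_1''+\tfrac{\eta}{2}\Phi_1'-\Phi_1=0,\qquad \Phi_1(0)=1,\qquad \Phi_1(\infty)=0.
\]
Since this is a second-order linear ODE with a decay condition at infinity, it suffices to show that the displayed function solves it and meets both boundary conditions.

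For the derivatives I will use $\frac{d}{d\eta}\operatorname{erfc}(\eta/2)=-\frac{1}{\sqrt\pi}e^{-\eta^2/4}$.

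\textbf{First derivative.} Differentiating the product term gives
\[
\eta\operatorname{erfc}(\eta/2)-\frac{1}{\sqrt\pi}\Bigl(1+\frac{\eta^2}{2}\Bigr)e^{-\eta^2/4}.
\]
Differentiating $-\frac{\eta}{\sqrt\pi}e^{-\eta^2/4}$ gives $-\frac{1}{\sqrt\pi}e^{-\eta^2/4}+\frac{\eta^2}{2\sqrt\pi}e^{-\eta^2/4}$. The $\eta^2$ terms cancel, leaving
\[
\Phi_1'=\eta\operatorname{erfc}(\eta/2)-\frac{2}{\sqrt\pi}e^{-\eta^2/4}.
\]

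\textbf{Second derivative.} Differentiating once more gives
\[
\operatorname{erfc}(\eta/2)-\frac{\eta}{\sqrt\pi}e^{-\eta^2/4}+\frac{\eta}{\sqrt\pi}e^{-\eta^2/4}=\operatorname{erfc}(\eta/2).
\]

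\textbf{The ODE.} Substituting these into the equation:
\[
\Phi_1''+\tfrac{\eta}{2}\Phi_1'=\Bigl(1+\tfrac{\eta^2}{2}\Bigr)\operatorname{erfc}(\eta/2)-\frac{\eta}{\sqrt\pi}e^{-\eta^2/4}=\Phi_1.
\]

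\textbf{Boundary conditions.} At $\eta=0$ we have $\Phi_1(0)=\operatorname{erfc}(0)=1$. As $\eta\to\infty$, the asymptotic $\operatorname{erfc}(r)\sim e^{-r^2}/(r\sqrt\pi)$ makes every term Gaussian-decaying, so $\Phi_1(\infty)=0$.

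\textbf{Uniqueness.} The homogeneous ODE has a second, growing solution $1+\eta^2/2$: its second derivative is $1$ and $\tfrac{\eta}{2}\cdot\eta=\eta^2/2$, so the sum is $1+\eta^2/2$. Any solution is a combination $A(1+\eta^2/2)+B\Phi_1$, and decay at infinity forces $A=0$; the condition at $0$ then gives $B=1$. Hence the displayed $\Phi_1$ is the function $\Phi_\lambda$ from \eqref{eq:5.4} with $\lambda=1$, which the preceding lemma shows satisfies \eqref{eq:5.5}.

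\textbf{The integral.} Apply \eqref{eq:5.6} with $\lambda=1$, using $\Gamma(2)=1$ and $\Gamma(5/2)=\tfrac34\sqrt\pi$:
\[
\int_0^\infty\Phi_1\,d\eta=\frac{\Gamma(2)}{\Gamma(5/2)}=\frac{4}{3\sqrt\pi}.
\]
As a cross-check, integrate the ODE over $(0,\infty)$. Integrating by parts, $\int_0^\infty\tfrac{\eta}{2}\Phi_1'\,d\eta=-\tfrac12\int_0^\infty\Phi_1\,d\eta$, since the boundary terms vanish by decay. This gives $-\Phi_1'(0)=\tfrac32\int_0^\infty\Phi_1$. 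Because $\Phi_1'(0)=-2/\sqrt\pi$, this again yields $\int_0^\infty\Phi_1=\frac{4}{3\sqrt\pi}$.

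None of these steps is a real obstacle. The only point needing care is the uniqueness of the decaying solution, and identifying the explicit growing solution $1+\eta^2/2$ settles it.
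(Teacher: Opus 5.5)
Your proof is correct, but it runs in the opposite direction from the paper's. The paper derives the closed form directly from the integral representation \eqref{eq:5.4}. The substitution $s=\eta/(2\sqrt{1-\sigma})$ turns $\Phi_1$ into $\frac{2}{\sqrt\pi}\int_{\eta/2}^\infty\bigl(1-\frac{\eta^2}{4s^2}\bigr)e^{-s^2}\,ds$, and one integration by parts on the $s^{-2}e^{-s^2}$ term produces the $\operatorname{erfc}$ and Gaussian terms. The derivative formulas then follow by differentiation, and the integral from \eqref{eq:5.6}.

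You instead verify a posteriori that the given formula solves \eqref{eq:5.5} with both boundary conditions. You then identify it with \eqref{eq:5.4} through uniqueness of the decaying solution, which you settle neatly by exhibiting the growing solution $1+\eta^2/2$.

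The paper's route is constructive: it is how one would find the formula, and it needs no ODE theory. Yours needs the formula in hand. It also relies on the lemma's claim that \eqref{eq:5.4} satisfies \eqref{eq:5.5}, plus a uniqueness statement that the paper's direct computation never has to invoke. In exchange, your argument supplies that uniqueness for $\lambda=1$, which the paper implicitly uses later when it calls $\Phi_2$ ``the'' decaying solution. Your cross-check $-\Phi_1'(0)=\tfrac32\int_0^\infty\Phi_1$ also gives the integral without Gamma functions.

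The same verification scheme would handle the closed form of $\Phi_2$ in the later remark, with growing solution $1+\eta^2+\eta^4/12$.
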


\begin{proof}
    Upon making the change of variables $s=\frac{\eta}{2\sqrt{1-\sigma}}$ and integrating by parts, we obtain the expression for $\Phi_{1}(\eta)$. The stated properties of $\Phi_{1}(\eta)$ then follow directly by differentiation and \eqref{eq:5.6}.
\end{proof}

\begin{lemma}[The finite heat-lift bounds]
\label{lem:heat-lift-scale}
Let $s\geq0$ be an integer, $g\in C([0,T];H_x^s)$, $g(0)=0$, and $c=g_t-g_{xx}\in C([0,T];H_x^s)$, with the identity understood in distributions. For the heat lifting in \eqref{eq:5.1}, every fixed polynomial weight $\ell\geq0$ and $0\leq t\leq\min\{1,T\}$ satisfy
\begin{equation}
\label{eq:5.7}
    \begin{aligned}
    \|\langle z\rangle^\ell\partial_z^jb(t)\|_{H_x^sL_z^2}
       &\leq C_{s,\ell}\sup_{0\leq r\leq T}\|c(r)\|_{H_x^s}\,t^{5/4-j/2},
          \quad j=0,1,2,\\[0.2cm]
    \|\langle z\rangle^\ell z\partial_zb(t)\|_{H_x^sL_z^2}
       &\leq C_{s,\ell}\sup_{0\leq r\leq T}\|c(r)\|_{H_x^s}\,t^{5/4}.
    \end{aligned}
\end{equation}
If, in addition, $c\in C([0,T];H_x^{s+1})$, set $M=\sup_{0\leq r\leq T}\|c(r)\|_{H_x^{s+1}}$. Then
\begin{equation}
\label{eq:5.8}
    \begin{gathered}
    \sum_{j=0}^s\left\|\sup_{z\ge0}\left|\partial_x^j
    \int_0^z  \partial_{x}b(x,r,t)\,dr\right|\right\|_{L_x^2}
     \le C_s M t^{3/2},
    \end{gathered}
\end{equation}
\begin{equation}
\label{eq:5.9}
    \begin{gathered}
        \left(\int_0^t\|\langle z\rangle^\ell\nabla b\|_{H_x^sL_z^2}^2\,dr\right)^{1/2}
         +\sqrt t\left(\int_0^t\|\langle z\rangle^\ell\nabla \partial_{z}b\|_{H_x^sL_z^2}^2\,dr\right)^{1/2}
     \le C_{s,\ell}M t^{5/4}.
    \end{gathered}
\end{equation}
\end{lemma}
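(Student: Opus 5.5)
The natural route is to subtract the wall data and work with a zero-boundary problem. Set $\beta(x,z,t)=b(x,z,t)-g(x,t)\varphi(z)$ with the cutoff $\varphi$ of \eqref{eq:1.16}. Then $\beta$ solves
\[
 \beta_t-\Delta\beta=-c(x,t)\varphi(z)+g(x,t)\varphi''(z),\qquad \beta|_{z=0}=0,\qquad \beta(0)=0.
\]
This removes the trace but produces a source of size $O(1)$ in time, which does not give $t^{5/4}$ directly. We therefore argue through the representation \eqref{eq:5.2} instead.

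\textbf{Reduction to a one-dimensional problem.} Write $g(s)=\int_0^s e^{(s-r)\partial_x^2}c(r)\,dr$ (Duhamel, using $g(0)=0$). Inserting this into \eqref{eq:5.2} and using the semigroup property gives
\[
 b(t)=\int_0^t e^{(t-r)\partial_x^2}\, B(z,t-r)\,c(r)\,dr,\qquad B(z,\tau)=\int_0^\tau K(z,\tau-\sigma)\,d\sigma=\operatorname{erfc}\!\Big(\frac{z}{2\sqrt\tau}\Big).
\]
Here $B$ is the one-dimensional heat lifting of the constant datum $1$. Since $e^{(t-r)\partial_x^2}$ is a contraction on $H_x^s$ and commutes with the $z$-operations, Minkowski's inequality yields
\[
 \|\langle z\rangle^\ell\partial_z^jb(t)\|_{H_x^sL_z^2}\le\sup_r\|c(r)\|_{H_x^s}\int_0^t\|\langle z\rangle^\ell\partial_z^jB(\cdot,\tau)\|_{L_z^2}\,d\tau .
\]
By the scaling $z=\sqrt\tau\,\eta$ and $\tau\le1$ (so that $\langle\sqrt\tau\eta\rangle\le\langle\eta\rangle$),
\[
 \|\langle z\rangle^\ell\partial_z^jB(\cdot,\tau)\|_{L^2_z}\le C_\ell\,\tau^{1/4-j/2}.
\]
Integrating in $\tau$ gives $t^{5/4-j/2}$ for $j=0,1$.

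\textbf{The cases $j=2$ and $z\partial_z b$.} For $j=2$ the integrand $\tau^{-3/4}$ is still integrable, so we get $t^{1/4}=t^{5/4-1}$. For $z\partial_zb$, the factor $z\partial_zB=\eta\,\partial_\eta\operatorname{erfc}(\eta/2)$ has $L^2_z$ norm $C\tau^{1/4}$, which gives $t^{5/4}$. This proves \eqref{eq:5.7}.

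\textbf{The primitive bound \eqref{eq:5.8}.} Pass $\partial_x$ onto $c$, which is why $H^{s+1}$ norms enter. Then
\[
 \sup_z\Big|\int_0^z\partial_xb\Big|\le\int_0^t\int_0^\infty B(z,t-r)\,dz\,\big|e^{(t-r)\partial_x^2}\partial_xc(r)\big|\,dr,
\]
and $\int_0^\infty B(z,\tau)\,dz=2\sqrt{\tau/\pi}$. Taking the $L^2_x$ norm after $\partial_x^j$ gives $\int_0^t\tau^{1/2}\,d\tau\,M\lesssim Mt^{3/2}$.

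\textbf{The dissipation bound \eqref{eq:5.9}.} The tangential part is routine. From the same representation, $\|\langle z\rangle^\ell\partial_xb\|_{H_x^sL_z^2}\le M t^{5/4}$, whose square integrated over $(0,t)$ is $O(t^{7/2})\le C t^{5/2}$. The $\partial_z b$ part is $\big(\int_0^t r^{3/2}\,dr\big)^{1/2}=Ct^{5/4}$.

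The term $\sqrt t\,\|\nabla\partial_zb\|_{L^2_t}$ is the main obstacle: $\|\partial_z^2b(r)\|\sim r^{-3/4}$ is not square integrable near $r=0$, so the pointwise bound \eqref{eq:5.7} alone fails. We plan to use the $L^2$-in-time maximal regularity of the one-dimensional Dirichlet heat lifting instead, along the following steps.

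\begin{enumerate}
\item Observe that $\partial_zb$ solves the heat equation with Neumann data $\partial_z(\partial_zb)|_{z=0}=\partial_t b|_{z=0}-\partial_x^2b|_{z=0}=g_t-g_{xx}=c$.
\item Apply the Neumann energy estimate: test the equation for $\langle z\rangle^{2\ell}\partial_zb$ against itself. The boundary term is $\int c\,\partial_zb|_{z=0}\,dx$, which is controlled by the trace inequality \eqref{eq:wall-trace-L2}.
\item Absorb that boundary term into the dissipation using Young's inequality.
\end{enumerate}

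This yields
\[
 \int_0^t\|\langle z\rangle^\ell\nabla\partial_zb\|^2\,dr\lesssim \sup_r\|\partial_zb(r)\|^2+M^2\int_0^t\|\partial_zb\|\,dr ,
\]
with the weight commutator handled by \eqref{eq:5.7}. Inserting $\|\partial_zb(r)\|\lesssim Mr^{3/4}$ bounds the right side by $M^2t^{3/2}$, so $\sqrt t\cdot(\cdots)^{1/2}\lesssim Mt^{5/4}$, which is exactly the required power.

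The $\partial_x\partial_z b$ component of $\nabla\partial_zb$ is handled the same way, using the $H^{s+1}$ assumption on $c$. If the energy route leaves a bad constant, the fallback is to compute $\|\partial_z^2B\|_{L^2_{\tau,z}}$ via the Fourier–Laplace transform in $\tau$, which gives the same $t^{3/4}$ scaling.
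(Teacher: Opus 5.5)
Your proof of \eqref{eq:5.7} and \eqref{eq:5.8} is the paper's argument. The Duhamel formula for $g$ turns \eqref{eq:5.2} into $b(t)=\int_0^t e^{h\partial_x^2}c(t-h)\operatorname{erfc}(z/(2\sqrt h))\,dh$. Everything then follows from Minkowski's inequality, contractivity of the horizontal heat semigroup, the scaling $\|\langle z\rangle^\ell\partial_z^j\operatorname{erfc}(z/(2\sqrt h))\|_{L^2_z}\lesssim h^{1/4-j/2}$ for $h\le 1$, and the kernel mass $2\sqrt{h/\pi}$.

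For \eqref{eq:5.9}, however, the obstacle you single out does not exist; the paper simply integrates \eqref{eq:5.7} in time. The $\tau^{-3/4}$ singularity belongs to the kernel norm $\|\partial_z^2B(\cdot,\tau)\|_{L^2_z}$, not to the solution. After the $\tau$-integration, your own $j=2$ case gives $\|\langle z\rangle^\ell\partial_z^2b(r)\|_{H^s_xL^2_z}\le CMr^{1/4}$. The $j=1$ case applied at regularity $s+1$ gives $\|\langle z\rangle^\ell\partial_x\partial_zb(r)\|_{H^s_xL^2_z}\le CMr^{3/4}$. Hence $\sqrt t\,(\int_0^t\|\langle z\rangle^\ell\nabla\partial_zb\|_{H^s_xL^2_z}^2\,dr)^{1/2}\le CM\sqrt t\,(\int_0^t r^{1/2}\,dr)^{1/2}\le CMt^{5/4}$ immediately. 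The first term of \eqref{eq:5.9} is handled exactly as you did.

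The Neumann energy detour is therefore unnecessary, and its key displayed inequality is wrong, although the route can be repaired. Your bound $M^2\int_0^t\|\partial_zb\|\,dr$ becomes cubic in $M$ once $\|\partial_zb(r)\|\lesssim Mr^{3/4}$ is inserted. Since $b$ is linear in $c$, such a bound cannot yield $M^2t^{3/2}$. The correct treatment of the wall term uses \eqref{eq:wall-trace-L2} and Young's inequality with exponents $4$ and $4/3$:
$\bigl|\int_{\mathbb R}c\,(\partial_zb)(x,0,t)\,dx\bigr|\le\frac14\|\partial_z^2b\|^2+C\|c\|_{L^2_x}^{4/3}\|\partial_zb\|^{2/3}$.
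Its time integral is $CM^2t^{3/2}$. The naive split $\|c\|_{L^2_x}^2+\|(\partial_zb)(\cdot,0)\|_{L^2_x}^2$ would only give $Mt$.

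You would also have to justify the Neumann datum $\partial_z^2b|_{z=0}=c$. This holds only as a limit $z\downarrow0$, since $\partial_z^2\operatorname{erfc}(z/(2\sqrt h))=K(z,h)$ vanishes at $z=0$ for every $h>0$ and concentrates at $h=0$. The direct route avoids both issues.
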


\begin{proof}
    Let $Q(z,h)=\operatorname{erfc}(\frac{z}{2\sqrt{h}})$. Indeed $\partial_{h}\left(e^{h\partial_{x}^{2}}g(x,t-h)\right)=-e^{h\partial_{x}^{2}}c(x,t-h)$ and $\partial_{h}Q(z,h)=K(z,h)$. Thus, we have
    \begin{equation}
        \begin{gathered}
            b(x,z,t)=\int_{0}^{t}e^{h\partial_{x}^{2}}g(x,t-h)K(z,h)dh=\int_{0}^{t}e^{h\partial_{x}^{2}}c(x,t-h)Q(z,h)dh.
        \end{gathered}
    \end{equation}
    Since $Q(z,h)=Q(z/\sqrt h,1)$, we have
    $\partial_z^jQ(z,h)=h^{-j/2}(\partial_\rho^jQ)(z/\sqrt h,1)$. Noting that
    $$
    \begin{aligned}
        \|\left<z\right>^{\ell}\partial_{z}^{j}Q(z,h) \|_{L_{z}^{2}}^{2} &= h^{\frac{1}{2} -j}\int_{0}^{\infty}(1+\rho^{2}h)^{\ell} |\partial_\rho^jQ(\rho,1)|^{2}d\rho \lesssim h^{\frac{1}{2} -j}(1+h^{\ell}) \\[0.2cm]
        \|\left<z \right>^{\ell} z \partial_{z}Q(z,h)\|_{L_{z}^{2}}^{2}&=h^{\frac{1}{2}} \int_{0}^{\infty}(1+\rho^{2}h)^{\ell}\rho^{2}|\partial_\rho Q(\rho,1)|^{2}d \rho \lesssim h^{\frac{1}{2}}(1+h^{\ell}).
    \end{aligned}
    $$
    Minkowski's inequality and the contractivity of the horizontal heat semigroup give
    $$
    \begin{aligned}
    \|\langle z\rangle^\ell\partial_z^jb(t)\|_{H_x^sL_z^2}
       &\leq\int_0^t\|c(t-h)\|_{H_x^s}
                    \|\langle z\rangle^\ell\partial_z^jQ(\cdot,h)\|_{L_z^2}\,dh \leq C_{s,\ell}\sup_{0\leq r\leq T}\|c(r)\|_{H_x^s}\,t^{5/4-j/2},\\[0.2cm]
    \|\langle z\rangle^\ell z\partial_zb(t)\|_{H_x^sL_z^2}
       &\leq\int_0^t\|c(t-h)\|_{H_x^s}
                    \|\langle z\rangle^\ell z\partial_zQ(\cdot,h)\|_{L_z^2}\,dh\leq C_{s,\ell}\sup_{0\leq r\leq T}\|c(r)\|_{H_x^s}\,t^{5/4}.
    \end{aligned}
    $$
    A similar treatment yields that
    $$
    \begin{aligned}
    \sum_{j=0}^s\left\|\sup_{z\ge0}\left|\partial_x^j
    \int_0^z \partial_{x}b(x,r,t)\,dr\right|\right\|_{L_x^2} &\leq \int_{0}^{\infty}\|b(x,z,t)\|_{H_{x}^{s+1}} dz \\[0.2cm]
    &\leq \int_{0}^{t}\|e^{h\partial_{x}^{2}}c(x,t-h)\|_{H_{x}^{s+1}}\|Q(z,h)\|_{L_{z}^{1}} dh \\[0.2cm]
    &\leq C_s M t^{3/2}.
    \end{aligned}
    $$
    Using \eqref{eq:5.7}, we have
    $$
    \|\langle z\rangle^\ell\nabla b\|_{H_x^sL_z^2} \leq C_{s,\ell} M t^{\frac{3}{4}},\quad \|\langle z\rangle^\ell\nabla \partial_{z}b\|_{H_x^sL_z^2} \leq C_{s,\ell} M t^{\frac{1}{4}}. 
    $$
    A direct calculation yields:
    $$
    \left(\int_0^t\|\langle z\rangle^\ell\nabla b\|_{H_x^sL_z^2}^2\,dr\right)^{1/2}
    +\sqrt t\left(\int_0^t\|\langle z\rangle^\ell\nabla \partial_{z}b\|_{H_x^sL_z^2}^2\,dr\right)^{1/2}
     \le C_{s,\ell}M t^{5/4}.
    $$
\end{proof}

\begin{remark}
    By \eqref{eq:4.7}, $a(x,t)=t\alpha_0(x)+O_{H_{x}^{1}}(t^{2})$. The boundary value of the leading layer is $-a$, so the first heat contribution is $-t\alpha_0(x)\Phi_{1}(\frac{z}{\sqrt{t}})$. The corresponding first term in $u_{2}^{b,1}$, obtained from incompressibility and decay at infinity, is $-t^{\frac{3}{2}} \partial_{x}\alpha_0(x)\int_{z / \sqrt{t}}^{\infty} \Phi_1(\sigma) \mathrm{d} \sigma $. At the wall, this leading term equals $-\frac{4}{3 \sqrt{\pi}} t^{3 / 2} \partial_x \alpha_0(x)$. The nonlinear proof that these model terms are the first terms of the actual layer is given in \cref{sec:corner-nonlinear-remainder}.
\end{remark}

\begin{proposition}[Existence and uniqueness for the leading boundary-layer equation]
\label{pro:boundary-layer equation}
Under the assumptions of Theorem \ref{thm:main}, and let $u^{I,0}$ be the solution given by Proposition \ref{prop:outer-flow}. Put
$$
a(x,t)=u_1^{I,0}(x,0,t),
\qquad
\pi(x,t)=\partial_x p^{I,0}(x,0,t).
$$
Then the leading boundary-layer problem \eqref{eq:3.3}  admits a unique solution $u_1^{b,0}$ on $[0,T]$ in the weighted regularity class below. If $g=\varphi a$ and $f=u_1^{b, 0}+g$, then, for every $\ell \geq 0$, 
\begin{equation}
\label{eq:5.11}
    \begin{gathered}
        \sup_{0\leq t \leq T}\sum_{j=0}^{5}\|\left<z \right>^{\ell+5-j}\partial_{x}^{j}f\|_{2}^{2}+\int_{0}^{T}\sum_{j=0}^{5}\|\left<z \right>^{\ell+5-j} \nabla \partial_{x}^{j}f\|_{2}^{2}ds \leq C_{\ell,T}.
    \end{gathered}
\end{equation}
The homogeneous profile belongs to $C([0,T];H_x^5L_{z,\ell}^2)$ and also satisfies
\begin{align}
 \langle z\rangle^\ell\partial_tf&\in L_T^2H_x^4L_z^2,\label{eq:5.25}\\[0.2cm]
 \langle z\rangle^\ell\partial_z^2f&\in L_T^2H_x^4L_z^2,\label{eq:5.26}\\[0.2cm]
 \langle z\rangle^\ell\partial_zf&\in C([0,T];H_x^4L_z^2),\qquad \partial_zf(0)=0.
 \label{eq:5.27}
\end{align}
Moreover, let $b_a$ denote the Dirichlet heat lifting of the wall datum $-a$,
$$
\left\{
\begin{aligned}
&\partial_t b_a-\partial_x^2 b_a-\partial_z^2 b_a=0,\\[0.2cm]
&b_a(x,0,t)=-a(x,t),\\[0.2cm]
&b_a|_{t=0}=0,\qquad b_a\to0\quad (z\to\infty),
\end{aligned}
\right.
$$
and write
\begin{equation}
\label{eq:5.12}
    \begin{gathered}
        u_1^{b,0}=b_a+r_a.
    \end{gathered}
\end{equation}
Then, for every fixed $\ell\ge0$ and $0\le t\le T$,
\begin{equation}
\label{eq:5.13}
    \begin{gathered}
        \|\langle z\rangle^\ell b_a(t)\|_{H_x^2L_z^2}+t^{1/2}
    \|\langle z\rangle^\ell\partial_z b_a(t)\|_{H_x^1L_z^2}
    \le C_{T,\ell}\,t^{5/4},
    \end{gathered}
\end{equation}
whereas the nonlinear remainder satisfies the strictly higher-order bound
\begin{equation}
\label{eq:5.14}
    \begin{gathered}
        \|\langle z\rangle^\ell r_a(t)\|_{H_x^2L_z^2}+t^{1/2}
        \|\langle z\rangle^\ell\partial_z r_a(t)\|_{H_x^1L_z^2}
        \le C_{T,\ell}\,t^{13/4}.
    \end{gathered}
\end{equation}
Finally, the first three tangential moments of the leading
boundary-layer profile satisfy
\begin{equation}
\label{eq:5.15}
    \begin{gathered}
        \left\|\partial_x^b\int_0^\infty u_1^{b,0}(x,z,t)\,dz
        \right\|_{L_x^2} \le C_T t^{3/2},\qquad b=0,1,2.
    \end{gathered}
\end{equation}
All constants depend only on the fixed time interval, the indicated polynomial weight, and the outer-flow bounds furnished by Proposition \ref{prop:outer-flow}.
\end{proposition}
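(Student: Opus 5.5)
We will prove Proposition \ref{pro:boundary-layer equation} by homogenizing the boundary datum and treating the remaining equation as a perturbation of the linear problem \eqref{eq:2.18}.

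\emph{Step 1: reduction to zero boundary data.} Since $\varphi(0)=1$, the function $g=\varphi(z)a(x,t)$ equals $a$ on $z=0$. Hence $f=u_1^{b,0}+g$ vanishes on $z=0$, and $f(0)=0$ because $a(0)=0$. Substituting $u_1^{b,0}=f-g$ into \eqref{eq:3.3} gives
\[
f_t-\Delta f+A\partial_xf+B\partial_zf+Cf+D\int_0^z\partial_xf\,dr=F(f)+G,
\]
with $\mathfrak a=a$ and $\mathfrak b=u_1^{b,0}$. Here $G$ collects the terms built only from $a$ and $\varphi$: $\varphi a_t$, $\varphi a_{xx}$, $\varphi'' a$, the transport terms $z\varphi' a_x a$, $\varphi a a_x$, and the products that are quadratic in $g$. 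By \eqref{eq:4.6}, $G$ lies in $L^2_TH_x^4$ with compact support in $z$.

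\emph{Step 2: fixed point and energy.} Existence and uniqueness will come from a Picard iteration. Freeze $\mathfrak b$ at the previous iterate, solve the linear problem with Lemma \ref{lemma:2.13} ($s=3$, then upgraded), and contract in the low norm $E_{0,\ell}$ on a short time interval. To globalize on $[0,T]$ we need the a priori bound \eqref{eq:5.11}, and this is the main obstacle. The Gronwall coefficient in Lemma \ref{lemma:2.13} contains $\|\mathfrak b\|_{H^5_x}$, which is the unknown itself. So the estimate cannot be quoted directly and must be closed as a nonlinear estimate at order $N=5$.

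The plan for closing it is the following. Test $\partial_x^j$ of the equation with $\langle z\rangle^{2(\ell+5-j)}\partial_x^jf$. The top nonlinear term $u_1^{b,0}\partial_x^{6}f$ cancels against $-\int_0^z\partial_x u_1^{b,0}\,\partial_z$ through the divergence structure $\partial_xA+\partial_zB=0$, exactly as in the proof of Lemma \ref{lemma:2.13}. The remaining commutators are bounded by $E_{5,\ell}^{1/2}D_{5,\ell}$. For the nonlocal term we use the bound on $\int_0^z\partial_x h$ together with horizontal dissipation. Where the velocity factor lacks the needed $L^\infty$ control, we first bound the shear $q=\partial_zv$ via its transport equation, which has no stretching term (Section~\ref{sec:main-ideas}), and then recover $\|\partial_z u\|_{L^\infty}$ from it. This gives a differential inequality whose coefficient is $L^1$ in time by the lower-order energy, obtained inductively in $N$. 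Once \eqref{eq:5.11} holds, \eqref{eq:5.25}--\eqref{eq:5.27} follow from the second half of the proof of Lemma \ref{lemma:2.13}: the equation is a zero-data Dirichlet heat equation with right-hand side in $L^2_TH_x^4L^2_{z,\ell}$, and Lemma \ref{lem:zero-data-weighted-heat} applies.

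\emph{Step 3: short-time structure.} Estimate \eqref{eq:5.13} is Lemma \ref{lem:heat-lift-scale} applied with $g=-a$ and $c=-(a_t-a_{xx})=\overline{\partial_xp^{I,0}}+aa_x\in C_TH_x^3$. This gives the rate $t^{5/4}$, and the $t^{1/2}$-weighted $\partial_z$ norm gives $t^{3/4}\cdot t^{1/2}$. The remainder $r_a$ satisfies the zero-data problem \eqref{eq:2.18} with source
\[
-\bigl[(a+u_1^{b,0})\partial_xb_a+b_a\partial_xa-(z\partial_xa+\textstyle\int_0^z\partial_xu_1^{b,0})\partial_zb_a\bigr].
\]
Using $\|a\|_{H^3_x}\le Ct$, the bounds \eqref{eq:5.13}, \eqref{eq:5.8}, and $\|u_1^{b,0}\|\lesssim t^{5/4}$ (bootstrapped from $b_a$), each source term is $O(t^{1+3/4})$ or better in $H^1_xL^2_{z,\ell+c}$. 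The linear energy for Lemma \ref{lemma:2.13}, taken with zero data, then gives $\int_0^t$ of the square of this source, so $\|r_a\|\lesssim t^{11/4}\cdot t^{1/2}$. A refinement is needed to reach $t^{13/4}$, which corresponds to a source of size $t^{11/4}$ pointwise in time: we use the $z$-derivative heat estimate of Lemma \ref{lem:zero-data-weighted-heat} with the weighted $t$-factors on the source (source $\sim t\cdot t^{3/4}=t^{7/4}$ in $L^2$, integrated over $[0,t]$ against the heat smoothing, combined with the extra factor $t$ from $a$). This is the delicate bookkeeping and should be checked term by term. Finally, \eqref{eq:5.15} follows by splitting $u_1^{b,0}=b_a+r_a$. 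For $b_a$, \eqref{eq:5.8}-type $L^1_z$ bounds give $t\cdot\|Q\|_{L^1_z}\sim t^{3/2}$. For $r_a$, $\int_0^\infty|r_a|\,dz\le C\|\langle z\rangle r_a\|_{L^2_z}$ gives a higher-order contribution.
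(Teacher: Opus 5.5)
Your architecture (homogenizing with $f=u_1^{b,0}+\varphi a$, the stretching-free shear equation, tangential induction, and the split $u_1^{b,0}=b_a+r_a$) matches the paper's. However, Step~3 does not prove \eqref{eq:5.14}, and your own power count shows why.

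First, leaving $u_1^{b,0}$ in the source makes it depend on the unknown $r_a$. You then need $\|u_1^{b,0}\|\lesssim t^{5/4}$, which is a consequence of what you are proving. The paper instead moves $r_a\partial_xb_a$ and $\partial_zb_a\int_0^z\partial_xr_a\,dr$ to the left-hand side. The source $F=-b_a\partial_xa-(a+b_a)\partial_xb_a+\bigl(z\partial_xa+\int_0^z\partial_xb_a\,dr\bigr)\partial_zb_a$ then involves only $a$ and $b_a$.

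More importantly, $F$ is $O(t^{9/4})$ in $H_x^2L_{z,\ell}^2$ at each time, not $O(t^{7/4})$. In $z\partial_xa\,\partial_zb_a$ the factor $z$ gains $t^{1/2}$, because the lifting lives on $z\lesssim\sqrt t$. This is the second line of \eqref{eq:5.7}, $\|\langle z\rangle^\ell z\partial_zb_a\|_{H_x^sL_z^2}\lesssim t^{5/4}$. Likewise $\|b_a\|_{H_x^3L_z^\infty}\lesssim t$, and $\|\int_0^z\partial_xb_a\,dr\|_{H_x^2L_z^\infty}\lesssim t^{3/2}$ by \eqref{eq:5.8}.

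The energy inequality must then be used at the norm level, $\frac{d}{dt}E_{2,\ell}[r_a]^{1/2}\le K(t)E_{2,\ell}[r_a]^{1/2}+\|\langle z\rangle^{\ell+2}F\|_{H_x^2L_z^2}$ with $K\in L^1(0,T)$. The source then enters as $\int_0^ts^{9/4}\,ds\sim t^{13/4}$. The $L_t^2$ form of the estimate in Lemma~\ref{lemma:2.13} that you invoke loses $t^{1/2}$; it gives only $t^{11/4}$ even with the correct source. From a $t^{7/4}$ source, no energy argument can beat $\int_0^ts^{7/4}\,ds\sim t^{11/4}$. Your sketched refinement reuses the factor $t$ from $a$, which is already inside $t\cdot t^{3/4}$.

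You also give no argument for the pointwise gradient half of \eqref{eq:5.14}. The paper tests with $\langle z\rangle^{2\ell}\partial_tr_a$ (and its $x$-derivative), starting from a time $\tau\in[t/2,t]$ with $\|\langle z\rangle^\ell\nabla r_a(\tau)\|^2\le\frac2t\int_{t/2}^tD_{2,\ell}[r_a]\,ds$. This converts $\int_0^tD_{2,\ell}[r_a]\,ds\lesssim t^{13/2}$ into $\|\langle z\rangle^\ell\nabla r_a(t)\|^2\lesssim t^{11/2}$.

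Step~2 has a gap as well. Lemma~\ref{lemma:2.13} requires $\mathfrak b\in C([0,T];H_x^5L_{z,\ell+s+3}^2)$ and $\partial_z\mathfrak b\in L^\infty(0,T;H_x^4L_{z,\ell+s+3}^2)\cap L^2(0,T;H_x^5L_{z,\ell+s+3}^2)$, but it returns only $s\le3$ tangential derivatives. A Picard iterate obtained with frozen $\mathfrak b$ is therefore not an admissible coefficient at the next step, and ``then upgraded'' is the whole difficulty. Since the lemma has zero data, it does not serve for continuation from a later endpoint either.

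The paper uses a semilinear scheme instead. Only the explicit affine drift $v^{(0)}=(1-\varphi)a$, $w^{(0)}=-\bigl(z-\int_0^z\varphi\,dr\bigr)a_x$ stays on the left. The terms $ff_x$, $\bigl(\int_0^zf_x\,dr\bigr)f_z$ and the $g$-cross terms go into a source norm with one fewer tangential derivative, which the tangential dissipation recovers. Smallness comes from $\delta_\tau\to0$, which uses $a(0)=0$. Continuation from $t_*$ subtracts the Dirichlet heat evolution of $f(t_*)$.

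Likewise, commutators bounded by $E_{5,\ell}^{1/2}D_{5,\ell}$ close only for small data. To get \eqref{eq:5.11} on an arbitrary $[0,T]$, the differential inequality must be linear in $E_{N,\ell}$. Its coefficients come from the previous level at a higher weight ($E_{N-1,\ell+2}$, $D_{N-1,\ell+2}$) and from $\|\langle z\rangle^{\ell+N+1}q\|_{L^4}^4\in L^1(0,T)$. That $L^4$ bound is exactly what the weighted shear energy plus Ladyzhenskaya gives. Combined with Hardy's inequality for $\int_0^z\partial_x^{j+1}f\,ds$, it handles the nonlocal term, so an $L^\infty$ bound on $\partial_zu$ is not needed.
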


\begin{proof}
    Set
    $$
    u=u_1^{b, 0}, \quad v=a+u, \quad w=-\int_0^z \partial_x v(x, s, t) \mathrm{d} s, \quad q=\partial_z v=\partial_z u
    $$
    The wall equation for the outer flow and $\partial_y u_2^{I, 0}|_{y=0}=-\partial_x a$ transform \eqref{eq:3.3} into
    \begin{equation}
    \label{eq:5.16}
        \begin{gathered}
            v_t-\Delta_{x, z} v+v \partial_x v+w \partial_z v=-\pi, \quad \partial_x v+\partial_z w=0,\left.\quad v\right|_{z=0}=0,\quad v|_{t=0}=0.
        \end{gathered}
    \end{equation}
    Differentiating in $z$ gives the exact no-stretching equation
    \begin{equation}
    \label{eq:5.17}
        \begin{gathered}
            q_t-\Delta_{x, z} q+v \partial_x q+w \partial_z q=0,\left.\quad \partial_z q\right|_{z=0}=\pi,\left.\quad q\right|_{t=0}=0 .
        \end{gathered}
    \end{equation}
    $f=u+\varphi a$ transform \eqref{eq:3.3} into
    \begin{equation}
    \label{eq:5.18}
        \begin{gathered}
            f_t-\Delta f+v \partial_x f+w \partial_z f+\left(\partial_x a\right) f=F_g,\quad f(x,0,t)=0,\quad f(x,z,0)=0,
        \end{gathered}
    \end{equation}
    where
    $$
    F_g=g_t-\Delta g+g \partial_x a+v \partial_x g+w \partial_z g .
    $$
    \textit{Local construction.} We first construct a solution near $t=0$. The estimates in this paragraph use a fixed polynomial weight; higher weights will be propagated by the a priori estimates below. For $0<\tau\leq1$, set
    $$
    \begin{aligned}
    \|h\|_\tau^2={}&\sup_{0\leq t\leq\tau}
       \bigl(E_{5,2}[h](t)+E_{4,2}[\partial_zh](t)\bigr)\\[0.2cm]
       &\quad+\int_0^\tau\bigl(D_{5,2}[h]+D_{4,2}[\partial_zh]
                       +E_{4,2}[\partial_th]\bigr)\,dt,\\[0.2cm]
    \|H\|_{\tau,*}^2={}&\int_0^\tau
       \bigl(\|\langle z\rangle^7H\|_2^2+E_{4,2}[H]\bigr)\,dt.
    \end{aligned}
    $$
    The first norm is taken on functions with zero initial and Dirichlet traces, continuous in the two spaces appearing in its supremum. The second norm provides one fewer tangential derivative for the top velocity energy. Define
    $$
    v^{(0)}=(1-\varphi)a,\qquad
    w^{(0)}=-\left(z-\int_0^z\varphi(r)\,dr\right)a_x,
    \qquad
    F_g^{(0)}=g_t-\Delta g+a_xg+v^{(0)}g_x+w^{(0)}g_z.
    $$
    Thus $v_x^{(0)}+w_z^{(0)}=0$ and $w^{(0)}|_{z=0}=0$. The known affine drift is kept on the left of the linear problem
    \begin{equation}
    \label{eq:leading-local-linear}
    \left\{
    \begin{aligned}
    h_t-\Delta h+v^{(0)}h_x+w^{(0)}h_z+a_xh&=H,\\
    h|_{z=0}=0,\qquad h|_{t=0}&=0.
    \end{aligned}
    \right.
    \end{equation}
    Put
    $$
    \delta_\tau=\sqrt\tau+\|a\|_{C([0,\tau];H_x^5)}
       +\|a\|_{L^2(0,\tau;H_x^6)}
       +\|a_t\|_{L^2(0,\tau;H_x^4)}.
    $$
    Proposition \ref{prop:outer-flow} and $a(0)=0$ imply $\delta_\tau\to0$ as $\tau\downarrow0$. For $\delta_\tau$ sufficiently small, the weighted linear estimate is
    \begin{equation}
    \label{eq:leading-local-linear-bound}
       \|h\|_\tau\leq C\|H\|_{\tau,*},
       \qquad \|F_g^{(0)}\|_{\tau,*}\leq C(\delta_\tau+\delta_\tau^2),
    \end{equation}
    with $C$ independent of $\tau$.

    Here are the estimates for \eqref{eq:leading-local-linear-bound}. Test the $j$th tangential equation by $\langle z\rangle^{2(7-j)}\partial_x^jh$, $0\leq j\leq5$. The undifferentiated transport cancels, apart from the derivative of the weight. For $j\geq1$, integrate the source once in $x$. A derivative of the affine coefficient contributes
    $$
       \partial_x^{k+1}a\left(z-\int_0^z\varphi(r)\,dr\right)
                         \partial_z\partial_x^{j-k}h,
       \qquad 1\leq k\leq j.
    $$
    Its factor $z$ is controlled by the weight at index $j-k$. When $j=k=5$, put $\partial_x^6a$ in $L_t^2L_x^2$ and $\langle z\rangle^3h_z$ in $L_t^\infty L_x^\infty L_z^2$; the latter follows from $E_{4,2}[h_z]$. All other coefficient derivatives are controlled by $\|a\|_{CH_x^5}$. Young's inequality gives
    $$
    \sup_{0\leq t\leq\tau}E_{5,2}[h](t)+\int_0^\tau D_{5,2}[h] \,dt
    \leq C\|H\|_{\tau,*}^2+C\delta_\tau\|h\|_\tau^2.
    $$
    Next write $h_t-\Delta h=H-v^{(0)}h_x-w^{(0)}h_z-a_xh$ and apply the weighted zero-data heat estimate at tangential indices $0\leq j\leq4$, with weight $\langle z\rangle^{6-j}$. The extra normal weight in $D_{5,2}[h]$ controls the affine drift. Derivatives of the weights are bounded by the same or lower weights. Consequently,
    $$
    \begin{aligned}
    &\sup_{0\leq t\leq\tau}E_{4,2}[h_z](t)
       +\int_0^\tau\bigl(D_{4,2}[h_z]+E_{4,2}[h_t]\bigr)\,dt\\
    &\quad\leq C\|H\|_{\tau,*}^2+C\int_0^\tau D_{5,2}[h]\,dt
                  +C\delta_\tau^2\|h\|_\tau^2.
    \end{aligned}
    $$
    Substitute the first estimate into the term
    $\int_0^\tau D_{5,2}[h]\,dt$ on the right-hand side of the second estimate. Adding the resulting bounds gives
    \[
     \|h\|_\tau^2\leq C\|H\|_{\tau,*}^2
          +C(\delta_\tau+\delta_\tau^2)\|h\|_\tau^2.
    \]
    Decreasing $\tau$ so that the last coefficient is at most $1/2$ proves
    the first estimate in \eqref{eq:leading-local-linear-bound}. The second follows from the compact support of $g$ and the norms in $\delta_\tau$. The linear solution is obtained first on finite strips with zero Dirichlet traces by Galerkin approximation, followed by passage to the half-plane. These bounds are independent of the strip and approximation parameters, and the weighted heat estimate supplies the stated initial continuity.

    For the nonlinear terms, the weighted product estimates give
    \begin{equation}
    \label{eq:leading-local-products}
    \left\|h k_x\right\|_{\tau,*}
     +\left\|\left(\int_0^z h_x\,dr\right)k_z\right\|_{\tau,*}
       \leq C\|h\|_\tau\|k\|_\tau.
    \end{equation}
    To check the primitive term, use
    $$
    \left\|\int_0^z\partial_x^{j+1}h\,dr\right\|_{L_x^2L_z^\infty}
       \leq C\|\langle z\rangle^2\partial_x^{j+1}h\|_2,
       \qquad 0\leq j\leq4.
    $$
    At $j=4$, place this factor in $L_t^\infty$ and the weighted $k_z$ factor in $L_t^2H_x^1L_z^2$. When fewer derivatives fall on the primitive, use the horizontal Sobolev inequality on that factor instead. The same placements, with the normal Sobolev inequality for the highest derivative of $h$, prove the first term in \eqref{eq:leading-local-products}. The zeroth-order weight $\langle z\rangle^7$ is carried by the differentiated factor $k_x$ or $k_z$ in $D_{5,2}[k]$. The compactly supported lifting also satisfies
    $$
    \left\|h g_x-\left(\int_0^z h_x\,dr\right)g_z\right\|_{\tau,*}
        \leq C\delta_\tau\|h\|_\tau.
    $$

    Starting from $f^{(0)}=0$, solve \eqref{eq:leading-local-linear} with right-hand side
    $$
    H^{(n)}=F_g^{(0)}+f^{(n)}g_x
       -\left(\int_0^z f_x^{(n)}\,dr\right)g_z
       -f^{(n)}f_x^{(n)}
       +\left(\int_0^z f_x^{(n)}\,dr\right)f_z^{(n)}
    $$
    to define $f^{(n+1)}$. Equations \eqref{eq:leading-local-linear-bound}--\eqref{eq:leading-local-products} imply
    $$
    \begin{aligned}
    \|f^{(n+1)}\|_\tau
      &\leq C(\delta_\tau+\delta_\tau^2)
               +C\delta_\tau\|f^{(n)}\|_\tau+C\|f^{(n)}\|_\tau^2,\\[0.2cm]
    \|f^{(n+1)}-f^{(n)}\|_\tau
      &\leq C\bigl(\delta_\tau+\|f^{(n)}\|_\tau+
                    \|f^{(n-1)}\|_\tau\bigr)
                       \|f^{(n)}-f^{(n-1)}\|_\tau.
    \end{aligned}
    $$
    Choose $R>0$ so that $2CR<1/4$, and then choose $\tau$ so that
    $C(\delta_\tau+\delta_\tau^2)\leq R/2$ and $C\delta_\tau<1/4$.
    The iterates stay in the ball $\|f\|_\tau\leq R$, and their differences contract by a factor smaller than $1/2$. Their limit solves \eqref{eq:5.18}, since
    $$
    v=v^{(0)}+f,\qquad w=w^{(0)}-\int_0^z f_x\,dr.
    $$
    This construction uses only the wall norms in Proposition \ref{prop:outer-flow}. At the initial corner, the solution has the energy traces specified above; no time-differentiated Dirichlet condition is imposed.

    \textit{A priori estimates.} We now obtain bounds on every finite time interval. The following calculations can first be made for smooth approximations of the nonlinear solution and then passed to the limit. In particular, the shear equation \eqref{eq:5.17} is used for the nonlinear solution, not for the linear Picard iterates.
    
    Multiplying \eqref{eq:5.18} by $f$ and then integrating by parts, we obtain 
    $$
    \frac{1}{2}\frac{d}{dt}\|f\|_{2}^{2} +\|\nabla f\|_{2}^{2} = -\left\langle(\partial_{x}a)f,f\right\rangle+\left\langle F_{g}, f\right\rangle.
    $$
    Noting that
    $$
    \left\|\int_{0}^{z}\partial_{x}u(x,s,t)ds \right\|_{L^2(\mathbb{R} \times(0,1))} \leq C\left\|\partial_x u\right\|_{L^2(\mathbb{R} \times(0,1))}.
    $$
    On the support of $\varphi^{\prime}$, we have
    $$
    \begin{aligned}
    \left|\left\langle \int_{0}^{z}\partial_{x}u(x,s,t)ds \cdot \varphi^{\prime} a, f\right\rangle\right| & \leq C\|a\|_{L_x^{\infty}}\left(\left\|\partial_x f\right\|_2+\left\|\partial_x g\right\|_2\right)\|f\|_{L^2(\mathbb{R} \times(0,1))} \\[0.2cm]
    & \leq \frac{1}{8}\left\|\partial_x f\right\|_2^2+C\|a\|_{H_x^1}^2\|f\|_2^2 +C\|a\|_{H_x^1}^2 .
    \end{aligned}
    $$
    Thus, we obtain
    $$
    \frac{\mathrm{d}}{\mathrm{~d} t}\|f\|_2^2+\|\nabla f\|_2^2 \leq C\left(1+\|a\|_{H_{x}^{2}}+\|a\|_{H_{x}^{2}}^{2} \right) \|f\|_{2}^{2}+\|\partial_{t}a\|_{2}^{2}+\|a\|_{H_{x}^{2}}^{2},
    $$
    which together with \eqref{eq:4.6} and Gronwall inequality, implies
    \begin{equation}
    \label{eq:5.19}
        \begin{gathered}
            \sup _{0 \leq t \leq T}\|f(t)\|_2^2+\int_0^T\|\nabla f\|_2^2 \mathrm{~d} t \leq C_T .
        \end{gathered}
    \end{equation}
    In particular, $\partial_x u=\partial_x f-\partial_x g \in L^2\left(0, T ; L^2\right)$. Multiplying \eqref{eq:5.17} by $\left\langle z \right\rangle^{2m}q$ and then integrating by parts, we obtain
    $$
    \begin{aligned}
        \frac{1}{2}\frac{d}{dt}\|\left\langle z \right\rangle^{m}q\|_{2}^{2} +\|\left\langle z\right\rangle^{m}\nabla q\|_{2}^{2}=&-\int_{\mathbb{R}}\pi\cdot q|_{z=0} dx -2m \iint \left\langle z\right\rangle^{2m-2}z \cdot\partial_{z}q \cdot q dxdz \\[0.2cm]
        &-\left\langle v\partial_{x}q, \langle z\rangle^{2m}q\right\rangle-\left\langle w\partial_{z}q,\langle z\rangle^{2m}q\right\rangle.
    \end{aligned}
    $$
    Using trace inequality, we have
    $$
    \int_{\mathbb{R}} \pi\cdot q|_{z=0}dx \leq \|q|_{z=0}\|_{L_{x}^{2}}\|\pi\|_{L_{x}^{2}} \leq \frac{1}{8}\|\partial_{z}q\|_{2}^{2}+C\|q\|_{2}^{2}+C\|\pi\|_{L_{x}^{2}}^{2}.
    $$
    Noting that
    $$
    \begin{aligned}
        \left\langle v\partial_{x}q, \langle z\rangle^{2m}q\right\rangle+\left\langle w\partial_{z}q,\langle z\rangle^{2m}q\right\rangle =&m \iint \partial_{x}a\left\langle z\right\rangle^{2m-2} z^{2} q^{2}dxdz \\[0.2cm]
        &+ m \iint\left\langle z\right\rangle^{2m-2}z q^{2} \left(\int_0^z \partial_x u(x, s, t) \mathrm{d} s\right) dxdz.
    \end{aligned}
    $$
    For the first term on the right-hand side, a direct estimate gives:
    $$
    m \iint \partial_{x}a\left\langle z\right\rangle^{2m-2} z^{2} q^{2}dxdz \leq C\|a\|_{H_{x}^{2}} \|\left\langle z\right\rangle^{m}q\|_{2}^{2}
    $$
    For the second term, the 2-D Ladyzhenskaya inequality yields
    $$
    \begin{aligned}
        m \iint\left\langle z\right\rangle^{2m-2}z q^{2} \left(\int_0^z \partial_x u(x, s, t) \mathrm{d} s\right) dxdz &\leq C \|\frac{\int_0^z \partial_x u(x, s, t) \mathrm{d} s}{z}\|_{2}\|\left\langle z\right\rangle^{m}q\|_{4}^{2} \\[0.2cm]
        &\leq \frac{1}{8}\|\left\langle z\right\rangle^{m}\nabla q\|_{2}^{2}+C\left(1+\left\|\partial_{x}u\right\|_{2}^{2}\right) \|\left\langle z\right\rangle^{m}q\|_{2}^{2}.
    \end{aligned}
    $$
    Combining these estimates, we have
    $$
    \frac{d}{dt} \left\|\left\langle z\right\rangle^{m}q\right\|_{2}^{2}+\|\left\langle z\right\rangle^{m}\nabla q\|_{2}^{2}\leq C\left(1+\|a\|_{H_{x}^{2}} +\|\partial_{x}u\|_{2}^{2} \right) \|\left\langle z\right\rangle^{m}q\|_{2}^{2}+C\|\pi\|_{L_{x}^{2}}^{2},
    $$
    which together with \eqref{eq:4.6}, \eqref{eq:5.19} and Gronwall inequality, implies
    \begin{equation}
    \label{eq:5.20}
        \begin{gathered}
            \sup_{0\leq t \leq T} \|\left\langle z\right\rangle^{m}q\|_{2}^{2}+\int_{0}^{T}\|\left\langle z\right\rangle^{m}\nabla q\|_{2}^{2}dt \leq C_{T,m}.
        \end{gathered}
    \end{equation}
    Using 2-D Ladyzhenskaya inequality, we have
    \begin{equation}
    \label{eq:5.21}
        \begin{gathered}
            \int_{0}^{T}\|\left\langle z\right\rangle^{m}q\|_{4}^{4} dt \leq C\int_{0}^{T} \|\left\langle z\right\rangle^{m}q\|_{2}^{2} \|\nabla \left(\left\langle z\right\rangle^{m}q\right)\|_{2}^{2} dt \leq C_{T,m}.
        \end{gathered}
    \end{equation}

    We next establish the higher-order weighted regularity of $f$. The case $N=0$ follows from \eqref{eq:5.20}, since $u=-\int_z^\infty q\,dr$ and
    $$
    \|\langle z\rangle^m u\|_2+\|\langle z\rangle^m\partial_xu\|_2
    \leq C_m\bigl(\|\langle z\rangle^{m+2}q\|_2
                     +\|\langle z\rangle^{m+2}\partial_xq\|_2\bigr).
    $$
    For $N=1$, we use
    $\partial_x\partial_zf=\partial_xq+\varphi'\partial_xa$;
    its squared $L^2$ norm is integrable by \eqref{eq:5.20} and
    Proposition \ref{prop:outer-flow}. For $N\geq2$, the corresponding
    lower derivatives are controlled by the preceding tangential energy.

    For $1\leq N\leq5$, use the abbreviated notation
    \[
    E_{N,\ell}=E_{N,\ell}[f],\qquad D_{N,\ell}=D_{N,\ell}[f].
    \]
    Acting $\partial_{x}^{j}$ on both sides of \eqref{eq:5.18}, then taking $L^{2}$ inner product with $\left\langle z\right\rangle^{2(\ell+N-j)}\partial_{x}^{j}f$, integrate by parts and summing about $0\leq j \leq N$, we obtain
    \begin{equation}
    \label{eq:5.22}
        \begin{gathered}
            \frac{1}{2}\frac{d}{dt}E_{N,\ell}+D_{N,\ell}=\sum_{i=1}^{5}I_{i},
        \end{gathered}
    \end{equation}
    where
    $$
    \begin{gathered}
        I_{1}=-\sum_{j=0}^{N}2(\ell+N-j)\iint \left\langle z\right\rangle^{2(\ell+N-j)-2}z \partial_{x}^{j}\partial_{z}f \cdot \partial_{x}^{j}f dxdz, \\[0.2cm]
        I_{2}=-\sum_{j=0}^{N} \left\langle \partial_{x}^{j}\left(v\partial_{x}f\right),\left\langle z\right\rangle^{2(\ell+N-j)}\partial_{x}^{j}f \right\rangle, \quad I_{3}=-\sum_{j=0}^{N} \left\langle \partial_{x}^{j}\left(w\partial_{z}f\right),\left\langle z\right\rangle^{2(\ell+N-j)}\partial_{x}^{j}f \right\rangle, \\[0.2cm]
        I_{4}=-\sum_{j=0}^{N} \left\langle \partial_{x}^{j}\left((\partial_{x}a)f \right),\left\langle z\right\rangle^{2(\ell+N-j)}\partial_{x}^{j}f \right\rangle,\quad I_{5}=\sum_{j=0}^{N} \left\langle \partial_{x}^{j}F_{g},\left\langle z\right\rangle^{2(\ell+N-j)}\partial_{x}^{j}f \right\rangle.
    \end{gathered}
    $$
    For $I_{1}$, we have
    $$
    I_{1}\leq \frac{1}{16}D_{N,\ell}+CE_{N,\ell}.
    $$
    Since $\partial_xv+\partial_zw=0$, the undifferentiated
    transport terms cancel apart from the derivative of the weight.
    With $I_{34}$ defined below, write
    $$
    I_2=I_{21}+I_{22}-I_{34},
    $$
    where
    $$
    \begin{aligned}
        I_{21}&=-\sum_{j=1}^{N}\sum_{k=0}^{j-1}\binom{j}{k}
        \left\langle (1-\varphi)\partial_x^{j-k}a\,
        \partial_x^{k+1}f,
        \langle z\rangle^{2(\ell+N-j)}\partial_x^jf\right\rangle,\\[0.2cm]
        I_{22}&=-\sum_{j=1}^{N}\sum_{k=0}^{j-1}\binom{j}{k}
        \left\langle \partial_x^{j-k}f\,\partial_x^{k+1}f,
        \langle z\rangle^{2(\ell+N-j)}\partial_x^jf\right\rangle.
    \end{aligned}
    $$
    For $I_{21}$, we have
    $$
    I_{21}\leq C\left(1+\|a\|_{H_{x}^{6}}^{2}\right) E_{N,\ell}.
    $$
    For $I_{22}$, Lemma \ref{lemma:2.2} gives
    $$
    \begin{aligned}
    |I_{22}|\lesssim
    \sum_{j=1}^{N}\sum_{k=0}^{j-1}
    &\|\partial_x^{j-k}f\|^{1/2}
     \|\partial_z\partial_x^{j-k}f\|^{1/2}
     \|\langle z\rangle^{\ell+N-j}\partial_x^{k+1}f\|^{1/2}\\
    &\times\|\langle z\rangle^{\ell+N-j}\partial_x^{k+2}f\|^{1/2}
     \|\langle z\rangle^{\ell+N-j}\partial_x^jf\|.
    \end{aligned}
    $$
    When $N=1$, Young's inequality and
    $\partial_x\partial_zf=\partial_xq+\varphi'\partial_xa$ yield
    $$
    \begin{aligned}
    |I_{22}|
    &\leq \frac1{16}D_{1,\ell}
      +C\|\partial_xf\|^{2/3}\|\partial_x\partial_zf\|^{2/3}E_{1,\ell}\\
    &\leq \frac1{16}D_{1,\ell}
      +C\left(1+D_{0,\ell+2}+\|\partial_xq\|_2^2
                      +\|a\|_{H_x^1}^2\right)E_{1,\ell}.
    \end{aligned}
    $$
    For $2\leq N\leq5$, the same product estimate and the lower-order
    energies give
    $$
    |I_{22}|\leq\frac1{16}D_{N,\ell}
       +C\left(1+E_{N-1,\ell+2}+D_{N-1,\ell+2}\right)
          \left(1+E_{N,\ell}\right).
    $$
    For $I_{3}$, we have
    $$
    I_{3}=I_{31}+I_{32}+I_{33}+I_{34}+I_{35},
    $$
    where
    $$
    \begin{gathered}
        I_{31}=\sum_{j=0}^{N}\sum_{k=0}^{j-1} \binom{j}{k}\left\langle z\partial_{x}^{j-k+1}a\cdot\partial_{z}\partial_{x}^{k}f,\left\langle z\right\rangle^{2(\ell+N-j)} \partial_{x}^{j}f \right\rangle, \\[0.2cm]
        I_{32}=\sum_{j=0}^{N}\sum_{k=0}^{j-1} \binom{j}{k}\left\langle \int_{0}^{z}\partial_{x}^{j-k+1}f(x,s,t)ds \cdot\partial_{z}\partial_{x}^{k}f,\left\langle z\right\rangle^{2(\ell+N-j)} \partial_{x}^{j}f \right\rangle, \\[0.2cm]
        I_{33}=-\sum_{j=0}^{N}\sum_{k=0}^{j-1} \binom{j}{k}\left\langle \int_{0}^{z}\varphi(s)ds\partial_{x}^{j-k+1}a \cdot\partial_{z}\partial_{x}^{k}f,\left\langle z\right\rangle^{2(\ell+N-j)} \partial_{x}^{j}f \right\rangle, \\[0.2cm]
        I_{34}=\sum_{j=0}^{N}\frac{1}{2} \iint \partial_{z}w \left\langle z\right\rangle^{2(\ell+N-j)}(\partial_{x}^{j}f)^{2}dxdz,\\[0.2cm]
        I_{35}=\sum_{j=0}^{N} (\ell+N-j) \iint w \left\langle z\right\rangle^{2(\ell+N-j)-2}z(\partial_{x}^{j}f)^{2}dxdz.
    \end{gathered}
    $$
    For $I_{31}$ and $I_{33}$, a direct calculation yields:
    $$
    I_{31}+I_{33}\leq CD_{N-1,\ell+2} E_{N,\ell} + \|a\|_{H_{x}^{6}}^{2}.
    $$
    For $I_{32}$, we distinguish the following cases. When $k=0$,
    the identity $\partial_zf=q+\varphi'a$ and the compact support of
    $\varphi'$ give
    $$
    \begin{aligned}
    &\sum_{j=1}^{N}\left|
    \left\langle\int_0^z\partial_x^{j+1}f(x,s,t)\,ds\,
       \partial_zf,\langle z\rangle^{2(\ell+N-j)}\partial_x^jf
    \right\rangle\right|\\[0.2cm]
    &\quad\leq C\sum_{j=1}^{N}
       \left\|\frac{\int_0^z\partial_x^{j+1}f\,ds}{z}\right\|_2
       \left(\|z\langle z\rangle^{\ell+N-j}q\|_4
                         +C_{\ell,N}\|a\|_{H_x^1}\right)
       \|\langle z\rangle^{\ell+N-j}\partial_x^jf\|_4\\[0.2cm]
    &\quad\leq\frac1{16}D_{N,\ell}
       +C\left(1+\|\langle z\rangle^{\ell+N+1}q\|_4^4
                         +\|a\|_{H_x^1}^4\right)E_{N,\ell}.
    \end{aligned}
    $$
    For $N\geq 3, 1\leq k\leq j-2$, Cauchy–Schwarz in $z$ and the one-dimensional Sobolev inequality give
    $$
    \begin{aligned}
        &\sum_{j=3}^{N}\sum_{k=1}^{j-2} \left\langle \int_{0}^{z}\partial_{x}^{j-k+1}f(x,s,t)ds \cdot\partial_{z}\partial_{x}^{k}f,\left\langle z\right\rangle^{2(\ell+N-j)} \partial_{x}^{j}f \right\rangle \\[0.2cm]
        &\quad\leq C\sum_{j=3}^{N}\sum_{k=1}^{j-2} \|\partial_{x}^{j-k+1}f\|_{2} \left\|\langle z\rangle^{\ell+N-j+1} \partial_z \partial_x^k f\right\|_{H_x^1 L_z^2}\left\|\langle z\rangle^{\ell+N-j} \partial_x^j f\right\|_2  \\[0.2cm]
        &\quad\leq C\left(1+D_{N-1, \ell+2}\right) E_{N, \ell}.
    \end{aligned}
    $$
    For $N\geq 3, k=j-1$, we have
    $$
    \begin{aligned}
        &\sum_{j=2}^{N}\left\langle \int_{0}^{z}\partial_{x}^{2}f(x,s,t)ds \cdot\partial_{z}\partial_{x}^{j-1}f,\left\langle z\right\rangle^{2(\ell+N-j)} \partial_{x}^{j}f \right\rangle \\[0.2cm]
        & \leq C \sum_{j=2}^N\left\|\partial_x^2 f\right\|_{L_x^{\infty} L_z^2}\left\|\langle z\rangle^{\ell+N-j+1} \partial_z \partial_x^{j-1} f\right\|_2\left\|\langle z\rangle^{\ell+N-j} \partial_x^j f\right\|_2 \\[0.2cm]
        &\leq  C D_{N-1,\ell+2} \left(1+E_{N,\ell}\right)+C E_{N-1,\ell+2}.
    \end{aligned}
    $$
    For $N=2, k=1$, Proceeding as above, we obtain
    $$
    \begin{aligned}
        &\iint \int_{0}^{z}\partial_{x}^{2}f(x,s,t)ds \cdot\partial_{z}\partial_{x}f \cdot \left\langle z\right\rangle^{2\ell} \partial_{x}^{2}f dxdz \\[0.2cm]
        &\leq C\left\|\partial_x^2 f\right\|_{L_x^{\infty} L_z^2}\left\|\langle z\rangle^{\ell+1} \partial_z \partial_x f\right\|_2\left\|\langle z\rangle^{\ell} \partial_x^2 f\right\|_2  \\[0.2cm]
        &\leq \frac{1}{16} D_{2, \ell}+C\left(1+D_{1, \ell+2}\right) E_{2, \ell} ..
    \end{aligned}
    $$
    Thus, we obtain
    $$
    \begin{aligned}
    |I_{32}|\leq\frac{3}{16}D_{N,\ell}
      +C\bigl(1+E_{N-1,\ell+2}+D_{N-1,\ell+2}
      +\|\langle z\rangle^{\ell+N+1}q\|_4^4
      +\|a\|_{H_x^1}^4\bigr)(1+E_{N,\ell}).
    \end{aligned}
    $$
    The terms $-I_{34}$ in $I_2$ and $I_{34}$ in $I_3$ cancel exactly, since $\partial_zw=-\partial_xv$.
    Using Hardy inequality and 2D Ladyzhenskaya inequality, we have
    $$
    \begin{aligned}
        I_{35}&\leq \|a\|_{H_{x}^{2}}E_{N,\ell}+C\sum_{j=0}^{N} \|\frac{\int_{0}^{z}\partial_{x}f(x,s,t)ds }{z}\|_{2} \|\left\langle z\right\rangle^{\ell+N-j} \partial_{x}^{j}f\|_{4}^{2} \\[0.2cm]
        &\leq \|a\|_{H_{x}^{2}}E_{N,\ell}+C\sum_{j=0}^{N} \|\partial_{x}f\|_{2} \|\left\langle z\right\rangle^{\ell+N-j} \partial_{x}^{j}f\|_{2} \|\nabla\left(\left\langle z\right\rangle^{\ell+N-j} \partial_{x}^{j}f\right)\|_{2} \\[0.2cm]
        &\leq \frac{1}{16}D_{N,\ell}+C\left(1+\|a\|_{H_{x}^{2}}+\|\partial_{x}f\|_{2}^{2}\right)E_{N,\ell}. 
    \end{aligned}
    $$
    For $I_{4}$, we have
    $$
    \begin{aligned}
        I_{4}&= \sum_{j=1}^{N}\left\langle \partial_x^{j-1}\left((\partial_xa)f\right),\langle z\rangle^{2(\ell+N-j)}\partial_x^{j+1}f\right\rangle-\left\langle(\partial_xa)f,\langle z\rangle^{2(\ell+N)}f\right\rangle \\[0.2cm]
        &\leq \frac{1}{16}D_{N,\ell}+C\|a\|_{H_{x}^{6}}^{2}E_{N,\ell}.
    \end{aligned}
    $$
    For $I_{5}$, we have
    $$
    \begin{aligned}
        I_{5}&=  -\sum_{j=1}^{N}\left\langle\partial_x^{j-1}F_g,\langle z\rangle^{2(\ell+N-j)}\partial_x^{j+1}f\right\rangle+\left\langle F_g,\langle z\rangle^{2(\ell+N)}f\right\rangle \\[0.2cm]
        &\leq \frac{1}{16}D_{N,\ell}+\|a\|_{H_{x}^{6}}^{2}E_{N,\ell} + \|\partial_{t}a\|_{H_{x}^{4}}^{2}+\|a\|_{H_{x}^{6}}^{2}+\|a\|_{H_{x}^{5}}^{4}.
    \end{aligned}
    $$
    Combining these estimates, for $1\leq N\leq5$ we obtain
    $$
    \begin{aligned}
    \frac{d}{dt}E_{N,\ell}+D_{N,\ell}
    \leq C\Bigl(&1+\|a\|_{H_x^6}^2
       +E_{N-1,\ell+2}+D_{N-1,\ell+2}
       +\|\langle z\rangle^{\ell+N+1}q\|_4^4\\[0.2cm]
       &+\|a\|_{H_x^2}+\|\partial_xf\|_2^2
       +\|a\|_{H_x^5}^4+\|\partial_ta\|_{H_x^4}^2
       +\|\partial_xq\|_2^2\Bigr)(1+E_{N,\ell}).
    \end{aligned}
    $$
    The last coefficient is integrable by \eqref{eq:5.20}; it supplies
    the $N=1$ case. Starting with the weighted $N=0$ estimate above,
    Gronwall's inequality and induction through $N=5$ therefore give
    $$
    \sup_{0\leq t \leq T}\sum_{j=0}^{5}\|\left<z \right>^{\ell+5-j}\partial_{x}^{j}f\|_{2}^{2}+\int_{0}^{T}\sum_{j=0}^{5}\|\left<z \right>^{\ell+5-j} \nabla \partial_{x}^{j}f\|_{2}^{2}ds \leq C_{\ell,T}.
    $$
    \textit{Normal regularity, continuation, and uniqueness.} Write \eqref{eq:5.18} as
    $$
    f_t-\Delta f=F_g-vf_x-wf_z-a_xf.
    $$
    The weighted tangential bounds just proved imply that the right-hand side belongs to $L^2(0,T;H_x^4L_{z,\ell}^2)$. For the highest primitive coefficient, use
    $$
    \left\|\int_0^z\partial_x^5u\,dr\right\|_{L_x^2L_z^\infty}
      \leq C\|\langle z\rangle^2u\|_{H_x^5L_z^2},
    \qquad
    \|\langle z\rangle^{\ell+1}f_z\|_{L_T^2L_x^\infty L_z^2}
      \leq C_{T,\ell},
    $$
    placing the first factor in $L_T^\infty$ and the second in $L_T^2$. The remaining terms follow from the same product estimates and $a_t\in L^2H_x^4$. Lemma \ref{lem:zero-data-weighted-heat} yields \eqref{eq:5.25}--\eqref{eq:5.27}; the tangential energy identity gives $f\in C([0,T];H_x^5L_{z,\ell}^2)$.

    Higher polynomial weights are obtained by applying the same estimates with normal cutoffs and then letting the cutoffs tend to infinity. The local solution is already controlled in the fixed-weight class used above; the additional weighted estimates are linear in the highest weighted energy after the unweighted lower-order factors are fixed. Repeating the finite tangential induction proves the bounds for each prescribed weight. In particular, the estimates with $\ell=3$ and the finitely many higher weights needed in their induction, together with the energy identities and the weighted heat estimate, give strong endpoint traces with finite $E_{5,3}[f]+E_{4,3}[f_z]$. Smooth approximation of the wall history, with $a(0)=0$ and uniformly bounded norms in \eqref{eq:4.6}, justifies these estimates for the wall history of Proposition \ref{prop:outer-flow}.
    
    These bounds also provide continuation. Let $t_*$ be a finite endpoint,
    let $k(s)=e^{s\Delta_D}f(t_*)$ be the Dirichlet heat evolution of its trace,
    and write $f(t_*+s)=k(s)+h(s)$, so that $h(0)=0$.
    The wall coefficients retain their original histories and are evaluated at $t_*+s$.
    The weighted heat estimate with the initial energy retained gives
    $$
    \begin{gathered}
    \sup_{0\leq s\leq\tau}E_{5,3}[k](s)
       +\int_0^\tau D_{5,3}[k](s)\,ds<\infty,\\[0.2cm]
    \int_0^\tau D_{5,3}[k](s)\,ds\longrightarrow0
       \qquad(\tau\downarrow0).
    \end{gathered}
    $$
    The normal-gradient estimate has the same property with initial energy
    $E_{4,3}[\partial_zf(t_*)]$ retained. The known right-hand side after subtracting $k$ is
    $$
    \begin{aligned}
    F_g^{(0)}-v^{(0)}k_x-w^{(0)}k_z-a_xk
       +kg_x-\left(\int_0^zk_x\,dr\right)g_z
       -kk_x+\left(\int_0^zk_x\,dr\right)k_z.
    \end{aligned}
    $$
    In particular, the extra weight controls its affine-drift term:
    $$
    \begin{aligned}
    \|\langle z\rangle^7w^{(0)}k_z\|_{L^2(0,\tau;L^2)}
       &\leq C\|a_x\|_{L^\infty((t_*,t_*+\tau)\times\mathbb R)}
          \|\langle z\rangle^8k_z\|_{L^2(0,\tau;L^2)} \longrightarrow0\qquad(\tau\downarrow0).
    \end{aligned}
    $$
    The other known terms are controlled by the same product estimates,
    with one factor in a bounded supremum norm and the differentiated factor
    in its time-integrable dissipation norm; the wall source uses the local
    $L^2$ norms of $a_t$ and $a$. Thus the source norm tends to zero on the new interval.
    Keep all terms linear in $h$ on the left-hand side. The resulting weighted
    energy estimate uses, at the top primitive term,
    $$
    \begin{aligned}
    \left|\left\langle
       \int_0^z\partial_x^6h\,dr,\,
       \partial_zk\,\langle z\rangle^4\partial_x^5h
       \right\rangle\right| \leq C\|\langle z\rangle^2\partial_x^6h\|_2
       \|\langle z\rangle^2\partial_zk\|_{L_x^\infty L_z^2}
       \|\langle z\rangle^2\partial_x^5h\|_2.
    \end{aligned}
    $$
    Young's inequality places the highest derivative in the dissipation and the bounded lower derivative in the Gronwall coefficient. The normal-gradient estimate is then obtained at the lower tangential indices as above. The remaining terms are quadratic in the zero-initial remainder and satisfy \eqref{eq:leading-local-products}; hence contraction on a smaller ball gives a local extension. This argument requires bounded endpoint norms, not their smallness. The a priori estimates therefore exclude a finite endpoint on $[0,T]$.

    For uniqueness, let $\widetilde u$ be another profile with the same wall history and put $d=u-\widetilde u$. Its equation is
    $$
    \begin{aligned}
    d_t-\Delta d+(a+u)d_x+
       \left(-za_x-\int_0^zu_x\,dr\right)d_z
       +(a_x+\widetilde u_x)d
       -\widetilde u_z\int_0^zd_x\,dr=0,
    \end{aligned}
    $$
    with zero initial and boundary values. The transport has zero divergence and zero normal wall flux. Hardy's and Ladyzhenskaya's inequalities give
    $$
    \begin{aligned}
    \left|\iint\widetilde u_z\left(\int_0^zd_x\,dr\right)d\,dx\,dz\right|
       &\leq2\|d_x\|_2\|z\widetilde u_z\|_4\|d\|_4\\
       &\leq\tfrac12\|\nabla d\|_2^2
               +C\|z\widetilde u_z\|_4^4\|d\|_2^2.
    \end{aligned}
    $$
    Consequently,
    $$
    \frac{d}{dt}\|d\|_2^2+\|\nabla d\|_2^2
       \leq C\bigl(\|a_x\|_\infty+\|\widetilde u_x\|_\infty
                       +\|z\widetilde u_z\|_4^4\bigr)\|d\|_2^2.
    $$
    The coefficient is integrable by \eqref{eq:5.11}, \eqref{eq:5.21}, and \eqref{eq:5.27}. Gronwall's inequality gives $d=0$.

    \textit{Short-time estimates.} Since $\partial_{t}a-\partial_{x}^{2}a \in C([0,T];H^{3})$, an application of Lemma \ref{lem:heat-lift-scale} yields \eqref{eq:5.13}.
    It follows from the equations satisfied by $u_{1}^{b,0}$ and $b_{a}$ that $r_{a}$ satisfies the following equation:
    \begin{equation}
    \label{eq:5.23}
    \left\{\begin{array}{l}
        \partial_{t}r_{a}-\Delta r_{a}+(a+u)\partial_{x}r_{a}-\left(z\partial_{x}a+\int_{0}^{z}\partial_{x}u(x,s,t)ds \right) \partial_{z}r_{a} \\[0.2cm]
        \qquad +\left(\partial_{x}a+\partial_{x}b_{a}\right)r_{a}-\partial_{z}b_{a}\int_{0}^{z}\partial_{x}r_{a}(x,s,t)ds=F(x,z,t), \\[0.2cm]
        \left.r_a\right|_{t=0}=0,\quad r_{a}|_{z=0}=0, \quad r_a \rightarrow 0 \quad(z \rightarrow \infty),
    \end{array}\right.
    \end{equation}
    where
    $$
    F(x,z,t):=-b_{a}\partial_{x}a-(a+b_{a})\partial_{x}b_{a}+\left(z\partial_{x}a+\int_{0}^{z}\partial_{x}b_{a}(x,s,t)ds \right)\partial_{z}b_{a}.
    $$
    A direct calculation yields:
    $$
    \begin{gathered}
        \|\left\langle z\right\rangle^{\ell}b_{a}\partial_{x}a \|_{H_{x}^{2} L_{z}^{2}}+\|\left\langle z\right\rangle^{\ell}a\partial_{x}b_{a} \|_{H_{x}^{2} L_{z}^{2}} \lesssim \|a\|_{H_{x}^{3}}\|\left\langle z\right\rangle^{\ell} b_{a}\|_{H_{x}^{3}L_{z}^{2}} \leq Ct^{\frac{9}{4}}, \\[0.2cm]
        \|\left\langle z\right\rangle^{\ell} b_{a}\partial_{x}b_{a}\|_{H_{x}^{2} L_{z}^{2}} \lesssim \|b_{a}\|_{H_{x}^{3} L_{z}^{\infty}} \|\left\langle z\right\rangle^{\ell} b_{a}\|_{H_{x}^{3} L_{z}^{2}} \lesssim  \|b_a\|_{H_x^3L_z^2}^{1/2} \|\partial_{z}b_{a}\|_{H_{x}^{3} L_{z}^{2}}^{\frac{1}{2}} \|\left\langle z\right\rangle^{\ell} b_{a}\|_{H_{x}^{3} L_{z}^{2}} \leq Ct^{\frac{9}{4}}, \\[0.2cm]
        \|\left\langle z\right\rangle^{\ell}z \partial_{x}a\partial_{z}b_{a} \|_{H_{x}^{2} L_{z}^{2}} \lesssim \|a\|_{H_{x}^{3}} \|\left\langle z\right\rangle^{\ell}z\partial_{z}b_{a} \|_{H_{x}^{3} L_{z}^{2}}\leq Ct^{\frac{9}{4}}, \\[0.2cm]
        \|\left\langle z\right\rangle^{\ell}\int_{0}^{z}\partial_{x}b_{a}(x,s,t)ds \cdot \partial_{z}b_{a} \|_{H_{x}^{2} L_{z}^{2}} \lesssim \|\int_{0}^{z}\partial_{x}b_{a}(x,s,t)ds \|_{H_{x}^{2} L_{z}^{\infty}} \|\left\langle z\right\rangle^{\ell}\partial_{z}b_{a} \|_{H_{x}^{2}L_{z}^{2}} \leq Ct^{\frac{9}{4}}.
    \end{gathered}
    $$
    Thus, we have
    $$
    \|\left\langle z\right\rangle^{\ell}F(x,z,t) \|_{H_{x}^{2} L_{z}^{2}} \leq Ct^{\frac{9}{4}}.
    $$
    Acting $\partial_{x}^{j}$ on both sides of \eqref{eq:5.23}, then taking $L^{2}$ inner product with $\left\langle z\right\rangle^{2(\ell+2-j)}\partial_{x}^{j}r_{a}$, integrate by parts and summing about $0\leq j \leq 2$, we obtain
    $$
    \begin{aligned}
        \frac{d}{dt}E_{2,\ell}[r_{a}]+D_{2,\ell}[r_{a}] &\leq C K_{\ell}(t) E_{2,\ell}[r_{a}] + \|\left\langle z\right\rangle^{\ell+2} F(x,z,t)\|_{H_{x}^{2}L_{z}^{2}}\left(E_{2,\ell}[r_{a}]\right)^{\frac{1}{2}},
    \end{aligned}
    $$
    where
    $$
    \begin{aligned}
        K_{\ell}(t)=1+\|a\|_{H_{x}^{3}}&+E_{2,\ell}[u]+D_{2,\ell}[u]+\|a\|_{H_{x}^{3}}^{2} +\|\left\langle z\right\rangle^{\ell+2}b_{a}\|_{H_{x}^{3}L_{z}^{2}} \\[0.2cm]
        &+\|\left\langle z\right\rangle^{\ell+2}\partial_{z}b_{a}\|_{H_{x}^{3}L_{z}^{2}}+\|\left\langle z\right\rangle^{\ell+2}z\partial_{z}b_{a}\|_{H_{x}^{3}L_{z}^{2}}.
    \end{aligned}
    $$
    Put $\sqrt{E_{2,\ell}(t)+\delta^{2}}=Y_{\delta}(t)$, we have 
    $$
    \frac{d}{dt}Y_{\delta}(t)\leq K_{\ell}(t)Y_{\delta}(t)+t^{\frac{9}{4}}
    $$
    Letting $\delta\to 0$ in the above inequality and then applying Gronwall inequality, we obtain
    \begin{equation}
    \label{eq:5.24}
        \begin{gathered}
            E_{2,\ell}[r_{a}](t)^{\frac{1}{2}} \leq Ct^{\frac{13}{4}},\quad \int_{0}^{t}D_{2,\ell}[r_{a}](s)ds \leq Ct^{\frac{13}{2}}.
        \end{gathered}
    \end{equation}
    Writing $\partial_tr_a-\Delta r_a=H(x,z,t)$, we estimate the primitive terms using an integrable normal weight. Weighted Cauchy--Schwarz and the one-dimensional Sobolev inequality give
    $$
    \left\|\int_0^zu_x\,dr\right\|_{L_x^\infty L_z^\infty}
    +\left\|\int_0^zu_{xx}\,dr\right\|_{L_x^2L_z^\infty}
       \leq C\|\langle z\rangle^2u\|_{H_x^2L_z^2}.
    $$
    Applying the product rule in $x$, we therefore obtain
    $$
    \begin{aligned}
    &\int_0^t\left\|\langle z\rangle^\ell
       \left(\int_0^zu_x\,dr\right)\partial_zr_a\right\|_{H_x^1L_z^2}^2\,ds\\[0.2cm]
    &\quad\leq C\sup_{0\leq s\leq t}\|\langle z\rangle^2u(s)\|_{H_x^2L_z^2}^2
          \int_0^t\|\langle z\rangle^\ell\partial_zr_a(s)\|_{H_x^1L_z^2}^2\,ds\\[0.2cm]
    &\quad\leq C_T\int_0^tD_{2,\ell}[r_a](s)\,ds
          \leq C_Tt^{13/2},
    \end{aligned}
    $$
    and
    $$
    \begin{aligned}
    &\int_0^t\left\|\langle z\rangle^\ell\partial_zb_a
       \int_0^z\partial_xr_a(x,r,s)\,dr\right\|_{H_x^1L_z^2}^2\,ds\\[0.2cm]
    &\quad\leq C\int_0^t\|\langle z\rangle^\ell\partial_zb_a\|_{H_x^2L_z^2}^2
          \|\langle z\rangle^2r_a\|_{H_x^2L_z^2}^2\,ds
          \leq Ct^9.
    \end{aligned}
    $$
    Consequently, a direct estimate yields
    $$
    \int_{0}^{t}\|\left\langle z\right\rangle^{\ell}H\|_{H_{x}^{1}L_{z}^{2}}^{2} ds \leq Ct^{\frac{11}{2}}.
    $$
    Multiplying $\eqref{eq:5.23}_{1}$ by $\left\langle z\right\rangle^{2\ell} \partial_{t}r_{a}$, and then
    integrating by parts, we obtain 
    $$
    \frac{d}{dt}\|\left\langle z\right\rangle^{\ell}\nabla r_{a}\|_{2}^{2} \leq C\|\left\langle z\right\rangle^{\ell}\nabla r_{a}\|_{2}^{2}+\|\left\langle z\right\rangle^{\ell} H\|_{2}^{2}.
    $$
    Choosing \(\tau\) such that $\|\left\langle z\right\rangle^{\ell} \nabla r_{a}(\tau)\|_{2}^{2} \leq \frac{2}{t} \int_{\frac{t}{2}}^{t}\|\left\langle z\right\rangle^{\ell} \nabla r_{a}\|_{2}^{2}ds $, and integrating the above inequality over \([\tau,t]\), we obtain
    $$
    \begin{aligned}
        \|\left\langle z\right\rangle^{\ell}\nabla r_{a}(t)\|_{2}^{2} &\leq C \|\left\langle z\right\rangle^{\ell}\nabla r_{a}(\tau)\|_{2}^{2} + C\int_{0}^{t}\|\left\langle z \right\rangle^{\ell}H\|_{2}^{2}ds \\[0.2cm]
        &\lesssim \frac{1}{t}\int_{0}^{t}D_{2,\ell}[r_{a}](s)ds+C\int_{0}^{t}\|\left\langle z \right\rangle^{\ell}H\|_{2}^{2}ds \leq Ct^{\frac{11}{2}}.
    \end{aligned}
    $$
    Multiplying $\partial_{x}\eqref{eq:5.23}_{1}$ by $\left\langle z\right\rangle^{2\ell} \partial_{t}\partial_{x}r_{a}$, and then
    integrating by parts, a similar argument yields that
    $$
    \|\left\langle z\right\rangle^{\ell}\nabla \partial_{x} r_{a}(t)\|_{2}^{2} \leq Ct^{\frac{11}{2}},
    $$
    Combining with \eqref{eq:5.24}, we get
    $$
    \begin{gathered}
        \|\langle z\rangle^\ell r_a(t)\|_{H_x^2L_z^2}+t^{1/2}
        \|\langle z\rangle^\ell\partial_z r_a(t)\|_{H_x^1L_z^2}
        \le C_{T,\ell}\,t^{13/4}.
    \end{gathered}
    $$
    The heat-history representation and the normal $L^1$ norm of its kernel give
    \[
     \int_0^\infty\|b_a(\cdot,z,t)\|_{H_x^2}\,dz\leq C_Tt^{3/2}.
    \]
    Weighted Cauchy--Schwarz and \eqref{eq:5.14} then yield
    \[
    \begin{aligned}
     \left\|\int_0^\infty u_1^{b,0}(\cdot,z,t)\,dz\right\|_{H_x^2}
     &\leq \int_0^\infty\|b_a(\cdot,z,t)\|_{H_x^2}\,dz
          +\int_0^\infty\|r_a(\cdot,z,t)\|_{H_x^2}\,dz\\
     &\leq C_Tt^{3/2}
          +C\|\langle z\rangle^2r_a(t)\|_{H_x^2L_z^2}
     \leq C_Tt^{3/2}.
    \end{aligned}
    \]
    This proves \eqref{eq:5.15} and completes the proof.
\end{proof}

\subsubsection{Regularity of Higher-Order Inner and Outer Profiles}
\begin{lemma}
\label{pro:6.1}
    Under the assumptions of Theorem \ref{thm:main}, there exists a unique solution $u^{I,1}$ of the problem \eqref{eq:3.5} on $[0,T]$, satisfying 
    $$
    u^{I,1} \in C([0,T];H^{3}),\quad \partial_{x}u^{I,1}\in L^{2}(0,T;H^{3}),\quad \partial_{t}u^{I,1} \in L^{2}(0,T;H^{2}),
    $$
    and
    $$
    u_{1}^{I,1}|_{y=0} \in C([0,T];H_{x}^{3}),\quad \partial_{x}u_{1}^{I,1}|_{y=0} \in L^{2}(0,T;H_{x}^{3}),\quad  \partial_{t}u_{1}^{I,1}|_{y=0} \in L^{2}(0,T;H_{x}^{2}).
    $$
\end{lemma}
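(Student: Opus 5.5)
The plan is to reduce \eqref{eq:3.5} to the linear outer problem of Lemma \ref{lemma:2.14} with $\mathfrak d=u^{I,0}$ and $s=3$. The only obstruction is the inhomogeneous normal boundary condition $u_2^{I,1}|_{y=0}=-\partial_xM_0$, where $M_0(x,t)=\int_0^\infty u_1^{b,0}(x,z,t)\,dz$.

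I will remove this datum with a divergence-free lifting built from a stream function. Set
\[
 \psi_L(x,y,t)=M_0(x,t)\,\chi(y),\qquad
 u^L=\nabla^\perp\psi_L=\bigl(M_0\chi'(y),\,-\partial_xM_0\,\chi(y)\bigr),
\]
where $\chi$ is a fixed smooth cutoff with $\chi(0)=1$ and $\chi(y)=0$ for $y\ge1$ (for instance $\chi=\varphi$). Then $\operatorname{div}u^L=0$, $u^L_2|_{y=0}=-\partial_xM_0$ and $u^L(0)=0$, since $u_1^{b,0}(0)=0$. Writing $u^{I,1}=u^L+v$, the field $v$ solves \eqref{eq:2.19} with $\mathfrak d=u^{I,0}$ and forcing
\[
 F=-\partial_tu^L-u^L\cdot\nabla u^{I,0}-u^{I,0}\cdot\nabla u^L+\partial_x^2u^L .
\]
The hypotheses on $\mathfrak d$ hold by Proposition \ref{prop:outer-flow}: we have $u^{I,0}\in C([0,T];H^4)$, it is divergence free, and it is tangent to the wall.

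Lemma \ref{lemma:2.14} with $s=3$ then needs $F\in L^2(0,T;H^3)$. This requires
\[
 M_0\in L^2(0,T;H_x^5)\cap L^\infty(0,T;H_x^4),\qquad \partial_tM_0\in L^2(0,T;H_x^4).
\]
The spatial bounds come from integrating \eqref{eq:5.11} in $z$ against a weight. By Cauchy--Schwarz, $\int_0^\infty|h|\,dz\le C\|\langle z\rangle h\|_{L_z^2}$. The term $-\varphi a$ hidden in $f$ is harmless, since $a\in C H_x^5$ and $\partial_xa\in L^2H_x^5$. In \eqref{eq:5.11} the index $j=5$ together with the $\partial_x$ dissipation gives six tangential derivatives in $L^2_t$ at weight $\ell$, hence $M_0\in L^2H_x^6$ and $M_0\in L^\infty H_x^5$. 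The time derivative is the delicate point. By \eqref{eq:5.25}, $\langle z\rangle^\ell\partial_tf\in L^2_TH_x^4L_z^2$, and $\partial_ta\in L^2H_x^4$ by \eqref{eq:4.6}, so $\partial_tM_0\in L^2(0,T;H_x^4)$. That is exactly enough, because $\partial_tu^L_2=-\partial_x\partial_tM_0\chi$ lies in $L^2H^3$.

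The products $u^{I,0}\cdot\nabla u^L$ and $u^L\cdot\nabla u^{I,0}$ lie in $L^2H^3$ by \eqref{eq:Hm-algebra-prelim}, since $u^{I,0}\in CH^4$. The diffusion term $\partial_x^2u^L$ costs $\partial_x^3M_0\in L^2H_x^3$, i.e.\ $M_0\in L^2H^6_x$, which is available. Lemma \ref{lemma:2.14} then gives existence and uniqueness of $v$ with $v\in C([0,T];H^3)$, $\partial_xv\in L^2H^3$, $\partial_tv\in L^2H^2$, together with the trace bounds $v_1|_{y=0}\in CH_x^3$, $\partial_xv_1|_{y=0}\in L^2H_x^3$ and $\partial_tv_1|_{y=0}\in L^2H_x^2$.

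Adding back $u^L$ transfers all of these to $u^{I,1}$. The trace $u^L_1|_{y=0}=M_0\chi'(0)=0$ because $\chi'(0)=0$, so the wall trace of $u^{I,1}$ is that of $v$. For the remaining claims, $\partial_tu^L\in L^2H^3\subset L^2H^2$ and $u^L\in CH^3$. Continuity $u^L\in C([0,T];H^3)$ follows from $M_0\in L^2H^6_x$ and $\partial_tM_0\in L^2H^4_x$ by the interpolation in Lemma \ref{lem:time-interpolation-prelim}.

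Uniqueness for \eqref{eq:3.5} follows because the difference of two solutions solves \eqref{eq:2.19} with $F=0$, so it vanishes by the uniqueness part of Lemma \ref{lemma:2.14}.

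The main obstacle is the time regularity of the moment $\partial_tM_0$ in $H_x^4$. It relies on the weighted estimate \eqref{eq:5.25} being valid with a weight $\ell\ge1$, so that $\partial_tf$ is integrable in $z$. The $L^2$-in-time, but not continuous, nature of this bound is what limits the conclusion to $\partial_tu^{I,1}\in L^2H^2$.
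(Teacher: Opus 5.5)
Your proposal is correct and is essentially the paper's proof. With $\chi=\varphi$, your lifting $u^L$ is exactly $-r$ in the paper's notation, so $v$ coincides with $\hat u^{I,1}$. The paper likewise checks $M_0\in C([0,T];H_x^5)\cap L^2(0,T;H_x^6)$ and $\partial_tM_0\in L^2(0,T;H_x^4)$, applies Lemma \ref{lemma:2.14} with $s=3$, and uses $\varphi'(0)=0$ to see that the lifting has zero tangential wall trace. (You should require $\chi'(0)=0$ in your choice of cutoff; even without it, the extra trace $M_0\chi'(0)$ has more than the required regularity.)
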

\begin{proof}
	Denoting $\hat{u}^{I,1}=u^{I,1}+\left(-\varphi^{\prime}(y)\int_{0}^{+\infty}u_{1}^{b,0}(x,s,t)ds,\varphi(y)\int_{0}^{+\infty}\partial_{x}u_{1}^{b,0}(x,s,t)ds\right)^{\top}$ with $\varphi$ defined in $\eqref{eq:1.16}$, and using \eqref{eq:3.5}, we have
	\begin{equation}
    \label{eq:6.1}
		\left\{\begin{array}{l}
			\partial_t \hat{u}^{I,1}+\hat{u}^{I,1} \cdot \nabla u^{I,0}+u^{I,0} \cdot \nabla \hat{u}^{I,1}+\nabla p^{I, 1}-\partial_{x}^{2} \hat{u}^{I,1}+I_{1}=0, \\[0.2cm]
			\partial_x \hat{u}_{1}^{I,1} + \partial_y \hat{u}_{2}^{I,1}=0, \\[0.2cm]
			\hat{u}_{2}^{I,1}(x,0,t)=0, \quad \hat{u}^{I,1}(x, y, 0)=0 .
		\end{array}\right.
	\end{equation}
	where
	$$
	r(x,y,t)=\left(-\varphi^{\prime}(y)\int_{0}^{+\infty}u_{1}^{b,0}(x,s,t)ds,\varphi(y)\int_{0}^{\infty}\partial_{x}u_{1}^{b,0}(x,s,t)ds\right)^{\top},
	$$
	and
	$$
	-I_{1}=\partial_{t}r+r \cdot \nabla u^{I,0}+u^{I,0}\cdot \nabla r-\partial_{x}^{2}r.
	$$
	Using Proposition \ref{prop:outer-flow} and Proposition \ref{pro:boundary-layer equation}, a direct calculation yields that
	$$
	\begin{gathered}
		 u^{I,0}\in C([0,T];H^{4}),\quad \partial_{x}u^{I,0} \in L^{2}(0,T;H^{4}),\quad I_{1} \in L^{2}(0,T;H^{3}).
	\end{gathered}
	$$
    Indeed, weighted Cauchy--Schwarz gives
    $$
    \int_0^\infty u_1^{b,0}\,dz\in C([0,T];H_x^5)\cap L^2(0,T;H_x^6),
    \qquad
    \int_0^\infty\partial_tu_1^{b,0}\,dz\in L^2(0,T;H_x^4).
    $$
    These bounds control $r_t-r_{xx}$ in $L^2H^3$ and the two transport terms in $L^2H^3$. Lemma \ref{lemma:2.14} with $s=3$ therefore applies. Since $\varphi'(0)=0$, the tangential trace of $r$ is zero, so the same wall bounds hold for $u^{I,1}$.
\end{proof}
\begin{remark}
	The regularity of $p^{b,2}$ follows from the identity obtained by combining \eqref{eq:3.3} and \eqref{eq:3.7}:
    $$
    \begin{aligned}
    \mathcal P_2={}&4(\partial_xa)u_2^{b,1}
     +2(\partial_x^2a)\int_z^\infty u_1^{b,0}\,dr
     +\partial_x^2\int_z^\infty (u_1^{b,0})^2\,dr\\[0.2cm]
     &+2z(\partial_x^2a)u_1^{b,0}
     -2\bigl(\partial_xu_2^{b,1}+\overline{\partial_xu_2^{I,1}}\bigr)u_1^{b,0}.
    \end{aligned}
    $$
    To verify this, set $K=\int_z^\infty u_1^{b,0}\,dr$ and
    $w=-z\partial_xa+u_2^{b,1}+\overline{u_2^{I,1}}$. Integrating \eqref{eq:3.3} in $z$ gives
    $$
    (\partial_t-\partial_x^2-\partial_z^2)K
    =-a u_2^{b,1}-\partial_x\int_z^\infty (u_1^{b,0})^2\,dr
     -2(\partial_xa)K+w u_1^{b,0}.
    $$
    Differentiate once in $x$ and substitute into \eqref{eq:3.7}; the time derivatives cancel and yield the displayed identity. Weighted Cauchy--Schwarz and the tangential product estimates now give
    $$
    \mathcal P_2,\ p^{b,2}\in C([0,T];H_x^3L_{z,\ell}^2)
    $$
    for every fixed $\ell\geq0$. For the square in the tail integral, one places the two factors in $L_z^2$ before integrating, so no additional normal derivative is required.
\end{remark}
Next, for $u_{1}^{b,1}$, we have the following Proposition.
\begin{lemma}
\label{pro:6.3}
    Under the assumptions of Theorem \ref{thm:main}, there exists a unique solution $u_{1}^{b,1}$ of the problem \eqref{eq:3.6} on $[0,T]$, satisfying 
    $$
    u_{1}^{b,1} \in C([0,T];H_{x}^{3}L_{z,\ell}^{2}),\quad \nabla u_{1}^{b,1}\in L^{2}(0,T;H_{x}^{3}L_{z,\ell}^{2}),
    $$
    and
    $$
    \partial_{z}u_{1}^{b,1} \in L^{\infty}(0,T;H_{x}^{2}L_{z,\ell}^{2}),\quad \partial_{t}u_{1}^{b,1},\partial_{z}^{2}u_{1}^{b,1}\in L^{2}(0,T;H_{x}^{2}L_{z,\ell}^{2}).
    $$
\end{lemma}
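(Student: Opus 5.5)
The plan is to reduce \eqref{eq:3.6} to the linear problem \eqref{eq:2.18} and invoke Lemma \ref{lemma:2.13} with $s=3$. The coefficients are $\mathfrak a=a$ and $\mathfrak b=u_1^{b,0}$. With these choices, $A=a+u_1^{b,0}$, $B=-z\partial_xa-\int_0^z\partial_xu_1^{b,0}\,dr$, $C=\partial_xa+\partial_xu_1^{b,0}$ and $D=-\partial_zu_1^{b,0}$ reproduce exactly the left-hand side of \eqref{eq:3.6}. Moreover, $\mathfrak b|_{z=0}=-\mathfrak a$ holds by \eqref{eq:3.3}.

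First, verify the hypotheses on $(\mathfrak a,\mathfrak b)$. The statement $a\in C([0,T];H_x^5)$ with $\partial_xa\in L^2(0,T;H_x^5)$ is \eqref{eq:4.6}. For $\mathfrak b=f-\varphi a$, the weighted bounds \eqref{eq:5.11}, applied with a large enough weight $\ell$, together with the $H_x^5$/$H_x^6$ control of $a$, give $\mathfrak b\in C H_x^5L^2_{z,\ell+6}$ and $\partial_x\mathfrak b\in L^2H_x^5L^2_{z,\ell+6}$; the index $\ell+s+3=\ell+6$ is covered because every weight in \eqref{eq:5.11} is admissible. For $\partial_z\mathfrak b$, use \eqref{eq:5.27} to get $L^\infty H_x^4$, and the $j=5$ dissipation term in \eqref{eq:5.11} to get $L^2H_x^5$. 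The compactly supported $\varphi'a$ is harmless.

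Second, remove the inhomogeneous Dirichlet datum $-u_1^{I,1}|_{y=0}$. Set $\theta=u_1^{b,1}+\varphi(z)\,\overline{u_1^{I,1}}$. Then $\theta$ has zero boundary and initial values, since $u^{I,1}(0)=0$. The new source is
\[
F=g^{b,1}-\bigl(\partial_t-\partial_x^2-\partial_z^2+A\partial_x+B\partial_z+C\bigr)\bigl(\varphi\overline{u_1^{I,1}}\bigr)-D\int_0^z\varphi\,\partial_x\overline{u_1^{I,1}}\,dr .
\]
By Lemma \ref{pro:6.1}, $\overline{u_1^{I,1}}\in C H_x^3$, $\partial_x\overline{u_1^{I,1}}\in L^2H_x^3$ and $\partial_t\overline{u_1^{I,1}}\in L^2H_x^2$. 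Because $\varphi$ has compact support, all lifting contributions lie in $L^2(0,T;H_x^2L^2_{z,\ell+5})$, which is exactly what $s=3$ requires.

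Third, and this is the main obstacle, show $g^{b,1}\in L^2(0,T;H_x^2L^2_{z,\ell+5})$. Each term of $g^{b,1}$ is a wall jet times a layer quantity with polynomial factors $z$ or $z^2$; the strategy is to place the wall jets in $L^\infty_x$-type norms and let the layer factors carry the weights. The wall jets used are $\overline{\partial_yu_1^{I,0}},\overline{\partial_y^2u_2^{I,0}}\in CH_x^2$ (Proposition \ref{prop:outer-flow}), with $\partial_y^2u_2^{I,0}=-\partial_x\partial_yu_1^{I,0}$; $\overline{\partial_x\partial_yu_1^{I,0}}\in L^2H_x^2$; and $\overline{u_1^{I,1}},\overline{\partial_yu_2^{I,1}}=-\overline{\partial_xu_1^{I,1}}\in CH_x^2$. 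The layer factors are $u_1^{b,0},\partial_xu_1^{b,0},\partial_zu_1^{b,0}$, which lie in $L^\infty H_x^4L^2_{z,m}$ for every $m$, and $u_2^{b,1}=\int_z^\infty\partial_xu_1^{b,0}$, which lies in $CH_x^4L^2_{z,m}$ by weighted Cauchy--Schwarz. The $H_x^2$ algebra property, together with the fact that the $z$, $z^2$ factors are absorbed by the arbitrary weights, finishes this step. The only delicate term is $z\,\overline{\partial_y\partial_xu_1^{I,0}}\,u_1^{b,0}$, which is merely $L^2$ in time; this is acceptable because only $L^2_t$ is required.

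With these three checks in place, Lemma \ref{lemma:2.13} gives the following for $\theta$: $\theta\in CH_x^3L^2_{z,\ell}$, $\nabla\theta\in L^2H_x^3L^2_{z,\ell}$, $\partial_z\theta\in CH_x^2L^2_{z,\ell}$, and $\partial_t\theta,\partial_z^2\theta\in L^2H_x^2L^2_{z,\ell}$, together with uniqueness. Subtracting the lift, whose regularity comes from Lemma \ref{pro:6.1} and includes $\partial_t\overline{u_1^{I,1}}\in L^2H_x^2$, transfers these properties to $u_1^{b,1}$ and yields the stated class. Uniqueness of $u_1^{b,1}$ follows from uniqueness of $\theta$.
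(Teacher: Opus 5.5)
Your proposal is correct and follows the paper's own argument. You lift the wall datum by $\varphi(z)\overline{u_1^{I,1}}$, check that the resulting source lies in $L^2(0,T;H_x^2L^2_{z,\ell+5})$ using Propositions~\ref{prop:outer-flow}, \ref{pro:boundary-layer equation} and Lemma~\ref{pro:6.1}, apply Lemma~\ref{lemma:2.13} with $s=3$, $\mathfrak a=a$, $\mathfrak b=u_1^{b,0}$, and then subtract the lift; that is exactly the paper's proof.

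There are two harmless slips. First, since $\theta=u_1^{b,1}+\varphi\overline{u_1^{I,1}}$, the lifting terms enter the source with a plus sign, not a minus sign. Second, $\overline{\partial_y^2u_2^{I,0}}=-\overline{\partial_x\partial_yu_1^{I,0}}$ is only known in $L^2(0,T;H_x^2)$, not in $C([0,T];H_x^2)$. This suffices for the same reason it does for the term $z\,\overline{\partial_x\partial_yu_1^{I,0}}\,u_1^{b,0}$: only $L^2$ in time is required.
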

\begin{proof}
	Denoting $\hat{u}_{1}^{b,1}(x,z,t)=u_{1}^{b,1}(x,z,t)+\varphi(z)u_{1}^{I,1}(x,0,t)$ with $\varphi$ defined in \eqref{eq:1.16}, and using \eqref{eq:3.6}, we have
	\begin{equation}
		\label{eq:6.2}
		\left\{\begin{array}{l}
			\partial_t \hat{u}_{1}^{b,1} - \partial_{x}^{2} \hat{u}_{1}^{b,1} - \partial_{z}^{2} \hat{u}_{1}^{b,1}+\left(a+u_{1}^{b,0}\right) \partial_x \hat{u}_{1}^{b,1} \\[0.2cm]
			\quad+ \left( -z\partial_{x}a-\int_{0}^{z}\partial_{x}u_{1}^{b,0}(x,s,t)ds\right) \partial_z \hat{u}_{1}^{b,1}+\left( \partial_x u_{1}^{b,0}+\partial_{x}a\right) \hat{u}_{1}^{b,1}\\[0.2cm]
			\quad-\left(\int_{0}^{z}\partial_{x}\hat{u}_{1}^{b,1}(x,s,t)ds\right) \partial_z u_{1}^{b,0}=B_{1}(x,z,t) \\[0.2cm]
			\hat{u}_{1}^{b,1}(x, z,0)=0, \\[0.2cm]
			\displaystyle \lim _{z \rightarrow \infty} \hat{u}_{1}^{b,1}(x,z,t)=0, \quad \hat{u}_{1}^{b,1}(x,0,t)=0.
		\end{array}\right.
	\end{equation}
	where
	$$
	h(x,z,t)=\varphi(z) u_{1}^{I,1}(x,0,t),
	$$
	and
	$$
	\begin{aligned}
		B_{1}(x,z,t)=&\partial_{t}h-\partial_{x}^{2}h-\partial_{z}^{2}h+(a+u_{1}^{b,0})\partial_{x}h \\[0.2cm]
		&+\left(-z\partial_{x}a-\int_{0}^{z}\partial_{x}u_{1}^{b,0}(x,s,t)ds\right)\partial_{z}h+(\partial_{x}u_{1}^{b,0}+\partial_{x}a)h \\[0.2cm]
		&-(\overline{u_{1}^{I,1}}+z\overline{\partial_{y}u_{1}^{I,0}})\partial_{x}u_{1}^{b,0}-z\overline{\partial_{y}\partial_{x}u_{1}^{I,0}}u_{1}^{b,0}-u_{1}^{b,0}\overline{\partial_{x}u_{1}^{I,1}}-u_{2}^{b,1}\overline{\partial_{y}u_{1}^{I,0}} \\[0.2cm]
		&-(\int_{0}^{z}\partial_{x}h(x,z^{\prime},t)dz^{\prime}+z\overline{\partial_{y}u_{2}^{I,1}}+\frac{z^{2}}{2}\overline{\partial_{y}^{2}u_{2}^{I,0}})\partial_{z}u_{1}^{b,0}.
	\end{aligned}
	$$
	Using Proposition \ref{prop:outer-flow}, Proposition \ref{pro:boundary-layer equation} and Lemma \ref{pro:6.1}, a direct calculation yields that
	$$
	\begin{gathered}
		\partial_{x}u_{1}^{b,0},\partial_{z}u_{1}^{b,0} \in L^{2}(0,T;H_{x}^{5}L_{z,\ell}^{2}), \\[0.2cm]
        \partial_{x}a \in L^{2}(0,T;H_{x}^{5}),\quad B_{1}(x,z,t) \in L^{2}(0,T;H_{x}^{2}L_{z,\ell+5}^{2}). 
	\end{gathered}
	$$
	Lemma \ref{lemma:2.13} with $s=3$ therefore gives a unique solution $\hat u_1^{b,1}$ with the stated regularity. Subtracting $h$ yields the desired solution $u_1^{b,1}$ and completes the proof.
\end{proof}

\begin{lemma}
\label{pro:6.4}
    Under the assumptions of Theorem \ref{thm:main}, there exists a unique solution $u^{I,2}$ of the problem \eqref{eq:3.8} on $[0,T]$, satisfying 
    $$
    u^{I,2} \in C([0,T];H^{2}),\quad  \partial_{x}u^{I,2}\in L^{2}(0,T;H^{2}), \qquad \partial_{t}u^{I,2},\nabla p^{I,2}\in L^{2}(0,T;H^{1}),
    $$
    and
    $$
    u_{1}^{I,2}|_{y=0} \in C([0,T];H_{x}^{2}),\quad \partial_{x}u_{1}^{I,2}|_{y=0} \in L^{2}(0,T;H_{x}^{2}),\quad  \partial_{t}u_{1}^{I,2}|_{y=0} \in L^{2}(0,T;H_{x}^{1}).
    $$
    Furthermore, there exists a unique solution $u_{1}^{b,2}$ of problem \eqref{eq:3.9} on $[0,T]$, satisfying
    $$
    u_{1}^{b,2} \in C([0,T];H_{x}^{2}L_{z,\ell}^{2}),\quad \nabla u_{1}^{b,2}\in L^{2}(0,T;H_{x}^{2}L_{z,\ell}^{2}),
    $$
    and
    $$
    \partial_{z}u_{1}^{b,2} \in L^{\infty}(0,T;H_{x}^{1}L_{z,\ell}^{2}),\quad \partial_{t}u_{1}^{b,2},\partial_{z}^{2}u_{1}^{b,2}\in L^{2}(0,T;H_{x}^{1}L_{z,\ell}^{2}).
    $$
\end{lemma}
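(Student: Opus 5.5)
The plan is to follow the two preceding lemmas exactly, one level down: first $u^{I,2}$ by the linear outer theory (Lemma \ref{lemma:2.14} with $s=2$), then $u_1^{b,2}$ by the linear boundary-layer theory (Lemma \ref{lemma:2.13} with $s=2$).

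For the outer profile, we remove the inhomogeneous normal trace in \eqref{eq:3.8} with the divergence-free lifting used in Lemma \ref{pro:6.1}. Set
\[
r^{(2)}=\Bigl(-\varphi'(y)\int_0^\infty u_1^{b,1}\,dz,\ \varphi(y)\int_0^\infty\partial_xu_1^{b,1}\,dz\Bigr)^{\top},
\qquad \hat u^{I,2}=u^{I,2}+r^{(2)} .
\]
Then $\hat u^{I,2}$ solves \eqref{eq:2.19} with $\mathfrak d=u^{I,0}\in C([0,T];H^4)$. The source is $F=g^{I,2}-I_2$, where $-I_2=\partial_tr^{(2)}+r^{(2)}\cdot\nabla u^{I,0}+u^{I,0}\cdot\nabla r^{(2)}-\partial_x^2r^{(2)}$.

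We check $F\in L^2(0,T;H^2)$ term by term:
\begin{itemize}
\item $\partial_y^2u^{I,0}\in C([0,T];H^2)$ by \eqref{eq:4.2}.
\item $u^{I,1}\cdot\nabla u^{I,1}\in C([0,T];H^2)$ by the algebra property \eqref{eq:Hm-algebra-prelim} and Lemma \ref{pro:6.1}.
\item For $r^{(2)}$, weighted Cauchy--Schwarz in $z$ and Lemma \ref{pro:6.3} give
\[
\int_0^\infty u_1^{b,1}\,dz\in C([0,T];H_x^3)\cap L^2(0,T;H_x^4),
\qquad
\int_0^\infty\partial_tu_1^{b,1}\,dz\in L^2(0,T;H_x^2).
\]
These control $\partial_x^2r^{(2)}$ and the transport terms in $L^2H^2$.
\end{itemize}

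The term $\partial_t r^{(2)}_2=\varphi\,\partial_x\int_0^\infty\partial_tu_1^{b,1}\,dz$ lies in $L^2H^2$ only if $\int\partial_tu_1^{b,1}\,dz\in L^2H_x^3$. This is the first place that needs care. I would bypass it by integrating \eqref{eq:3.6} in $z$ to express $\partial_t\int u_1^{b,1}$ through spatial quantities, as in the remark on $\mathcal P_2$. Failing that, I would keep one tangential derivative on the dissipation side as in the proof of Lemma \ref{lemma:2.14}.

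Lemma \ref{lemma:2.14} with $s=2$ then gives the bulk regularity and the wall trace bounds. Since $\varphi'(0)=0$, $r^{(2)}$ has zero tangential trace, so the trace bounds transfer to $u_1^{I,2}|_{y=0}$.

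For $u_1^{b,2}$, set $\hat u_1^{b,2}=u_1^{b,2}+\varphi(z)\,\overline{u_1^{I,2}}$. This gives problem \eqref{eq:2.18} with $\mathfrak a=a$ and $\mathfrak b=u_1^{b,0}$, whose coefficient hypotheses are exactly \eqref{eq:4.6} and \eqref{eq:5.11}--\eqref{eq:5.27}. The source is $B_2=g^{b,2}$ plus the lifting terms, and we must show $B_2\in L^2(0,T;H_x^1L^2_{z,\ell+4})$.

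The lifting terms use the new trace bounds: $\partial_t\overline{u_1^{I,2}}\in L^2H_x^1$ and $\overline{u_1^{I,2}}\in CH_x^2$. The terms of $g^{b,2}$ are products of three kinds of factors:
\begin{itemize}
\item wall jets of $u^{I,0},u^{I,1},u^{I,2}$, which lie in $H_x^1$ by the trace lemma \ref{lem:wall-trace-prelim} applied to $H^4,H^3,H^2$ bulk bounds;
\item polynomially weighted profiles $u_1^{b,0},u_1^{b,1},u_2^{b,1},u_2^{b,2}$;
\item $\partial_xp^{b,2}\in CH_x^2L^2_{z,\ell}$, from the remark after Lemma \ref{pro:6.1}.
\end{itemize}
The powers $z,z^2,z^3$ are absorbed by the arbitrary weight $\ell$. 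The quadratic terms in $u_1^{b,1}$ are handled with $H_x^1\hookrightarrow L^\infty_x$, and the primitive $\int_0^z\partial_xu_1^{b,1}$ with weighted Cauchy--Schwarz.

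The only delicate count is $\overline{\partial_y^3u_2^{I,0}}$. It equals $-\overline{\partial_x\partial_y^2u_1^{I,0}}$, which lies in $L^2H_x^1$ by Proposition \ref{prop:outer-flow}, and this square-integrability suffices. Lemma \ref{lemma:2.13} with $s=2$ then yields the stated class, and subtracting the lift completes the proof. Uniqueness in both parts follows from linearity of the two lemmas.

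The main obstacle I expect is the derivative count for $\partial_t r^{(2)}$ in the outer source: Lemma \ref{pro:6.3} supplies $\partial_tu_1^{b,1}$ only in $H_x^2$. The plan is to resolve it through the $z$-integrated equation for the moment $M_1=\int_0^\infty u_1^{b,1}\,dz$.
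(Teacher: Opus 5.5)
There is a genuine gap in the outer part, and the fix you propose does not close it.

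\textbf{Where the cutoff lifting fails.} You lift with $r^{(2)}=(-\varphi' M_1,\ \varphi\,\partial_x M_1)$, where $M_1=\int_0^\infty u_1^{b,1}\,dz$. The lifted source then contains $\partial_t r^{(2)}-\partial_x^2 r^{(2)}$, whose normal component is $\varphi\,\partial_x N$ with $N=\partial_t M_1-\partial_x^2 M_1$. Lemma \ref{pro:6.3} gives only $M_1\in L^2(0,T;H_x^4)$ and $\partial_t M_1\in L^2(0,T;H_x^2)$. So this term is controlled only in $L^2H^1$, not $L^2H^2$, and Lemma \ref{lemma:2.14} with $s=2$ does not apply.

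You flagged $\partial_t r^{(2)}$, but your claim that $\partial_x^2 r^{(2)}\in L^2H^2$ is also wrong: $\|\varphi\,\partial_x^3 M_1\|_{H^2}$ requires $\partial_x^5 M_1\in L^2$.

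\textbf{Why your repairs do not work.} Integrating \eqref{eq:3.6} in $z$ writes $N$ as $-\partial_z u_1^{b,1}|_{z=0}$ plus lower-order terms. From $\partial_z u_1^{b,1}\in L^2H_x^3L_z^2$ and $\partial_z^2 u_1^{b,1}\in L^2H_x^2L_z^2$, the bounds of Lemma \ref{pro:6.3} control this wall shear only in $L^2H_x^{5/2}$, whereas $L^2H_x^3$ is needed. No derivative is gained.

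The fallback of moving one $x$-derivative onto the dissipation can close the bulk vorticity estimate, since the bad term is $\partial_x$ of a field in $L^2H^2$. However, the pressure is then only in $L^2H^1$. The generic trace theorem gives only $\overline{\partial_x p}\in L^2H_x^{1/2}$. That is too weak for the wall equation to give $u_1^{I,2}|_{y=0}\in C([0,T];H_x^2)$ and $\partial_x u_1^{I,2}|_{y=0}\in L^2(0,T;H_x^2)$. You would need an explicit computation of the harmonic pressure generated by the lifting, so Lemma \ref{lemma:2.14} cannot be used as a black box.

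\textbf{The missing idea: lift by a potential flow.} The paper takes $\widehat\Phi=\frac{\mathrm i\xi}{|\xi|}e^{-|\xi|y}\widehat M_1$ and $r=\nabla\Phi$. Then $\operatorname{div}r=\operatorname{curl}r=0$ and $r_2|_{y=0}=-\partial_x M_1$. Crucially, $r_t-r_{xx}=\nabla(\Phi_t-\Phi_{xx})$ is a gradient, absorbed into $\hat p^{I,2}=p^{I,2}+\Phi_t-\Phi_{xx}$.

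\begin{itemize}
\item The source for $\hat u^{I,2}=u^{I,2}-r$ is just $g^{I,2}-u^{I,0}\cdot\nabla r-r\cdot\nabla u^{I,0}$. This lies in $L^2H^2$ using only $\|\nabla r\|_{H^2}\lesssim\|M_1\|_{H_x^4}$ and $\|r\|_{H^2}\lesssim\|M_1\|_{H_x^3}$.
\item $\partial_t M_1$ enters only when recovering $\nabla p^{I,2}=\nabla\hat p-r_t+r_{xx}\in L^2H^1$, through $\|r_t\|_{H^1}\lesssim\|\partial_t M_1\|_{H_x^2}$.
\item The price is a nonzero tangential trace: $r_1|_{y=0}$ has Fourier multiplier $-|\xi|$ acting on $M_1$. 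It lies in $C([0,T];H_x^2)\cap L^2(0,T;H_x^3)$ with time derivative in $L^2(0,T;H_x^1)$, and is added to the trace of $\hat u_1^{I,2}$.
\end{itemize}

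Once these wall bounds are available, your boundary-layer argument matches the paper's and goes through. That argument lifts by $\varphi(z)\overline{u_1^{I,2}}$, checks $g^{b,2}$ in $L^2H_x^1L_{z,\ell+4}^2$ (including $\overline{\partial_y^3u_2^{I,0}}=-\overline{\partial_x\partial_y^2u_1^{I,0}}$), and applies Lemma \ref{lemma:2.13} with $s=2$.
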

\begin{proof}
	For the second outer profile, put
    $$
    M_1(x,t)=\int_0^\infty u_1^{b,1}(x,z,t)\,dz.
    $$
    Lemma \ref{pro:6.3} and weighted Cauchy--Schwarz imply
    $$
    M_1\in C([0,T];H_x^3)\cap L^2(0,T;H_x^4),\qquad
    \partial_tM_1\in L^2(0,T;H_x^2),\qquad M_1(0)=0.
    $$
    We use a harmonic lifting of the normal boundary value. Let
    $$
    \widehat K(\xi,y,t)=e^{-|\xi|y}\widehat M_1(\xi,t),\qquad
    r=(\partial_yK,-\partial_xK),\qquad
    \widehat\Phi(\xi,y,t)=\frac{\mathrm i\xi}{|\xi|}e^{-|\xi|y}\widehat M_1(\xi,t).
    $$
    The value at $\xi=0$ is immaterial. Direct calculation gives
    $$
    r=\nabla\Phi,\qquad \operatorname{div}r=\operatorname{curl}r=0,
    \qquad r_2|_{y=0}=-\partial_xM_1,
    $$
    and integration in $y$ gives
    $$
    \begin{gathered}
    \|r\|_{H^2}\leq C\|M_1\|_{H_x^3},\qquad
    \|\nabla r\|_{H^2}\leq C\|M_1\|_{H_x^4},\\[0.2cm]
    \|r_t\|_{H^1}\leq C\|\partial_tM_1\|_{H_x^2},\qquad
    \|r_{xx}\|_{H^1}\leq C\|M_1\|_{H_x^4}.
    \end{gathered}
    $$
    Only the gradient of $\Phi$ is used; the potential need not be in $L^2$.
    Writing $\hat u^{I,2}=u^{I,2}-r$ and $\hat p^{I,2}=p^{I,2}+\Phi_t-\Phi_{xx}$, we obtain
    $$
    \begin{gathered}
    \partial_t\hat u^{I,2}-\partial_x^2\hat u^{I,2}
     +u^{I,0}\cdot\nabla\hat u^{I,2}
     +\hat u^{I,2}\cdot\nabla u^{I,0}+\nabla\hat p^{I,2}
    =g^{I,2}-u^{I,0}\cdot\nabla r-r\cdot\nabla u^{I,0},\\[0.2cm]
    \operatorname{div}\hat u^{I,2}=0,\qquad \hat u_2^{I,2}|_{y=0}=0,
    \qquad \hat u^{I,2}(0)=0.
    \end{gathered}
    $$
    The right-hand side belongs to $L^2H^2$: $g^{I,2}=\partial_y^2u^{I,0}-u^{I,1}\cdot\nabla u^{I,1}$, and all the other terms are controlled by the displayed lifting bounds. Lemma \ref{lemma:2.14} with $s=2$ proves the bulk estimates. Recovering the original pressure uses
    $$
    \nabla p^{I,2}=\nabla\hat p^{I,2}-r_t+r_{xx}\in L^2H^1.
    $$
    At the wall, the Fourier multiplier for $r_1$ is $-|\xi|$, so its trace is in $CH_x^2\cap L^2H_x^3$, with time derivative in $L^2H_x^1$. Adding this trace to that of $\hat u^{I,2}$ proves all the stated wall estimates. In particular, no third tangential derivative of $\partial_tM_1$ is needed.

    For $u_1^{b,2}$, set $h=\varphi(z)u_1^{I,2}(x,0,t)$ and $\hat u_1^{b,2}=u_1^{b,2}+h$. The equation for $\hat u_1^{b,2}$ is \eqref{eq:2.18} with the coefficients determined by $a,u_1^{b,0}$. Its right-hand side is $g^{b,2}$ plus the terms obtained by applying that linear operator to $h$. It belongs to $L^2H_x^1L_{z,\ell+4}^2$. Indeed, the outer factors needed in $g^{b,2}$ are
    $$
    \begin{gathered}
    \overline{\partial_yu_1^{I,0}}\in CH_x^2\cap L^2H_x^3,\qquad
    \overline{\partial_y^2u_1^{I,0}}\in CH_x^1\cap L^2H_x^2,\\
    \overline{\partial_yu_1^{I,1}}\in CH_x^1\cap L^2H_x^2,\qquad
    \overline{u_1^{I,2}}\in CH_x^2\cap L^2H_x^3.
    \end{gathered}
    $$
    Normal derivatives of $u_2^{I,j}$ are replaced by tangential derivatives of $u_1^{I,j}$ using incompressibility. The powers of $z$ are absorbed by the polynomial weights. The terms involving $u_1^{b,1}$ use Lemma \ref{pro:6.3}, and the pressure term uses the preceding remark. Finally, $h_t,h_{xx}$ have exactly the required $L^2H_x^1$ regularity. Lemma \ref{lemma:2.13} with $s=2$ therefore proves the inner estimates and uniqueness.
\end{proof}

\subsection{Construction of the approximate solution}
\label{sec:finite-approximation}
Based on the analysis in Section \ref{subsec:Asymptotic analysis}, we define an approximate solution for the system \eqref{eq:1.1} as follows:
\begin{equation}
\label{eq:3.70}
    \left\{\begin{array}{l}
	u^{a}(x, y, t)=u^{I}(x, y, t)+u^{b}\left(x, \frac{y}{\varepsilon}, t\right)+\varepsilon^{3} S(x, y, t), \\[0.2cm]
	p^{a}(x, y, t)=p^{I}(x, y, t)+p^{b}\left(x,\frac{y}{\varepsilon},t\right),
\end{array}\right.
\end{equation}
where
$$
\begin{gathered}
	u^{I}=u^{I, 0}+\varepsilon u^{I, 1}+\varepsilon^{2} u^{I, 2}, \quad p^{I}=p^{I, 0}+\varepsilon p^{I, 1}+\varepsilon^{2} p^{I, 2},  \\[0.2cm]
	u^{b}=u^{b,0}+\varepsilon u^{b, 1}+\varepsilon^{2} u^{b, 2}+\varepsilon^{3}\left(0, u_2^{b, 3}\right)^{\top}, \quad p^{b}=\varepsilon^{2}p^{b,2}.
\end{gathered}
$$
and
\begin{equation}
\begin{gathered}
\label{eq:S}
    S(x, y, t)=\binom{\varphi^{\prime}(y) \int_{0}^{+\infty}u_{1}^{b,2}(x,s,t)ds}{-\varphi(y)\int_{0}^{+\infty}\partial_{x}u_{1}^{b,2}(x,s,t)ds}.
\end{gathered}
\end{equation}
with $\varphi$ the cut-off function defined by \eqref{eq:1.16}. Let$(u,p)$ be the solution of problem \eqref{eq:1.1}-\eqref{eq:1.2}. Then, we define the error terms by
\begin{equation}
	\label{eq:3.38}
	E^\varepsilon=u^\varepsilon-u^a,\qquad \pi^\varepsilon=p^\varepsilon-p^a.
\end{equation}
Substituting $(u^{a},p^{a})$ into \eqref{eq:1.1}-\eqref{eq:1.2}, with the help of the equations of inner and outer layer profiles in Section \ref{subsec:Asymptotic analysis}, we find that the error functions satisfy the following problem:
\begin{equation}
	\label{eq:3.39}
	\left\{\begin{array}{l}
		\partial_tE^\varepsilon+(E^\varepsilon\cdot\nabla)u^a+(u^a\cdot\nabla)E^\varepsilon+(E^\varepsilon\cdot\nabla)E^\varepsilon
        +\nabla \pi^\varepsilon-\partial_x^2E^\varepsilon-\varepsilon^2\partial_y^2E^\varepsilon=-\mathcal{R}^\varepsilon \\[0.2cm]
		\operatorname{div} E^\varepsilon=0, \\[0.2cm]
		E^\varepsilon(x, y, 0)=0, \quad E^\varepsilon(x, 0, t)=0,
	\end{array}\right.
\end{equation}
where
$$
\begin{gathered}
	\mathcal{R}^\varepsilon_1=\partial_tu^a_1+(u^a\cdot\nabla)u^a_1+\partial_xp^a
          -\partial_x^2u^a_1-\varepsilon^2\partial_y^2u^a_1,\\[0.2cm]
	\mathcal{R}^\varepsilon_2=\partial_tu^a_2+(u^a\cdot\nabla)u^a_2+\partial_yp^a
          -\partial_x^2u^a_2-\varepsilon^2\partial_y^2u^a_2.
\end{gathered}
$$
Moreover, using the equations of inner and outer layer profiles, we can split $\mathcal{R}_{1}^{\varepsilon}$ and $\mathcal{R}_{2}^{\varepsilon}$ as follows.
$$
\mathcal{R}_1^{\varepsilon}=-\sum_{k=1}^{19} F_k, \quad \mathcal{R}_2^{\varepsilon}=-\sum_{k=1}^{20} G_k .
$$
See $(\ref{eq:B1})$ and $(\ref{eq:B2})$ for the detailed expressions.

\section{Justification of the vanishing vertical viscosity limit}
\label{sr:finite-criterion}

In this section, we give the proof of Theorems \ref{thm:main}. All the constants $C$ used in the proof may depend on $T$ and the bounds obtained in Section \ref{sec:profile-regularity}, but are independent of $\varepsilon$. Without loss of generality, we assume that $\varepsilon \in (0,1]$ in the rest of the paper.

\subsection{Estimates of the source terms}
\label{subsec:source terms}

To begin with, we give some basic estimates of the approximate solutions which will be frequently used later.
\begin{lemma}
\label{lemma:4.1}
    Under the assumptions of Theorem \ref{thm:main}, there exists a constant $C_T$ independent of $\varepsilon$ such that
	$$
    \|u_1^a\|_{L_T^\infty L_{xy}^\infty}
    +\|S\|_{L_T^\infty H_{xy}^1}+\|S\|_{L_T^2H_{xy}^2}\leq C_T,
    $$
	and
	$$
    \begin{aligned}
    \int_0^T\Bigl(&\|(\nabla u^I,\nabla S,\partial_xu^b,
     \varepsilon^{-1}\partial_zu_2^b,\langle z\rangle\partial_zu_1^b)\|_\infty^2
     +\|\partial_xu^a\|_\infty^2
     +\|\partial_x^2u^a\|_{L_x^2L_y^\infty}^2\\[0.2cm]
     &+\|\nabla\partial_y(u_1^I+\varepsilon^3S_1)\|_2^2
     +\|\langle z\rangle\partial_x\partial_zu_1^b\|_{H_x^1L_z^2}^2
     +\|\langle z\rangle^2\partial_z^2u_1^b\|_{H_x^1L_z^2}^2\Bigr)\,dt
     \leq C_T.
    \end{aligned}
    $$
\end{lemma}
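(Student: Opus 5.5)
The plan is to bound each listed quantity by splitting $u^a$ into its outer part $u^I=u^{I,0}+\varepsilon u^{I,1}+\varepsilon^2u^{I,2}$, its inner part $u^b(x,y/\varepsilon,t)$, and the lifting $\varepsilon^3S$. Each term is then estimated by the profile regularity already proved: Proposition \ref{prop:outer-flow} for $u^{I,0}$, Lemmas \ref{pro:6.1} and \ref{pro:6.4} for $u^{I,1},u^{I,2}$, Proposition \ref{pro:boundary-layer equation} for $u_1^{b,0}$, and Lemmas \ref{pro:6.3}, \ref{pro:6.4} for $u_1^{b,1},u_1^{b,2}$. The key observation is that every norm in the statement is either an $L^\infty$ norm, which is invariant under $y=\varepsilon z$, or an $L^2$ norm written directly in the $(x,z)$ variables. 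Hence no negative power of $\varepsilon$ appears, except in $\varepsilon^{-1}\partial_zu_2^b$, which is handled by incompressibility.

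For the outer part, $\nabla u^{I,j}$ and $\partial_x u^{I,j}$ in $L^\infty$ follow from $H^2\hookrightarrow L^\infty$ (Lemma \ref{lem:two-dimensional-Sobolev}) and the $C_TH^s$ and $L^2_T H^{s+1}$ bounds. The weakest is $u^{I,2}\in CH^2$, $\partial_xu^{I,2}\in L^2H^2$. Consequently $\|\nabla u^{I,2}\|_\infty$ needs $\nabla u^{I,2}\in L^2H^2$. This holds for the $\partial_x$ part. For $\partial_yu^{I,2}$ one uses $\partial_yu_2^{I,2}=-\partial_xu_1^{I,2}$, and for $\partial_yu_1^{I,2}$ the vorticity/div--curl bound (Lemma \ref{lem:divcurl}) from the proof of Lemma \ref{lemma:2.14}. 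If that fails at top order, I would use the anisotropic $L^\infty$ lemma proved above, $\|f\|_\infty\lesssim\|f\|^{1/4}\|\partial_xf\|^{1/2}\|\partial_y^2f\|^{1/4}$, which needs only one tangential derivative. The terms $\|\nabla\partial_y(u_1^I+\varepsilon^3S_1)\|_2$ and $\|\partial_x^2 u^I\|_{L^2_xL^\infty_y}$ are similar; the latter uses Lemma \ref{lem:one-dimensional-Sobolev} in $y$. For $S$, defined in \eqref{eq:S}, the moments $M_2=\int_0^\infty u_1^{b,2}dz$ lie in $C H_x^2\cap L^2H_x^3$ by weighted Cauchy--Schwarz, since $u_1^{b,2}\in CH_x^2L^2_{z,1}$ and $\nabla u_1^{b,2}\in L^2H_x^2L^2_{z,1}$. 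This gives $S\in L^\infty H^1\cap L^2H^2$. The $L^\infty$ bound for $\nabla S$ needs $M_2\in L^2H_x^3$; I would accept a factor loss here because $S$ always carries $\varepsilon^3$.

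For the inner part, I use the mixed embedding $\|h\|_{L^\infty_{xz}}\lesssim\|h\|_{H^1_xL^2_z}^{1/2}\|\partial_z h\|_{H^1_xL^2_z}^{1/2}$, which follows from Lemma \ref{lem:one-dimensional-Sobolev} in $x$ and $z$. Together with the weighted bounds \eqref{eq:5.11}--\eqref{eq:5.27} and their analogues for $u_1^{b,1},u_1^{b,2}$, this gives $u_1^b\in L^\infty$ uniformly. Combined with the outer bound it gives $\|u_1^a\|_{L^\infty_TL^\infty}\le C_T$. The square integrability in time of $\|\partial_xu_1^b\|_\infty$, $\|\langle z\rangle\partial_zu_1^b\|_\infty$, $\|\langle z\rangle\partial_x\partial_zu_1^b\|_{H^1_xL^2_z}$ and $\|\langle z\rangle^2\partial_z^2u_1^b\|_{H^1_xL^2_z}$ comes from $\nabla u_1^{b,j}$, $\partial_z^2u_1^{b,j}\in L^2_T H_x^{s}L^2_{z,\ell}$ with $s\ge1$ (Lemma \ref{pro:6.4} gives $s=1$ for $j=2$, exactly enough). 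For the normal components, $u_2^{b,j+1}=\int_z^\infty\partial_xu_1^{b,j}$ gives $\varepsilon^{-1}\partial_z(\varepsilon^{j+1}u_2^{b,j+1})=-\varepsilon^j\partial_xu_1^{b,j}$, so this term reduces to the previous one. The bound $\|\partial_xu_2^b\|_\infty\lesssim\|\langle z\rangle^2\partial_x^2u_1^{b,j}\|_{L^\infty_xL^2_z}$ uses Cauchy--Schwarz in $z$.

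The main obstacle is the top-order tangential count at the lowest-regularity levels $u^{I,2}$, $u_1^{b,2}$ and $S$. In particular, $\|\partial_x^2u^a\|_{L^2_xL^\infty_y}$ involves $\partial_x^2u_2^{b,3}=\int_z^\infty\partial_x^3u_1^{b,2}$, which needs three tangential derivatives of $u_1^{b,2}$ while only $\nabla u_1^{b,2}\in L^2H_x^2$ is available. The prefactor there is $\varepsilon^3$. I would first try to close this with exactly the available $L^2_TH^2_xL^2_{z,\ell}$ dissipation, writing $\partial_x^3 u_1^{b,2}$ as $\partial_x(\partial_x^2u_1^{b,2})$ with $\partial_x\partial_x^2u_1^{b,2}$ in the dissipation $\nabla u_1^{b,2}\in L^2H_x^2$. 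If that is not enough, I would spend the spare powers of $\varepsilon$: either move the derivative through the anisotropic interpolation, or rescale a normal derivative via $\partial_y=\varepsilon^{-1}\partial_z$, so that only the stated regularities are used.
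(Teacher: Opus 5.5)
Your overall route (split $u^a$ into outer, inner and lifting parts and read each quantity off the profile regularity with H\"older and Sobolev) is what the paper's omitted proof intends, and most of your bookkeeping is right. But one step fails as written: the bound $\int_0^T\|\partial_yu_1^{I,2}\|_\infty^2\,dt\le C_T$. Lemma \ref{pro:6.4} gives only $u^{I,2}\in C([0,T];H^2)$ and $\partial_xu^{I,2}\in L^2(0,T;H^2)$, so the scarce direction is the \emph{normal} one.

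Your first route needs $\partial_yu_1^{I,2}\in L_T^2H^2$, i.e.\ $u^{I,2}\in L_T^2H^3$. The div--curl estimate cannot supply this. The vorticity equation in Lemma \ref{lemma:2.14} has only horizontal diffusion, so it yields $\zeta\in L_T^\infty H^1$ and $\partial_x\zeta\in L_T^2H^1$, never $\zeta\in L_T^2H^2$. Your fallback, $\|f\|_\infty\lesssim\|f\|^{1/4}\|\partial_xf\|^{1/2}\|\partial_y^2f\|^{1/4}$ with $f=\partial_yu_1^{I,2}$, needs $\partial_y^3u_1^{I,2}\in L^2$, which is equally unavailable. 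Saving tangential derivatives is the wrong economy here.

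The right tool is the mixed embedding you already use for the inner profiles, now in $(x,y)$:
\[
\|\partial_yu_1^{I,2}\|_\infty^2\lesssim\|\partial_yu_1^{I,2}\|_{H_x^1L_y^2}\,\|\partial_y^2u_1^{I,2}\|_{H_x^1L_y^2}.
\]
The first factor is bounded on $[0,T]$ because $u^{I,2}\in CH^2$. The second lies in $L^2(0,T)$ because $\partial_y^2u_1^{I,2}\in L_T^\infty L^2$ and $\partial_x\partial_y^2u_1^{I,2}\in L_T^2L^2$. This uses at most two normal derivatives, with the extra tangential derivative needed only in $L_T^2$, which is exactly what Lemma \ref{pro:6.4} provides.

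Two of your hedges should also be removed, because they would not prove the lemma as stated. First, your ``main obstacle'' is not one. The bound $\nabla u_1^{b,2}\in L_T^2H_x^2L_{z,\ell}^2$ contains $\partial_xu_1^{b,2}\in L_T^2H_x^2L_{z,\ell}^2$, hence $\langle z\rangle\partial_x^3u_1^{b,2}\in L_T^2L^2$. That directly bounds $\partial_x^2u_2^{b,3}=\int_z^\infty\partial_x^3u_1^{b,2}$ in $L_T^2L_x^2L_z^\infty$. Second, $\nabla S\in L_T^2L^\infty$ follows from $M_2\in L_T^2H_x^3$, which you had already established.

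``Spending powers of $\varepsilon$'' or ``accepting a factor loss'' is not an admissible fallback anyway. The statement bounds $\nabla S$ with no $\varepsilon^3$ in front. More generally, no small prefactor can compensate for a norm that is not finite, and rewriting $\partial_z=\varepsilon\partial_y$ produces no tangential regularity.
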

\begin{proof}
	The proof can be completed by directly using the Proposition \ref{prop:outer-flow},\ref{pro:boundary-layer equation} and Lemmas \ref{pro:6.1}-\ref{pro:6.4}, H\"older and Sobolev inequalities with the expressions of the approximate solutions. We omit the details for brevity.
\end{proof}
The following estimates can be organized in the following form, which also records the cancellations needed for the spatial error energies. The residual $\mathcal{R}^{\varepsilon}$ enters the two velocity estimates and the zeroth-order shear estimate, while $\partial_{x}\mathcal{R}^{\varepsilon}_{1}$ enters the mixed shear estimate. The residual pressure is controlled by $ \partial_{x}\mathcal{R}^{\varepsilon}_{1}+\partial_{y}\mathcal{R}^{\varepsilon}_{2}$ and the wall Fourier integral of $\mathcal{R}^{\varepsilon}_{2}|_{y=0}$. We give the regrouping explicitly; the original terms are listed in Appendix \ref{app:residual}.
\begin{lemma}
\label{prop:residual}
    Under the assumptions of Theorem \ref{thm:main}, there exists a constant $C$ independent of $\varepsilon$ such that
    \begin{equation}
    \label{residual-estimates}
    \begin{gathered}
         \|\mathcal{R}^{\varepsilon}\|_{L_t^2L_{xy}^2}
        +\|\partial_x\mathcal{R}_1^{\varepsilon}\|_{L_t^2L_{xy}^2}
        \leq C_T\varepsilon^{5/2}, \\[0.2cm] 
        \|\partial_y\mathcal{R}_2^{\varepsilon}\|_{L_t^2L_{xy}^2}
        \leq C_T\varepsilon^{3/2}, \\[0.2cm]
         \int_0^T\int_{\mathbb{R}}|\xi|
           |\widehat{\mathcal{R}^{\varepsilon}_2|_{y=0}}(\xi,t)|^2\,d\xi\,dt
             \le C_T\varepsilon^4.
    \end{gathered}
    \end{equation}
\end{lemma}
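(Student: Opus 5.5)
The plan is to take the residual $\mathcal R^\varepsilon$ as it is split in Appendix~\ref{app:residual} into the terms $F_k$ and $G_k$, and to sort these terms into three classes. The first class consists of outer terms evaluated at $y$. The second consists of boundary-layer Taylor remainders, where outer traces are expanded around $y=0$ and multiplied by inner profiles. The third consists of the cutoff corrector $S$ together with the truncated top-order inner terms. For each class I would rely on one scaling fact. If $h=h(x,z)$ and $h^\varepsilon(x,y)=h(x,y/\varepsilon)$, then
\[
\|h^\varepsilon\|_{L^2_{xy}}=\varepsilon^{1/2}\|h\|_{L^2_{xz}},
\qquad
\partial_y h^\varepsilon=\varepsilon^{-1}(\partial_zh)^\varepsilon .
\]
Moreover, a Taylor remainder of an outer trace, such as $u^{I,j}(x,\varepsilon z)-\sum_k \frac{(\varepsilon z)^k}{k!}\overline{\partial_y^ku^{I,j}}$, carries a factor $(\varepsilon z)^{k+1}$ times a bounded integral of $\partial_y^{k+1}u^{I,j}$. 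The polynomial weights $\langle z\rangle^\ell$ from Proposition~\ref{pro:boundary-layer equation} and Lemmas~\ref{pro:6.3}--\ref{pro:6.4} absorb these powers of $z$. By construction of the hierarchy (Section~\ref{subsec:Asymptotic analysis}), every term that survives in $\mathcal R_1^\varepsilon$ is of formal order $\varepsilon^2$ in $L^\infty$. The pure outer terms are $\varepsilon^3$-type: for instance $\varepsilon^2\partial_y^2(\varepsilon u^{I,1}+\varepsilon^2 u^{I,2})$, $\varepsilon^3u^{I,1}\!\cdot\!\nabla u^{I,2}$, and the $g^{I,2}$ mismatches. The inner terms are $\varepsilon^2\times$(profile in $z$), so they contribute $\varepsilon^{5/2}$ in $L^2_{xy}$. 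Applying $\partial_x$ does not change powers of $\varepsilon$. It costs one tangential derivative, which is available because the profiles carry $H_x^3$ (for $u^{b,1}$) and $H^2_x$ (for $u^{b,2}$) regularity in $L^2_t$ via the dissipation, together with the regularity of $\nabla u^{I,2}$ in $L^2H^2$. This yields the first line of \eqref{residual-estimates}, with time integrability supplied by Lemma~\ref{lemma:4.1} and the listed $L^2_t$ bounds.

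For $\mathcal R_2^\varepsilon$, I would use the fact that the normal-momentum balance was solved through $p^{b,2}$ (the formula \eqref{eq:3.7} for $\mathcal P_2$). The leftover inner terms are then $\varepsilon^2$ times profiles, for example $\varepsilon^2(\partial_t-\partial_x^2)u_2^{b,2}$, $\varepsilon^3$-level contributions of $u_2^{b,3}$, and the $S_2$ terms. This gives $\|\mathcal R_2^\varepsilon\|_{L^2}\lesssim\varepsilon^{5/2}$. For the normal derivative, one $\partial_y$ costs $\varepsilon^{-1}$ on inner terms, which gives $\varepsilon^{3/2}$. This requires $\partial_z$ of the inner profiles in $L^2_tH^1_xL^2_{z,\ell}$ and $\partial_z^2u^{b,1}_1$ in $L^2$, both of which are available from Lemma~\ref{pro:6.3} and Lemma~\ref{pro:6.4}. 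The outer pieces remain $O(\varepsilon^3)$ in $L^2H^1$. Wherever a top-order term such as $\partial_y(u_2^{b,3})^\varepsilon=-\varepsilon^{-1}(\partial_xu_1^{b,2})^\varepsilon$ appears, I would trade it through the divergence-free structure, $\partial_yR_2=-\partial_xR_1+\mathrm{div}\,R$, and then recheck it term by term.

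The wall term is the delicate step. At $y=0$ the inner profiles are evaluated at $z=0$, so there is no $\varepsilon^{1/2}$ gain from the $L^2_{xy}$ scaling. The bound $\int|\xi||\widehat{\mathcal R_2^\varepsilon|_{y=0}}|^2\le C\varepsilon^4$ says that the trace is $O(\varepsilon^2)$ in $L^2_t\dot H_x^{1/2}$. I would first list the trace of $\mathcal R_2^\varepsilon$ using the boundary values: $u^a=0$ at the wall, $u_2^{b,j}|_{z=0}=-\overline{u_2^{I,j}}$, $\varphi(0)=1$, and $\varphi'(0)=0$. The transport terms then vanish, because $u^a|_{y=0}=0$ kills $u^a\!\cdot\!\nabla u^a_2$. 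Since $u_2^a$ vanishes identically on $y=0$, the terms $\partial_tu_2^a$ and $\partial_x^2u_2^a$ also vanish there. What remains is $\partial_yp^a-\varepsilon^2\partial_y^2u_2^a$ at $y=0$. Using $\partial_y^2u_2=-\partial_x\partial_yu_1$ on inner and outer parts, this becomes $\overline{\partial_yp^{I}}+\varepsilon\,\overline{\partial_zp^{b,2}}+\varepsilon\,\partial_x\overline{\partial_zu_1^{b}}+\dots$. The hierarchy cancels the $O(\varepsilon)$ part: the Neumann condition for $p^{I,j}$ and the $z=0$ value of $\mathcal P_2$ are designed to match. What survives is $\varepsilon^2$ times traces such as $\partial_x\overline{\partial_z u_1^{b,1}}$, $\overline{\partial_y p^{I,2}}$, and $\partial_x\overline{\partial_zu_1^{b,2}}$ (the last with an extra $\varepsilon$). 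I would bound the $\dot H^{1/2}_x$ norm by the product $\|\cdot\|_{L^2_x}\|\partial_x\cdot\|_{L^2_x}$. Each trace is then controlled by \eqref{eq:wall-trace-general} in terms of $L^2_t$ norms of $\partial_z$ and $\partial_z^2$ of the profiles in $H_x^2L^2_z$, which Lemmas~\ref{pro:6.3}--\ref{pro:6.4} provide; the outer pressure traces use $\nabla p^{I,2}\in L^2H^1$.

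I expect the main obstacle to be this exact bookkeeping of the $O(\varepsilon)$ cancellation at the wall, together with the fact that $u_1^{b,2}$ has only $H^1_x$-level control of $\partial_z^2$. If the trace of $\partial_x\partial_zu_1^{b,2}$ is not square-integrable in $\dot H^{1/2}$, I would fall back on its $\varepsilon^3$ prefactor and use the interpolation trace bound at the cost of $\varepsilon^{-1/2}$, which still leaves a margin below $\varepsilon^4$.
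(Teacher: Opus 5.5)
Your decomposition and scaling bookkeeping agree with the paper's term-by-term proof, so the first two estimates are fine. You also correctly identify the wall identity $\mathcal R_2^\varepsilon|_{y=0}=\partial_yp^a|_{y=0}+\varepsilon^2\partial_x\partial_yu_1^a|_{y=0}$ and the $O(\varepsilon)$ cancellation $(\partial_yp^{I,1}+\partial_zp^{b,2}+\partial_x\partial_zu_1^{b,0})|_{y=0}=0$.

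The gap is in how you then bound $\int|\xi||\widehat{g}|^2\,d\xi$ for the surviving traces. Your chain is $\int|\xi||\hat g|^2\,d\xi\le\|g\|_{L^2_x}\|\partial_xg\|_{L^2_x}$, followed by \eqref{eq:wall-trace-general} with $m=1$. This needs the mixed derivative $\partial_x\partial_yf$ of the bulk function $f$ whose trace is $g$, which is more than the $\dot H^{1/2}_x$ norm of a trace actually costs.

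\begin{itemize}
\item For $g=\partial_x\partial_zu_1^{b,2}|_{z=0}$ it requires $\partial_x^2\partial_z^2u_1^{b,2}\in L^2_tL^2$. Lemma~\ref{pro:6.4} only gives $\partial_z^2u_1^{b,2}\in L^2_tH^1_xL^2_{z,\ell}$.
\item Applied literally to $\overline{\partial_yp^{I,2}}$ and $\overline{\partial_x\partial_yu_1^{I,2}}$, it would need $\partial_x\partial_y^2p^{I,2}$ and $\partial_x^2\partial_y^2u_1^{I,2}$. Only $\nabla p^{I,2}\in L^2_tH^1$ and $\partial_xu^{I,2}\in L^2_tH^2$ are available.
\end{itemize}

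Your fallback does not repair this. The issue is whether $\int_0^T\int|\xi||\hat g|^2\,d\xi\,dt$ is finite at all, and no power of $\varepsilon$ in front can compensate. Nor does the change of variables $y=\varepsilon z$ produce an $\varepsilon^{-1/2}$ trade, since the relevant trace bound is invariant under that rescaling.

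The missing ingredient is the bulk-to-trace inequality \eqref{eq:Fourier-fundamental} used in the paper. Write $|\hat f(\xi,0)|^2=-2\,\mathrm{Re}\int_0^\infty\hat f\,\overline{\partial_y\hat f}\,dy$, multiply by $|\xi|$, and apply Cauchy--Schwarz and Plancherel. This gives $\int|\xi||\widehat{f|_{y=0}}|^2\,d\xi\le2\|f_x\|_2\|f_y\|_2$, which spends exactly one derivative in each direction: it is the sharp $H^1\to\dot H^{1/2}$ trace, not its lossy one-dimensional factorization.

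\begin{itemize}
\item For $f=\partial_x\partial_zu_1^{b,2}$ it needs only $\partial_x^2\partial_zu_1^{b,2}$ and $\partial_x\partial_z^2u_1^{b,2}$ in $L^2_tL^2$, both supplied by Lemma~\ref{pro:6.4}.
\item For $\partial_yp^{I,2}$ and $\partial_x\partial_yu_1^{I,2}$ it uses exactly $\nabla p^{I,2}\in L^2_tH^1$ and $\partial_xu^{I,2}\in L^2_tH^2$.
\end{itemize}

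With this inequality the $\varepsilon^2$-level traces give $C_T\varepsilon^4$, and the $\varepsilon^3$-level ones are smaller. Your one-dimensional route does work for $\partial_x\partial_zu_1^{b,1}|_{z=0}$, because Lemma~\ref{pro:6.3} gives $\partial_z^2u_1^{b,1}\in L^2_tH^2_xL^2_{z,\ell}$. That alone does not prove the third estimate.
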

\begin{proof}
	Thanks to $(\ref{eq:B1})$, we can estimate $\mathcal{R}_{1}$ term by term. For $F_{1}$ using Lemmas \ref{pro:6.4} and \ref{lemma:4.1}, we have
	$$
	\begin{aligned}
		\left\|F_{1}\right\|_{L_{t}^{2} L_{x y}^2} & \leq \varepsilon^{2}\left\|\partial_t u_{1}^{b, 2}\right\|_{L_t^{2} L_{x y}^2}+\varepsilon^{3}\left\|\partial_t S_{1}\right\|_{L_t^{2} L_{x y}^2} \\[0.2cm]
		& \leq \varepsilon^{\frac{5}{2}}\left\|\partial_t u_{1}^{b, 2}\right\|_{L_t^{2} L_{xz}^2}+\varepsilon^{3}\left\|\partial_t S_{1}\right\|_{L_t^{2} L_{x y}^2} \\[0.2cm]
		& \leq C \varepsilon^{\frac{5}{2}}.
	\end{aligned}
	$$
	For $F_{2}$, we use the Taylor's formula,
	$$
	\begin{aligned}
		\left\|F_{2}\right\|_{L_t^{2} L_{x y}^2} &\leq\left\|\frac{u_{1}^{I, 0}(x, y, t)-u_{1}^{I, 0}(x, 0, t)-\partial_{y}u_{1}^{I,0}(x,0,t)y}{\frac{y^{2}}{2}} \frac{1}{2}\varepsilon^{2} z^{2} \partial_{x} u_{1}^{b, 0}\right\|_{L_t^{2} L_{x y}^2} \\[0.2cm]
		&\quad+\left\|\frac{u_{1}^{I, 0}(x, y, t)-u_{1}^{I, 0}(x, 0, t)}{y} \varepsilon z \varepsilon \partial_{x} u_{1}^{b, 1}\right\|_{L_t^{2} L_{x y}^2} \\[0.2cm]
		& \leq C \varepsilon^{\frac{5}{2}}\left\|u_1^{I, 0}\right\|_{L_T^{\infty} H_{x y}^{4}}\left\|\langle z\rangle^{2} u_{1}^{b, 0}\right\|_{L_t^{2} H_{x}^{1}L_{z}^{2} }+C \varepsilon^{\frac{5}{2}}\left\|u_{1}^{I, 0}\right\|_{L_T^{\infty} H_{x y}^{3}}\left\|\langle z\rangle u_{1}^{b, 1}\right\|_{L_T^{\infty} H_{x}^{1}L_{z}^{2} } \\[0.2cm]
		& \leq C \varepsilon^{\frac{5}{2}}.
	\end{aligned}
	$$
	Using Proposition \ref{prop:outer-flow}, Lemmas \ref{pro:6.1}, \ref{pro:6.4}, \ref{lemma:4.1}, and Sobolev embedding theorem, we have
	$$
	\begin{aligned}
		\left\|F_{3}\right\|_{L_t^{2} L_{x y}^2} & \leq  \varepsilon^{\frac{5}{2}}\left\|u_{1}^{I, 0}\right\|_{L_T^{\infty} L_{x y}^{\infty}}\left\|\partial_{x}u_{1}^{b, 2}\right\|_{L_t^{2} L_{xz}^{2}}+\varepsilon^{3}\left\|u_{1}^{I, 0}\right\|_{L_T^{\infty} L_{x y}^{\infty}}\left\|\partial_{x}S_{1}\right\|_{L_t^{2} L_{xy}^{2}}\\[0.2cm]
        &\quad+\varepsilon^{3} \left\| u_{1}^{I,1}\right\|_{L_T^{\infty} L_{x y}^{\infty}} \left\| \partial_{x}u_{1}^{I,2}\right\|_{L_{t}^{2} L_{xy}^{2}} \\[0.2cm]
		&\leq C \varepsilon^{\frac{5}{2}} \left\| u_{1}^{I,0}\right\|_{L_{T}^{\infty} H_{xy}^{2}} \left\| u_{1}^{b,2}\right\|_{L_{t}^{2} H_{x}^{1}L_{z}^{2}}+C \varepsilon^{3} \left\| u_{1}^{I,0}\right\|_{L_{T}^{\infty} H_{xy}^{2}} \left\| S_{1}\right\|_{L_{t}^{2} H_{xy}^{1}}\\[0.2cm]
        &\quad+C \varepsilon^{3}\left\| u_{1}^{I,1}\right\|_{L_{T}^{\infty} H_{xy}^{2}} \left\| u_{1}^{I,2}\right\|_{L_{t}^{2} H_{xy}^{1}} \\[0.2cm]
		&\leq C \varepsilon^{\frac{5}{2}} .
	\end{aligned}
	$$
	For $F_{4}$, similar to the estimate of $F_{2}$, we have
	$$
	\begin{aligned}
		\left\| F_{4}\right\|_{L_{t}^{2} L_{xy}^{2}} \leq C \varepsilon^{\frac{5}{2}} \left\| u_{1}^{I,1}\right\|_{L_{T}^{\infty} H_{xy}^{3}} \left\| \langle z \rangle u_{1}^{b,0}\right\|_{L_{t}^{2} H_{x}^{1}L_{z}^{2}}  \leq C \varepsilon^{\frac{5}{2}}.
	\end{aligned}
	$$
	A similar treatment yields that
	$$
	\begin{aligned}
		\left\| F_{5}\right\|_{L_{t}^{2} L_{xy}^{2}} & \leq \varepsilon^{\frac{5}{2}} \left\| u_{1}^{I,1}\right\|_{L_{T}^{\infty} H_{xy}^{2}} \left(\left\| u_{1}^{b,1}\right\|_{L_{t}^{2} H_{x}^{1}L_{z}^{2}} +\varepsilon \left\| u_{1}^{b,2}\right\|_{L_{t}^{2} H_{x}^{1}L_{z}^{2}}+\varepsilon^{\frac{3}{2}} \left\| S_{1}\right\|_{L_{t}^{2} H_{xy}^{1}}\right) \\[0.2cm]
		& \leq C \varepsilon^{\frac{5}{2}}
	\end{aligned}
	$$
	Using Proposition \ref{pro:boundary-layer equation} and Lemmas \ref{pro:6.1}, \ref{pro:6.3}, \ref{pro:6.4}, \ref{lemma:4.1}, we have
	$$
	\begin{aligned}
		\left\|F_{6}\right\|_{L_t^{2} L_{x y}^2} &\leq  \varepsilon^{\frac{5}{2}}\left\|u_{1}^{I, 2}\right\|_{L_T^{\infty} H_{x y}^{2}}\left(\varepsilon^{\frac{1}{2}}\left\|u_{1}^{I, 1}\right\|_{L_t^{2} H_{xy}^{1}}+\varepsilon^{\frac{3}{2}}\left\|u_{1}^{I, 2}\right\|_{L_t^{2} H_{xy}^{1}}+\left\| u_{1}^{b,0}\right\|_{L_{t}^{2} H_{x}^{1}L_{z}^{2}} \right. \\[0.2cm]
		& \left.\quad+\varepsilon\left\|u_{1}^{b, 1}\right\|_{L_t^{2} H_{x}^{1}L_{z}^{2}}+\varepsilon^{2}\left\| u_{1}^{b,2}\right\|_{L_{t}^{2} H_{x}^{1}L_{z}^{2} }+\left\| S_{1}\right\|_{L_{t}^{2} H_{xy}^{1}} \right) \\[0.2cm]
		&\leq  C \varepsilon^{\frac{5}{2}} .
	\end{aligned}
	$$
	For $F_{7}$, similar to the estimate of $F_{2}$, we have
	$$
	\begin{aligned}
		\left\|F_{7} \right\|_{L_{t}^{2} L_{xy}^{2}} &\leq  C \varepsilon^{\frac{5}{2}} \left\|u_{1}^{I,0} \right\|_{L_{t}^{2} H_{x}^{2}H_{y}^{3}} \left\|\langle z \rangle^{2}u_{1}^{b,0} \right\|_{L_{T}^{\infty} L_{xz}^{2}} + C \varepsilon^{\frac{5}{2}}\left\|u_{1}^{I,1} \right\|_{L_{t}^{2} H_{x}^{2}H_{y}^{2}} \left\|\langle z \rangle u_{1}^{b,0} \right\|_{L_{T}^{\infty} L_{xz}^{2}} \\[0.2cm]
		& \leq C \varepsilon^{\frac{5}{2}}.
	\end{aligned}
	$$
	A similar treatment yields that
	$$
	\begin{aligned}
		\left\|F_{8} \right\|_{L_{t}^{2} L_{xy}^{2}} &\leq  C \varepsilon^{\frac{5}{2}} \left\|u_{1}^{I,2} \right\|_{L_{t}^{2} H_{x}^{2}H_{y}^{1}} \left\|u_{1}^{b,0} \right\|_{L_{T}^{\infty} L_{xz}^{2}} +C \varepsilon^{\frac{5}{2}} \left\|u_{1}^{b,0} \right\|_{L_{T}^{\infty} H_{x}^{1}H_{z}^{1}} \left\|u_{1}^{b,2} \right\|_{L_{t}^{2} H_{x}^{1}L_{z}^{2}} \\[0.2cm]
		&\quad+ C \varepsilon^{3} \left\|u_{1}^{b,0} \right\|_{L_{T}^{\infty} H_{x}^{1}H_{z}^{1}} \left\|S_{1} \right\|_{L_{t}^{2} H_{xy}^{1}} \\[0.2cm]
		& \leq C \varepsilon^{\frac{5}{2}}.
	\end{aligned}
	$$
	For $F_{9}$, similar to the estimate of $F_{2}$, we have
	$$
	\begin{aligned}
		\left\|F_{9} \right\|_{L_{t}^{2} L_{xy}^{2}} \leq  C \varepsilon^{\frac{5}{2}} \left\|u_{1}^{I,0} \right\|_{L_{T}^{\infty} H_{xy}^{4}} \left\|\langle z 
		\rangle u_{1}^{b,1} \right\|_{L_{t}^{2} L_{xz}^{2}} \leq C \varepsilon^{\frac{5}{2}}.
	\end{aligned}
	$$
	A similar treatment yields that
	$$
	\begin{aligned}
		\left\|F_{10} \right\|_{L_{t}^{2} L_{xy}^{2}} &\leq  C \varepsilon^{\frac{5}{2}} \left\|u_{1}^{I,1} \right\|_{L_{T}^{\infty} H_{xy}^{3}} \left\|u_{1}^{b,1} \right\|_{L_{t}^{2} L_{xz}^{2}} +C \varepsilon^{\frac{7}{2}} \left\|u_{1}^{I,2} \right\|_{L_{t}^{2} H_{x}^{2}H_{y}^{1}} \left\|u_{1}^{b,1} \right\|_{L_{T}^{\infty} L_{xz}^{2}} \\[0.2cm]
		&\quad+ C \varepsilon^{\frac{5}{2}} \left\|u_{1}^{b,1} \right\|_{L_{T}^{\infty} H_{x}^{1}H_{z}^{1}} \left(\left\|u_{1}^{b,1} \right\|_{L_{t}^{2} H_{x}^{1}L_{z}^{2} }+\varepsilon \left\|u_{1}^{b,2} \right\|_{L_{t}^{2} H_{x}^{1}L_{z}^{2}} +\varepsilon^{\frac{3}{2}} \left\|S_{1} \right\|_{L_{t}^{2} H_{xy}^{1} }\right) \\[0.2cm]
		&\leq C \varepsilon^{\frac{5}{2}}.
	\end{aligned}
	$$
	Using the definition of $u_{1}^{a}$, it is easy to get
	$$
	\left\|\partial_x u_1^a\right\|_{L_t^{2} L_{x y}^{\infty}}  \leq C .
	$$
	Hence, for $F_{11}$, we have
	$$
	\begin{aligned}
		\left\|F_{11}\right\|_{L_t^{2} L_{x y}^2} &\leq C \varepsilon^{\frac{5}{2}}\left\|\partial_x u_1^a\right\|_{L_t^{2} L_{x y}^{\infty}}\left(\left\|u_1^{b, 2}\right\|_{L_T^{\infty} L_{xz}^2 }+\varepsilon^{\frac{1}{2}}\left\|S_1\right\|_{L_T^{\infty} L_{x y}^2}\right) \\[0.2cm]
		& \leq C \varepsilon^{\frac{5}{2}}
	\end{aligned}
	$$
	For $F_{12}$, similar to the estimate of $F_{2}$, we have
	$$
	\begin{aligned}
		\left\|F_{12} \right\|_{L_{t}^{2} L_{xy}^{2}} &\leq  C \varepsilon^{\frac{5}{2}} \left\|u_{2}^{I,0} \right\|_{L_{t}^{2} H_{x}^{1}H_{y}^{4}} \left\|\langle z \rangle^{3} u_{1}^{b,0} \right\|_{L_{T}^{\infty} L_{x}^{2}H_{z}^{1}} + C \varepsilon^{\frac{5}{2}} \left\|u_{2}^{I,0} \right\|_{L_{T}^{\infty} H_{xy}^{4}} \left\|\langle z \rangle^{2} u_{1}^{b,1} \right\|_{L_{t}^{2} L_{x}^{2}H_{z}^{1}} \\[0.2cm]
		& \leq C \varepsilon^{\frac{5}{2}}.
	\end{aligned}
	$$
	For $F_{13}$, similar to the estimate of $F_{2}$, we have
	$$
	\begin{aligned}
		\left\|F_{13} \right\|_{L_{t}^{2} L_{xy}^{2}} &\leq  C \varepsilon^{\frac{5}{2}} \left\|u_{2}^{I,0} \right\|_{L_{T}^{\infty} H_{xy}^{3}} \left\|\langle z \rangle u_{1}^{b,2} \right\|_{L_{t}^{2} L_{x}^{2}H_{z}^{1}} + C \varepsilon^{3} \left\|u_{2}^{I,0} \right\|_{L_{T}^{\infty} H_{xy}^{2}} \left\|S_{1} \right\|_{L_{t}^{2} H_{xy}^{1}} \\[0.2cm]
		&\quad+ C \varepsilon^{3} \left\|u_{2}^{I,1} \right\|_{L_{T}^{\infty} H_{xy}^{2}} \left\|u_{1}^{I,2} \right\|_{L_{t}^{2} H_{xy}^{1}} \\[0.2cm]
		& \leq C \varepsilon^{\frac{5}{2}}.
	\end{aligned}
	$$
	For $F_{14}$, similar to the estimate of $F_{2}$, we have
	$$
	\begin{aligned}
		\left\|F_{14} \right\|_{L_{t}^{2} L_{xy}^{2}} &\leq  C \varepsilon^{\frac{5}{2}} \left\|u_{2}^{I,1} \right\|_{L_{t}^{2} H_{x}^{1}H_{y}^{3}} \left\|\langle z \rangle^{2} u_{1}^{b,0} \right\|_{L_{T}^{\infty} L_{x}^{2}H_{z}^{1}} + C \varepsilon^{\frac{5}{2}} \left\|u_{2}^{I,1} \right\|_{L_{T}^{\infty} H_{xy}^{3}} \left\|\langle z \rangle u_{1}^{b,1} \right\|_{L_{t}^{2} L_{x}^{2}H_{z}^{1}} \\[0.2cm]
		& \leq C \varepsilon^{\frac{5}{2}}.
	\end{aligned}
	$$
	A similar treatment yields that
	$$
	\begin{aligned}
		\left\|F_{15} \right\|_{L_{t}^{2} L_{xy}^{2}} &\leq  C \varepsilon^{\frac{5}{2}} \left\|u_{2}^{I,1} \right\|_{L_{T}^{\infty} H_{xy}^{2}} \left\| u_{1}^{b,2} \right\|_{L_{t}^{2} L_{x}^{2}H_{z}^{1}} + C \varepsilon^{4} \left\|u_{2}^{I,1} \right\|_{L_{T}^{\infty} H_{xy}^{2}} \left\|S_{1} \right\|_{L_{t}^{2} H_{xy}^{1}} \\[0.2cm]
		&\quad+ C \varepsilon^{3} \left\|u_{1}^{I,1} \right\|_{L_{T}^{\infty} H_{xy}^{3}} \left\|u_{2}^{I,2} \right\|_{L_{t}^{2} L_{xy}^{2}} +C \varepsilon^{4} \left\|u_{2}^{I,2} \right\|_{L_{T}^{\infty} H_{xy}^{2}} \left\|u_{1}^{I,2} \right\|_{L_{t}^{2} H_{xy}^{1}} \\[0.2cm]
		& \leq C \varepsilon^{\frac{5}{2}}.
	\end{aligned}
	$$
	For $F_{16}$, similar to the estimate of $F_{2}$, we have
	$$
	\begin{aligned}
		\left\|F_{16} \right\|_{L_{t}^{2} L_{xy}^{2}} &\leq  C \varepsilon^{\frac{5}{2}} \left\|u_{2}^{I,2} \right\|_{L_{t}^{2} H_{x}^{1}H_{y}^{2}} \left\|\langle z \rangle u_{1}^{b,0} \right\|_{L_{T}^{\infty} L_{x}^{2}H_{z}^{1}} + C \varepsilon^{\frac{5}{2}} \left\|u_{2}^{I,2} \right\|_{L_{T}^{\infty} H_{xy}^{2}} \left\|u_{1}^{b,1} \right\|_{L_{t}^{2} L_{x}^{2}H_{z}^{1}} \\[0.2cm]
		&\quad+ C \varepsilon^{\frac{7}{2}} \left\|u_{2}^{I,2} \right\|_{L_{T}^{\infty} H_{xy}^{2}} \left\|u_{1}^{b,2} \right\|_{L_{t}^{2} L_{x}^{2}H_{z}^{1}} +C \varepsilon^{5} \left\|u_{2}^{I,2} \right\|_{L_{T}^{\infty} H_{xy}^{2}} \left\|S_{1} \right\|_{L_{t}^{2} H_{xy}^{1}} \\[0.2cm]
		& \leq C \varepsilon^{\frac{5}{2}}.
	\end{aligned}
	$$
	For $F_{17}$, similar to the estimate of $F_{2}$, we have
	$$
	\begin{aligned}
		\left\|F_{17} \right\|_{L_{t}^{2} L_{xy}^{2}} &\leq  C \varepsilon^{\frac{5}{2}} \left\|u_{1}^{I,0} \right\|_{L_{T}^{\infty} H_{xy}^{4}} \left\|\langle z \rangle u_{2}^{b,1} \right\|_{L_{t}^{2} L_{xz}^{2}} + C \varepsilon^{\frac{5}{2}} \left\|u_{1}^{I,1} \right\|_{L_{T}^{\infty} H_{xy}^{3}} \left\|u_{2}^{b,1} \right\|_{L_{t}^{2} L_{xz}^{2}} \\[0.2cm]
		&\quad+ \varepsilon^{\frac{5}{2}} \left\|u_{2}^{b,1} \right\|_{L_{T}^{\infty} H_{x}^{1}H_{z}^{1}}\left( \varepsilon^{\frac{1}{2}}\left\|u_{1}^{I,2} \right\|_{L_{t}^{2} H_{xy}^{1}}+\left\|u_{1}^{b,2} \right\|_{L_{t}^{2} L_{x}^{2}H_{z}^{1}}+\varepsilon^{\frac{3}{2}}\left\|S_{1} \right\|_{L_{t}^{2} H_{xy}^{1}}\right)  \\[0.2cm]
		&\leq C \varepsilon^{\frac{5}{2}}.
	\end{aligned}
	$$
	Using the definition of $u_{1}^{a}$, it is easy to get
	$$
	\left\|\partial_y u_1^a \right\|_{L_t^{2} L_{x y}^{\infty}}\leq \frac{C}{\varepsilon},\quad \left\|\partial_y u_1^a\right\|_{L_T^{\infty} L_{x y}^{2}} \leq \frac{C}{\sqrt{\varepsilon}}, \quad\left\|\partial_y\left(u_1^a-u_1^{b, 0}\right)\right\|_{L_T^{\infty} L_{x y}^{2}} \leq C .
	$$
	Hence, for $F_{18}$, we have
	$$
	\begin{aligned}
		\left\|F_{18}\right\|_{L_t^{2} L_{x y}^2} &\leq  \varepsilon^{\frac{5}{2}} \left\|\partial_{y}\left(u_{1}^{I,0}+\varepsilon u_{1}^{I,1}+\varepsilon^{2}u_{1}^{I,2}+\varepsilon^{3}S_{1}\right)\right\|_{L_{t}^{2} L_{xy}^{\infty}} \left\|u_{2}^{b,2}\right\|_{L_{T}^{\infty} L_{xz}^{2}} \\[0.2cm]
		&\quad+\varepsilon^{\frac{5}{2}}\left\|u_{2}^{b,2}\right\|_{L_{t}^{2} H_{x}^{1}H_{z}^{1}}\left(\left\|u_{1}^{b,1}\right\|_{L_{T}^{\infty} L_{x}^{2}H_{z}^{1}} +\varepsilon \left\|u_{1}^{b,2}\right\|_{L_{T}^{\infty} L_{x}^{2}H_{z}^{1}} \right) \\[0.2cm]
        &\quad+\varepsilon^{\frac{7}{2}}\left\|\partial_{y}u_{1}^{a} \right\|_{L_t^{2} L_{xy}^{\infty}}\left\|u_{2}^{b,3}\right\|_{L_T^{\infty} L_{xz}^{2}} \\[0.2cm]
		& \quad+C \varepsilon^{\frac{5}{2}}\left\|S_2\right\|_{L_t^{2} H_{x}^{1}H_{y}^{1}}\left(\varepsilon^{\frac{1}{2}}\left\|\partial_y\left(u_1^a-u_1^{b, 0}\right)\right\|_{L_T^{\infty} L_{x y}^{2}} + \left\|u_1^{b, 0}\right\|_{L_T^{\infty} L_{x}^{2}H_{z}^1} \right) \\[0.2cm]
        &\quad+\varepsilon^{\frac{5}{2}}\left\|p^{b, 2}\right\|_{L_T^{\infty} H_{x}^{1}L_{z}^{2}} \\[0.2cm]
		& \leq C \varepsilon^{\frac{5}{2}} .
	\end{aligned}
	$$
	Finally, we can estimate $F_{19}$ as follows.
	$$
	\begin{aligned}
		\left\|F_{19} \right\|_{L_{t}^{2} L_{xy}^{2}} &\leq  \varepsilon^{\frac{5}{2}} \left\|u_{1}^{b,2} \right\|_{L_{t}^{2} H_{x}^{2}L_{z}^{2}} + \varepsilon^{3} \left\|S_{1} \right\|_{L_{t}^{2} H_{xy}^{2}} +\varepsilon^{3} \left\|u_{1}^{I,1} \right\|_{L_{t}^{2} H_{xy}^{2}} \\[0.2cm]
		&\quad+\varepsilon^{4} \left\|u_{1}^{I,2} \right\|_{L_{t}^{2} H_{xy}^{2}} +\varepsilon^{\frac{5}{2}} \left\|u_{1}^{b,2} \right\|_{L_{t}^{2} L_{x}^{2}H_{z}^{2}} +\varepsilon^{5} \left\|S_{1} \right\|_{L_{t}^{2} H_{xy}^{2}} \\[0.2cm]
		& \leq C \varepsilon^{\frac{5}{2}}.
	\end{aligned}
	$$
	Combining the estimates of $F_{1}, \cdots, F_{19}$, we have
	$$
	\left\|\mathcal{R}^{\varepsilon}_{1} \right\|_{L_{t}^{2} L_{xy}^{2}} \leq \displaystyle \sum_{i=1}^{19} \left\|F_{i} \right\|_{L_{t}^{2} L_{xy}^{2}} \leq C \varepsilon^{\frac{5}{2}}.
	$$
    We next estimate $\partial_x\mathcal R_1^\varepsilon$ without differentiating the full normal residual. The time-derivative group $F_1$ uses $\partial_x\partial_tu_1^{b,2}\in L_t^2L_{z,\ell}^2L_x^2$ and $\partial_x\partial_tS_1=\varphi'\int_0^\infty\partial_x\partial_tu_1^{b,2}\,dz$. For the Taylor groups $F_2,F_4,F_7,F_9,F_{12},F_{14},F_{16},F_{17}$, use
    $$
    \begin{aligned}
    &\left\|\left(f-\sum_{j=0}^{m-1}\frac{y^j}{j!}\overline{\partial_y^jf}\right)
               Q(x,y/\varepsilon)\right\|_2 \leq C_m\varepsilon^{m+1/2}
       \|\partial_y^mf\|_{L_x^2L_y^\infty}
       \|z^mQ\|_{L_x^\infty L_z^2}.
    \end{aligned}
    $$
    This is Taylor's integral formula followed by $y=\varepsilon z$. If $\partial_x$ falls on an outer Taylor remainder, use $f=\partial_xu_i^{I,j}$ in this estimate. In the highest case, $F_{12}$, incompressibility gives
    $$
    \partial_x\partial_y^3u_2^{I,0}=-\partial_x^2\partial_y^2u_1^{I,0};
    $$
    its $L_t^2L_x^2L_y^\infty$ norm is bounded by $u^{I,0}\in CH^4$ and $\partial_xu^{I,0}\in L_t^2H^4$. The prefactor $\varepsilon^{-1}$ leaves the order $\varepsilon^{5/2}$.

    The product groups $F_3,F_5,F_6,F_8,F_{10},F_{11},F_{13},F_{15},F_{18}$ use the same mixed Sobolev estimates. At the terminal normal profile, put
    $$
    \partial_xu_2^{b,3}=\int_z^\infty\partial_x^2u_1^{b,2}\,dr
    $$
    in $L_T^\infty L_x^2L_z^\infty$, and put the leading shear factor in $L_T^\infty L_x^\infty L_z^2$. All other highest tangential derivatives are placed in their $L_t^2$ dissipation norms, while the other factor is bounded in time. The diffusion group $F_{19}$ uses
    $$
    \partial_x^3u_1^{b,2},\ \partial_x\partial_z^2u_1^{b,2}\in L_t^2L_{z,\ell}^2L_x^2,
    \qquad
    \partial_x\partial_y^2u^{I,2}\in L_t^2L_{xy}^2.
    $$
    These estimates cover all nineteen groups and give
    $$
    \sum_{i=1}^{19}\|\partial_xF_i\|_{L_t^2L_{xy}^2}
    \leq C_T\varepsilon^{5/2},\qquad
    \|\partial_x\mathcal R_1^\varepsilon\|_{L_t^2L_{xy}^2}
    \leq C_T\varepsilon^{5/2}.
    $$
    
    For the source term $\mathcal{R}_{2}$, we have the following estimates. For $G_{1}$, similar to the estimate of $F_{1}$, we have
	$$
	\begin{aligned}
		\left\|G_{1}\right\|_{L_t^{2} L_{x y}^2} \leq \varepsilon^{\frac{5}{2}}\left\|\partial_t u_{2}^{b, 2}\right\|_{L_t^{2} L_{xz}^{2} }+\varepsilon^{\frac{7}{2}}\left\|\partial_t u_{2}^{b,3}\right\|_{L_t^{2} L_{xz}^2 } +\varepsilon^{3} \left\|\partial_{t} S_{2} \right\|_{L_{t}^{2} L_{xy}^{2}}  \leq C \varepsilon^{\frac{5}{2}}.
	\end{aligned}
	$$
	For $G_{2}$, similar to the estimate of $F_{2}$, we have
	$$
	\begin{aligned}
		\left\|G_{2}\right\|_{L_t^{2} L_{x y}^2} \leq &  C \varepsilon^{\frac{5}{2}}\left\|u_1^{I, 0}\right\|_{L_T^{\infty} H_{x y}^{3}}\left\|\langle z\rangle u_{2}^{b, 1}\right\|_{L_t^{2} H_{x}^{1}L_{z}^{2}} \leq C \varepsilon^{\frac{5}{2}}.
	\end{aligned}
	$$
	Based on the regularity of boundary profiles, we can give
	$$
	\begin{aligned}
		\left\|G_{3}\right\|_{L_t^{2} L_{x y}^2} &\leq  C \varepsilon^{\frac{5}{2}} \left\|u_{1}^{I,0} \right\|_{L_{T}^{\infty} H_{xy}^{2}} \left( \left\| u_{2}^{b,2}\right\|_{L_{t}^{2} H_{x}^{1}L_{z}^{2}} + \varepsilon \left\|u_{2}^{b,3} \right\|_{L_{t}^{2} H_{x}^{1}L_{z}^{2}} +\varepsilon^{\frac{1}{2}} \left\|S_{2} \right\|_{L_{t}^{2} H_{xy}^{1}} \right) \\[0.2cm]
		&\quad+C \varepsilon^{3} \left\|u_{1}^{I,1} \right\|_{L_{T}^{\infty} H_{xy}^{2}} \left\|u_{2}^{I,2} \right\|_{L_{t}^{2} H_{xy}^{1}} \\[0.2cm]
		& \leq C \varepsilon^{\frac{5}{2}}.
	\end{aligned}
	$$
	A similar argument yields that
	$$
	\begin{aligned}
		\left\|G_{4}\right\|_{L_t^{2} L_{x y}^2} & \leq C \varepsilon^{\frac{5}{2}} \left\|u_{1}^{I,1} \right\|_{L_{T}^{\infty} H_{xy}^{2}} \left( \left\| u_{2}^{b,1}\right\|_{L_{t}^{2} H_{x}^{1}L_{z}^{2}} + \varepsilon \left\|u_{2}^{b,2} \right\|_{L_{t}^{2} H_{x}^{1}L_{z}^{2}} \right. \\[0.2cm]
		&\left.\quad+ \varepsilon^{2} \left\|u_{2}^{b,3} \right\|_{L_{t}^{2} H_{x}^{1}L_{z}^{2}} +\varepsilon^{\frac{3}{2}} \left\|S_{2} \right\|_{L_{t}^{2} H_{xy}^{1}} \right) \\[0.2cm]
		& \leq C \varepsilon^{\frac{5}{2}}.
	\end{aligned}
	$$
	A similar argument yields that
	$$
	\begin{aligned}
		\left\|G_{5}\right\|_{L_t^{2} L_{x y}^2} &\leq C \left\|u_{1}^{I,2} \right\|_{L_{T}^{\infty} H_{xy}^{2}} \left( \varepsilon^{3}\left\| u_{2}^{I,1}\right\|_{L_{t}^{2} H_{xy}^{1}} + \varepsilon^{4} \left\|u_{2}^{I,2} \right\|_{L_{t}^{2} H_{xy}^{1}} +\varepsilon^{\frac{7}{2}} \left\|u_{2}^{b,1} \right\|_{L_{t}^{2} H_{x}^{1}L_{z}^{2}} \right. \\[0.2cm]
		&\left.\quad+ \varepsilon^{\frac{9}{2}} \left\|u_{2}^{b,2} \right\|_{L_{t}^{2} H_{x}^{1}L_{z}^{2}} +\varepsilon^{\frac{11}{2}} \left\|u_{2}^{b,3} \right\|_{L_{t}^{2} H_{x}^{1}L_{z}^{2}}+\varepsilon^{5} \left\|S_{2} \right\|_{L_{t}^{2} H_{xy}^{1}} \right) \\[0.2cm]
		& \leq C \varepsilon^{3}.
	\end{aligned}
	$$
	For $G_{6}$, similar to the estimate of $F_{2}$, we have 
	$$
	\begin{aligned}
		\left\|G_{6}\right\|_{L_t^{2} L_{x y}^2} &\leq   C \varepsilon^{\frac{5}{2}}\left\|u_{2}^{I, 0}\right\|_{L_t^{2} H_{x}^{2}H_{y}^{3}}\left\|\langle z\rangle^{2} u_{1}^{b, 0}\right\|_{L_T^{\infty} L_{xz}^{2}}+C \varepsilon^{\frac{5}{2}} \left\|u_{2}^{I, 1}\right\|_{L_t^{2} H_{x}^{2}H_{y}^{2}}\left\|\langle z\rangle u_{1}^{b, 0}\right\|_{L_T^{\infty} L_{xz}^{2}} \\[0.2cm]
		& \leq C \varepsilon^{\frac{5}{2}}.
	\end{aligned}
	$$
	A similar argument yields that
	$$
	\begin{aligned}
		\left\|G_{7}\right\|_{L_t^{2} L_{x y}^2} &\leq C \varepsilon^{\frac{5}{2}}\left\|u_{2}^{I,2} \right\|_{L_{t}^{2} H_{x}^{2}H_{y}^{1}} \left\|u_{1}^{b,0} \right\|_{L_{T}^{\infty} L_{xz}^{2}}+C\varepsilon^{\frac{5}{2}} \left\|u_{1}^{b,0} \right\|_{L_{T}^{\infty} H_{x}^{1}H_{z}^{1}} \left\|u_{2}^{b,2} \right\|_{L_{t}^{2} H_{x}^{1}L_{z}^{2}}  \\[0.2cm]
		&\quad+C\varepsilon^{\frac{7}{2}} \left\|u_{1}^{b,0} \right\|_{L_{T}^{\infty}H_{x}^{1}H_{z}^{1}} \left\|u_{2}^{b,3} \right\|_{L_{t}^{2} H_{x}^{1}L_{z}^{2}} +C\varepsilon^{3} \left\|u_{1}^{b,0} \right\|_{L_{T}^{\infty} H_{x}^{1}H_{z}^{1}} \left\|S_{2} \right\|_{L_{t}^{2} H_{xy}^{1} }  \\[0.2cm]
		& \leq C \varepsilon^{\frac{5}{2}}.
	\end{aligned}
	$$
	For $G_{8}$, similar to the estimate of $F_{2}$, we have
	$$
	\begin{aligned}
		\left\|G_{8}\right\|_{L_t^{2} L_{x y}^2} \leq &  C \varepsilon^{\frac{5}{2}}\left\|u_{2}^{I, 0}\right\|_{L_T^{\infty} H_{x y}^{4}}\left\|\langle z\rangle u_{1}^{b,1}\right\|_{L_t^{2} L_{xz}^{2}}  \leq C \varepsilon^{\frac{5}{2}}.
	\end{aligned}
	$$
	A similar argument yields that
	$$
	\begin{aligned}
		\left\|G_{9}\right\|_{L_t^{2} L_{x y}^2} &\leq C \varepsilon^{\frac{5}{2}}\left\|u_{2}^{I,1} \right\|_{L_{T}^{\infty} H_{xy}^{3}} \left\|u_{1}^{b,1} \right\|_{L_{t}^{2} L_{xz}^{2}}+C\varepsilon^{3} \left\|u_{1}^{b,1} \right\|_{L_{T}^{\infty} H_{x}^{1}H_{z}^{1}} \left\|u_{2}^{I,2} \right\|_{L_{t}^{2} H_{xy}^{1}}  \\[0.2cm]
		&\quad+C\varepsilon^{\frac{5}{2}} \left\|u_{1}^{b,1} \right\|_{L_{t}^{2} H_{x}^{1}H_{z}^{1}} \left\|u_{2}^{b,1} \right\|_{L_{T}^{\infty} H_{x}^{1}L_{z}^{2}} +C\varepsilon^{\frac{7}{2}} \left\|u_{1}^{b,1} \right\|_{L_{t}^{2} H_{x}^{1}H_{z}^{1}} \left\|u_{2}^{b,2} \right\|_{L_{T}^{\infty} H_{x}^{1}L_{z}^{2} }  \\[0.2cm]
		&\quad+C\varepsilon^{\frac{9}{2}} \left\|u_{1}^{b,1} \right\|_{L_{T}^{\infty} H_{x}^{1}H_{z}^{1}} \left\|u_{2}^{b,3} \right\|_{L_{t}^{2} H_{x}^{1}L_{z}^{2}} +C\varepsilon^{4} \left\|u_{1}^{b,1} \right\|_{L_{T}^{\infty} H_{x}^{1}H_{z}^{1}} \left\|S_{2} \right\|_{L_{t}^{2} H_{xy}^{1} }  \\[0.2cm]
		& \leq C \varepsilon^{\frac{5}{2}}.
	\end{aligned}
	$$
	Using the definition of $u_{2}^{a}$, it is easy to get
	$$
	\left\|\partial_{x} \left(u_{2}^{a}-\varepsilon^{3}u_{2}^{b,3}\right)  \right\|_{L_{t}^{2} L_{xy}^{\infty}} \leq C.
	$$
	Hence, for $G_{10}$, we have
	$$
	\begin{aligned}
		\left\|G_{10} \right\|_{L_{t}^{2} L_{xy}^{2}} &\leq  \varepsilon^{\frac{5}{2}} \left\|\partial_{x} \left(u_{2}^{a}-\varepsilon^{3}u_{2}^{b,3}\right) \right\|_{L_{t}^{2} L_{xy}^{\infty}} \left(\left\|u_{1}^{b,2} \right\|_{L_{T}^{\infty} L_{xz}^{2}} + \varepsilon^{\frac{1}{2}} \left\|S_{1} \right\|_{L_{T}^{\infty} L_{xy}^{2}} \right) \\[0.2cm]
		&\quad+\varepsilon^{\frac{11}{2}}\left(\left\|u_{1}^{b,2}\right\|_{L_{t}^{2} H_{x}^{1}H_{z}^{1}}+\varepsilon \left\|S_{1}\right\|_{L_{t}^{2} H_{x}^{1}H_{y}^{1}} \right) \left\|u_{2}^{b,3}\right\|_{L_{T}^{\infty} H_{x}^{1}L_{z}^{2}} \\[0.2cm]
		& \leq C \varepsilon^{\frac{5}{2}}.
	\end{aligned}
	$$
	For $G_{11}$, similar to the estimate of $F_{2}$, we have
	$$
	\begin{aligned}
		\left\|G_{11}\right\|_{L_t^{2} L_{x y}^2} &\leq   C \varepsilon^{\frac{5}{2}}\left\|u_{2}^{I, 0}\right\|_{L_T^{\infty} H_{x y}^{4}}\left\|\langle z\rangle^{2} u_{2}^{b, 1}\right\|_{L_t^{2} L_{x}^{2}H_{z}^{1}}+C \varepsilon^{\frac{5}{2}} \left\|u_{2}^{I, 0}\right\|_{L_T^{\infty} H_{x y}^{3}}\left\|\langle z\rangle u_{2}^{b, 2}\right\|_{L_t^{2} L_{x}^{2}H_{z}^{1}}  \leq C \varepsilon^{\frac{5}{2}}.
	\end{aligned}
	$$
	A similar argument yields that
	$$
	\begin{aligned}
		\left\|G_{12}\right\|_{L_t^{2} L_{x y}^2} &\leq C \varepsilon^{\frac{5}{2}}\left\|u_{2}^{I,0} \right\|_{L_{T}^{\infty} H_{xy}^{2}} \left\|u_{2}^{b,3} \right\|_{L_{t}^{2} L_{x}^{2}H_{z}^{1}}+C\varepsilon^{3} \left\|u_{2}^{I,0} \right\|_{L_{T}^{\infty} H_{xy}^{2}} \left\|S_{2} \right\|_{L_{t}^{2} H_{xy}^{1}}  \\[0.2cm]
		&\quad+C\varepsilon^{3} \left\|u_{2}^{I,1} \right\|_{L_{T}^{\infty} H_{xy}^{2}} \left\|u_{2}^{I,2} \right\|_{L_{t}^{2} H_{xy}^{1}} \\[0.2cm]
		& \leq C \varepsilon^{\frac{5}{2}}.
	\end{aligned}
	$$
	For $G_{13}$, similar to the estimate of $F_{2}$, we have
	$$
	\begin{aligned}
		\left\|G_{13}\right\|_{L_t^{2} L_{x y}^2} &\leq   C \varepsilon^{\frac{5}{2}}\left\|u_{2}^{I, 1}\right\|_{L_T^{\infty} H_{x y}^{3}}\left\|\langle z\rangle u_{2}^{b, 1}\right\|_{L_t^{2} L_{x}^{2}H_{z}^{1}}+C \varepsilon^{\frac{5}{2}} \left\|u_{2}^{I, 1}\right\|_{L_T^{\infty} H_{x y}^{2}}\left\| u_{2}^{b, 2}\right\|_{L_t^{2} L_{x}^{2}H_{z}^{1}} \\[0.2cm]
		&\quad+C \varepsilon^{\frac{7}{2}} \left\|u_{2}^{I,1} \right\|_{L_{T}^{\infty} H_{xy}^{2}} \left\|u_{2}^{b,3} \right\|_{L_{t}^{2} L_{x}^{2}H_{z}^{1}}+C\varepsilon^{4} \left\|u_{2}^{I,1} \right\|_{L_{T}^{\infty} H_{xy}^{2}} \left\|S_{2} \right\|_{L_{t}^{2} H_{xy}^{1}} \\[0.2cm]
		& \leq C \varepsilon^{\frac{5}{2}}.
	\end{aligned}
	$$
	A similar argument yields that
	$$
	\begin{aligned}
		\left\|(G_{14},G_{15})\right\|_{L_t^{2}  L_{x y}^2} &\leq  C \varepsilon^{\frac{5}{2}}\left\|u_{2}^{I,2} \right\|_{L_{t}^{2} H_{xy}^{2}} \left( \varepsilon^{\frac{1}{2}}\left\|u_{2}^{I,1} \right\|_{L_{T}^{\infty} H_{xy}^{1}}+\varepsilon^{\frac{3}{2}}\left\|u_{2}^{I,2} \right\|_{L_{T}^{\infty}  H_{xy}^{1}}  \right. \\[0.2cm]
		&\left.\quad +\left\|u_{2}^{b,1} \right\|_{L_{T}^{\infty} L_{x}^{2}H_{z}^{1}} + \varepsilon \left\|u_{2}^{b,2} \right\|_{L_{T}^{\infty} L_{x}^{2}H_{z}^{1}} +\varepsilon^{2} \left\|u_{2}^{b,3} \right\|_{L_{T}^{\infty} L_{x}^{2}H_{z}^{1}}+\left\|S_{2} \right\|_{L_{T}^{\infty}H_{xy}^{1}} \right) \\[0.2cm]
		& \leq C \varepsilon^{\frac{5}{2}}.
	\end{aligned}
	$$
	For $G_{16}$, similar to the estimate of $F_{2}$, we have
	$$
	\begin{aligned}
		\left\|G_{16}\right\|_{L_t^{2}  L_{x y}^2} \leq & C \varepsilon^{\frac{5}{2}}\left\|u_{2}^{I,0} \right\|_{L_{T}^{\infty} H_{xy}^{4}} \left\|\langle z \rangle u_{2}^{b,1}\right\|_{L_{t}^{2} L_{xz}^{2}} \leq C \varepsilon^{\frac{5}{2}}.
	\end{aligned}
	$$
	A similar argument yields that
	$$
	\begin{aligned}
		\left\|G_{17}\right\|_{L_t^{2}  L_{x y}^2} &\leq  C \varepsilon^{\frac{5}{2}}\left\|u_{2}^{I,1} \right\|_{L_{t}^{2} H_{xy}^{3}} \left\|u_{2}^{b,1}\right\|_{L_{T}^{\infty} L_{xz}^{2}} +C \varepsilon^{\frac{5}{2}} \left\|u_{2}^{b,1} \right\|_{L_{T}^{\infty}H_{x}^{1}H_{z}^{1}} \left( \varepsilon^{\frac{1}{2}}\left\|u_{2}^{I,2} \right\|_{L_{t}^{2} H_{xy}^{1}} \right. \\[0.2cm]
		&\left.\quad+\left\|u_{2}^{b,2} \right\|_{L_{t}^{2} L_{x}^{2}H_{z}^{1}} +\varepsilon \left\|u_{2}^{b,3} \right\|_{L_{t}^{2} L_{x}^{2}H_{z}^{1}} +\varepsilon^{\frac{3}{2}} \left\|S_{2} \right\|_{L_{t}^{2} H_{xy}^{1}} \right) \\[0.2cm]
		& \leq C \varepsilon^{\frac{5}{2}}.
	\end{aligned}
	$$
	Using the definition of $u_{2}^{a}$, it is easy to get
	$$
	\left\|\partial_{y}\left(u_{2}^{a}-u_{2}^{I,0}\right) \right\|_{L_{t}^{2} L_{xy}^{\infty}} \leq  C,\quad \left\|\partial_{y}\left(u_{2}^{a}-u_{2}^{I,0}\right) \right\|_{L_{T}^{\infty} L_{xy}^{2}} \leq \sqrt{\varepsilon} C.
	$$
	Hence, for $G_{18}$, we have
	$$
	\begin{aligned}
		\left\|G_{18} \right\|_{L_{t}^{2} L_{xy}^{2}}&\leq C\varepsilon^{\frac{5}{2}} \left\|u_{2}^{I,0}\right\|_{L_{T}^{\infty} H_{xy}^{3}} \left(\left\|u_{2}^{b,2}\right\|_{L_{t}^{2} L_{xz}^{2}}+\varepsilon \left\|u_{2}^{b,3}\right\|_{L_{t}^{2} L_{xz}^{2}}+\varepsilon^{\frac{1}{2}}\left\|S_{2}\right\|_{L_{t}^{2} L_{xy}^{2}} \right) \\[0.2cm]
		&\quad +C \varepsilon^{2} \left(\left\|u_{2}^{b,2}\right\|_{L_{t}^{2} H_{x}^{1}H_{z}^{1}}+\varepsilon \left\|S_{2}\right\|_{L_{t}^{2} H_{xy}^{2}}\right) \left\|\partial_{y}\left(u_{2}^{a}-u_{2}^{I,0}\right) \right\|_{L_{T}^{\infty} L_{xy}^{2}}\\[0.2cm]
		&\quad+\varepsilon^{\frac{7}{2}} \left\|\partial_{y}\left(u_{2}^{a}-u_{2}^{I,0}\right) \right\|_{L_{t}^{2} L_{xy}^{\infty}} \left\|u_{2}^{b,3}\right\|_{L_{T}^{\infty} L_{xz}^{2}} \\[0.2cm]
		& \leq C \varepsilon^{\frac{5}{2}}.
	\end{aligned}
	$$
	A similar argument yields that
	$$
	\begin{aligned}
		\left\|G_{19}\right\|_{L_t^{2}  L_{x y}^2} &\leq  \varepsilon^{\frac{5}{2}} \left\|u_{2}^{b,2} \right\|_{L_{t}^{2} H_{x}^{2}L_{z}^{2}}+\varepsilon^{\frac{7}{2}} \left\|u_{2}^{b,3} \right\|_{L_{t}^{2} H_{x}^{2}L_{z}^{2}} +\varepsilon^{3} \left\|S_{2} \right\|_{L_{t}^{2} H_{xy}^{2} } \leq C \varepsilon^{\frac{5}{2}}.
	\end{aligned}
	$$
	A similar argument yields that
	$$
	\begin{aligned}
		\left\|G_{20}\right\|_{L_t^{2}  L_{x y}^2} &\leq  \varepsilon^{3} \left\|u_{2}^{I,1} \right\|_{L_{t}^{2} H_{xy}^{2}}+\varepsilon^{4} \left\|u_{2}^{I,2} \right\|_{L_{t}^{2} H_{xy}^{2}} +\varepsilon^{\frac{5}{2}} \left\|u_{1}^{b,1} \right\|_{L_{t}^{2} H_{x}^{1}H_{z}^{1}} \\[0.2cm]
		&\quad+\varepsilon^{\frac{7}{2}} \left\|u_{1}^{b,2} \right\|_{L_{t}^{2} H_{x}^{1}H_{z}^{1}} +\varepsilon^{5} \left\|S_{2} \right\|_{L_{t}^{2}  H_{xy}^{2}} \\[0.2cm]
		& \leq C \varepsilon^{\frac{5}{2}}.
	\end{aligned}
	$$
	Combining $G_{i}$, we have
	$$
	\|\mathcal{R}^{\varepsilon}_{2}\|_{L_t^{2} L_{x y}^2}\leq\sum_{i=1}^{20}\left\|G_i\right\|_{L_t^{2} L_{x y}^2} \leq C \varepsilon^{\frac{5}{2}}.
	$$
    For $\partial_y\mathcal R_2^\varepsilon$, differentiate the displayed groups $G_i$ once in the physical normal variable. The Taylor groups $G_2,G_6,G_8,G_{11},G_{13},G_{16}$ use the preceding Taylor estimate, with either one less power of $y$ or one additional normal derivative of the inner factor. In each case the resulting order is at least $\varepsilon^{3/2}$. The product groups $G_3,G_4,G_5,G_7,G_9,G_{10},G_{12},G_{14},G_{15},G_{17},G_{18}$ use the same mixed-norm placement and incompressibility. The lowest-order inner contribution after differentiation is $\varepsilon$ times a weighted profile, whose physical $L^2$ norm is $O(\varepsilon^{3/2})$.

    For the terminal time and diffusion groups $G_1,G_{19},G_{20}$, the identities needed are
    $$
    \begin{gathered}
    \partial_z\partial_tu_2^{b,3}=-\partial_x\partial_tu_1^{b,2},\qquad
    \partial_z^3u_2^{b,3}=-\partial_x\partial_z^2u_1^{b,2},\\[0.2cm]
    \partial_x^2\partial_zu_2^{b,3}=-\partial_x^3u_1^{b,2},\qquad
    \partial_y^3u_2^{I,2}=-\partial_x\partial_y^2u_1^{I,2}.
    \end{gathered}
    $$
    All right-hand sides are in the stated $L_t^2L^2$ spaces. The derivatives of $S$ are controlled from \eqref{eq:S}. Hence
    $$
    \sum_{i=1}^{20}\|\partial_yG_i\|_{L_t^2L_{xy}^2}
    \leq C_T\varepsilon^{3/2},\qquad
    \|\partial_y\mathcal R_2^\varepsilon\|_{L_t^2L_{xy}^2}
    \leq C_T\varepsilon^{3/2}.
    $$
    No estimate for $\partial_x\mathcal R_2^\varepsilon$ is used.

    Finally, For any scalar $f$, horizontal Fourier transformation and the fundamental theorem in $y$ give the integer bulk estimate:
    \begin{equation}
    \label{eq:Fourier-fundamental}
        \int|\xi||\widehat{f|_{y=0}}|^2\,d\xi \le C\|f_x\|_2\|f_y\|_2.
    \end{equation}
    using \eqref{eq:3.39}, \eqref{eq:3.3}-\eqref{eq:3.5} and \eqref{eq:3.7}, we have
    $$
    \mathcal{R}^{\varepsilon}_{2}|_{y=0}= \partial_{y}p^{a}|_{y=0}+\varepsilon^{2}\partial_{x}\partial_{y}u_{1}^{a}|_{y=0}, \quad \left(\partial_{y}p^{I,1}+\partial_{z}p^{b,2}+\partial_{x}\partial_{z}u_{1}^{b,0}\right)|_{y=0}=0.
    $$
    Thus, we get
    \begin{equation}
        \label{r4:wall-residual}
        \begin{gathered}
        \mathcal{R}^{\varepsilon}_{2}|_{y=0}=
        \varepsilon^{2}\left(\partial_{y}p^{I,2}|_{y=0}
        +\partial_{x}\partial_{y}u_{1}^{I,0}|_{y=0}
        +\partial_{x}\partial_{z}u_{1}^{b,1}|_{z=0}\right)\\[0.2cm]
        +\varepsilon^{3}\left(\partial_{x}\partial_{y}u_{1}^{I,1}|_{y=0}+\partial_{x}\partial_{z}u_{1}^{b,2}|_{z=0} \right) \\[0.2cm]
        +\varepsilon^{4}\partial_{x}\partial_{y}u_{1}^{I,2}|_{y=0}+\varepsilon^{5}\partial_{x}\partial_{y}S_{1}|_{y=0}.
    \end{gathered}
    \end{equation}
    For the last inner wall trace, \eqref{eq:Fourier-fundamental} gives
    $$
    \int_{\mathbb R}|\xi|
       |\widehat{\partial_x\partial_zu_1^{b,2}|_{z=0}}|^2\,d\xi
    \leq2\|\partial_x^2\partial_zu_1^{b,2}\|_2
             \|\partial_x\partial_z^2u_1^{b,2}\|_2.
    $$
    Both factors are in $L_t^2$. The outer trace $\overline{\partial_x\partial_yu_1^{I,2}}$ is controlled by $\partial_xu^{I,2}\in L_t^2H^2$, and $\overline{\partial_yp^{I,2}}$ by $\nabla p^{I,2}\in L_t^2H^1$. The lower-order traces have at least the same regularity, and the final lifting trace follows directly from \eqref{eq:S}. Applying these bounds to \eqref{r4:wall-residual}, we obtain
    $$
    \int_0^T\int_{\mathbb{R}}|\xi|
           |\widehat{\mathcal{R}^{\varepsilon}_2|_{y=0}}(\xi,t)|^2\,d\xi\,dt
             \le C_T\varepsilon^4.
    $$
    The proof is complete.
\end{proof}

\subsection{Estimates for the error terms}\label{subsec:error terms}
To avoid imposing strong compatibility conditions involving mixed normal derivatives at the initial time, we estimate $E^{\varepsilon},\partial_{x}E^{\varepsilon},\partial_{y}E_{1}^{\varepsilon}$and $\partial_{x}\partial_{y}E_{1}^{\varepsilon}$ separately in $L_{T}^{\infty}L_{xy}^{2}$. To begin with, we derive the following $L_{T}^{\infty} L_{xy}^{2}$ estimate for $(E_{1}^{\varepsilon},E_{2}^{\varepsilon})$.
\begin{lemma}
	\label{lemma:4.3}
	Under the assumptions of Theorem \ref{thm:main}, for any $0<\varepsilon\leq1$, there exists a constant $C$ independent of $\varepsilon$, such that
	$$
	\sup_{0\leq t\leq T}\left\|E^{\varepsilon} \right\|^{2}+\int_{0}^{T} \left\|\partial_{x}E^{\varepsilon}\right\|^{2} dt+\varepsilon^{2} \int_{0}^{T}\left\|\partial_{y}E^{\varepsilon}\right\|^{2} dt \leq C_{T} \varepsilon^{5}.
	$$
\end{lemma}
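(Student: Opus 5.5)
The plan is the standard $L^2$ energy estimate for the error system \eqref{eq:3.39}, closed by Gronwall. I will take the $L^2$ inner product of the first equation in \eqref{eq:3.39} with $E^\varepsilon$ and integrate over $\Omega$. Since $\operatorname{div}E^\varepsilon=0$, $\operatorname{div}u^a=0$ and $E^\varepsilon|_{y=0}=0$, several terms drop out: the pressure term $\nabla\pi^\varepsilon$ vanishes, and so do the transport terms $(u^a\cdot\nabla)E^\varepsilon$ and $(E^\varepsilon\cdot\nabla)E^\varepsilon$. Note that $u^a$ is exactly divergence free and no-slip by the construction \eqref{eq:3.70}, with the lifting $S$ included. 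The diffusion terms give $\|\partial_xE^\varepsilon\|^2+\varepsilon^2\|\partial_yE^\varepsilon\|^2$. This leaves
\[
\frac12\frac{d}{dt}\|E^\varepsilon\|^2+\|\partial_xE^\varepsilon\|^2+\varepsilon^2\|\partial_yE^\varepsilon\|^2
=-\int_\Omega (E^\varepsilon\cdot\nabla)u^a\cdot E^\varepsilon-\int_\Omega\mathcal R^\varepsilon\cdot E^\varepsilon .
\]
The residual term is bounded by $\|\mathcal R^\varepsilon\|\|E^\varepsilon\|\le \frac12\|\mathcal R^\varepsilon\|^2+\frac12\|E^\varepsilon\|^2$. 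By Lemma \ref{prop:residual}, $\|\mathcal R^\varepsilon\|_{L^2_tL^2}^2\le C\varepsilon^5$, which produces the claimed rate.

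The main obstacle is the stretching term $\int(E^\varepsilon\cdot\nabla)u^a\cdot E^\varepsilon$. The contribution containing $E_2^\varepsilon\partial_yu_1^a$ is the dangerous one, because $\partial_yu_1^{b}$ is of size $\varepsilon^{-1}$. I split it into four pieces.
\begin{itemize}
\item[(i)] The terms $E_1^\varepsilon\partial_xu_1^a E_1^\varepsilon$ and $E_1^\varepsilon\partial_xu_2^a E_2^\varepsilon$ are bounded by $\|\partial_xu^a\|_\infty\|E^\varepsilon\|^2$.
\item[(ii)] The term $E_2^\varepsilon\partial_yu_2^a E_2^\varepsilon$ is handled either by $\partial_yu_2^a=-\partial_xu_1^a$ or by the bound on $\varepsilon^{-1}\partial_zu_2^b$ from Lemma \ref{lemma:4.1}.
\item[(iii)] The outer part $E_2^\varepsilon\,\partial_y(u_1^I+\varepsilon^3S_1)\,E_1^\varepsilon$ is bounded by $\|\nabla u^I,\nabla S\|_\infty\|E^\varepsilon\|^2$.
\item[(iv)] For the boundary-layer part $\varepsilon^{-1}E_2^\varepsilon(\partial_zu_1^b)^\varepsilon E_1^\varepsilon$, I use the no-slip condition together with incompressibility. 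Since $E_2^\varepsilon(x,0)=0$ and $\partial_yE_2^\varepsilon=-\partial_xE_1^\varepsilon$, Hardy's inequality (Lemma \ref{lemma:2.1}) gives $\|E_2^\varepsilon/y\|_{L^2_y}\le C\|\partial_xE_1^\varepsilon\|_{L^2_y}$. Writing $\varepsilon^{-1}E_2^\varepsilon=(E_2^\varepsilon/y)\,z$ then yields
\[
\Big|\int \frac{E_2^\varepsilon}{y}\,(z\partial_zu_1^b)^\varepsilon E_1^\varepsilon\Big|
\le C\|\partial_xE_1^\varepsilon\|\,\|\langle z\rangle\partial_zu_1^b\|_\infty\|E_1^\varepsilon\|
\le \tfrac14\|\partial_xE^\varepsilon\|^2+C\|\langle z\rangle\partial_zu_1^b\|_\infty^2\|E^\varepsilon\|^2 .
\end{itemize}
Note that only horizontal dissipation is used in (iv), and no power of $\varepsilon$ is lost there. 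By Lemma \ref{lemma:4.1}, $\|\langle z\rangle\partial_zu_1^b\|_\infty^2$ is integrable in time uniformly in $\varepsilon$.

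If the anisotropic $L^\infty$ placement in (iv) is not directly available, I would fall back on Lemma \ref{lemma:2.2}. That lemma puts $E_2^\varepsilon/y$ in $L^2$, the factor $\langle z\rangle\partial_zu_1^b$ in $L^2_xL^\infty_y$ via the $L^2_t$ dissipation bound on $\langle z\rangle\partial_x\partial_zu_1^b$, and $E_1^\varepsilon$ with its $x$-derivative.

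Collecting the estimates gives
\[
\frac{d}{dt}\|E^\varepsilon\|^2+\|\partial_xE^\varepsilon\|^2+2\varepsilon^2\|\partial_yE^\varepsilon\|^2\le K(t)\|E^\varepsilon\|^2+\|\mathcal R^\varepsilon\|^2,
\]
with $K\in L^1(0,T)$ independent of $\varepsilon$ by Lemma \ref{lemma:4.1}. Since $E^\varepsilon(0)=0$, Gronwall's inequality gives the bound $C_T\varepsilon^5$. This energy identity is justified by the regularity in Proposition \ref{prop:exact-solution} together with the profile regularity, and it holds down to $t=0$ because only spatial energies are involved.
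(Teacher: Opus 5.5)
Your proposal is correct and is essentially the paper's proof. Both use the same $L^2$ energy identity and the same splitting of the stretching term. Both apply Hardy's inequality with $\partial_yE_2^\varepsilon=-\partial_xE_1^\varepsilon$ to absorb the $\varepsilon^{-1}E_2^\varepsilon\partial_zu_1^b$ term into horizontal dissipation via $\|\langle z\rangle\partial_zu_1^b\|_{L^\infty}$, then close with Gronwall using Lemmas~\ref{lemma:4.1} and~\ref{prop:residual}. The paper additionally keeps the forcing at norm level to get $\|E^\varepsilon(t)\|\le C_T\varepsilon^{5/2}t^{1/2}$; this is used later in Lemma~\ref{lemma:4.5} but is not needed for the statement here.
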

\begin{proof}
	Multiplying $(\ref{eq:3.39})$ by $E^{\varepsilon}$, and then integrating by parts, we obtain
	\begin{equation}
		\label{eq:E^{v}}
		\begin{aligned}
			\frac{1}{2} &\frac{d}{dt}\left\|(E_{1}^{\varepsilon},E_{2}^{\varepsilon}) \right\|^{2} +\left\|(\partial_{x}E_{1}^{\varepsilon},\partial_{x}E_{2}^{\varepsilon})\right\|^{2}+\varepsilon^{2}\left\|(\partial_{y}E_{1}^{\varepsilon},\partial_{y}E_{2}^{\varepsilon})\right\|^{2} \\[0.2cm]
			&=-\left\langle E^{\varepsilon} \cdot \nabla u_{1}^{a}, E_{1}^{\varepsilon} \right\rangle -\left\langle E^{\varepsilon} \cdot \nabla u_{2}^{a}, E_{2}^{\varepsilon} \right\rangle -\left\langle \mathcal{R}^{\varepsilon}_{1},E_{1}^{\varepsilon} \right\rangle-\left\langle \mathcal{R}^{\varepsilon}_{2},E_{2}^{\varepsilon} \right\rangle.
		\end{aligned}
	\end{equation}
	Using the definition of $u_{1}^{a}$, we handle the first term on the right hand side of \eqref{eq:E^{v}} as follows,
	$$
	\begin{aligned}
		-\left\langle E^{\varepsilon}\cdot\nabla u_{1}^{a},E_{1}^{\varepsilon} \right\rangle&=-\left\langle E^{\varepsilon}\cdot \nabla (u_{1}^{I}+\varepsilon^{3}S_{1})+E_{1}^{\varepsilon} \partial_{x}u_{1}^{b}, E_{1}^{\varepsilon} \right\rangle - \left\langle E_{2}^{\varepsilon}\partial_{y}u_{1}^{b},E_{1}^{\varepsilon} \right\rangle \\[0.2cm]
		&=-\left\langle E^{\varepsilon}\cdot \nabla (u_{1}^{I}+\varepsilon^{3}S_{1})+E_{1}^{\varepsilon} \partial_{x}u_{1}^{b}, E_{1}^{\varepsilon} \right\rangle - \left\langle \frac{1}{\varepsilon} E_{2}^{\varepsilon}\partial_{z}u_{1}^{b},E_{1}^{\varepsilon} \right\rangle \\[0.2cm]
		&=-\left\langle E^{\varepsilon}\cdot \nabla (u_{1}^{I}+\varepsilon^{3}S_{1})+E_{1}^{\varepsilon} \partial_{x}u_{1}^{b}, E_{1}^{\varepsilon} \right\rangle - \left\langle \frac{1}{y} E_{2}^{\varepsilon}z\partial_{z}u_{1}^{b},E_{1}^{\varepsilon} \right\rangle.
	\end{aligned}
	$$
    For the first term on the right-hand side, we directly obtain
    $$
    \begin{aligned}
    \left\langle E^{\varepsilon}\cdot \nabla (u_{1}^{I}+\varepsilon^{3}S_{1})+E_{1}^{\varepsilon} \partial_{x}u_{1}^{b}, E_{1}^{\varepsilon} \right\rangle \leq C\left\|(E_{1}^{\varepsilon},E_{2}^{\varepsilon}) \right\|^{2}+\left\|\left(\nabla u_{1}^{I},\nabla S_{1},\partial_{x}u_{1}^{b}\right) \right\|_{L^{\infty}}^{2} \left\|E_{1}^{\varepsilon} \right\|^{2}
    \end{aligned}
    $$
    For the second term, by Hardy's inequality, we have
    $$
    \begin{aligned}
    \left\langle \frac{1}{y} E_{2}^{\varepsilon}z\partial_{z}u_{1}^{b},E_{1}^{\varepsilon} \right\rangle &\leq  \left\|\partial_{y}E_{2}^{\varepsilon} \right\| \left\|\langle z \rangle  \partial_{z}u_{1}^{b}\right\|_{L^{\infty}}\left\|E_{1}^{\varepsilon} \right\| \\[0.2cm]
    &= \left\|\partial_{x}E_{1}^{\varepsilon} \right\| \left\|\langle z \rangle  \partial_{z}u_{1}^{b}\right\|_{L^{\infty}}\left\|E_{1}^{\varepsilon} \right\| \\[0.2cm]
    &\leq \frac{1}{8} \left\|\partial_{x}E_{1}^{\varepsilon} \right\|^{2} +C\left\|\langle z \rangle \partial_{z}u_{1}^{b} \right\|_{L^{\infty}}^{2} \left\|E_{1}^{\varepsilon} \right\|^{2}.
    \end{aligned}
    $$
    Thus, we obtain
    $$
    \begin{aligned}
    -\left\langle E^{\varepsilon}\cdot\nabla u_{1}^{a},E_{1}^{\varepsilon} \right\rangle \leq  \frac{1}{8} \left\|\partial_{x}E_{1}^{\varepsilon} \right\|^{2}+C\left\|(E_{1}^{\varepsilon},E_{2}^{\varepsilon}) \right\|^{2} +C\left\|\left(\nabla u_{1}^{I},\nabla S_{1},\partial_{x}u_{1}^{b},\langle z \rangle \partial_{z}u_{1}^{b}\right) \right\|_{L^{\infty}}^{2} \left\|E_{1}^{\varepsilon} \right\|^{2}.
    \end{aligned}
    $$
	Noting that $u_{2}^{b,0}=0$, we have
	$$
	-\left\langle E^{\varepsilon}\cdot \nabla u_{2}^{a},E_{2}^{\varepsilon} \right\rangle \leq C\left\|(E_{1}^{\varepsilon},E_{2}^{\varepsilon}) \right\|^{2}+C\left\|\left(\nabla u_{2}^{I},\nabla S_{2},\partial_{x}u_{2}^{b}, \varepsilon^{-1}\partial_{z}u_{2}^{b}\right) \right\|_{L^{\infty}}^{2} \left\|E_{2}^{\varepsilon} \right\|^{2}
	$$
	Combining these estimates, we have
    \begin{equation}
    \label{eq:E-est}
        \begin{aligned}
		\frac{d}{dt}&\left\|(E_{1}^{\varepsilon},E_{2}^{\varepsilon}) \right\|^{2}+\left\|(\partial_{x}E_{1}^{\varepsilon},\partial_{x}E_{2}^{\varepsilon}) \right\|^{2}+\varepsilon^{2}\left\| (\partial_{y}E_{1}^{\varepsilon},\partial_{y}E_{2}^{\varepsilon})\right\|^{2} \\[0.2cm]
		&\leq \left\|\mathcal{R}^{\varepsilon}\right\|^{2} +C \left(\left\|\nabla u^{I},\nabla S,\partial_{x}u^{b},  \varepsilon^{-1}\partial_{z}u_{2}^{b}, \langle z\rangle \partial_{z}u_{1}^{b} \right\|_{L^{\infty}}^{2}+1\right) \left\|(E_{1}^{\varepsilon},E_{2}^{\varepsilon}) \right\|^{2}.
	\end{aligned}
    \end{equation}
	which, together with Gronwall inequality, Lemmas \ref{lemma:4.1}-\ref{prop:residual}, implies
	$$
	\sup_{0\leq t\leq T}\left\|E^{\varepsilon} \right\|^{2}+\int_{0}^{T} \left\|\partial_{x}E^{\varepsilon}\right\|^{2} dt+\varepsilon^{2} \int_{0}^{T}\left\|\partial_{y}E^{\varepsilon}\right\|^{2} dt \leq C_{T} \varepsilon^{5}.
	$$
	Keeping the forcing in the form $\|\mathcal R^\varepsilon\|_2\|E^\varepsilon\|_2$ also gives
    \begin{equation}
    \label{eq:error-initial-L2}
    \|E^\varepsilon(t)\|_2
       \leq C_T\int_0^t\|\mathcal R^\varepsilon(s)\|_2\,ds
       \leq C_T\varepsilon^{5/2}t^{1/2}.
    \end{equation}
    Indeed, the coefficient in \eqref{eq:E-est} has a uniformly bounded time integral by Lemma \ref{lemma:4.1}, so this follows from the norm-level Gronwall inequality and $E^\varepsilon(0)=0$. This completes the proof.
\end{proof}

Next, we have the following $L_{T}^{\infty} L_{xy}^{2}$ estimate of $(\partial_{x}E_{1}^{\varepsilon},\partial_{x}E_{2}^{\varepsilon})$.
\begin{lemma}
	\label{lemma:4.4}
	Under the assumptions of Theorem \ref{thm:main}, for any $0<\varepsilon\leq1$, there exists a constant $C$ independent of $\varepsilon$, such that
	$$
	\sup_{0\leq t\leq T}\left\|\partial_{x}E^{\varepsilon} \right\|^{2}+\int_{0}^{T}\left\|\partial_{x}^{2}E^{\varepsilon} \right\|^{2}dt+\varepsilon^{2}\int_{0}^{T}\left\|\partial_{y}\partial_{x}E^{\varepsilon} \right\|^{2}dt \leq C_{T}\varepsilon^{3}.
	$$
\end{lemma}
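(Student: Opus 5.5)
Apply $\partial_x$ to the error system \eqref{eq:3.39} and take the $L^2$ inner product with $\partial_xE^\varepsilon$. Since $\partial_x$ commutes with the wall condition, $\partial_xE^\varepsilon|_{y=0}=0$ and $\operatorname{div}\partial_xE^\varepsilon=0$. Hence the pressure term $\langle\nabla\partial_x\pi^\varepsilon,\partial_xE^\varepsilon\rangle$ vanishes, and no pressure boundary term appears at this level. The transport term $\langle u^a\cdot\nabla\partial_xE^\varepsilon,\partial_xE^\varepsilon\rangle$ and the nonlinear term $\langle E^\varepsilon\cdot\nabla\partial_xE^\varepsilon,\partial_xE^\varepsilon\rangle$ cancel by incompressibility. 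This leaves
\[
\tfrac12\tfrac{d}{dt}\|\partial_xE^\varepsilon\|^2+\|\partial_x^2E^\varepsilon\|^2+\varepsilon^2\|\partial_x\partial_yE^\varepsilon\|^2
=-\langle \partial_x\mathcal R^\varepsilon,\partial_xE^\varepsilon\rangle-\langle \partial_xu^a\cdot\nabla E^\varepsilon+\partial_xE^\varepsilon\cdot\nabla u^a+\partial_xE^\varepsilon\cdot\nabla E^\varepsilon,\partial_xE^\varepsilon\rangle .
\]
I would move the $x$-derivative on the residual onto the test function, $-\langle\partial_x\mathcal R^\varepsilon,\partial_xE^\varepsilon\rangle=\langle\mathcal R^\varepsilon,\partial_x^2E^\varepsilon\rangle\le\frac18\|\partial_x^2E^\varepsilon\|^2+C\|\mathcal R^\varepsilon\|^2$. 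This uses only $\|\mathcal R^\varepsilon\|_{L^2_tL^2}\le C\varepsilon^{5/2}$ from Lemma \ref{prop:residual}, and it avoids any need for $\partial_x\mathcal R_2^\varepsilon$.

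The linear terms are treated as in Lemma \ref{lemma:4.3}, with $\partial_xE^\varepsilon$ in place of $E^\varepsilon$. The term $\langle\partial_xE^\varepsilon\cdot\nabla u^a,\partial_xE^\varepsilon\rangle$ splits in the same way. The dangerous piece is $\langle\varepsilon^{-1}\partial_xE_2^\varepsilon\,\partial_zu_1^b,\partial_xE_1^\varepsilon\rangle$. By Hardy's inequality and $\partial_y\partial_xE_2^\varepsilon=-\partial_x^2E_1^\varepsilon$, it is bounded by $\|\partial_x^2E_1^\varepsilon\|\,\|\langle z\rangle\partial_zu_1^b\|_\infty\|\partial_xE_1^\varepsilon\|$, which is absorbed into the dissipation.

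The term $\langle\partial_xu^a\cdot\nabla E^\varepsilon,\partial_xE^\varepsilon\rangle$ needs more care. Its horizontal part $\partial_xu_1^a\partial_xE$ is bounded by $\|\partial_xu^a\|_\infty\|\partial_xE^\varepsilon\|^2$. For the normal part $\partial_xu_2^a\,\partial_yE^\varepsilon$ I would integrate by parts in $x$, moving the derivative off $\partial_xE^\varepsilon$:
\[
\langle\partial_xu_2^a\partial_yE,\partial_xE\rangle=-\langle\partial_x^2u_2^a\partial_yE,E\rangle-\langle\partial_xu_2^a\partial_x\partial_yE,E\rangle .
\]
Using $u_2^a|_{y=0}=0$, I would write $\partial_xu_2^a=y\cdot(\partial_xu_2^a/y)$ and let the factor $y$ act through Hardy on $E$ (which vanishes at the wall). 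Equivalently, I would use $\varepsilon^{-1}\partial_zu_2^b$ and $\partial_x^2u^a\in L^2_tL^2_xL^\infty_y$ from Lemma \ref{lemma:4.1}. The component $\partial_yE_2=-\partial_xE_1$ is harmless. For $\partial_yE_1^\varepsilon$ one pays $\varepsilon^{-1}$ against $\varepsilon\|\partial_yE_1\|$, and by Lemma \ref{lemma:4.3} $\int_0^T\varepsilon^2\|\partial_yE\|^2\le C\varepsilon^5$. The resulting forcing is therefore at most of size $\varepsilon^{-2}\cdot\varepsilon^5=\varepsilon^3$ in time integral. This is exactly what fixes the rate $\varepsilon^3$ in the statement.

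The cubic term $\langle\partial_xE\cdot\nabla E,\partial_xE\rangle$ would be handled by Lemma \ref{lemma:2.2}, using again $\partial_yE_2=-\partial_xE_1$ and $\varepsilon\partial_yE_1$ together with the smallness $\|E^\varepsilon\|\le C\varepsilon^{5/2}$. For example, one bound is $\|\partial_xE_2\partial_yE_1\partial_xE_1\|_{L^1}\lesssim\|\partial_yE_1\|\,\|\partial_xE\|^{1/2}\|\partial_x^2E\|^{1/2}\|\partial_xE\|^{1/2}\|\partial_x\partial_yE\|^{1/2}$. The factor $\varepsilon^{-3/2}$ produced when converting $\partial_y$ to the $\varepsilon$-weighted dissipation is compensated by $\|E\|$-smallness after interpolation. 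A continuity (bootstrap) argument on the bound $\sup\|\partial_xE\|^2\le 2C\varepsilon^3$ then closes for small $\varepsilon$; for $\varepsilon$ bounded below the bound holds trivially by adjusting the constant. Gronwall's inequality with the $L^1_t$ coefficients of Lemma \ref{lemma:4.1} and $\partial_xE^\varepsilon(0)=0$ yields the claim.

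I expect the main obstacle to be the normal-transport term $\partial_xu_2^a\,\partial_yE_1^\varepsilon$ paired with $\partial_xE_1^\varepsilon$, together with the cubic term. Only $\varepsilon\partial_yE_1^\varepsilon$ is controlled, so the bookkeeping must ensure exactly one factor $\varepsilon^{-1}$ is lost. That single loss is what degrades $\varepsilon^5$ to $\varepsilon^3$.
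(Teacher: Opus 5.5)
Your overall route coincides with the paper's. You use the $\partial_x$-energy identity with no pressure contribution, move the residual onto $\partial_x^2E^\varepsilon$, apply Hardy to $\varepsilon^{-1}\partial_xE_2^\varepsilon\,\partial_zu_1^b$, let the rate be set by a forcing $\varepsilon^{-2}\|\partial_xu_2^a\|_\infty^2\|E^\varepsilon\|^2$, and close the cubic term with Lemma~\ref{lemma:2.2} plus Gronwall. Two steps, however, are wrong or unjustified as written.

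\textbf{A missing term.} Your energy identity omits $-\langle(E^\varepsilon\cdot\nabla)\partial_xu^a,\partial_xE^\varepsilon\rangle$, which comes from $\partial_x[(E^\varepsilon\cdot\nabla)u^a]$. It is not negligible: it contains $-\varepsilon^{-1}\langle E_2^\varepsilon\,\partial_x\partial_zu_1^b,\partial_xE_1^\varepsilon\rangle$, which loses a factor $\varepsilon^{-1}$ and involves a mixed derivative of the layer. The paper does not estimate this term separately. It uses the $x$-integration by parts
\[
-\langle(\partial_xE^\varepsilon\cdot\nabla)u^a,\partial_xE^\varepsilon\rangle-\langle(E^\varepsilon\cdot\nabla)\partial_xu^a,\partial_xE^\varepsilon\rangle=\langle(E^\varepsilon\cdot\nabla)u^a,\partial_x^2E^\varepsilon\rangle .
\]
It then writes $\varepsilon^{-1}E_2^\varepsilon\partial_zu_1^b=(E_2^\varepsilon/y)\,z\partial_zu_1^b$ and uses $\|E_2^\varepsilon/y\|\le2\|\partial_xE_1^\varepsilon\|$. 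The only layer norm needed is then $\|\langle z\rangle\partial_zu_1^b\|_\infty$, which is square integrable in time by Lemma~\ref{lemma:4.1}.

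\textbf{The normal-transport term.} For $\langle(\partial_xu^a\cdot\nabla)E^\varepsilon,\partial_xE^\varepsilon\rangle$, your integration by parts in $x$ alone creates the extra term $\langle\partial_x^2u_2^a\,\partial_yE_1^\varepsilon,E_1^\varepsilon\rangle$.
\begin{itemize}
\item Paying $\varepsilon^{-1}$ against $\varepsilon\|\partial_yE_1^\varepsilon\|$ gives $O(\varepsilon^3)$ only if $\|\partial_x^2u_2^a\|_{L^\infty}\in L^2_t$ uniformly in $\varepsilon$. Lemma~\ref{lemma:4.1} does not supply this.
\item With the $L^2_xL^\infty_y$ norm that Lemma~\ref{lemma:4.1} does supply, that splitting no longer closes at order $\varepsilon^3$.
\item The device $\partial_xu_2^a=y\cdot(\partial_xu_2^a/y)$ runs the wrong way, because Hardy controls $E^\varepsilon/y$, not $yE^\varepsilon$.
\end{itemize}
The paper instead integrates by parts in both variables, using $\operatorname{div}\partial_xu^a=0$ and $E^\varepsilon|_{y=0}=0$. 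This gives $-\langle(\partial_xu^a\cdot\nabla)E^\varepsilon,\partial_xE^\varepsilon\rangle=\langle E^\varepsilon\otimes\partial_xu^a,\nabla\partial_xE^\varepsilon\rangle$. That form needs only $\|\partial_xu^a\|_\infty$. Absorbing $\tfrac{\varepsilon^2}{16}\|\partial_y\partial_xE^\varepsilon\|^2$ then produces exactly the forcing $\varepsilon^{-2}\|\partial_xu_2^a\|_\infty^2\|E^\varepsilon\|^2$, whose time integral is $O(\varepsilon^3)$.

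\textbf{The bootstrap.} No bootstrap is needed. Your Lemma~\ref{lemma:2.2} bound leads to the Gronwall coefficient $\varepsilon^{-1}\|\partial_yE^\varepsilon\|^2$, whose time integral is $O(\varepsilon^2)$ by Lemma~\ref{lemma:4.3}. Your remark that the bound ``holds trivially'' for $\varepsilon$ bounded below would itself require this estimate.
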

\begin{proof}
	Multiplying $\partial_{x}(\ref{eq:3.39})$ by $\partial_{x}E^{\varepsilon}$, and then integrating by parts, we obtain
	\begin{equation}
		\label{eq:xE^{v}}
		\begin{aligned}
			\frac{1}{2}\frac{d}{dt}&\left\|(\partial_{x}E_{1}^{\varepsilon},\partial_{x}E_{2}^{\varepsilon}) \right\|^{2}+\left\|(\partial_{x}^{2}E_{1}^{\varepsilon},\partial_{x}^{2}E_{2}^{\varepsilon}) \right\|^{2}+\varepsilon^{2}\left\|(\partial_{y}\partial_{x}E_{1}^{\varepsilon},\partial_{y}\partial_{x}E_{2}^{\varepsilon}) \right\|^{2}=\sum_{i=1}^{6}I_{i},
		\end{aligned}
	\end{equation}
    where
    $$
    \begin{gathered}
        I_{1}=-\left\langle (\partial_{x}E^{\varepsilon}) \cdot \nabla u^{a},\partial_{x}E^{\varepsilon} \right\rangle, \quad I_{2}=-\left\langle E^{\varepsilon}\cdot \nabla (\partial_{x}u^{a}),\partial_{x}E^{\varepsilon}\right\rangle, \\[0.2cm]
    I_{3}=\left\langle E^{\varepsilon} \otimes \partial_{x}u^{a},\nabla \partial_{x}E^{\varepsilon} \right\rangle, \quad I_{4}=-\left\langle \partial_{x}E^{\varepsilon} \cdot\nabla E^{\varepsilon}, \partial_{x}E^{\varepsilon} \right\rangle, \\[0.2cm]
     I_5=\left\langle\mathcal R_1^\varepsilon,\partial_x^2E_1^\varepsilon\right\rangle,
    \qquad I_6=\left\langle\mathcal R_2^\varepsilon,\partial_x^2E_2^\varepsilon\right\rangle.
    \end{gathered}
    $$
	For $I_{1}$ and $I_{2}$, integrating by parts, we have
	$$
	\begin{aligned}
		I_{1}+I_{2}=\left\langle E^{\varepsilon} \cdot \nabla u^{a},\partial_{x}^{2}E^{\varepsilon} \right\rangle
		&\leq  \frac{1}{16}\left\|\partial_x^2E^\varepsilon\right\|^2+C\left\|(\nabla u^{I},\nabla S,\partial_{x}u^{b}, \varepsilon^{-1}\partial_{z}u_{2}^{b}) \right\|_{L^{\infty}}^{2} \left\|E^{\varepsilon} \right\|^{2} \\[0.2cm]
        &\quad +\|\left\langle z\right\rangle \partial_{z}u_{1}^{b}\|_{L^{\infty}}^{2}\|\partial_{x}E_{1}^{\varepsilon}\|^{2}
	\end{aligned}
	$$
	For $I_{3}$, we have
	$$
	\begin{aligned}
		I_{3} &\leq \frac{1}{16} \left\|(\partial_{x}^{2}E_{1}^{\varepsilon},\partial_{x}^{2}E_{2}^{\varepsilon}) \right\|^{2} +C \left\|\partial_{x}u_{1}^{a} \right\|_{L^{\infty}}^{2}\left\|(E_{1}^{\varepsilon},E_{2}^{\varepsilon}) \right\|^{2} \\[0.2cm]
		&\quad+\frac{\varepsilon^{2}}{16} \left\|(\partial_{y}\partial_{x}E_{1}^{\varepsilon},\partial_{y}\partial_{x}E_{2}^{\varepsilon}) \right\|^{2}+\frac{C}{\varepsilon^{2}}\left\|\partial_{x}u_{2}^{a} \right\|_{L^{\infty}}^{2}\left\|(E_{1}^{\varepsilon},E_{2}^{\varepsilon}) \right\|^{2}.
	\end{aligned}
	$$
	For $I_{4}$ using the Lemma \ref{lemma:2.2}, we have
	$$
	\begin{aligned}
		I_{4} &\leq \frac{3}{16} \left\|(\partial_{x}^{2}E_{1}^{\varepsilon},\partial_{x}^{2}E_{2}^{\varepsilon}) \right\|^{2}+\frac{\varepsilon^{2}}{16}\left\|(\partial_{y}\partial_{x}E_{1}^{\varepsilon},\partial_{y}\partial_{x}E_{2}^{\varepsilon}) \right\|^{2} \\[0.2cm]
		&\quad+C(1+\frac{1}{\varepsilon^{4}}) \left\|(E_{1}^{\varepsilon},E_{2}^{\varepsilon}) \right\|^{2} \left\|(\partial_{y}E_{1}^{\varepsilon},\partial_{y}E_{2}^{\varepsilon}) \right\|^{2}\left\|(\partial_{x}E_{1}^{\varepsilon},\partial_{x}E_{2}^{\varepsilon}) \right\|^{2}.
	\end{aligned}
	$$
	For the last two terms, we have
	$$
	 I_5+I_6\leq\frac1{16}\|\partial_x^2E^\varepsilon\|_2^2+C\|\mathcal R^\varepsilon\|_2^2.
	$$
	Substituting the estimates of $I_{1}, \cdots, I_{6}$ into $(\ref{eq:xE^{v}})$, we have
	$$
	\begin{aligned}
		&\frac{d}{dt}\left\|(\partial_{x}E_{1}^{\varepsilon},\partial_{x}E_{2}^{\varepsilon}) \right\|^{2}+\left\|(\partial_{x}^{2}E_{1}^{\varepsilon},\partial_{x}^{2}E_{2}^{\varepsilon}) \right\|^{2}+\varepsilon^{2}\left\|(\partial_{y}\partial_{x}E_{1}^{\varepsilon},\partial_{y}\partial_{x}E_{2}^{\varepsilon}) \right\|^{2} \\[0.2cm]
		&\leq C\left( \|\langle z \rangle \partial_{z}u_{1}^{b} \|_{L^{\infty}}^{2} +(1+\frac{1}{\varepsilon^{4}})\left\|(E_{1}^{\varepsilon},E_{2}^{\varepsilon}) \right\|^{2}\left\|(\partial_{y}E_{1}^{\varepsilon},\partial_{y}E_{2}^{\varepsilon}) \right\|^{2}+1 \right) \left\|(\partial_{x}E_{1}^{\varepsilon},\partial_{x}E_{2}^{\varepsilon}) \right\|^{2} \\[0.2cm]
		&+C \left(\left\|(\nabla u^{I},\nabla S,\partial_{x}u^{b}, \varepsilon^{-1}\partial_{z}u_{2}^{b}) \right\|_{L^{\infty}}^{2}+\left\|\partial_{x}u_{1}^{a} \right\|_{L^{\infty}}^{2}+\frac{1}{\varepsilon^{2}}\left\|\partial_{x}u_{2}^{a} \right\|_{L^{\infty}}^{2}\right)\left\|(E_{1}^{\varepsilon},E_{2}^{\varepsilon}) \right\|^{2}+ C\left\|\mathcal R^\varepsilon\right\|^2.
	\end{aligned}
	$$
	which together with Lemmas \ref{lemma:4.1}-\ref{lemma:4.3} and Gronwall inequality, implies
	$$
	\sup_{0\leq t \leq T}\left\|\partial_{x}E^{\varepsilon} \right\|^{2}+\int_{0}^{T} \left\|\partial_{x}^{2}E^{\varepsilon} \right\|^{2}dt +\varepsilon^{2}\int_{0}^{T}\left\|\partial_{y}\partial_{x}E^{\varepsilon} \right\|^{2}dt \leq C_{2}T e^{C_{2}T} \varepsilon^{3}.
	$$
	The proof is complete.
\end{proof}

Next, we give the $L_{T}^{\infty} L_{xy}^{2}$ estimate of $\partial_{y}E_{1}^{\varepsilon}$.
\begin{lemma}
\label{lemma:4.5}
    Under the assumptions of Theorem \ref{thm:main}, for any $0<\varepsilon\leq1$, there exists a constant $C$ independent of $\varepsilon$, such that
	$$
	\sup_{0\leq t\leq T}\left\|\partial_{y}E_{1}^{\varepsilon} \right\|^{2}+\int_{0}^{T}\left\|\partial_{x}\partial_{y}E_{1}^{\varepsilon} \right\|^{2}dt+\varepsilon^{2}\int_{0}^{T}\left\|\partial_{y}^{2} E_{1}^{\varepsilon} \right\|^{2}dt \leq C_{T}\varepsilon^{3}.
	$$
\end{lemma}
\begin{proof}
For $0<t_0<T$, we first make the following calculations on $[t_0,T]$.
Multiplying the first equation in \eqref{eq:3.39} by
$-\partial_y^2E_1^\varepsilon$,
 and then integrating by parts,
we obtain
\begin{equation}
\label{eq:yE1}
\begin{aligned}
&\frac12\frac{d}{dt}\|\partial_yE_1^\varepsilon\|^2
 +\|\partial_x\partial_yE_1^\varepsilon\|^2
 +\varepsilon^2\|\partial_y^2E_1^\varepsilon\|^2
 =\sum_{i=1}^{3}M_i,
\end{aligned}
\end{equation}
where
$$
\begin{aligned}
M_1
&=\left\langle
 (u^a+E^\varepsilon)\cdot\nabla(u_1^a+E_1^\varepsilon)
 -u^a\cdot\nabla u_1^a,
 \partial_y^2E_1^\varepsilon
 \right\rangle,\\[0.2cm]
M_2
&=\left\langle
 \partial_x\pi^\varepsilon,
 \partial_y^2E_1^\varepsilon
 \right\rangle,\qquad
M_3
=\left\langle
 \mathcal R_1^\varepsilon,
 \partial_y^2E_1^\varepsilon
 \right\rangle.
\end{aligned}
$$

For $M_1$, using incompressibility and integrating by parts, we have
$$
\begin{aligned}
M_1
&=-\left\langle
 E^\varepsilon\cdot\nabla\partial_yu_1^a,
 \partial_yE_1^\varepsilon
 \right\rangle\\[0.2cm]
&=\left\langle
 (\partial_yE^\varepsilon)\cdot
 \nabla(u_1^I+\varepsilon^3S_1),
 \partial_yE_1^\varepsilon
 \right\rangle +
 \left\langle
 E^\varepsilon\cdot\nabla(u_1^I+\varepsilon^3S_1),
 \partial_y^2E_1^\varepsilon
 \right\rangle\\[0.2cm]
&\quad-
 \frac1\varepsilon
 \left\langle
 E_1^\varepsilon\partial_x\partial_zu_1^b,
 \partial_yE_1^\varepsilon
 \right\rangle
 -\frac1{\varepsilon^2}
 \left\langle
 E_2^\varepsilon\partial_z^2u_1^b,
 \partial_yE_1^\varepsilon
 \right\rangle.
\end{aligned}
$$
Since $\partial_yE_2^\varepsilon=-\partial_xE_1^\varepsilon$,
the first two terms satisfy
$$
\begin{aligned}
&\left|
 \left\langle
 (\partial_yE^\varepsilon)\cdot
 \nabla(u_1^I+\varepsilon^3S_1),
 \partial_yE_1^\varepsilon
 \right\rangle
 \right| +
 \left|
 \left\langle
 E^\varepsilon\cdot\nabla(u_1^I+\varepsilon^3S_1),
 \partial_y^2E_1^\varepsilon
 \right\rangle
 \right|\\[0.2cm]
&\le
 \frac{\varepsilon^2}{32}
 \|\partial_y^2E_1^\varepsilon\|^2
 +C\left(
 1+\|\nabla(u_1^I+\varepsilon^3S_1)\|_{L^\infty}^2
 \right)\|\partial_yE_1^\varepsilon\|^2\\[0.2cm]
&\quad+
 C\|\partial_xE_1^\varepsilon\|^2
 +\frac C{\varepsilon^2}
 \|\nabla(u_1^I+\varepsilon^3S_1)\|_{L^\infty}^2
 \|E^\varepsilon\|^2.
\end{aligned}
$$
For the boundary-layer terms, the boundary conditions give
$$
\begin{aligned}
E_1^\varepsilon(x,y,t)
&=\int_0^y\partial_yE_1^\varepsilon(x,r,t)\,dr,\\[0.2cm]
E_2^\varepsilon(x,y,t)
&=-\int_0^y\partial_xE_1^\varepsilon(x,r,t)\,dr\\[0.2cm]
&=-\int_0^y(y-r)
 \partial_x\partial_yE_1^\varepsilon(x,r,t)\,dr.
\end{aligned}
$$
Consequently,
$$
\begin{aligned}
|E_1^\varepsilon(x,y,t)|
&\le y^{1/2}
 \|\partial_yE_1^\varepsilon(x,\cdot,t)\|_{L_y^2},\\[0.2cm]
|E_2^\varepsilon(x,y,t)|
&\le \frac{y^{3/2}}{\sqrt3}
 \|\partial_x\partial_yE_1^\varepsilon(x,\cdot,t)\|_{L_y^2}.
\end{aligned}
$$
Using these inequalities, the change of variables
$y=\varepsilon z$, and the one-dimensional Sobolev
inequality in $x$, we obtain
$$
\begin{aligned}
\frac1\varepsilon
 \left|
 \left\langle
 E_1^\varepsilon\partial_x\partial_zu_1^b,
 \partial_yE_1^\varepsilon
 \right\rangle
 \right| &\le
 \|z^{1/2}\partial_x\partial_zu_1^b\|_{L_x^\infty L_z^2}
 \|\partial_yE_1^\varepsilon\|^2\\[0.2cm]
&\le
 C\|\langle z\rangle\partial_x\partial_zu_1^b\|_{H_x^1L_z^2}
 \|\partial_yE_1^\varepsilon\|^2,
\end{aligned}
$$
and
$$
\begin{aligned}
\frac1{\varepsilon^2}
 \left|
 \left\langle
 E_2^\varepsilon\partial_z^2u_1^b,
 \partial_yE_1^\varepsilon
 \right\rangle
 \right|&\le
 C\|z^{3/2}\partial_z^2u_1^b\|_{L_x^\infty L_z^2}
 \|\partial_x\partial_yE_1^\varepsilon\|
 \|\partial_yE_1^\varepsilon\|\\[0.2cm]
&\le
 \frac1{32}\|\partial_x\partial_yE_1^\varepsilon\|^2
 +C\|\langle z\rangle^2\partial_z^2u_1^b\|_{H_x^1L_z^2}^2
 \|\partial_yE_1^\varepsilon\|^2.
\end{aligned}
$$
Combining these estimates yields
\begin{equation}
\label{eq:yE1-M1-repaired}
\begin{aligned}
M_1
&\le
 \frac1{32}\|\partial_x\partial_yE_1^\varepsilon\|^2
 +\frac{\varepsilon^2}{32}
 \|\partial_y^2E_1^\varepsilon\|^2 
 +C\left(
 1+\|\nabla(u_1^I+\varepsilon^3S_1)\|_{L^\infty}^2
 +\|\langle z\rangle\partial_x\partial_zu_1^b\|_{H_x^1L_z^2}^2 \right. \\[0.2cm]
 &\quad \left. +\|\langle z\rangle^2\partial_z^2u_1^b\|_{H_x^1L_z^2}^2
 \right)\|\partial_yE_1^\varepsilon\|^2 +
 C\|\partial_xE_1^\varepsilon\|^2
 +\frac C{\varepsilon^2}
 \|\nabla(u_1^I+\varepsilon^3S_1)\|_{L^\infty}^2
 \|E^\varepsilon\|^2.
\end{aligned}
\end{equation}
For $M_2$, we decompose
$\pi^\varepsilon=\pi_H+\pi_R+\pi_s$, where
$$
\left\{
\begin{aligned}
-\Delta\pi_H
&=2\operatorname{div}\left(
 u_1^a\partial_xE^\varepsilon
 +E_1^\varepsilon\partial_xu^a
 +E_1^\varepsilon\partial_xE^\varepsilon
 \right),\\
\partial_y\pi_H|_{y=0}&=0,
\end{aligned}
\right.
$$
$$
\left\{
\begin{aligned}
-\Delta\pi_R&=\operatorname{div}\mathcal R^\varepsilon,\\
\partial_y\pi_R|_{y=0}&=-\mathcal R_2^\varepsilon|_{y=0},
\end{aligned}
\right.
$$
and
$$
\left\{
\begin{aligned}
\Delta\pi_s&=0,\\
\partial_y\pi_s|_{y=0}
&=-\varepsilon^2\partial_x\partial_yE_1^\varepsilon|_{y=0}.
\end{aligned}
\right.
$$
The energy estimates for the finite-energy Neumann problems give
$$
\begin{aligned}
\|\nabla\pi_H\|^2
&\le
4\left\|
 u_1^a\partial_xE^\varepsilon
 +E_1^\varepsilon\partial_xu^a
 +E_1^\varepsilon\partial_xE^\varepsilon
\right\|^2\\[0.2cm]
&\le
 C\|u_1^a\|_{L^\infty}^2\|\partial_xE^\varepsilon\|^2
 +C\|\partial_xu^a\|_{L^\infty}^2\|E^\varepsilon\|^2
 +C\|E_1^\varepsilon\partial_xE^\varepsilon\|^2,
\end{aligned}
$$
and
$$
\|\nabla\pi_R\|^2\le\|\mathcal R^\varepsilon\|^2.
$$
For the nonlinear product, the one-dimensional Sobolev
inequalities yield
$$
\begin{aligned}
\|E_1^\varepsilon\partial_xE^\varepsilon\|^2
&\le
 \|E_1^\varepsilon\|_{L_x^2L_y^\infty}^2
 \|\partial_xE^\varepsilon\|_{L_x^\infty L_y^2}^2\\[0.2cm]
&\le
4\|E_1^\varepsilon\|
 \|\partial_yE_1^\varepsilon\|
 \|\partial_xE^\varepsilon\|
 \|\partial_x^2E^\varepsilon\|.
\end{aligned}
$$
By Lemmas \ref{lemma:4.3} and \ref{lemma:4.4},
$$
\sup_{0<t<T}\|E^\varepsilon(t)\|\le C_T\varepsilon^{5/2},
\qquad
\sup_{0<t<T}\|\partial_xE^\varepsilon(t)\|
\le C_T\varepsilon^{3/2}.
$$
Therefore,
\begin{equation}
\label{eq:yE1-pressure-product}
\begin{aligned}
\frac1{\varepsilon^2}
\|E_1^\varepsilon\partial_xE^\varepsilon\|^2
&\le
 C_T\varepsilon^2
 \|\partial_yE_1^\varepsilon\|
 \|\partial_x^2E^\varepsilon\|\\[0.2cm]
&\le
 C_T\varepsilon^2
 \left(
 \|\partial_yE_1^\varepsilon\|^2
 +\|\partial_x^2E^\varepsilon\|^2
 \right).
\end{aligned}
\end{equation}
For the harmonic pressure, the unitary Fourier transform in $x$
gives
$$
\widehat{\pi_s}(\xi,y,t)
=
\varepsilon^2\frac{\mathrm i\xi}{|\xi|}
\widehat{\partial_yE_1^\varepsilon}(\xi,0,t)e^{-|\xi|y}.
$$
Hence,
$$
\begin{aligned}
\|\partial_x\pi_s\|^2
=
\frac{\varepsilon^4}{2}
\int_{\mathbb R}|\xi|
\left|
\widehat{\partial_yE_1^\varepsilon}(\xi,0,t)
\right|^2\,d\xi  \leq\varepsilon^4
\|\partial_x\partial_yE_1^\varepsilon\|
\|\partial_y^2E_1^\varepsilon\|.
\end{aligned}
$$
It follows from Young's inequality and $0<\varepsilon\le1$ that
$$
\begin{aligned}
\left|
\left\langle
\partial_x\pi_s,\partial_y^2E_1^\varepsilon
\right\rangle
\right|
\le
\varepsilon^2
\|\partial_x\partial_yE_1^\varepsilon\|^{1/2}
\|\partial_y^2E_1^\varepsilon\|^{3/2}\le
\frac14\|\partial_x\partial_yE_1^\varepsilon\|^2
+\frac{3\varepsilon^2}{4}
\|\partial_y^2E_1^\varepsilon\|^2.
\end{aligned}
$$
Combining these estimates, we obtain
\begin{equation}
\label{eq:yE1-M2-repaired}
\begin{aligned}
M_2
&\le
\frac14\|\partial_x\partial_yE_1^\varepsilon\|^2
+\frac{25\varepsilon^2}{32}
\|\partial_y^2E_1^\varepsilon\|^2 +
C_T\varepsilon^2
\left(
\|\partial_yE_1^\varepsilon\|^2
+\|\partial_x^2E^\varepsilon\|^2
\right) \\[0.2cm]
&\quad+
\frac C{\varepsilon^2}
\left(
\|u_1^a\|_{L^\infty}^2\|\partial_xE^\varepsilon\|^2
+\|\partial_xu^a\|_{L^\infty}^2\|E^\varepsilon\|^2
+\|\mathcal R^\varepsilon\|^2
\right).
\end{aligned}
\end{equation}
For $M_3$, Young's inequality gives
$$
M_3
\le
\frac{\varepsilon^2}{32}\|\partial_y^2E_1^\varepsilon\|^2
+\frac C{\varepsilon^2}\|\mathcal R_1^\varepsilon\|^2.
$$
Substituting the estimates of $M_1,M_2,M_3$
into \eqref{eq:yE1}, we obtain
\begin{equation}
\label{eq:yE1-closed-repaired}
\begin{aligned}
&\frac{d}{dt}\|\partial_yE_1^\varepsilon\|^2
+\frac14\|\partial_x\partial_yE_1^\varepsilon\|^2
+\frac{\varepsilon^2}{4}\|\partial_y^2E_1^\varepsilon\|^2\\[0.2cm]
&\quad\le
C_T\Bigl(
1+\|\nabla(u_1^I+\varepsilon^3S_1)\|_{L^\infty}^2
+\|\langle z\rangle\partial_x\partial_zu_1^b\|_{H_x^1L_z^2}^2
+\|\langle z\rangle^2\partial_z^2u_1^b\|_{H_x^1L_z^2}^2
\Bigr)\|\partial_yE_1^\varepsilon\|^2\\[0.2cm]
&\qquad+
C\|\partial_xE_1^\varepsilon\|^2
+C_T\varepsilon^2\|\partial_x^2E^\varepsilon\|^2+
\frac C{\varepsilon^2}
\left(
\|\nabla(u_1^I+\varepsilon^3S_1)\|_{L^\infty}^2
+\|\partial_xu^a\|_{L^\infty}^2
\right)\|E^\varepsilon\|^2\\[0.2cm]
&\qquad\quad+
\frac C{\varepsilon^2}
\|u_1^a\|_{L^\infty}^2\|\partial_xE^\varepsilon\|^2
+\frac C{\varepsilon^2}\|\mathcal R^\varepsilon\|^2.
\end{aligned}
\end{equation}
For the term containing $\varepsilon^{-2}\|\partial_xE^\varepsilon\|_2^2$,
Lemma \ref{lemma:4.3} gives the required dissipation bound:
\[
 \varepsilon^{-2}\int_0^T
 \|u_1^a\|_\infty^2\|\partial_xE^\varepsilon\|_2^2\,dt
 \leq C_T\varepsilon^{-2}\int_0^T
             \|\partial_xE^\varepsilon\|_2^2\,dt
 \leq C_T\varepsilon^3.
\]
The remaining source terms are controlled by Lemmas \ref{lemma:4.1}--\ref{lemma:4.4}.
Using the profile regularity in Propositions \ref{prop:outer-flow},
\ref{pro:boundary-layer equation}, and Lemmas \ref{pro:6.1}--\ref{pro:6.4},
Gronwall's inequality applied to \eqref{eq:yE1-closed-repaired} on $[t_0,T]$ yields
\begin{equation}
\label{eq:yE1-positive-time-repaired}
\begin{aligned}
\sup_{t_0\le t\le T}\|\partial_yE_1^\varepsilon(t)\|^2
&+\int_{t_0}^T
 \|\partial_x\partial_yE_1^\varepsilon(t)\|^2\,dt\\[0.2cm]
&+
\varepsilon^2\int_{t_0}^T
\|\partial_y^2E_1^\varepsilon(t)\|^2\,dt
\le
C_T\left(
\|\partial_yE_1^\varepsilon(t_0)\|^2+\varepsilon^3
\right),
\end{aligned}
\end{equation}
where $C_T$ is independent of $\varepsilon$ and $t_0$.

To pass to the initial time, use the norm-level estimate \eqref{eq:error-initial-L2} in the velocity energy identity \eqref{eq:E^{v}}. This gives
$$
\begin{aligned}
\varepsilon^2\int_0^t
\|\partial_yE^\varepsilon(s)\|^2\,ds
\le
C_T\left(
\int_0^t\|\mathcal R^\varepsilon(s)\|\,ds
\right)^2 \le
C_Tt\int_0^t\|\mathcal R^\varepsilon(s)\|^2\,ds.
\end{aligned}
$$
For each fixed $\varepsilon>0$, it follows that
$$
\frac1t\int_0^t
\|\partial_yE_1^\varepsilon(s)\|^2\,ds
\le
\frac{C_T}{\varepsilon^2}
\int_0^t\|\mathcal R^\varepsilon(s)\|^2\,ds
\longrightarrow0
\qquad(t\downarrow0).
$$
Hence there exists a sequence $t_n\downarrow0$, chosen
among times for which the preceding energy estimates
hold, such that
$$
\|\partial_yE_1^\varepsilon(t_n)\|\longrightarrow0.
$$
Taking $t_0=t_n$ in
\eqref{eq:yE1-positive-time-repaired} and letting
$n\to\infty$, we obtain
$$
\begin{aligned}
\sup_{0\le t\le T}\|\partial_yE_1^\varepsilon(t)\|^2
+\int_0^T
\|\partial_x\partial_yE_1^\varepsilon(t)\|^2\,dt +
\varepsilon^2\int_0^T
\|\partial_y^2E_1^\varepsilon(t)\|^2\,dt
\le C_T\varepsilon^3.
\end{aligned}
$$
The proof is complete.
\end{proof}
Finally, we give the $L_{T}^{\infty}L_{xy}^{2}$ estimate of $\partial_{x}\partial_{y}E_{1}^{\varepsilon}$.
\begin{lemma}
\label{lemma:4.6}
    Under the assumptions of Theorem \ref{thm:main}, for any $0<\varepsilon\leq1$, there exists a constant $C$ independent of $\varepsilon$, such that
	$$
	\sup_{0\leq t\leq T}\left\|\partial_{x}\partial_{y}E_{1}^{\varepsilon} \right\|^{2}+\int_{0}^{T}\left\|\partial_{x}^{2}\partial_{y}E_{1}^{\varepsilon} \right\|^{2}dt+\varepsilon^{2}\int_{0}^{T}\left\|\partial_{x}\partial_{y}^{2} E_{1}^{\varepsilon} \right\|^{2}dt \leq C_{T}\varepsilon.
	$$
\end{lemma}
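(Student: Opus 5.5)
The plan is to mirror the proof of Lemma \ref{lemma:4.5}, one tangential derivative higher. For $0<t_0<T$, I apply $\partial_x$ to the first component of \eqref{eq:3.39}, test with $-\partial_x\partial_y^2E_1^\varepsilon$, and integrate by parts. Since $\partial_xE_1^\varepsilon|_{y=0}=0$, this gives
\[
\tfrac12\tfrac{d}{dt}\|\partial_x\partial_yE_1^\varepsilon\|^2+\|\partial_x^2\partial_yE_1^\varepsilon\|^2+\varepsilon^2\|\partial_x\partial_y^2E_1^\varepsilon\|^2=\widetilde M_1+\widetilde M_2+\widetilde M_3 .
\]
Here $\widetilde M_1$ collects the convective terms, $\widetilde M_2=\langle\partial_x^2\pi^\varepsilon,\partial_y^2E_1^\varepsilon\rangle$ (one $x$-derivative moved onto the pressure), and $\widetilde M_3=\langle\partial_x\mathcal R_1^\varepsilon,\partial_x\partial_y^2E_1^\varepsilon\rangle$. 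I bound $\widetilde M_3\le\frac{\varepsilon^2}{32}\|\partial_x\partial_y^2E_1^\varepsilon\|^2+C\varepsilon^{-2}\|\partial_x\mathcal R_1^\varepsilon\|^2$. By Lemma \ref{prop:residual}, the last term integrates to $O(\varepsilon^3)$.

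For $\widetilde M_1$, I expand $\partial_x$ of the convective difference. Most terms are handled as in $M_1$, now with one extra $x$-derivative on $E^\varepsilon$.
\begin{itemize}
\item The boundary-layer factors $\varepsilon^{-1}\partial_x\partial_z u_1^b$ and $\varepsilon^{-2}\partial_z^2u_1^b$ are again treated by the wall Hardy bounds $|\partial_xE_1^\varepsilon|\le y^{1/2}\|\partial_x\partial_yE_1^\varepsilon\|_{L^2_y}$ and $|\partial_xE_2^\varepsilon|\le Cy^{3/2}\|\partial_x^2\partial_yE_1^\varepsilon\|_{L_y^2}$. This produces Gronwall coefficients in $L^1_t$ by Lemma \ref{lemma:4.1}.
\item When $\partial_x$ falls on the profile, I get coefficients like $\varepsilon^{-2}\partial_x\partial_z^2u_1^b$. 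I handle these with $\|\langle z\rangle^2\partial_x\partial_z^2u_1^b\|_{L_x^2L_z^2}$ in $L^2_t$ (this needs $H_x^2$-level normal regularity, which is available from \eqref{eq:5.26} and Lemmas \ref{pro:6.3}--\ref{pro:6.4}), together with the lower-order Hardy bound placed in $L^\infty_x$.
\item The nonlinear terms $\partial_x(E^\varepsilon\cdot\nabla E_1^\varepsilon)$ are split with Lemma \ref{lemma:2.2} and the bounds already proved: $\|E^\varepsilon\|\lesssim\varepsilon^{5/2}$, $\|\partial_xE^\varepsilon\|,\|\partial_yE_1^\varepsilon\|\lesssim\varepsilon^{3/2}$. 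They produce a coefficient $\varepsilon^{-2}\cdot\varepsilon^{3}$ times dissipation, which can be absorbed.
\end{itemize}
The source terms of type $\varepsilon^{-2}\|\nabla u^a\|_\infty^2\|\partial_xE^\varepsilon\|^2$ integrate to $\varepsilon^{-2}\cdot\varepsilon^3=\varepsilon$ by Lemma \ref{lemma:4.4}. This loss of two powers relative to Lemma \ref{lemma:4.5} is exactly why the target is $C_T\varepsilon$.

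The main obstacle is the pressure term $\widetilde M_2$. I split $\pi^\varepsilon=\pi_H+\pi_R+\pi_s$ as in Lemma \ref{lemma:4.5}, but with $\partial_x$ applied.
\begin{itemize}
\item $\pi_H$: the Neumann energy estimate for $\partial_x\pi_H$ gives $\|\nabla\partial_x\pi_H\|\lesssim\|\partial_x(u_1^a\partial_xE^\varepsilon+E_1^\varepsilon\partial_xu^a+E_1^\varepsilon\partial_xE^\varepsilon)\|$. The worst piece is $u_1^a\partial_x^2E^\varepsilon$, which after the factor $\varepsilon^{-2}$ integrates to $\varepsilon^{-2}\int\|\partial_x^2E^\varepsilon\|^2\lesssim\varepsilon$ by Lemma \ref{lemma:4.4}. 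The piece $\partial_x^2u^a$ is controlled by $\|\partial_x^2u^a\|_{L_x^2L_y^\infty}$ from Lemma \ref{lemma:4.1}.
\item $\pi_R$: here I need $\|\partial_x\nabla\pi_R\|$. I intend to avoid differentiating $\mathcal R^\varepsilon$ normally by writing $\partial_x\pi_R$ via the Neumann problem with data $\partial_x\mathcal R^\varepsilon$. If that fails, I will instead keep the form $\langle\partial_x\pi_R,\partial_x\partial_y^2E_1^\varepsilon\rangle$ and use $\|\nabla\pi_R\|$ controlled by $\partial_x\mathcal R_1^\varepsilon+\partial_y\mathcal R_2^\varepsilon$ and the wall integral $\int|\xi||\widehat{\mathcal R_2^\varepsilon|_{y=0}}|^2$. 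The bounds $\varepsilon^{3/2}$ and $\varepsilon^4$ in \eqref{residual-estimates} give $\varepsilon^{-2}\cdot\varepsilon^3=\varepsilon$. This is precisely what those estimates were recorded for.
\item $\pi_s$: the explicit Fourier formula gives $\|\partial_x^2\pi_s\|^2\le\varepsilon^4\|\partial_x^2\partial_yE_1^\varepsilon\|\|\partial_x\partial_y^2E_1^\varepsilon\|$, absorbed by Young exactly as before.
\end{itemize}

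Gronwall on $[t_0,T]$ then yields
\[
\sup_{t\ge t_0}\|\partial_x\partial_yE_1^\varepsilon\|^2+\int_{t_0}^T\Bigl(\|\partial_x^2\partial_yE_1^\varepsilon\|^2+\varepsilon^2\|\partial_x\partial_y^2E_1^\varepsilon\|^2\Bigr)\le C_T\bigl(\|\partial_x\partial_yE_1^\varepsilon(t_0)\|^2+\varepsilon\bigr).
\]
To reach $t=0$ I use Lemma \ref{lemma:4.5}: it gives $\int_0^t\|\partial_x\partial_yE_1^\varepsilon\|^2\to0$ as $t\downarrow0$ for fixed $\varepsilon$. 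Hence there is a sequence $t_n\downarrow0$ with $\|\partial_x\partial_yE_1^\varepsilon(t_n)\|\to0$; this is also consistent with $u^\varepsilon\in C([0,T];H^2)$ from Proposition \ref{prop:exact-solution} and the initial continuity \eqref{eq:5.27} of the profiles. Letting $t_0=t_n\to0$ concludes the proof.
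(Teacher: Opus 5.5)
Your energy identity, pressure splitting and residual bookkeeping follow the paper. Testing $\partial_x$ of the equation by $-\partial_x\partial_y^2E_1^\varepsilon$ is equivalent to the paper's test by $\partial_x^2\partial_y^2E_1^\varepsilon$. Distributing $\partial_x$ in the convective terms, with the wall Hardy bounds as Gronwall coefficients, is a workable variant of the paper's integration by parts $N_1+N_2=\langle E^\varepsilon\cdot\nabla\partial_yu_1^a,\partial_x^2\partial_yE_1^\varepsilon\rangle$.

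The genuine gap is the passage to $t=0$. Finiteness of $\int_0^T\|\partial_x\partial_yE_1^\varepsilon\|^2\,dt$, hence $\int_0^t\|\partial_x\partial_yE_1^\varepsilon\|^2\,ds\to0$, does not give a sequence $t_n\downarrow0$ with $\|\partial_x\partial_yE_1^\varepsilon(t_n)\|\to0$: a nonnegative integrable function can stay $\geq1$ on all of $(0,\delta)$. In Lemma~\ref{lemma:4.5} the sequence came from the stronger averaged statement $t^{-1}\int_0^t\|\partial_yE_1^\varepsilon\|^2\,ds\to0$. That statement rested on $\|E^\varepsilon(t)\|\le C_T\int_0^t\|\mathcal R^\varepsilon\|\,ds$, and nothing analogous is established one tangential derivative higher, where the sources are only controlled in $L^1$ in time. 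What closes the argument is what you mention only as ``consistent with'': for fixed $\varepsilon$, show directly that $\|\partial_x\partial_yE_1^\varepsilon(t)\|\to0$ as $t\downarrow0$. The exact solution is continuous into $H^2$ by Proposition~\ref{prop:exact-solution}. Also $\partial_x\partial_yu_1^a(t)\to\partial_x\partial_y\tilde u_1$ in $L^2$, for four reasons. The outer corrections are continuous in $H^2$ with zero data. $\|\partial_x\partial_zu_1^{b,j}(t)\|_{L^2_{xz}}\to0$ for $j=0,1,2$, by \eqref{eq:5.27} for $j=0$ and the initial continuity of $\partial_z\theta$ in Lemma~\ref{lemma:2.13} for $\hat u_1^{b,1},\hat u_1^{b,2}$, since the wall liftings $\varphi(z)u_1^{I,j}(x,0,t)$ vanish at $t=0$. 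The factor $\varepsilon^{-1/2}$ from $y=\varepsilon z$ is harmless at fixed $\varepsilon$. Finally, $\partial_x\partial_yS_1=\varphi''(y)\int_0^\infty\partial_xu_1^{b,2}\,dz\to0$. Since your Gronwall constants are independent of $t_0$, you can then let $t_0\downarrow0$; this is the paper's argument.

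The residual pressure also needs repair. The pressure term is $\langle\partial_x^2\pi^\varepsilon,\partial_x\partial_y^2E_1^\varepsilon\rangle$; your $\widetilde M_2$ has lost a derivative. So after Young against $\varepsilon^2\|\partial_x\partial_y^2E_1^\varepsilon\|^2$ you need $\varepsilon^{-2}\|\partial_x^2\pi_R\|^2$; $\|\nabla\pi_R\|\le\|\mathcal R^\varepsilon\|$ does not help. Your first option, a Neumann problem for $\partial_x\pi_R$ with data $\partial_x\mathcal R^\varepsilon$, needs $\partial_x\mathcal R_2^\varepsilon\in L^2_tL^2$, which is not available at $H^4$ regularity. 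For instance, $\partial_xG_{19}$ contains $\varepsilon^3\int_z^\infty\partial_x^4u_1^{b,2}\,dr$ and $\partial_xG_1$ contains $\varepsilon^3\int_z^\infty\partial_x^2\partial_tu_1^{b,2}\,dr$, both beyond Lemma~\ref{pro:6.4}; this is why the paper uses no bound on $\partial_x\mathcal R_2^\varepsilon$. The correct form of your fallback is
\[
\|\partial_x^2\pi_R\|^2\le C\Bigl(\|\operatorname{div}\mathcal R^\varepsilon\|^2+\int_{\mathbb R}|\xi|\bigl|\widehat{\mathcal R_2^\varepsilon|_{y=0}}(\xi)\bigr|^2\,d\xi\Bigr).
\]
Split $\pi_R$ into a zero-Neumann part and a harmonic part. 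Handle the first by even extension in $y$ and the multiplier $\xi^2/(\xi^2+\zeta^2)$. The second is explicit: $\widehat{\pi}=|\xi|^{-1}\widehat{\mathcal R_2^\varepsilon|_{y=0}}\,e^{-|\xi|y}$. With \eqref{residual-estimates} this term contributes $O(\varepsilon)$, as in the paper.
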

\begin{proof}
Multiplying the first equation in \eqref{eq:3.39} by
$\partial_x^2\partial_y^2E_1^\varepsilon$, and then
integrating by parts, we obtain
\begin{equation}
\label{eq:xyE1}
\begin{aligned}
&\frac12\frac{d}{dt}
 \|\partial_x\partial_yE_1^\varepsilon\|^2
 +\|\partial_x^2\partial_yE_1^\varepsilon\|^2
 +\varepsilon^2
  \|\partial_x\partial_y^2E_1^\varepsilon\|^2
 =\sum_{i=1}^{5}N_i,
\end{aligned}
\end{equation}
where
$$
\begin{gathered}
    N_1=-\left\langle
 (\partial_xE^\varepsilon)\cdot\nabla\partial_yu_1^a,
 \partial_x\partial_yE_1^\varepsilon
 \right\rangle,\quad
N_2=-\left\langle
 E^\varepsilon\cdot\nabla\partial_x\partial_yu_1^a,
 \partial_x\partial_yE_1^\varepsilon
 \right\rangle,\\[0.2cm]
N_3=\left\langle
 \partial_x^2\pi^\varepsilon,
 \partial_x\partial_y^2E_1^\varepsilon
 \right\rangle,\quad 
N_4=\left\langle
 \partial_x\mathcal R_1^\varepsilon,
 \partial_x\partial_y^2E_1^\varepsilon
 \right\rangle,\\[0.2cm]
N_5=-\left\langle
 \partial_x(u^a+E^\varepsilon)
       \cdot\nabla\partial_yE_1^\varepsilon,
 \partial_x\partial_yE_1^\varepsilon
 \right\rangle.
\end{gathered}
$$
For $N_1$ and $N_2$, integrating by parts in $x$, we have
$$
\begin{aligned}
N_1+N_2
&=-\left\langle
 \partial_x\left(
 E^\varepsilon\cdot\nabla\partial_yu_1^a
 \right),
 \partial_x\partial_yE_1^\varepsilon
 \right\rangle\\[0.2cm]
&=\left\langle
 E^\varepsilon\cdot\nabla\partial_yu_1^a,
 \partial_x^2\partial_yE_1^\varepsilon
 \right\rangle\\[0.2cm]
&\le
 \frac1{32}\|\partial_x^2\partial_yE_1^\varepsilon\|^2
 +C\|E^\varepsilon\cdot\nabla\partial_yu_1^a\|^2.
\end{aligned}
$$
By the anisotropic Sobolev inequalities and
$\partial_y^2E_2^\varepsilon
=-\partial_x\partial_yE_1^\varepsilon$, we obtain
$$
\begin{aligned}
\|E^\varepsilon\|_{L^\infty}^2 &\leq C \|E_{1}^{\varepsilon}\|^{2}+\|\partial_{x}E_{1}^{\varepsilon}\|^{2}+\|\partial_{y}E_{1}^{\varepsilon}\|^{2}+\|\partial_{x}\partial_{y}E_{1}^{\varepsilon}\|^{2} \\[0.2cm]
 &\qquad +\|E_{2}^{\varepsilon}\|^{2}+\|\partial_{x}E^{\varepsilon}\|^{2}+\|\partial_{y}^{2}E_{2}^{\varepsilon}\|^{2} \\[0.2cm]
&\le C\bigl(
 \|E^\varepsilon\|^2
 +\|\partial_xE^\varepsilon\|^2 +\|\partial_yE_1^\varepsilon\|^2
 +\|\partial_x\partial_yE_1^\varepsilon\|^2
 \bigr).
\end{aligned}
$$
Moreover, the one-dimensional Sobolev inequality gives
$$
\|E_1^\varepsilon\|_{L_x^2L_y^\infty}^2
\le
2\|E_1^\varepsilon\|
 \|\partial_yE_1^\varepsilon\|.
$$
Using the boundary condition
$E_2^\varepsilon|_{y=0}=0$ and incompressibility, we also have
$$
\begin{aligned}
|E_2^\varepsilon(x,y,t)|^2
=\left|
 \int_0^y\partial_xE_1^\varepsilon(x,r,t)\,dr
 \right|^2 \le
 y\int_0^\infty
 |\partial_xE_1^\varepsilon(x,r,t)|^2\,dr.
\end{aligned}
$$
Consequently, after the change of variables $y=\varepsilon z$,
$$
\begin{aligned}
\frac1{\varepsilon^2}
 \left\|
 E_1^\varepsilon(x,y,t)
 \partial_x\partial_zu_1^b
       \left(x,\frac{y}{\varepsilon},t\right)
 \right\|^2 &\le
 \frac C\varepsilon
 \|E_1^\varepsilon\|_{L_x^2L_y^\infty}^2
 \|\partial_x\partial_zu_1^b\|_{H_x^1L_z^2}^2\\[0.2cm]
&\le
 \frac C\varepsilon
 \|E_1^\varepsilon\|
 \|\partial_yE_1^\varepsilon\|
 \|\partial_x\partial_zu_1^b\|_{H_x^1L_z^2}^2,
\end{aligned}
$$
and
$$
\begin{aligned}
\frac1{\varepsilon^4}
 \left\|
 E_2^\varepsilon(x,y,t)
 \partial_z^2u_1^b
       \left(x,\frac{y}{\varepsilon},t\right)
 \right\|^2 \le
 \frac C{\varepsilon^2}
 \|\partial_xE_1^\varepsilon\|^2
 \|\langle z\rangle\partial_z^2u_1^b\|_{H_x^1L_z^2}^2.
\end{aligned}
$$
Thus, we obtain
\begin{equation}
\label{eq:xyE1-N12}
\begin{aligned}
N_1+N_2
&\le
 \frac1{32}\|\partial_x^2\partial_yE_1^\varepsilon\|^2 +
 C\|\nabla\partial_y(u_1^I+\varepsilon^3S_1)\|^2
 \bigl(
 \|E^\varepsilon\|^2
 +\|\partial_xE^\varepsilon\|^2
 +\|\partial_yE_1^\varepsilon\|^2
 +\|\partial_x\partial_yE_1^\varepsilon\|^2
 \bigr)\\[0.2cm]
&\quad+
 \frac C\varepsilon
 \|E_1^\varepsilon\|
 \|\partial_yE_1^\varepsilon\|
 \|\partial_x\partial_zu_1^b\|_{H_x^1L_z^2}^2 +
 \frac C{\varepsilon^2}
 \|\partial_xE_1^\varepsilon\|^2
 \|\langle z\rangle\partial_z^2u_1^b\|_{H_x^1L_z^2}^2.
\end{aligned}
\end{equation}

For $N_3$, use the decomposition $\pi^\varepsilon=\pi_H+\pi_R+\pi_s$ from the preceding proof. The residual pressure is estimated without differentiating $\mathcal R_2^\varepsilon$ in $x$. Indeed,
$$
\|\partial_x^2\pi_R\|^2\leq C\left(
\|\operatorname{div}\mathcal R^\varepsilon\|^2+
\int_{\mathbb R}|\xi|\left|\widehat{\mathcal R_2^\varepsilon|_{y=0}}(\xi,t)\right|^2\,d\xi\right).
$$
To see this, split the Neumann solution into a zero-Neumann part and a harmonic part. Even extension in $y$ and the multiplier $\xi^2/(\xi^2+\zeta^2)$ control the first part by $\|\operatorname{div}\mathcal R^\varepsilon\|$. For the harmonic part the Fourier formula is
$$
\widehat{\pi} (\xi,y)=\frac{\widehat{\mathcal R_2^\varepsilon|_{y=0}}(\xi)}{|\xi|}e^{-|\xi|y},\qquad
\|\partial_x^2\pi\|^2=\frac12\int_{\mathbb R}|\xi|\left|\widehat{\mathcal R_2^\varepsilon|_{y=0}}(\xi)\right|^2\,d\xi.
$$
The calculation is justified first away from zero frequency and then at the derivative level. For the Stokes pressure, the same Fourier trace calculation as above gives
$$
\|\partial_x^2\pi_s\|^2\leq\varepsilon^4
\|\partial_x^2\partial_yE_1^\varepsilon\|
\|\partial_x\partial_y^2E_1^\varepsilon\|.
$$
Combining these bounds with the estimate of $\pi_H$, we obtain
\begin{equation}
\label{eq:xyE1-N3}
\begin{aligned}
N_3
&\le
\frac14
\|\partial_x^2\partial_yE_1^\varepsilon\|^2
+\frac{25\varepsilon^2}{32}
\|\partial_x\partial_y^2E_1^\varepsilon\|^2 +
\frac C{\varepsilon^2}
 \left(\|\operatorname{div}\mathcal R^\varepsilon\|^2+
\int_{\mathbb R}|\xi|\left|\widehat{\mathcal R_2^\varepsilon|_{y=0}}\right|^2\,d\xi\right)\\[0.2cm]
&\quad+
\frac C{\varepsilon^2}
\left\|
\partial_x\left(
u_1^a\partial_xE^\varepsilon
+E_1^\varepsilon\partial_xu^a
+E_1^\varepsilon\partial_xE^\varepsilon
\right)
\right\|^2.
\end{aligned}
\end{equation}
For $N_4$, Young's inequality yields
$$
N_4
\le
\frac{\varepsilon^2}{32}
\|\partial_x\partial_y^2E_1^\varepsilon\|^2
+\frac C{\varepsilon^2}
\|\partial_x\mathcal R_1^\varepsilon\|^2.
$$
Finally, for $N_5$, we have
$$
\begin{aligned}
N_5
&=
-\int_\Omega
(\partial_xu_1^a)
|\partial_x\partial_yE_1^\varepsilon|^2\,dx\,dy-
\int_\Omega
(\partial_xu_2^a)
(\partial_y^2E_1^\varepsilon)
(\partial_x\partial_yE_1^\varepsilon)\,dx\,dy\\[0.2cm]
&\quad-
\int_\Omega
(\partial_xE_1^\varepsilon)
|\partial_x\partial_yE_1^\varepsilon|^2\,dx\,dy-
\int_\Omega
(\partial_xE_2^\varepsilon)
(\partial_y^2E_1^\varepsilon)
(\partial_x\partial_yE_1^\varepsilon)\,dx\,dy.
\end{aligned}
$$
The first two terms satisfy
$$
\begin{aligned}
&\left|
\int_\Omega
(\partial_xu_1^a)
|\partial_x\partial_yE_1^\varepsilon|^2\,dx\,dy
\right|+
\left|
\int_\Omega
(\partial_xu_2^a)
(\partial_y^2E_1^\varepsilon)
(\partial_x\partial_yE_1^\varepsilon)\,dx\,dy
\right|\\[0.2cm]
&\quad\le
C\left(
\|\partial_xu_1^a\|_{L^\infty}
+\|\partial_xu_2^a\|_{L^\infty}^2
\right)
\|\partial_x\partial_yE_1^\varepsilon\|^2
+C\|\partial_y^2E_1^\varepsilon\|^2.
\end{aligned}
$$
For the third term, using Lemma \ref{lemma:2.2}, we obtain
$$
\begin{aligned}
&\left|
\int_\Omega
(\partial_xE_1^\varepsilon)
|\partial_x\partial_yE_1^\varepsilon|^2\,dx\,dy
\right|\\[0.2cm]
&\quad\le
C\|\partial_xE_1^\varepsilon\|^{1/2}
 \|\partial_x^2E_1^\varepsilon\|^{1/2}
 \|\partial_x\partial_yE_1^\varepsilon\|^{3/2}
 \|\partial_x\partial_y^2E_1^\varepsilon\|^{1/2}\\[0.2cm]
&\quad\le
\frac{\varepsilon^2}{32}
\|\partial_x\partial_y^2E_1^\varepsilon\|^2 +
C\varepsilon^{-2/3}
\|\partial_xE^\varepsilon\|^{2/3}
\|\partial_x^2E^\varepsilon\|^{2/3}
\|\partial_x\partial_yE_1^\varepsilon\|^2.
\end{aligned}
$$
For the last term, using
$\partial_y\partial_xE_2^\varepsilon
=-\partial_x^2E_1^\varepsilon$
and Lemma \ref{lemma:2.2}, we have
$$
\begin{aligned}
&\left|
\int_\Omega
(\partial_xE_2^\varepsilon)
(\partial_y^2E_1^\varepsilon)
(\partial_x\partial_yE_1^\varepsilon)\,dx\,dy
\right|\\[0.2cm]
&\quad\le
C\|\partial_y^2E_1^\varepsilon\|
 \|\partial_xE_2^\varepsilon\|^{1/2}
 \|\partial_x^2E_1^\varepsilon\|^{1/2}
 \|\partial_x\partial_yE_1^\varepsilon\|^{1/2}
 \|\partial_x^2\partial_yE_1^\varepsilon\|^{1/2}\\[0.2cm]
&\quad\le
C\|\partial_y^2E_1^\varepsilon\|^2
+C\|\partial_xE^\varepsilon\|
 \|\partial_x^2E^\varepsilon\|
 \|\partial_x\partial_yE_1^\varepsilon\|
 \|\partial_x^2\partial_yE_1^\varepsilon\|\\[0.2cm]
&\quad\le
\frac1{32}
\|\partial_x^2\partial_yE_1^\varepsilon\|^2
+C\|\partial_y^2E_1^\varepsilon\|^2+
C\|\partial_xE^\varepsilon\|^2
 \|\partial_x^2E^\varepsilon\|^2
 \|\partial_x\partial_yE_1^\varepsilon\|^2.
\end{aligned}
$$
Therefore,
\begin{equation}
\label{eq:xyE1-N5}
\begin{aligned}
N_5
&\le
\frac1{32}
\|\partial_x^2\partial_yE_1^\varepsilon\|^2
+\frac{\varepsilon^2}{32}
\|\partial_x\partial_y^2E_1^\varepsilon\|^2+
C\left(
\|\partial_xu_1^a\|_{L^\infty}
+\|\partial_xu_2^a\|_{L^\infty}^2 \right. \\[0.2cm]
&\quad \left. +\varepsilon^{-2/3}
\|\partial_xE^\varepsilon\|^{2/3}
\|\partial_x^2E^\varepsilon\|^{2/3}+\|\partial_xE^\varepsilon\|^2 \|\partial_x^2E^\varepsilon\|^2
\right)
\|\partial_x\partial_yE_1^\varepsilon\|^2 +C\|\partial_y^2E_1^\varepsilon\|^2 .
\end{aligned}
\end{equation}
Substituting the estimates of $N_1,\ldots,N_5$
into \eqref{eq:xyE1}, we obtain
$$
\begin{aligned}
&\frac{d}{dt}
\|\partial_x\partial_yE_1^\varepsilon\|^2
+\frac14
\|\partial_x^2\partial_yE_1^\varepsilon\|^2
+\frac{\varepsilon^2}{4}
\|\partial_x\partial_y^2E_1^\varepsilon\|^2\\[0.2cm]
&\le
C\left(
 \|\nabla\partial_y(u_1^I+\varepsilon^3S_1)\|^2
 +\|\partial_xu_1^a\|_{L^\infty}
 +\|\partial_xu_2^a\|_{L^\infty}^2+\varepsilon^{-2/3}
  \|\partial_xE^\varepsilon\|^{2/3}
  \|\partial_x^2E^\varepsilon\|^{2/3}\right. \\[0.2cm]
&\left.\quad
 +\|\partial_xE^\varepsilon\|^2
  \|\partial_x^2E^\varepsilon\|^2
 \right)
 \|\partial_x\partial_yE_1^\varepsilon\|^2 +
C\|\nabla\partial_y(u_1^I+\varepsilon^3S_1)\|^2
 \left(
 \|E^\varepsilon\|^2
 +\|\partial_xE^\varepsilon\|^2
 +\|\partial_yE_1^\varepsilon\|^2
 \right)\\[0.2cm]
&\qquad+
\frac C\varepsilon
\|E_1^\varepsilon\|
\|\partial_yE_1^\varepsilon\|
\|\partial_x\partial_zu_1^b\|_{H_x^1L_z^2}^2+
\frac C{\varepsilon^2}
\|\partial_xE_1^\varepsilon\|^2
\|\langle z\rangle\partial_z^2u_1^b\|_{H_x^1L_z^2}^2
+C\|\partial_y^2E_1^\varepsilon\|^2\\[0.2cm]
&\qquad+
\frac C{\varepsilon^2}
\left\|
\partial_x\left(
u_1^a\partial_xE^\varepsilon
+E_1^\varepsilon\partial_xu^a
+E_1^\varepsilon\partial_xE^\varepsilon
\right)
\right\|^2\\[0.2cm]
&\qquad+\frac C{\varepsilon^2}
 \left(\|\partial_x\mathcal R_1^\varepsilon\|^2+
\|\operatorname{div}\mathcal R^\varepsilon\|^2+
\int_{\mathbb R}|\xi|\left|\widehat{\mathcal R_2^\varepsilon|_{y=0}}\right|^2\,d\xi\right).
\end{aligned}
$$
It remains to estimate the product containing the unknown error before applying Gronwall's inequality. For this calculation, put
$$
X(t)=\|\partial_x\partial_yE_1^\varepsilon(t)\|^2,
\qquad H(t)=\|\partial_x^2E^\varepsilon(t)\|^2.
$$
The linear part of the product is
$$
u_1^a\partial_x^2E^\varepsilon
+(\partial_xu_1^a)\partial_xE^\varepsilon
+(\partial_xE_1^\varepsilon)\partial_xu^a
+E_1^\varepsilon\partial_x^2u^a.
$$
For the last of these terms, the one-dimensional Sobolev inequality gives
$$
\|E_1^\varepsilon\partial_x^2u^a\|^2
\leq C\|E_1^\varepsilon\|\|\partial_xE_1^\varepsilon\|
\|\partial_x^2u^a\|_{L_x^2L_y^\infty}^2.
$$
By Lemmas \ref{lemma:4.1}, \ref{lemma:4.3}, and \ref{lemma:4.4}, the time integral of the squared linear part, divided by $\varepsilon^2$, is at most $C_T\varepsilon$. For the two nonlinear terms, we have
$$
\begin{aligned}
\|(\partial_xE_1^\varepsilon)\partial_xE^\varepsilon\|^2
&\leq C\|\partial_xE^\varepsilon\|^2
\|\partial_x^2E^\varepsilon\|
\|\partial_x\partial_yE^\varepsilon\|,\\[0.2cm]
\|E_1^\varepsilon\|_{L^\infty}^2
&\leq C\bigl(\|E_1^\varepsilon\|\|\partial_xE_1^\varepsilon\|
\|\partial_yE_1^\varepsilon\|\|\partial_x\partial_yE_1^\varepsilon\|\bigr)^{1/2}
\leq C_T\varepsilon^{11/4}X(t)^{1/4}.
\end{aligned}
$$
The first estimate and the lower energy bounds imply
$$
\varepsilon^{-2}\int_0^T
\|(\partial_xE_1^\varepsilon)\partial_xE^\varepsilon\|^2\,dt
\leq C_T\varepsilon^3.
$$
The remaining nonlinear term satisfies, without any prior bound on $X$,
$$
\varepsilon^{-2}\|E_1^\varepsilon\partial_x^2E^\varepsilon\|^2
\leq C_T\varepsilon^{3/4}H(t)X(t)^{1/4}
\leq C_T\varepsilon^{3/4}H(t)(1+X(t)).
$$
Since $\int_0^T H(t)\,dt\leq C_T\varepsilon^3$, the additional coefficient of $X$ is integrable uniformly in $\varepsilon$. Moreover, Lemma \ref{prop:residual} gives
$$
\varepsilon^{-2}\int_0^T\left(
\|\partial_x\mathcal R_1^\varepsilon\|^2+
\|\operatorname{div}\mathcal R^\varepsilon\|^2+
\int_{\mathbb R}|\xi|\left|\widehat{\mathcal R_2^\varepsilon|_{y=0}}\right|^2\,d\xi
\right)dt\leq C_T\varepsilon.
$$
All other coefficients of $X$ in the preceding differential inequality have bounded time integrals by Lemmas \ref{lemma:4.1} and \ref{lemma:4.3}--\ref{lemma:4.5}; the remaining source integrals are at most $C_T\varepsilon$. In particular, the terms with $\varepsilon^{-2/3}$ and $\|\partial_xE^\varepsilon\|^2\|\partial_x^2E^\varepsilon\|^2$ have integrals bounded by $C_T\varepsilon^{4/3}$ and $C_T\varepsilon^6$, respectively. Gronwall's inequality on $[t_0,T]$ therefore gives

\begin{equation}
\label{eq:xyE1-positive-time}
\begin{aligned}
\sup_{t_0\le t\le T}
\|\partial_x\partial_yE_1^\varepsilon(t)\|^2
&+\int_{t_0}^T
\|\partial_x^2\partial_yE_1^\varepsilon(t)\|^2\,dt\\[0.2cm]
&+
\varepsilon^2\int_{t_0}^T
\|\partial_x\partial_y^2E_1^\varepsilon(t)\|^2\,dt \le
C_T\left(
\|\partial_x\partial_yE_1^\varepsilon(t_0)\|^2
+\varepsilon
\right),
\end{aligned}
\end{equation}
where $C_T$ is independent of $\varepsilon$ and $t_0$.

We now justify the initial limit for each fixed $\varepsilon>0$. The zero-data heat estimate used in \eqref{eq:5.27} and in Lemma \ref{lemma:2.13} gives initial continuity of the normal derivatives of the homogeneous profiles $f$, $\hat u_1^{b,1}$, and $\hat u_1^{b,2}$. Their wall liftings vanish initially. Consequently,
$$
\|\partial_x\partial_zu_1^{b,j}(t)\|_{L_{xz}^2}\longrightarrow0,
\qquad j=0,1,2,\qquad t\downarrow0.
$$
At the lowest regularity, $j=2$, this uses only
$\partial_x\partial_tu_1^{b,2}$, $\partial_x^3u_1^{b,2}$,
$\partial_x^2\partial_zu_1^{b,2}$, and $\partial_x\partial_z^2u_1^{b,2}$ in $L_t^2L_{xz}^2$, all supplied by Lemma \ref{pro:6.4}. Since
$$
\left\|\partial_x\partial_y\bigl[u_1^{b,j}(x,y/\varepsilon,t)\bigr]\right\|_{L_{xy}^2}
=\varepsilon^{-1/2}\|\partial_x\partial_zu_1^{b,j}(t)\|_{L_{xz}^2},
$$
the outer $H^2$ continuity and
$\partial_x\partial_yS_1=\varphi''(y)\int_0^\infty\partial_xu_1^{b,2}\,dz$ imply
$$
\partial_x\partial_yu_1^a(t)\longrightarrow\partial_x\partial_y\tilde u_1
\quad\hbox{in }L^2,\qquad \varepsilon>0\hbox{ fixed}.
$$
By the fixed-viscosity $H^2$ continuity in Proposition \ref{prop:exact-solution}, it follows that
$$
\|\partial_x\partial_yE_1^\varepsilon(t)\|\longrightarrow0\qquad(t\downarrow0).
$$
No uniform modulus in this last limit is needed. Letting $t_0\downarrow0$ in \eqref{eq:xyE1-positive-time}, whose constants are independent of $t_0$ and $\varepsilon$, we obtain

$$
\begin{aligned}
&\sup_{0\le t\le T}
\|\partial_x\partial_yE_1^\varepsilon(t)\|^2
+\int_0^T
\|\partial_x^2\partial_yE_1^\varepsilon(t)\|^2\,dt +
\varepsilon^2\int_0^T
\|\partial_x\partial_y^2E_1^\varepsilon(t)\|^2\,dt
\le C_T\varepsilon.
\end{aligned}
$$
The proof is complete.
\end{proof}

\subsection{Convergence rate}\label{subsec:Convergence rate}
\begin{lemma}
	\label{lemma:4.7}
	Under the assumptions of Theorem \ref{thm:main}, for any $0<\varepsilon\leq1$, there exists a constant $C$ independent of $\varepsilon$, such that
	$$
	\left\|E_{1}^{\varepsilon} \right\|_{L_{T}^{\infty}L_{xy}^{\infty}}\leq C\varepsilon^{\frac{3}{2}} \quad \text{and} \quad \left\|E_{2}^{\varepsilon} \right\|_{L_{T}^{\infty}L_{xy}^{\infty}}\leq C\varepsilon^{\frac{3}{2}}.
	$$
\end{lemma}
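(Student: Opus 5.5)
The $L^\infty$ bounds will come from anisotropic interpolation applied to the four energy estimates of Lemmas \ref{lemma:4.3}--\ref{lemma:4.6}. These give, uniformly on $[0,T]$,
\[
\|E^\varepsilon\|^2\le C\varepsilon^5,\quad \|\partial_xE^\varepsilon\|^2\le C\varepsilon^3,\quad \|\partial_yE_1^\varepsilon\|^2\le C\varepsilon^3,\quad \|\partial_x\partial_yE_1^\varepsilon\|^2\le C\varepsilon .
\]

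For $E_1^\varepsilon$, I use the anisotropic Agmon-type inequality on the half-plane. Apply the one-dimensional Sobolev inequality (Lemma \ref{lem:one-dimensional-Sobolev}) first in $y$ with values in $L_x^\infty$, and then in $x$. This yields
\[
\|f\|_{L^\infty(\Omega)}^2\le C\,\|f\|^{1/2}\|\partial_xf\|^{1/2}\|\partial_yf\|^{1/2}\|\partial_x\partial_yf\|^{1/2}.
\]
No trace condition is needed, since the sup over $y\in\mathbb R_+$ is controlled by $\|f\|_{L^2_y}\|\partial_yf\|_{L^2_y}$. With $f=E_1^\varepsilon$ the right side is bounded by
\[
C\bigl(\varepsilon^{5}\varepsilon^{3}\varepsilon^{3}\varepsilon\bigr)^{1/4}=C\varepsilon^{3}.
\]
Hence $\|E_1^\varepsilon\|_{L^\infty}\le C\varepsilon^{3/2}$ uniformly on $[0,T]$, including $t=0$.

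For $E_2^\varepsilon$ we have no estimate on $\partial_yE_2^\varepsilon$ beyond incompressibility, but that is enough: $\partial_yE_2^\varepsilon=-\partial_xE_1^\varepsilon$ and $\partial_x\partial_yE_2^\varepsilon=-\partial_x^2E_1^\varepsilon$. Applying the same inequality to $E_2^\varepsilon$ gives the factors
\[
\|E_2^\varepsilon\|\le C\varepsilon^{5/2},\quad \|\partial_xE_2^\varepsilon\|\le C\varepsilon^{3/2},\quad \|\partial_yE_2^\varepsilon\|=\|\partial_xE_1^\varepsilon\|\le C\varepsilon^{3/2}.
\]
The last factor, $\|\partial_x^2E_1^\varepsilon\|$, is the obstacle, because Lemma \ref{lemma:4.4} controls $\partial_x^2E^\varepsilon$ only in $L_t^2$.

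To avoid $\partial_x^2E_1^\varepsilon$, I instead use the representation $E_2^\varepsilon(x,y)=-\int_0^y\partial_xE_1^\varepsilon\,dr$ together with the interpolation $\|g\|_{L^\infty}^2\le C\|g\|_{L^2_x}\|\partial_xg\|_{L^2_x}$ in $x$ at fixed $y$. The difficulty is the factor $y$ in this representation. So I would first try to avoid $E_2^\varepsilon$ being large at large $y$ as follows. The divergence-free structure gives
\[
\sup_y\|E_2^\varepsilon(\cdot,y)\|_{L_x^2}^2\le 2\|E_2^\varepsilon\|\,\|\partial_yE_2^\varepsilon\|=2\|E_2^\varepsilon\|\,\|\partial_xE_1^\varepsilon\|\le C\varepsilon^4 .
\]
Similarly,
\[
\sup_y\|\partial_xE_2^\varepsilon(\cdot,y)\|_{L_x^2}^2\le 2\|\partial_xE_2^\varepsilon\|\,\|\partial_x^2E_1^\varepsilon\|,
\]
which again brings in $\partial_x^2E_1^\varepsilon$.

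Failing that, I fall back on a version of Lemma \ref{lemma:4.6}'s interpolation. Using $\partial_y^2E_2^\varepsilon=-\partial_x\partial_yE_1^\varepsilon$ and the lemma proved in the preliminaries,
\[
\|f\|_{L^\infty}\le C\|f\|^{1/4}\|\partial_xf\|^{1/2}\|\partial_y^2f\|^{1/4},
\]
we get
\[
\|E_2^\varepsilon\|_{L^\infty}\le C\varepsilon^{5/8}\varepsilon^{3/4}\varepsilon^{1/8}=C\varepsilon^{3/2}.
\]
The exponents match exactly, so this is the intended route, and it needs only the sup-in-time bounds. The only point to check is that this lemma applies with $f,\partial_xf,\partial_y^2f\in L^2$, which holds here.
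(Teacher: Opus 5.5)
Your proposal is correct and follows the paper's proof. The bound for $E_1^\varepsilon$ is the same two-step one-dimensional Sobolev interpolation: the paper passes through $\|E_1^\varepsilon\|_{L_x^\infty L_y^2}\le C\varepsilon^2$ and $\|\partial_yE_1^\varepsilon\|_{L_x^\infty L_y^2}\le C\varepsilon$. The bound for $E_2^\varepsilon$ is exactly the paper's use of $\|f\|_{L^\infty}\le C\|f\|^{1/4}\|\partial_xf\|^{1/2}\|\partial_y^2f\|^{1/4}$ with $\partial_y^2E_2^\varepsilon=-\partial_x\partial_yE_1^\varepsilon$. You can simply delete the exploratory detours through $\partial_x^2E_1^\varepsilon$, since only the $\sup_t$ bounds of Lemmas~\ref{lemma:4.3}--\ref{lemma:4.6} are needed.
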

\begin{proof}
	Using the Sobolev inequality, we have
	$$
	\left\|E_{1}^{\varepsilon} \right\|_{L_{T}^{\infty}L_{xy}^{\infty}}\leq C \left\|E_{1}^{\varepsilon} \right\|_{L_{T}^{\infty}L_{x}^{\infty}L_{y}^{2}}^{\frac{1}{2}} \left\|\partial_{y}E_{1}^{\varepsilon} \right\|_{L_{T}^{\infty}L_{x}^{\infty}L_{y}^{2}}^{\frac{1}{2}}.
	$$
	where
	$$
	\left\|E_{1}^{\varepsilon} \right\|_{L_{T}^{\infty}L_{x}^{\infty}L_{y}^{2}} \leq C \left\|E_{1}^{\varepsilon} \right\|_{L_{T}^{\infty}L_{xy}^{2}}^{\frac{1}{2}} \left\|\partial_{x}E_{1}^{\varepsilon} \right\|_{L_{T}^{\infty}L_{xy}^{2}}^{\frac{1}{2}} \leq C\varepsilon^{(\frac{5}{4}+\frac{3}{4})}=C\varepsilon^{2},
	$$
	and
	$$
	\begin{aligned}
		\left\|\partial_{y}E_{1}^{\varepsilon} \right\|_{L_{T}^{\infty}L_{x}^{\infty}L_{y}^{2}} &\leq C \left\|\partial_{y}E_{1}^{\varepsilon} \right\|_{L_{T}^{\infty}L_{xy}^{2}}^{\frac{1}{2}} \left\|\partial_{x}\partial_{y}E_{1}^{\varepsilon} \right\|_{L_{T}^{\infty}L_{xy}^{2}}^{\frac{1}{2}} \\[0.2cm]
		&\leq C \varepsilon^{\frac{3}{4}+\frac{1}{4}}=C\varepsilon.
	\end{aligned}
	$$
	Hence, we get
	$$
	\left\|E_{1}^{\varepsilon} \right\|_{L_{T}^{\infty}L_{xy}^{\infty}}\leq C\left\|E_{1}^{\varepsilon} \right\|_{L_{T}^{\infty}L_{x}^{\infty}L_{y}^{2}}^{\frac{1}{2}} \left\|\partial_{y}E_{1}^{\varepsilon} \right\|_{L_{T}^{\infty}L_{x}^{\infty}L_{y}^{2}}^{\frac{1}{2}} \leq C\varepsilon^{\frac{3}{2}}.
	$$
	Similarly, we have
	$$
    \begin{aligned}
        \|E_{2}^{\varepsilon}\|_{L_{T}^{\infty}L_{xy}^{\infty}}&\leq C\|E_{2}^{\varepsilon}\|_{L_{T}^{\infty} L_{xy}^{2}}^{\frac{1}{4}} \|\partial_{x}E_{2}^{\varepsilon}\|_{L_{T}^{\infty} L_{xy}^{2}}^{\frac{1}{2}} \|\partial_{y}^{2}E_{2}^{\varepsilon}\|_{L_{T}^{\infty} L_{xy}^{2}}^{\frac{1}{4}} \\[0.2cm]
        &\leq C\|E_{2}^{\varepsilon}\|_{L_{T}^{\infty} L_{xy}^{2}}^{\frac{1}{4}} \|\partial_{x}E_{2}^{\varepsilon}\|_{L_{T}^{\infty} L_{xy}^{2}}^{\frac{1}{2}} \|\partial_{x}\partial_{y}E_{1}^{\varepsilon}\|_{L_{T}^{\infty} L_{xy}^{2}}^{\frac{1}{4}} \\[0.2cm]
        &\leq C\varepsilon^{\frac{3}{2}}.
    \end{aligned}
	$$
\end{proof}
\subsection{Proof of Theorem \ref{thm:main}}
\label{r4:error}
Using  Proposition \ref{prop:outer-flow},\ref{pro:boundary-layer equation}, Lemmas \ref{pro:6.1}-\ref{pro:6.4}, \ref{lemma:4.7}, Sobolev inequality and the definition of $E^{\varepsilon}$, we have
\begin{equation}
\label{r4:uniform-error}
	\begin{aligned}
		&\left\|u_{1}(x,y,t)-u_{1}^{I,0}(x,y,t)-u_{1}^{b,0}(x,\frac{y}{\varepsilon},t) \right\|_{L_{T}^{\infty} L_{xy}^{\infty}} \\[0.2cm]
		&\quad\lesssim \varepsilon \left\|u_{1}^{I,1} \right\|_{L_{T}^{\infty} L_{xy}^{\infty}} +\varepsilon^{2} \left\|u_{1}^{I,2} \right\|_{L_{T}^{\infty} L_{xy}^{\infty}}+\varepsilon \left\|u_{1}^{b,1} \right\|_{L_{T}^{\infty} L_{xy}^{\infty}} \\[0.2cm]
		&\qquad +\varepsilon^{2} \left\|u_{1}^{b,2} \right\|_{L_{T}^{\infty} L_{xy}^{\infty}}+\varepsilon^{3} \left\|S_{1} \right\|_{L_{T}^{\infty} L_{xy}^{\infty}}+\left\|E_{1}^{\varepsilon}(x,y,t) \right\|_{L_{T}^{\infty}L_{xy}^{\infty}} \leq C\varepsilon,
	\end{aligned}
\end{equation}
and
\begin{equation}
	\begin{aligned}
		&\left\|u_{2}(x,y,t)-u_{2}^{I,0}(x,y,t) \right\|_{L_{T}^{\infty}L_{xy}^{\infty}} \\[0.2cm]
		&\quad\lesssim \varepsilon \left\|u_{2}^{I,1} \right\|_{L_{T}^{\infty}L_{xy}^{\infty}} +\varepsilon^{2} \left\|u_{2}^{I,2} \right\|_{L_{T}^{\infty}L_{xy}^{\infty}}+\varepsilon \left\|u_{2}^{b,1} \right\|_{L_{T}^{\infty}L_{xy}^{\infty}}+\varepsilon^{2} \left\|u_{2}^{b,2} \right\|_{L_{T}^{\infty}L_{xy}^{\infty}} \\[0.2cm]
		&\qquad+\varepsilon^{3} \left\|u_{2}^{b,3} \right\|_{L_{T}^{\infty}L_{xy}^{\infty}}+\varepsilon^{3} \left\|S_{2} \right\|_{L_{T}^{\infty}L_{xy}^{\infty}}+\left\|E_{2}^{\varepsilon}(x,y,t) \right\|_{L_{T}^{\infty}L_{xy}^{\infty}} \leq C\varepsilon,
	\end{aligned}
\end{equation}
Together with Lemma \ref{lemma:4.7}, this completes the proof of Theorem \ref{thm:main}.

\begin{corollary}
\label{cor:initial-error-modulus}
Under the assumptions of Theorem \ref{thm:main}, fix \(T>0\).
For \(0<\varepsilon\leq1\),
the actual error \(E^\varepsilon=u^\varepsilon-u^a\) satisfies
\begin{equation}
 \|E^\varepsilon(t)\|_{L^\infty}
 \leq C_T\varepsilon^{3/2}t^{1/8},
 \qquad 0\leq t\leq T.
 \label{eq:initial-error-infty}
\end{equation}
More generally, for \(2\leq p\leq\infty\), with the convention
\(1/\infty=0\), we have
\begin{equation}
 \|E^\varepsilon(t)\|_{L^p(\Omega)}
 \leq C_{T,p}\varepsilon^{3/2+2/p}t^{1/8+3/(4p)},
 \qquad 0\leq t\leq T.
 \label{eq:initial-error-p}
\end{equation}
In particular, the full velocity error converges strongly to zero as
\(t\to0^+\). No claim is made here that the temporal exponent \(1/8\)
is optimal.
\end{corollary}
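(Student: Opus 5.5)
We will interpolate between the uniform-in-time energy bounds of Lemmas \ref{lemma:4.3}--\ref{lemma:4.6}, which carry the $\varepsilon$-powers, and short-time bounds that carry powers of $t$. The only short-time information available at the top order comes from the norm-level estimate \eqref{eq:error-initial-L2}, namely $\|E^\varepsilon(t)\|_2\le C_T\varepsilon^{5/2}t^{1/2}$. The plan is to use this single $t$-gain in the lowest norm and feed it through the same anisotropic interpolation chain as in Lemma \ref{lemma:4.7}. The higher norms $\|\partial_xE^\varepsilon\|\lesssim\varepsilon^{3/2}$, $\|\partial_yE_1^\varepsilon\|\lesssim\varepsilon^{3/2}$ and $\|\partial_x\partial_yE_1^\varepsilon\|\lesssim\varepsilon^{1/2}$ are used only in their $t$-uniform form. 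This is why we do not expect the exponent $1/8$ to be optimal.

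\emph{Step 1 (the $L^\infty$ bound).} We redo the proof of Lemma \ref{lemma:4.7} keeping the factor $t$. First,
\[
\|E_1^\varepsilon(t)\|_{L_x^\infty L_y^2}\le C\|E_1^\varepsilon\|^{1/2}\|\partial_xE_1^\varepsilon\|^{1/2}\le C\varepsilon^{2}t^{1/4}.
\]
Second, $\|\partial_yE_1^\varepsilon\|_{L_x^\infty L_y^2}\le C\varepsilon$, with no gain in $t$. Together these give
\[
\|E_1^\varepsilon(t)\|_\infty\le C(\varepsilon^2t^{1/4})^{1/2}\varepsilon^{1/2}=C\varepsilon^{3/2}t^{1/8}.
\]
For $E_2^\varepsilon$ we use the anisotropic inequality $\|f\|_\infty\le C\|f\|^{1/4}\|\partial_xf\|^{1/2}\|\partial_y^2f\|^{1/4}$ together with $\partial_y^2E_2^\varepsilon=-\partial_x\partial_yE_1^\varepsilon$. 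This yields the bound $(\varepsilon^{5/2}t^{1/2})^{1/4}(\varepsilon^{3/2})^{1/2}(\varepsilon^{1/2})^{1/4}=\varepsilon^{3/2}t^{1/8}$. If this split falls short anywhere, we instead bound $E_2^\varepsilon$ through $E_2^\varepsilon=-\int_0^y\partial_xE_1^\varepsilon$ and the same $E_1$-type chain. This proves \eqref{eq:initial-error-infty}.

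\emph{Step 2 (the $L^p$ bound).} For $2\le p<\infty$ we interpolate:
\[
\|E^\varepsilon\|_p\le\|E^\varepsilon\|_2^{2/p}\|E^\varepsilon\|_\infty^{1-2/p}\le C\,(\varepsilon^{5/2}t^{1/2})^{2/p}(\varepsilon^{3/2}t^{1/8})^{1-2/p}.
\]
The $\varepsilon$-exponent is $3/2+2/p$. The $t$-exponent is $1/8+\tfrac{2}{p}(1/2-1/8)=1/8+3/(4p)$, which is \eqref{eq:initial-error-p}. The case $p=\infty$ is Step 1. Strong convergence as $t\to0^+$ follows because every exponent of $t$ is positive.

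\emph{Main obstacle.} The delicate point is the legitimacy of \eqref{eq:error-initial-L2} at the endpoint, and the validity of the higher energy bounds on the closed interval including $t=0$. Both have already been established, in Lemmas \ref{lemma:4.3}--\ref{lemma:4.6}, via the $t_0\downarrow0$ argument. What remains to check is that the constants in the anisotropic Sobolev inequalities do not depend on $t$, which holds since they are applied at each fixed time.
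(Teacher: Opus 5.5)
Your argument is correct and is essentially the paper's proof. It uses the same input $\|E^\varepsilon(t)\|_2\le C_T\varepsilon^{5/2}t^{1/2}$ from \eqref{eq:error-initial-L2} together with the $t$-uniform bounds of Lemmas \ref{lemma:4.4}--\ref{lemma:4.6}. Your iterated one-dimensional chain reproduces the paper's four-factor product $(\|E_1^\varepsilon\|\|\partial_xE_1^\varepsilon\|\|\partial_yE_1^\varepsilon\|\|\partial_x\partial_yE_1^\varepsilon\|)^{1/4}$, and the remaining steps coincide: the $\|f\|^{1/4}\|\partial_xf\|^{1/2}\|\partial_y^2f\|^{1/4}$ inequality for $E_2^\varepsilon$, then $L^2$--$L^\infty$ interpolation. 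The fallback you mention for $E_2^\varepsilon$ is unnecessary. As described, through $L_x^\infty L_y^2$, it would in fact need a fixed-time bound on $\partial_x^2E_1^\varepsilon$, which Lemma \ref{lemma:4.4} supplies only in $L_t^2$.
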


\begin{proof}
We retain the initial \(L^2\)-bound already transferred to the actual
error and combine it with the corresponding componentwise embeddings.
In particular,
\[
 \|E^\varepsilon(t)\|_2
 \leq C_T\varepsilon^{5/2}t^{1/2},
\]
while the four-level energy estimates yield
\[
 \|E_x^\varepsilon(t)\|_2\leq C_T\varepsilon^{3/2},
 \qquad
 \|E_{1y}^\varepsilon(t)\|_2\leq C_T\varepsilon^{3/2},
 \qquad
 \|E_{1xy}^\varepsilon(t)\|_2\leq C_T\varepsilon^{1/2}.
\]
Hence, applying the four-factor anisotropic embedding to the tangential
component gives
\[
 \|E_1^\varepsilon(t)\|_{L^\infty}
 \leq C_T
 \bigl(
   \varepsilon^{5/2}t^{1/2}
   \varepsilon^{3/2}
   \varepsilon^{3/2}
   \varepsilon^{1/2}
 \bigr)^{1/4}
 =C_T\varepsilon^{3/2}t^{1/8}.
\]
For the normal component, using
\[
 E_{2yy}^\varepsilon=-E_{1xy}^\varepsilon,
 \qquad
 E_2^\varepsilon|_{y=0}=E_{2y}^\varepsilon|_{y=0}=0,
\]
we obtain
\[
 \|E_2^\varepsilon(t)\|_{L^\infty}
 \leq
 C\|E_2^\varepsilon(t)\|_2^{1/4}
  \|E_{2x}^\varepsilon(t)\|_2^{1/2}
  \|E_{2yy}^\varepsilon(t)\|_2^{1/4}
 \leq C_T\varepsilon^{3/2}t^{1/8}.
\]
This proves \eqref{eq:initial-error-infty}.
The above spatial inequalities hold initially for almost every time.
By choosing the corresponding time-continuous representatives and using
lower semicontinuity, the estimates extend to every \(t>0\).
Finally, interpolation between \(L^2\) and \(L^\infty\) gives
\[
 \|E^\varepsilon(t)\|_{L^p}
 \leq
 \|E^\varepsilon(t)\|_2^{2/p}
 \|E^\varepsilon(t)\|_{L^\infty}^{1-2/p},
\]
and therefore
\[
 \|E^\varepsilon(t)\|_{L^p(\Omega)}
 \leq
 C_{T,p}\varepsilon^{3/2+2/p}
 t^{1/8+3/(4p)},
\]
which proves \cref{eq:initial-error-p}.
The right-hand side tends to zero as $t\to0^+$, consistently with $E^\varepsilon(0)=0$. This interpolation uses the error estimates already proved, including their initial traces, and requires no additional compatibility condition.
\end{proof}

\section{Tail Estimates, Corner Effects, and Optimality of the Convergence Rate}
\label{sec:thickness-sharpness}

The three results in this section concern three distinct limiting regimes:
\[
\begin{array}{lll}
\text{Fixed time interval:}
& \varepsilon\downarrow0,
& t\in[0,T],\quad T>0\ \text{fixed},\\[0.2cm]
\text{Corner self-similar scale:}
& t\downarrow0,
& z/\sqrt{t}\ \text{fixed},\\[0.2cm]
\text{Sharp convergence rate:}
& \varepsilon\downarrow0,
& t=t_*>0\ \text{fixed}.
\end{array}
\]
Distinguishing these limiting regimes explicitly prevents confusion among the geometric thickness of the boundary layer, the magnitude of the error, and the temporal scale associated with the corner structure.

\subsection{Proof of Theorem \ref{thm:thickness}}
\begin{proof}
Theorem \ref{thm:main}, together with the rescaling \(z=y/\varepsilon\), immediately yields \eqref{eq:interval-tail}. It therefore remains only to establish a uniform-in-time modulus of continuity in the normal variable and the corresponding decay of the boundary-layer profile.
The lower-order shear and horizontal estimates imply that, for every \(h\geq0\),
\[
 \sup_{x,t}|u^{b,0}_1(x,z+h,t)-u^{b,0}_1(x,z,t)|
 \le h^{1/2}\sup_t\|u^{b,0}_{1z}(t)\|_{L_x^\infty L_z^2}
 \le C_T h^{1/2}.
\]
Hence,
\[
 0\le F_T(\rho)-F_T(\rho+h)
 \le C_T h^{1/2}.
\]
Moreover, the weighted estimates give
\begin{equation}
 \sup_{0\le t\le T}\|\langle z\rangle u^{b,0}_1(t)\|_\infty<\infty,
 \qquad
 F_T(\rho)\le \frac{C_T}{\langle\rho\rangle}.
 \label{eq:B-tail-weight}
\end{equation}
Thus, the tail function on the time interval is continuous and tends to zero as \(\rho\to\infty\). In particular,
\begin{equation}
 \|u^\varepsilon-u^{I,0}\|_{L^\infty([0,T]\times\Omega_{\delta_\varepsilon})}
 \le C_T\left(\varepsilon+\frac1{\langle\delta_\varepsilon/\varepsilon\rangle}\right).
 \label{eq:thickness-upper}
\end{equation}
Applying the same argument at a fixed time yields \cref{eq:tail-limit}. If \(u^{b,0}_1\) is not identically zero on the time interval under consideration, then \(F_T(0)>0\). By continuity, this rules out the possibility that the defect tends to zero when the layer thickness is \(o(\varepsilon)\).
\end{proof}
\begin{corollary}
\label{cor:uncorrected-limits}
Under the assumptions of Theorem \ref{thm:main}, fix \(T>0\). Then
\begin{equation}
 \lim_{\varepsilon\downarrow0}
 \left\lVert u^\varepsilon-u^{I,0}\right\rVert_{{L^\infty([0,T]\times\Omega)}}
 =
 \left\lVert u^{b,0}_1\right\rVert_{{L^\infty([0,T]\times\Omega_{\mathrm b})}}.
\label{eq:uncorrected-Linf}
\end{equation}
In particular, for every fixed \(t_0\in[0,T]\),
\[
 \lim_{\varepsilon\downarrow0}
 \left\lVert u^\varepsilon(t_0)-u^{I,0}(t_0)\right\rVert_{L^\infty(\Omega)}
 =
 \left\lVert u^{b,0}_1(t_0)\right\rVert_{L^\infty(\Omega_{\mathrm b})}.
\]
\end{corollary}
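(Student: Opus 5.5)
The plan is to reduce both identities to the fixed-interval tail estimate \eqref{eq:interval-tail} of Theorem \ref{thm:thickness} with \(\delta_\varepsilon=0\). The defect has two components, and they behave differently.

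For the tangential component, take \(\delta_\varepsilon=0\), so that \(\Omega_{\delta_\varepsilon}=\Omega\). Then \eqref{eq:interval-tail} reads
\[
\Bigl|\|u_1^\varepsilon-u_1^{I,0}\|_{L^\infty([0,T]\times\Omega)}-F_T(0)\Bigr|\le C_T\varepsilon ,
\]
and \(F_T(0)=\sup_{t\le T}\|u_1^{b,0}(t)\|_{L^\infty(\Omega_{\mathrm b})}=\|u_1^{b,0}\|_{L^\infty([0,T]\times\Omega_{\mathrm b})}\). Equivalently, one can argue directly from \eqref{eq:leading-uniform-main}. The rescaling \(y=\varepsilon z\) is a bijection of \(\Omega\) onto \(\Omega_{\mathrm b}\), so \(\|u_1^{b,0}(\cdot,\cdot/\varepsilon,t)\|_{L^\infty(\Omega)}=\|u_1^{b,0}(t)\|_{L^\infty(\Omega_{\mathrm b})}\). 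The reverse triangle inequality then gives the same \(O(\varepsilon)\) comparison.

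For the normal component, I would use the bound already displayed after \eqref{r4:uniform-error} in the proof of Theorem \ref{thm:main}: \(\|u_2^\varepsilon-u_2^{I,0}\|_{L^\infty([0,T]\times\Omega)}\le C\varepsilon\). This holds because \(u_2^{b,0}=0\) and every remaining normal profile carries a factor \(\varepsilon\) and is bounded in \(L^\infty\) by Lemma \ref{lemma:4.1} and the profile regularity. Since the pointwise Euclidean norm satisfies \(|u_1|\le|(u_1,u_2)|\le|u_1|+|u_2|\), the full vector defect differs from its tangential part by at most \(C_T\varepsilon\) in sup norm. Letting \(\varepsilon\downarrow0\) yields \eqref{eq:uncorrected-Linf}.

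The fixed-time statement follows in the same way, either from \eqref{eq:tail-limit} with \(\ell=0\) for the tangential part or directly from the same estimates at \(t=t_0\). The bounds in Theorem \ref{thm:main} hold for every \(t\in[0,T]\), including \(t_0=0\), where both sides vanish.

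The only point needing care, and so the main obstacle, is whether the supremum over \(t\) is a genuine supremum or an essential one. I would use the continuity of \(u_1^{b,0}\) in time with values in \(H_x^1H_z^1\hookrightarrow L^\infty\), which follows from \eqref{eq:5.11} and \eqref{eq:5.27}, together with the continuity of \(u^\varepsilon\) and \(u^{I,0}\) in \(H^2\). This ensures that the two sides are taken over the same representatives.
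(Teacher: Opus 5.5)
Your proposal is correct and follows essentially the route the paper intends: the corollary is the case $\delta_\varepsilon=0$ of Theorem~\ref{thm:thickness}, i.e.\ \eqref{eq:leading-uniform-main} combined with the rescaling $y=\varepsilon z$ and the reverse triangle inequality. Your separate treatment of the normal component is harmless but unnecessary, because \eqref{eq:leading-uniform-main} is already a vector estimate whose second component is exactly $u_2^\varepsilon-u_2^{I,0}$; likewise, your continuity remark is needed only to identify $F_T(0)$ with the space--time essential supremum.
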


\subsection{Proof of Theorem \ref{thm:corner}}
\label{sec:corner-nonlinear-remainder}
We first collect the lower-order short-time estimates established in \eqref{eq:5.13}--\eqref{eq:5.14}, and supplement them with bounds for the normal factors and the underlying functions. These estimates will serve as common inputs for the two-order corner expansion developed below.
\begin{lemma}
\label{lem:nonlinear-corner-scale}
For every integer \(\ell\geq0\), there exists
\(0<t_c\leq\min\{1,T\}\) such that, for all
\(0\leq t\leq t_c\),
\[
\begin{gathered}
    \left\|\langle z\rangle^{\ell} u_1^{b, 0}(t)\right\|_{H_x^2 L_z^2}
    \leq C_{\ell} t^{5/4}, \qquad
    \left\|\langle z\rangle^{\ell} \partial_z u_1^{b, 0}(t)\right\|_{H_x^1 L_z^2}
    \leq C_{\ell} t^{3/4},\\[0.2cm]
    \left\|\langle z\rangle^{\ell} z \partial_z u_1^{b, 0}(t)\right\|_{H_x^1 L_z^2}
    \leq C_{\ell} t^{5/4},\\[0.2cm]
    \sum_{j=0}^1
    \left\|
    \sup_{z\geq0}
    \left|
    \partial_x^j
    \int_0^z
    \partial_x u_1^{b,0}(x,r,t)\,dr
    \right|
    \right\|_{L_x^2}
    \leq C_{\ell} t^{3/2}.
\end{gathered}
\]
\end{lemma}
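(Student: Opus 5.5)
The plan is to use the decomposition \eqref{eq:5.12}, $u_1^{b,0}=b_a+r_a$, and to estimate each piece separately; essentially all the required information is already in \eqref{eq:5.13}--\eqref{eq:5.14} and Lemma \ref{lem:heat-lift-scale}. We take $t_c=\min\{1,T\}$, so that $t^{13/4}\le t^{5/4}$ and similar comparisons hold. The first two bounds then follow by the triangle inequality. \eqref{eq:5.13} gives $\|\langle z\rangle^\ell b_a\|_{H_x^2L_z^2}\le Ct^{5/4}$ and $\|\langle z\rangle^\ell\partial_zb_a\|_{H_x^1L_z^2}\le Ct^{3/4}$. \eqref{eq:5.14} gives the corresponding bounds $Ct^{13/4}$ and $Ct^{11/4}$ for $r_a$, and these are dominated by the $b_a$ bounds on $[0,t_c]$.

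For the weighted bound on $z\partial_zu_1^{b,0}$, the $b_a$ part comes from the second line of \eqref{eq:5.7}. We apply it with $g=-a$, $s=1$, and $c=-(a_t-a_{xx})\in C([0,T];H_x^3)$, which holds by \eqref{eq:4.6}; this gives $Ct^{5/4}$. For the $r_a$ part we write $z\langle z\rangle^\ell\le\langle z\rangle^{\ell+1}$ and apply \eqref{eq:5.14} with weight $\ell+1$. This gives $t^{-1/2}\cdot Ct^{13/4}=Ct^{11/4}\le Ct^{5/4}$.

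For the primitive bound, the $b_a$ contribution is \eqref{eq:5.8} with $s=1$, which requires $c\in C([0,T];H_x^2)$; this again holds by \eqref{eq:4.6}, and the bound is $CMt^{3/2}$. For the $r_a$ contribution we use the pointwise estimate
\[
\sup_{z\ge0}\Bigl|\int_0^z\partial_x^{j+1}r_a\,dr\Bigr|\le\int_0^\infty|\partial_x^{j+1}r_a|\,dr\le C\|\langle z\rangle\partial_x^{j+1}r_a(x,\cdot)\|_{L_z^2},
\]
which is Cauchy--Schwarz against $\langle z\rangle^{-1}\in L_z^2$. Taking the $L_x^2$ norm for $j=0,1$ bounds this by $C\|\langle z\rangle r_a\|_{H_x^2L_z^2}\le Ct^{13/4}\le Ct^{3/2}$.

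The only delicate point is the derivative count in this last step. It needs $H_x^2$ control of $r_a$, which \eqref{eq:5.14} provides exactly, and $H_x^2$ control of $c$ for \eqref{eq:5.8}, which holds with room to spare. All constants depend only on $\ell$, $T$, and the outer-flow bounds of Proposition \ref{prop:outer-flow}.
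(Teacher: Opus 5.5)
Your proposal is correct and follows the paper's proof essentially verbatim. Like the paper, you split $u_1^{b,0}=b_a+r_a$, control $b_a$ by the heat-lift bounds \eqref{eq:5.7}--\eqref{eq:5.8} (via \eqref{eq:5.13}) and $r_a$ by \eqref{eq:5.14}, and handle the primitive of $r_a$ by weighted Cauchy--Schwarz in $z$. You are in fact more explicit than the paper on two points it leaves implicit: the weight shift $z\langle z\rangle^\ell\le\langle z\rangle^{\ell+1}$ for $z\partial_z r_a$, and the invocation of \eqref{eq:5.8} with $s=1$ for the $b_a$ primitive.
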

\begin{proof}
The heat-lifting estimate \eqref{eq:5.7} gives
\[
 \|\langle z\rangle^\ell b_a(t)\|_{H_x^2L_z^2}
 \leq C_\ell t^{5/4},
 \quad
 \|\langle z\rangle^\ell\partial_z b_a(t)\|_{H_x^1L_z^2}
 \leq C_\ell t^{3/4},\quad
 \|\langle z\rangle^\ell z\partial_z b_a(t)\|_{H_x^1L_z^2}
 \leq C_\ell t^{5/4}.
\]
By \eqref{eq:5.14}, the nonlinear remainder satisfies
\begin{equation}
 \|\langle z\rangle^\ell r_a(t)\|_{H_x^2L_z^2}
 \leq C_\ell t^{13/4},
 \qquad
 \|\langle z\rangle^\ell \partial_z r_a(t)\|_{H_x^1L_z^2}
 \leq C_\ell t^{11/4}.
 \label{eq:nonlinear-remainder-corner-bounds}
\end{equation}
Moreover,
\[
 \sum_{j=0}^1
 \left\|
 \sup_{z\geq0}
 \left|
 \int_0^z\partial_x^{j+1}r_a\,ds
 \right|
 \right\|_{L_x^2}
 \leq
 C_m\|\langle z\rangle^m r_a\|_{H_x^2L_z^2}
 \leq C_m t^{13/4}.
\]
Since
\[
 u^{b,0}_1=b_a+r_a,
\]
the stated estimates follow.
\end{proof}

Using the above lemma, we now prove Theorem \ref{thm:corner}.
\begin{proof}
Set$c=a_t-a_{xx}$. The boundary equation implies that
\[
c\in C H_x^3,\qquad
\partial_t c\in C H_x^1,
\qquad
c(x,0)=\alpha_0.
\]
The heat-lifting term corresponding to the boundary value \(-a\) is given by
\begin{equation}
 b_a(x,z,t)
 =-\int_0^t e^{h\partial_x^2}c(x,t-h)
       \operatorname{erfc}\!\left(\frac{z}{2\sqrt{h}}\right)\,dh .
 \label{r5:corner-heat-history}
\end{equation}
For \(\theta\in(0,1)\), define $Q_\theta(\eta)=\operatorname{erfc}\!\left(\frac{\eta}{2\sqrt{\theta}}\right)$. Making the change of variables \(h=t\theta\), we obtain
\[
 t^{-1}b_a(x,\sqrt{t}\,\eta,t)
 =-\int_0^1
 Q_\theta(\eta)
 e^{t\theta\partial_x^2}
 c\bigl(x,t(1-\theta)\bigr)\,d\theta.
\]
By the contractivity of the horizontal heat semigroup and the standard
heat-semigroup difference estimate,
\begin{align*}
 \left\|
 e^{t\theta\partial_x^2}c\bigl(x,t(1-\theta)\bigr)
 -\alpha_0
 \right\|_{H_x^1}
 &\leq
 \left\|
 c\bigl(x,t(1-\theta)\bigr)-c(x,0)
 \right\|_{H_x^1}
 +
 \left\|
 (e^{t\theta\partial_x^2}-1)\alpha_0
 \right\|_{H_x^1}
 \\[0.2cm]
 &\leq
 Ct(1-\theta)
 +Ct\theta\|\alpha_0\|_{H_x^3}
 \leq Ct.
\end{align*}
A direct calculation gives
\[
 \|Q_\theta\|_{L_\eta^2}
 =C\theta^{1/4},
 \qquad
 \|\partial_\eta Q_\theta\|_{L_\eta^2}
 =C\theta^{-1/4}.
\]
Both powers are integrable over \((0,1)\). Hence, by Minkowski's inequality,
\[
 \left\|
 t^{-1}b_a(x,\sqrt{t}\,\eta,t)
 +\alpha_0\Phi_1(\eta)
 \right\|_{H_\eta^1(H_x^1)}
 \leq Ct,
 \qquad
 \Phi_1(\eta)=\int_0^1Q_\theta(\eta)\,d\theta.
\]
The same estimate remains valid after inserting any fixed polynomial
weight in \(\eta\). The explicit formula for \(\Phi_1\) is given in
\eqref{eq:Phi1-explicit}.
The lower-order construction yields
\eqref{eq:nonlinear-remainder-corner-bounds}. Therefore,
\[
\begin{gathered}
 \left\|
 t^{-1}r_a(x,\sqrt{t}\,\eta,t)
 \right\|_{L_\eta^2(H_x^1)}
 =
 t^{-5/4}\|r_a(t)\|_{H_x^1L_z^2}
 \leq Ct^2,
 \\[0.2cm]
 \left\|
 \partial_\eta
 \bigl[t^{-1}r_a(x,\sqrt{t}\,\eta,t)\bigr]
 \right\|_{L_\eta^2(H_x^1)}
 =
 t^{-3/4}\|(r_a)_z(t)\|_{H_x^1L_z^2}
 \leq Ct^2.
\end{gathered}
\]
Combining these two estimates proves \cref{eq:corner-limit}. The
\(L^\infty\)-version \cref{eq:corner-limit-Linf} then follows from the
Sobolev embedding theorem. Finally, the resulting remainder also satisfies, in the physical boundary-layer variables,
\begin{equation}
 \|\langle z\rangle^\ell(u^{b,0}_1+t\alpha_0\Phi_1\!\left(\eta \right))(t)\|_{H_x^1L_z^2}
 +\sqrt{t}\,
 \|\langle z\rangle^\ell\partial_{z}(u^{b,0}_1+t\alpha_0\Phi_1\!\left(\eta \right))(t)\|_{H_x^1L_z^2}
 \leq C_\ell t^{9/4}.
 \label{eq:Rc-terminal-H1}
\end{equation}
This completes the proof.
\end{proof}

We next prove Corollary \ref{cor:second-corner}.
\begin{proof}
Using the identity
\[
 e^{h\partial_x^2}\alpha_0-\alpha_0-h\partial_x^2\alpha_0
 =\int_0^h
 (e^{s\partial_x^2}-\mathrm{Id})\partial_x^2\alpha_0\,ds,
\]
we obtain, uniformly for \(0\leq\theta\leq1\),
\begin{equation}
 e^{t\theta\partial_x^2}c(x,t(1-\theta))
 =
 \alpha_0
 +t\Bigl(
 (1-\theta)\partial_t c(x,0)
 +\theta\partial_x^2\alpha_0
 \Bigr)
 +o_{H_x^1}(t).
 \label{r6:heat-second-taylor}
\end{equation}
Applying Minkowski's integral inequality, we may pass the expansion
\eqref{r6:heat-second-taylor} to the space \(H_\eta^1(H_x^1)\). The two
coefficient profiles are given by
\[
 \int_0^1(1-\theta)Q_\theta\,d\theta
 =\frac12\Phi_2,
 \qquad
 \int_0^1\theta Q_\theta\,d\theta
 =\Phi_1-\frac12\Phi_2.
\]
Substituting these identities into the representation of \(b_a\) yields
the expansion \eqref{r6:second-corner-expansion} for the heat-lifting term. For the nonlinear remainder constructed above, we have
\begin{align*}
 \|t^{-2}r_a(x,\sqrt{t}\eta,t)\|_{L_\eta^2(H_x^1)}
 &\leq C t^{-9/4}t^{13/4}
 =Ct,\\[0.2cm]
 \|\partial_\eta[t^{-2}r_a(x,\sqrt{t}\eta,t)]\|_{L_\eta^2(H_x^1)}
 &\leq C t^{-7/4}t^{11/4}
 =Ct.
\end{align*}
If $\Psi_2=0$, its trace at $\eta=0$ gives $\beta_0=0$. Since $\Phi_1-\Phi_2$ is not identically zero, the formula for $\Psi_2$ then gives $\alpha_{0,xx}=0$. An affine function in $L^2(\mathbb R)$ must vanish, contradicting $\alpha_0\not\equiv0$. Hence $\Psi_2\neq0$ in that case. Finally, $H_\eta^1(H_x^1)\hookrightarrow L^\infty_{x,\eta}$ and continuity of the norm give both limits in \eqref{r6:sharp-corner-remainder}. This completes the proof.
\end{proof}

\begin{remark}
The second-order profile also admits the explicit representation
\[
 \Phi_2(\eta)
 =
 \left(1+\eta^2+\frac{\eta^4}{12}\right)\operatorname{erfc}(\eta/2)
 -\frac{1}{\sqrt{\pi}}
 \left(\frac{5\eta}{3}+\frac{\eta^3}{6}\right)
 e^{-\eta^2/4}.
\]
By \cref{eq:3.3}, the corresponding leading-order term of the normal
profile is given by
\[
 -t^{3/2}\partial_x\alpha_0(x)
 \int_{z/\sqrt{t}}^\infty \Phi_1(\sigma)\,\,\mathrm{d}\sigma.
\]
In particular, its boundary value satisfies
\begin{equation}
 u^{b,1}_2(x,0,t)
 =
 -\frac{4}{3\sqrt{\pi}}t^{3/2}\partial_x\alpha_0
 +O_{L_x^2}(t^{5/2}),
 \qquad t\downarrow0.
\label{eq:N1-wall-asymptotic}
\end{equation}
\end{remark}

\subsection{Size of the Corner Layer in Physical Variables}
In the physical variables, define
\[
 K_{\mathrm c}^\varepsilon(x,y,t)
 =-t\alpha_0(x)\Phi_1\!\left(\frac{y}{\varepsilon\sqrt t}\right).
\]
For \(2\leq p<\infty\), the change of variables
\(y=\varepsilon\sqrt t\,\eta\) yields the exact identities
\begin{align}
 \left\lVert K_{\mathrm c}^\varepsilon(t)\right\rVert_{L^p_{x,y}}
 &=\varepsilon^{1/p}t^{1+1/(2p)}
   \left\lVert \alpha_0\Phi_1\right\rVert_{L^p_{x,\eta}},
\label{eq:corner-physical-Lp}\\
 \left\lVert \partial_yK_{\mathrm c}^\varepsilon(t)\right\rVert_{L^p_{x,y}}
 &=\varepsilon^{-1+1/p}t^{1/2+1/(2p)}
   \left\lVert \alpha_0\Phi_1'\right\rVert_{L^p_{x,\eta}}.
\label{eq:corner-physical-gradient}
\end{align}
Moreover, for every \(c\geq0\),
\begin{equation}
 \left\lVert K_{\mathrm c}^\varepsilon(t)\right\rVert_{{L^\infty(\mathbb{R}\times[c\varepsilon\sqrt t,\infty))}}
 =
 t\left\lVert \alpha_0\Phi_1\right\rVert_{{L^\infty(\mathbb{R}\times[c,\infty))}}.
\label{eq:corner-physical-tail}
\end{equation}
Together with \cref{eq:corner-limit-Linf}, these identities show that,
provided \(\alpha_0\neq0\), the scale \(O(\varepsilon\sqrt t)\) is the sharp
matching thickness of the leading-order corner profile, rather than
merely an upper bound for its thickness. The next result identifies the
same profile in the simultaneous limiting regime for the exact tangential
defect.

For \(0<t\leq t_c\), let
\[
 L=-t\alpha_0\Phi_1(z/\sqrt t).
\]
Then, for \(0\leq k\leq3\) and \(j\geq0\), a direct change of variables gives
\[
 \|\partial_x^k\partial_z^jL\|_{L^2_{x,z}}
 =
 t^{5/4-j/2}
 \|\partial_x^k\alpha_0\|_{L_x^2}
 \|\Phi_1^{(j)}\|_{L_\eta^2}.
\]
After returning to the physical variables, this norm acquires the
additional factor \(\varepsilon^{1/2-j}\).
For \(j=0,1,2,3,4\), the corresponding powers of \(t\) are, respectively, $\frac54,\frac34,\frac14,-\frac14,-\frac34$. Consequently, for a nontrivial leading-order profile, the third normal
derivative is square-integrable in time, whereas the fourth normal
derivative is, in general, not square-integrable near \(t=0\).
This calculation concerns only the explicit profile \(L\); derivatives
of the nonlinear profile remain controlled by the finite-order estimates established above.

\begin{corollary}
\label{cor:exact-corner-coupled}
Fix initial data satisfying the assumptions of Theorem \ref{thm:main}, and set
\[
 d^\varepsilon(x,y,t)=u_1^\varepsilon(x,y,t)-u_1^{I,0}(x,y,t).
\]
For $\eta\geq0$ and $0<t\leq t_c$, the estimate
\begin{equation}
\label{eq:exact-corner-coupled-bound}
\|t^{-1}d^\varepsilon(x,\varepsilon\sqrt t\,\eta,t)+\alpha_0\Phi_1\|_{L^\infty_{x,\eta}}
\leq
C_T t
+\varepsilon t^{-3/4}\omega(t)
+C_T\varepsilon^{3/2}t^{-1},
\qquad 0<t\leq t_c,
\end{equation}
holds, where
\[
\omega(t)\to0
\qquad\text{as }t\downarrow0
\]
is the modulus associated with the fixed initial data in
\cref{r6:first-correction-onset}.

For each fixed choice of initial data, the exact corner limit follows
whenever
\begin{equation}
 t_\varepsilon>0,\qquad
 t_\varepsilon\to0,\qquad
 \liminf_{\varepsilon\downarrow0}
 \frac{t_\varepsilon}{\varepsilon^{4/3}}>0.
\label{eq:corner-coupling}
\end{equation}
In particular, one may take
\[
t_\varepsilon=\varepsilon^\gamma,
\qquad 0<\gamma\leq\frac43.
\]
These conditions provide sufficient coupling regimes. At the endpoint
\(\gamma=4/3\), the argument uses the modulus of continuity associated
with the fixed initial data; no uniformity of this modulus over the
entire \(H^4\)-ball of initial data is asserted. Here, the two parameters \(\varepsilon\) and \(t_\varepsilon\) tend to
zero simultaneously subject to the ratio condition
\eqref{eq:corner-coupling}.
\end{corollary}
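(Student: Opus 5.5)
The plan is to decompose the exact defect in the corner variables into three pieces and estimate each separately. Writing $y=\varepsilon\sqrt t\,\eta$, so that $z=y/\varepsilon=\sqrt t\,\eta$, the definition \eqref{eq:3.70} of $u^a$ and $E^\varepsilon=u^\varepsilon-u^a$ give
\[
 d^\varepsilon(x,\varepsilon\sqrt t\,\eta,t)
 =u_1^{b,0}(x,\sqrt t\,\eta,t)
 +\mathcal{C}^\varepsilon(x,\sqrt t\,\eta,t)
 +E_1^\varepsilon(x,\varepsilon\sqrt t\,\eta,t),
\]
where $\mathcal{C}^\varepsilon=\varepsilon(u_1^{I,1}+u_1^{b,1})+\varepsilon^2(u_1^{I,2}+u_1^{b,2})+\varepsilon^3S_1$ collects the higher-order corrections. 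After dividing by $t$ and adding $\alpha_0\Phi_1$, the first piece is controlled in $L^\infty_{x,\eta}$ by $C_Tt$, which is exactly \eqref{eq:corner-limit-Linf} of Theorem \ref{thm:corner}. For the third piece I will use Corollary \ref{cor:initial-error-modulus}. It gives $\|E^\varepsilon(t)\|_{L^\infty}\le C_T\varepsilon^{3/2}t^{1/8}\le C_T\varepsilon^{3/2}$, so after division by $t$ this piece is at most $C_T\varepsilon^{3/2}t^{-1}$, the last term in \eqref{eq:exact-corner-coupled-bound}.

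The middle piece is where the modulus $\omega$ is needed. It is provided by the initial-continuity estimate \eqref{r6:first-correction-onset}, which I take as given from the continuation of the paper. What I need from it is a bound $\|u_1^{I,1}(t)\|_{L^\infty}+\|u_1^{b,1}(t)\|_{L^\infty}\le t^{1/4}\omega(t)$ with $\omega(t)\to0$ for the fixed datum. The justification is as follows. Both $u^{I,1}$ and the homogeneous profile $\hat u_1^{b,1}$ have zero initial data. The $L^\infty$ norm of $\hat u_1^{b,1}$ is controlled through the anisotropic inequality $\|f\|_\infty\lesssim\|f\|_{H^1_xL^2_z}^{1/2}\|\partial_zf\|_{H^1_xL^2_z}^{1/2}$. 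The heat estimates of Lemma \ref{lemma:2.13} and Lemma \ref{lem:zero-data-weighted-heat} show that these norms vanish at $t=0$. The wall traces are handled by the time-derivative bounds in Lemma \ref{pro:6.1}, which give $\|u_1^{I,1}(t)|_{y=0}\|_{H^1_x}\lesssim t^{1/2}$; by $M_0(t)=O(t^{3/2})$ from \eqref{eq:5.15}, the boundary datum of $u^{I,1}$ is small as well. Dividing the $\varepsilon$-order correction by $t$ then yields $\varepsilon t^{-3/4}\omega(t)$. The $\varepsilon^2$ and $\varepsilon^3$ terms are bounded by $C_T\varepsilon^2t^{-1}\le C_T\varepsilon^{3/2}t^{-1}$ using the $L^\infty$ bounds of Lemma \ref{pro:6.4} and Lemma \ref{lemma:4.1}, so they merge into the last term. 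This proves \eqref{eq:exact-corner-coupled-bound}.

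For the coupling statement, I insert $t=t_\varepsilon\to0$ into \eqref{eq:exact-corner-coupled-bound}. The first term $C_Tt_\varepsilon$ tends to zero. Under \eqref{eq:corner-coupling}, for small $\varepsilon$ we have $t_\varepsilon\ge c\varepsilon^{4/3}$, hence $\varepsilon t_\varepsilon^{-3/4}\le c^{-3/4}$ and the second term is $O(\omega(t_\varepsilon))\to0$. The third term satisfies $\varepsilon^{3/2}t_\varepsilon^{-1}\le c^{-1}\varepsilon^{1/6}\to0$. The choice $t_\varepsilon=\varepsilon^\gamma$ with $\gamma\le4/3$ meets the ratio condition. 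At the endpoint $\gamma=4/3$ the second term is bounded only by the modulus of the fixed datum, which explains why no uniformity over the $H^4$ ball is claimed.

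The main obstacle is the middle step: obtaining the rate $t^{1/4}\omega(t)$ rather than just $o(1)$ for the first correction at the corner scale. If \eqref{r6:first-correction-onset} gives only $o(1)$, the $\varepsilon$ term becomes $\varepsilon t^{-1}o(1)$, and the admissible coupling shrinks to $t_\varepsilon\gtrsim\varepsilon$. In that case I would first try to extract the $t^{1/4}$ factor by the same heat-lifting scaling used in Lemma \ref{lem:heat-lift-scale}, which gives $L^2_z$ size $t^{5/4}$ and $L^\infty$ size $\sim t$ for the lifted wall data.
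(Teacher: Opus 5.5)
Your decomposition of $d^\varepsilon$ into the leading profile, the first correction, the $\varepsilon^2,\varepsilon^3$ terms and $E_1^\varepsilon$ is the paper's argument. So are the use of \eqref{eq:corner-limit-Linf} for the profile, the $C_T\varepsilon^{3/2}$ bound for the error and higher profiles, and the coupling arithmetic. The gap is the step you flag yourself: the rate $t^{1/4}\omega(t)$ in \eqref{r6:first-correction-onset}. That estimate is not proved anywhere else in the paper. It is proved inside the proof of this corollary, so you cannot take it as given. Your sketch also does not produce it. Knowing that $\|\hat u_1^{b,1}(t)\|_{H_x^1L_z^2}$ and $\|\partial_z\hat u_1^{b,1}(t)\|_{H_x^1L_z^2}$ vanish at $t=0$ gives only $\|\hat u_1^{b,1}(t)\|_{L^\infty}=o(1)$, with no power of $t$. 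Likewise, information on the wall trace of $u_1^{I,1}$ or on $M_0$ does not control $\|u^{I,1}(t)\|_{L^\infty(\Omega)}$. You need that bulk norm, because $y=\varepsilon\sqrt t\,\eta$ sweeps all of $\Omega$ as $\eta$ ranges over $[0,\infty)$.

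The missing idea is elementary: combine zero initial data with the $L^2$-in-time bounds on the time derivatives and apply Cauchy--Schwarz in time. You already used this trick for the wall trace; apply it to the bulk fields as well. The heat-lifting scaling you propose as a fallback is not needed. It would not apply directly anyway, since $\hat u_1^{b,1}$ solves a forced equation with source $B_1$ and a nonlocal coupling term. Lemma~\ref{pro:6.3} gives $u_1^{b,1}(0)=0$, $\partial_tu_1^{b,1}\in L^2(0,T;H_x^2L_z^2)$ and $\sup_t\|\partial_zu_1^{b,1}(t)\|_{H_x^1L_z^2}\le C_T$. Hence
\[
\|u_1^{b,1}(t)\|_{L^\infty}^2\le C\|u_1^{b,1}(t)\|_{H_x^1L_z^2}\|\partial_zu_1^{b,1}(t)\|_{H_x^1L_z^2}\le C_T\,t^{1/2}\|\partial_tu_1^{b,1}\|_{L^2(0,t;H_x^1L_z^2)} .
\]
Similarly, Lemma~\ref{pro:6.1} gives $\|u^{I,1}(t)\|_{L^\infty}\le C\|u^{I,1}(t)\|_{H^2}\le C\,t^{1/2}\|\partial_tu^{I,1}\|_{L^2(0,t;H^2)}$. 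The factors $\|\partial_t(\cdot)\|_{L^2(0,t)}$ tend to zero by absolute continuity of the integral. This yields the bound $t^{1/4}\omega(t)$ with an explicit $\omega$; the outer term even carries $t^{1/2}$. This is how the paper obtains \eqref{r6:first-correction-onset}, and with it in place the rest of your proof goes through unchanged.
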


\begin{proof}
Using the zero initial data of the correction terms together with the
previously established estimates for their time derivatives, we obtain
$$
\begin{aligned}
    \|u^{I,1}(t)\|_\infty \le C\sqrt t\,\|\partial_{t}u^{I,1}\|_{L^2(0,t;H^2)}, \quad \|u^{b,1}_1(t)\|_\infty^2 \le C\|u^{b,1}_1(t)\|_{H_x^1L_z^2}\|\partial_{z}u_{1}^{b,1}(t)\|_{H_x^1L_z^2} \\[0.2cm]
    \|u^{b,1}_1(t)\|_\infty \le C_T t^{1/4}\left(\int_0^t\|\partial_{t}u_{1}^{b,1}(s)\|_{H_x^1L_z^2}^2\,ds\right)^{1/4}, \quad \|u^{b,1}_2(t)\|_\infty\le C\|\langle z\rangle^2u^{b,0}_1(t)\|_{H_x^2L_z^2} \le C_Tt^{5/4}.
\end{aligned}
$$
Consequently,
\begin{equation}
 \|u^{I,1}(t)\|_\infty
 +\|u^{b,1}_1(t)\|_\infty
 +\|u^{b,1}_2(t)\|_\infty
 =
 t^{1/4}\omega(t),
 \qquad
 \omega(t)\to0
 \quad\text{as }t\downarrow0.
 \label{r6:first-correction-onset}
\end{equation}
Using the uniform bounds for the higher-order profiles together with
\cref{r4:uniform-error}, we further obtain
\[
 \|u^\varepsilon-u^{I,0}-((u^{b,0}_1)^\varepsilon,0)\|_\infty
 \leq
 \varepsilon t^{1/4}\omega(t)+C_T\varepsilon^{3/2}.
\]
If $t_\varepsilon\geq c\varepsilon^{4/3} $ for some \(c>0\), then
\[
 \varepsilon t_\varepsilon^{-3/4}\omega(t_\varepsilon)
 \leq
 c^{-3/4}\omega(t_\varepsilon)
 \longrightarrow0,
\]
and
\[
 \frac{\varepsilon^{3/2}}{t_\varepsilon}
 \leq
 c^{-1}\varepsilon^{1/6}
 \longrightarrow0.
\]
Combining these estimates with \(t_\varepsilon\to0\) proves both the
endpoint case and the general coupled-limit assertion.
\end{proof}

\begin{corollary}
\label{cor:r7-exact-second-corner}
Fix initial data satisfying the assumptions of Theorem \ref{thm:main}, and use
$d^\varepsilon$ as defined in Corollary~\ref{cor:exact-corner-coupled}.
For \(0<t\leq t_c\), the quantitative remainder estimate
\[
 \left\|
 t^{-2}\bigl[d^\varepsilon(x,\varepsilon\sqrt t\,\eta,t)
 +t\alpha_0\Phi_1\bigr]+\Psi_2
 \right\|_{L^\infty_{x,\eta}}
 \leq
 \rho(t)
 +\varepsilon t^{-7/4}\omega(t)
 +C_T\varepsilon^{3/2}t^{-2},
\]
holds, where
\[
 \rho(t)\to0,
 \qquad
 \omega(t)\to0
 \qquad\text{as }t\downarrow0.
\]
Here, \(\rho\) arises from the second-order expansion of the corner profile,
whereas \(\omega\) is associated with the first-order correction for the fixed initial data.

If
\begin{equation}
 t_\varepsilon>0,\qquad
 t_\varepsilon\to0,\qquad
 \liminf_{\varepsilon\downarrow0}
 \frac{t_\varepsilon}{\varepsilon^{4/7}}>0,
 \label{r7:second-corner-coupling}
\end{equation}
then
\begin{equation}
 \left\|
 t_\varepsilon^{-2}
 \bigl[
 d^\varepsilon(x,\varepsilon\sqrt{t_\varepsilon}\,\eta,t_\varepsilon)
 +t_\varepsilon\alpha_0\Phi_1
 \bigr]
 +\Psi_2
 \right\|_{L^\infty_{x,\eta}}
 \longrightarrow0
 \qquad\text{as }\varepsilon\downarrow0.
 \label{r7:exact-second-corner}
\end{equation}
In particular, one may take $ t_\varepsilon=\varepsilon^\gamma,\quad 0<\gamma\leq\frac47 $.
\end{corollary}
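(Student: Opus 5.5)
The plan is to split the normalized exact defect into a profile part and an error part, mirroring the proof of Corollary \ref{cor:exact-corner-coupled} but at the next order. Write
\[
 d^\varepsilon=(u_1^{b,0})^\varepsilon+\bigl(d^\varepsilon-(u_1^{b,0})^\varepsilon\bigr),
\]
and evaluate both pieces at \(y=\varepsilon\sqrt t\,\eta\), so that \(z=\sqrt t\,\eta\).

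For the first piece I would invoke Corollary \ref{cor:second-corner}. It gives
\[
 u_1^{b,0}(x,\sqrt t\eta,t)=-t\alpha_0\Phi_1-t^2\Psi_2+o_{H_\eta^1(H_x^1)}(t^2).
\]
The embedding \(H_\eta^1(H_x^1)\hookrightarrow L^\infty_{x,\eta}\) then turns this into
\[
 \bigl\|t^{-2}[u_1^{b,0}(x,\sqrt t\eta,t)+t\alpha_0\Phi_1]+\Psi_2\bigr\|_{L^\infty_{x,\eta}}\le\rho(t),
\]
where \(\rho(t)\to0\). This is a statement about the profile only and involves no \(\varepsilon\).

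For the second piece I would reuse the bound established in the proof of Corollary \ref{cor:exact-corner-coupled}:
\[
 \|u^\varepsilon-u^{I,0}-((u_1^{b,0})^\varepsilon,0)\|_\infty\le\varepsilon t^{1/4}\omega(t)+C_T\varepsilon^{3/2}.
\]
This bound comes from expanding \(u^a\). The first-order corrections \(u^{I,1}\) and \(u_1^{b,1}\) contribute \(\varepsilon t^{1/4}\omega(t)\) by the onset modulus \eqref{r6:first-correction-onset}. The terms of order \(\varepsilon^2\) and \(\varepsilon^3\) contribute \(C\varepsilon^2\le C\varepsilon^{3/2}\). The true error \(E^\varepsilon\) contributes \(C_T\varepsilon^{3/2}\) by Theorem \ref{thm:main}. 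Dividing by \(t^2\) gives the terms \(\varepsilon t^{-7/4}\omega(t)+C_T\varepsilon^{3/2}t^{-2}\). Adding the two pieces by the triangle inequality yields the displayed quantitative estimate for \(0<t\le t_c\).

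The coupled limit follows by substituting \(t=t_\varepsilon\ge c\varepsilon^{4/7}\), which holds for small \(\varepsilon\) by the \(\liminf\) hypothesis. First, \(\rho(t_\varepsilon)\to0\) because \(t_\varepsilon\to0\). Second,
\[
 \varepsilon t_\varepsilon^{-7/4}\omega(t_\varepsilon)\le c^{-7/4}\omega(t_\varepsilon)\to0,
\]
using \(\varepsilon\le c^{-7/4}t_\varepsilon^{7/4}\). Third,
\[
 \varepsilon^{3/2}t_\varepsilon^{-2}\le c^{-2}\varepsilon^{3/2-8/7}=c^{-2}\varepsilon^{5/14}\to0.
\]
The choice \(t_\varepsilon=\varepsilon^\gamma\) with \(0<\gamma\le4/7\) satisfies the hypothesis.

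I expect the main obstacle to be the first piece. It requires the \(o(t^2)\) remainder of Corollary \ref{cor:second-corner} in \(L^\infty_{x,\eta}\), and not merely in \(H^1_\eta(H^1_x)\). I would handle this with the embedding above. The bound on the nonlinear remainder, \(Ct\) in the proof of that corollary, together with the \(o_{H_x^1}(t)\) heat Taylor expansion \eqref{r6:heat-second-taylor}, then yields \(\rho\). One point to check is that the heat-semigroup Taylor term is only \(o(t)\) without a rate, because \(\partial_tc\in CH^1_x\) gives continuity but no modulus. This is why the statement records a qualitative \(\rho\) rather than a power of \(t\).
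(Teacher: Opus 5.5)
Your proposal is correct and is essentially the paper's own argument: you split $d^\varepsilon$ into the rescaled leading profile, controlled in $L^\infty_{x,\eta}$ by the second-order corner expansion \eqref{r6:second-corner-expansion} (which gives $\rho$), plus the remaining defect bounded by $\varepsilon t^{1/4}\omega(t)+C_T\varepsilon^{3/2}$ via \eqref{r6:first-correction-onset} and Theorem~\ref{thm:main}, and then substitute $t_\varepsilon\ge c\varepsilon^{4/7}$ to obtain $\rho(t_\varepsilon)+c^{-7/4}\omega(t_\varepsilon)+C_Tc^{-2}\varepsilon^{5/14}\to0$. Your extra remarks, namely the embedding $H^1_\eta(H^1_x)\hookrightarrow L^\infty_{x,\eta}$ and the reason $\rho$ carries no rate, make explicit what the paper leaves implicit.
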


\begin{proof}
By \eqref{r6:second-corner-expansion}, the profile remainder, after division by \(t^2\), admits an \(L^\infty\)-modulus \(\rho(t)\to0\) as \(t\downarrow0\). Retaining the modulus \(\omega(t)\to0\) from \cref{r6:first-correction-onset} in the full error expansion, we obtain
\[
 \left\|
 t^{-2}\bigl[d^\varepsilon(x,\varepsilon\sqrt t\,\eta,t)+t\alpha_0\Phi_1\bigr]
 +\Psi_2
 \right\|_\infty
 \leq
 \rho(t)
 +\varepsilon t^{-7/4}\omega(t)
 +C_T\varepsilon^{3/2}t^{-2}.
\]
If \(t=t_\varepsilon\geq c\varepsilon^{4/7}\), then the right-hand side is bounded by
\[
 \rho(t)
 +c^{-7/4}\omega(t)
 +C_Tc^{-2}\varepsilon^{5/14}
 \longrightarrow0.
\]
Thus, the coefficient \(\Psi_2\) is transferred to the exact solution without introducing any additional profile terms or imposing stronger assumptions on the initial data.
\end{proof}

\begin{corollary}
\label{cor:r8-corner-tail}
Fix initial data satisfying the assumptions of Theorem \ref{thm:main}, and let
\(t_\varepsilon\) satisfy \cref{eq:corner-coupling}. Set
\[
 r_\varepsilon=\frac{\delta_\varepsilon}{\varepsilon\sqrt{t_\varepsilon}},
 \qquad
 \delta_\varepsilon\geq0.
\]
Then, for any such sequence of widths \(\{\delta_\varepsilon\}\),
\begin{equation}
 \left|
 \frac{
 \|d^\varepsilon(t_\varepsilon)\|_{L^\infty(\mathbb{R}\times[\delta_\varepsilon,\infty))}
 }{t_\varepsilon}
 -
 \|\alpha_0\|_\infty\Phi_1(r_\varepsilon)
 \right|
 \longrightarrow0
 \qquad\text{as }\varepsilon\downarrow0.
 \label{r8:normalized-corner-tail}
\end{equation}
Consequently, provided that \(\alpha_0\not\equiv0\), the normalized exterior
defect converges to zero if and only if \(r_\varepsilon\to\infty\).
If \(r_\varepsilon\to\ell<\infty\), then
\[
 \frac{
 \|d^\varepsilon(t_\varepsilon)\|_{L^\infty(\mathbb{R}\times[\delta_\varepsilon,\infty))}
 }{t_\varepsilon}
 \longrightarrow
 \|\alpha_0\|_\infty\Phi_1(\ell)>0.
\]
If \(\delta_\varepsilon\) is to represent a vanishing physical thickness, one must
in addition require \(\delta_\varepsilon\to0\). We emphasize that the quantity
considered here is the defect normalized by the vanishing amplitude
\(t_\varepsilon\).
\end{corollary}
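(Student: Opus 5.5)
The plan is to derive \eqref{r8:normalized-corner-tail} directly from the uniform bound \eqref{eq:exact-corner-coupled-bound} of Corollary~\ref{cor:exact-corner-coupled}. We restrict to the half-line $\eta\ge r_\varepsilon$, which in physical variables is exactly the exterior region $y\ge\delta_\varepsilon$ at $t=t_\varepsilon$.

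Write $d^\varepsilon(x,\varepsilon\sqrt{t_\varepsilon}\,\eta,t_\varepsilon)=-t_\varepsilon\alpha_0(x)\Phi_1(\eta)+t_\varepsilon R_\varepsilon(x,\eta)$. By \eqref{eq:exact-corner-coupled-bound} and the coupling condition \eqref{eq:corner-coupling}, the remainder satisfies $\|R_\varepsilon\|_{L^\infty_{x,\eta}}\le C_Tt_\varepsilon+\varepsilon t_\varepsilon^{-3/4}\omega(t_\varepsilon)+C_T\varepsilon^{3/2}t_\varepsilon^{-1}\to0$; the last two terms are handled exactly as in the proof of that corollary. Since $y\mapsto\eta=y/(\varepsilon\sqrt{t_\varepsilon})$ is a bijection of $[\delta_\varepsilon,\infty)$ onto $[r_\varepsilon,\infty)$, the triangle inequality for sup norms over this common region gives
\[
\left|\frac{\|d^\varepsilon(t_\varepsilon)\|_{L^\infty(\mathbb R\times[\delta_\varepsilon,\infty))}}{t_\varepsilon}-\|\alpha_0\Phi_1\|_{L^\infty(\mathbb R\times[r_\varepsilon,\infty))}\right|\le\|R_\varepsilon\|_{L^\infty_{x,\eta}}\to0 .
\]

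It remains to identify $\|\alpha_0\Phi_1\|_{L^\infty(\mathbb R\times[r,\infty))}=\|\alpha_0\|_\infty\Phi_1(r)$, i.e.\ that $\Phi_1$ is nonnegative and nonincreasing on $[0,\infty)$. The factorization is then immediate. From the preceding corollary, $\Phi_1''=\operatorname{erfc}(\eta/2)>0$, so $\Phi_1'$ is increasing, and $\Phi_1'(\eta)\to0$ as $\eta\to\infty$ (erfc tail asymptotics). Hence $\Phi_1'<0$. Positivity follows from the integral representation \eqref{eq:5.4}, or from $\Phi_1(\infty)=0$ together with monotonicity. Note that $\alpha_0\in H^3_x\subset C_0(\mathbb R)$, so $\|\alpha_0\|_\infty$ is finite, and it is positive when $\alpha_0\not\equiv0$.

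The consequences then follow. $\Phi_1$ is continuous, strictly decreasing, and satisfies $\Phi_1(0)=1$ and $\Phi_1(\infty)=0$. So the normalized exterior defect tends to zero if and only if $\|\alpha_0\|_\infty\Phi_1(r_\varepsilon)\to0$, which holds if and only if $r_\varepsilon\to\infty$. If $r_\varepsilon\to\ell<\infty$, continuity gives the limit $\|\alpha_0\|_\infty\Phi_1(\ell)>0$, since $\Phi_1$ is strictly positive on $[0,\infty)$.

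The only delicate point is the uniformity of the error bound over $\eta$. The estimate \eqref{eq:exact-corner-coupled-bound} is in $L^\infty_{x,\eta}$ over the whole half-line, so restricting to $\eta\ge r_\varepsilon$ costs nothing. Moreover, no separate treatment of very large $r_\varepsilon$ is needed, because both normalized quantities then tend to zero together.
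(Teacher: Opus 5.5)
Your proposal is correct and follows essentially the paper's proof. Both arguments restrict the uniform $L^\infty_{x,\eta}$ bound of Corollary~\ref{cor:exact-corner-coupled} to $\eta\ge r_\varepsilon$, apply the reverse triangle inequality, and use positivity and monotonicity of $\Phi_1$ to factor the supremum as $\|\alpha_0\|_\infty\Phi_1(r_\varepsilon)$. The only cosmetic difference is how monotonicity is obtained: you derive it from $\Phi_1''=\operatorname{erfc}(\eta/2)>0$ together with $\Phi_1'\to0$, whereas the paper reads it off the representation $\Phi_1(\eta)=\int_0^1\operatorname{erfc}(\eta/(2\sqrt\theta))\,d\theta$.
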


\begin{proof}
Restrict the uniform approximation in Corollary~\ref{cor:exact-corner-coupled} to the parameter-dependent region \(\eta\geq r_\varepsilon\). Then
\[
 \left|
 t_\varepsilon^{-1}
 \|d^\varepsilon(t_\varepsilon)\|_{L^\infty(\mathbb{R}\times[\delta_\varepsilon,\infty))}
 -
 \|\alpha_0\Phi_1\|_{L^\infty(\mathbb{R}\times[r_\varepsilon,\infty))}
 \right|
 \leq
 \|t_\varepsilon^{-1}d^\varepsilon(x,\varepsilon\sqrt{t_\varepsilon}\,\eta,t_\varepsilon)+\alpha_0\Phi_1\|_{L^\infty_{x,\eta}}
 \longrightarrow0.
\]
Moreover,
\[
 \Phi_1(\eta)
 =
 \int_0^1
 \operatorname{erfc}\!\left(\frac{\eta}{2\sqrt{\theta}}\right)\,d\theta.
\]
Since \(\Phi_1\) is positive and decreasing on \([0,\infty)\), it follows that
\[
 \|\alpha_0\Phi_1\|_{L^\infty(\mathbb{R}\times[r_\varepsilon,\infty))}
 =
 \|\alpha_0\|_\infty\Phi_1(r_\varepsilon).
\]
This proves \eqref{r8:normalized-corner-tail} and, in particular, the
sufficiency of \(r_\varepsilon\to\infty\).

Conversely, if \(r_\varepsilon\) does not tend to infinity, then there exists a
subsequence, still denoted by \(r_\varepsilon\), such that \(r_\varepsilon\leq R\) for
some finite \(R>0\). Along this subsequence,
\[
 \|\alpha_0\|_\infty\Phi_1(r_\varepsilon)
 \geq
 \|\alpha_0\|_\infty\Phi_1(R)>0,
\]
provided that \(\alpha_0\not\equiv0\). Hence the normalized exterior defect
cannot converge to zero, which proves the necessity. Finally, if \(r_\varepsilon\to\ell<\infty\), the stated limit follows directly
from the continuity of \(\Phi_1\).
\end{proof}

\begin{remark}
For \(0<\varepsilon\leq1\) and \(0<t\leq\min\{t_c,1\}\),
\eqref{eq:initial-error-infty} and \eqref{r6:first-correction-onset} yield
\[
 \|u^\varepsilon-u^{I,0}-((u^{b,0}_1)^\varepsilon,0)\|_\infty
 \leq
 \varepsilon t^{1/4}\omega(t)
 +C_T\varepsilon^2
 +C_T\varepsilon^{3/2}t^{1/8}.
\]
Here, the term \(C_T\varepsilon^2\) accounts for the second- and higher-order
profiles that are not absorbed into the exact error, without assigning
to these profiles any additional time-decay rates that have not been
established. Consequently, the first- and second-order normalized error
estimates can be refined to
$$
\begin{gathered}
    \|t^{-1}d^\varepsilon(x,\varepsilon\sqrt t\,\eta,t)+\alpha_0\Phi_1\|_{L^\infty_{x,\eta}}\leq C_Tt +\varepsilon t^{-3/4}\omega(t) +C_T\varepsilon^2t^{-1} +C_T\varepsilon^{3/2}t^{-7/8},\\[0.2cm]
    \|t^{-2}[d^\varepsilon(x,\varepsilon\sqrt t\,\eta,t)+t\alpha_0\Phi_1]+\Psi_2\|_\infty \leq \rho(t) +\varepsilon t^{-7/4}\omega(t) +C_T\varepsilon^2t^{-2} +C_T\varepsilon^{3/2}t^{-15/8}.
\end{gathered}
$$
When \(t\geq c\varepsilon^{4/3}\), the last two terms in the first estimate are
of order \(O(\varepsilon^{2/3})\) and \(O(\varepsilon^{1/3})\), respectively. Likewise,
when \(t\geq c\varepsilon^{4/7}\), the corresponding terms in the second estimate
are of order \(O(\varepsilon^{6/7})\) and \(O(\varepsilon^{3/7})\), respectively.
The contribution involving \(\omega(t)\) from the first-order correction
remains present. Hence, these improved remainder bounds do not enlarge
the sufficient coupling regimes given in
\cref{eq:corner-coupling,r7:second-corner-coupling}.
\end{remark}

\subsection{Explicit Initial Data  }

\begin{proposition}[A Family of Gaussian Initial Data]
\label{prop:Gaussian-data}
Fix \(\kappa>0\), choose
\(0\neq\beta\in C_c^\infty((0,\infty))\), and define
\begin{equation}
 \psi(x,y)=\mathrm{e}^{-\kappa x^2}\beta(y),
 \qquad
 \tilde u=(\partial_y\psi,-\partial_x\psi).
\label{eq:Gaussian-data}
\end{equation}
Then \(\tilde u\) is smooth and divergence-free, and satisfies the no-slip
boundary condition. Moreover, the corresponding coefficient
\[
 \alpha_0=-\partial_xp^{I,0}(x,0,0)
\]
is not identically zero.
\end{proposition}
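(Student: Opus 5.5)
The plan is to compute $\alpha_0$ from the Neumann problem for the initial pressure and then show that its Fourier transform in $x$ does not vanish identically. Smoothness, $\operatorname{div}\tilde u=0$, and the no-slip condition are immediate. Since $\beta$ is supported in some interval $[y_0,y_1]\subset(0,\infty)$, both $\tilde u$ and all its derivatives vanish near $y=0$. The velocity field $\tilde u=\nabla^\perp\psi$ lies in every $H^k$ because of the Gaussian factor in $x$. So the assumptions of Theorem \ref{thm:main} hold.

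By Remark \ref{rem:corner-coefficient} and Step 5 of Proposition \ref{prop:outer-flow}, we have $\alpha_0=-\partial_x p_0(x,0)$, where $p_0$ solves
\[
-\Delta p_0=\partial_iu_j\partial_ju_i=2\bigl(\partial_x^2\psi\,\partial_y^2\psi-(\partial_x\partial_y\psi)^2\bigr)=:f,\qquad \partial_yp_0|_{y=0}=0 .
\]
Here $f$ is a Monge–Ampère type expression with compact support in $y\in[y_0,y_1]$. I would solve by even reflection in $y$ and the Neumann Green's function. Taking the Fourier transform in $x$ gives
\[
\widehat{p_0}(\xi,0)=\frac{1}{|\xi|}\int_0^\infty e^{-|\xi|s}\hat f(\xi,s)\,ds ,
\qquad
\widehat{\alpha_0}(\xi)=-\mathrm i\,\mathrm{sgn}(\xi)\int_0^\infty e^{-|\xi|s}\hat f(\xi,s)\,ds .
\]
This formula only needs gradient-level uniqueness, which the paper has already established.

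Next I compute $\hat f$ explicitly. With $g(x)=e^{-\kappa x^2}$, we have $f=2\beta\beta''\,g g''-2\beta'^2 g'^2$. Each product $gg''$ and $g'^2$ is a polynomial times $e^{-2\kappa x^2}$, so its transform is a polynomial times $e^{-\xi^2/(8\kappa)}$. Concretely,
\[
gg''=(4\kappa^2x^2-2\kappa)e^{-2\kappa x^2},\qquad g'^2=4\kappa^2x^2e^{-2\kappa x^2}.
\]
Hence
\[
\hat f(\xi,s)=e^{-\xi^2/(8\kappa)}\bigl[P_1(\xi)\,\beta\beta''(s)+P_2(\xi)\,\beta'^2(s)\bigr],
\]
with explicit polynomials $P_1,P_2$ that depend on $\kappa$.

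It then suffices to show that $\widehat{\alpha_0}(\xi)\ne0$ for some $\xi$. The natural way is to look at $\xi\to0^+$. There, $\widehat{\alpha_0}(\xi)\approx -\mathrm i\int\hat f(0,s)\,ds+O(\xi)$. At $\xi=0$ we have $\hat g g''(0)=\int gg''=-\int g'^2$. Integrating by parts in $s$ gives $\int\beta\beta''=-\int\beta'^2$. Therefore
\[
\int \hat f(0,s)\,ds=2\Bigl(\int\beta\beta''\Bigr)\Bigl(\int gg''\Bigr)-2\Bigl(\int\beta'^2\Bigr)\Bigl(\int g'^2\Bigr)=2\Bigl(\int\beta'^2\Bigr)\Bigl(\int g'^2\Bigr)-2\Bigl(\int\beta'^2\Bigr)\Bigl(\int g'^2\Bigr)=0 .
\]
The zeroth order therefore cancels; this is also forced by the divergence structure of $f$. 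So I would expand to the next orders in $\xi$, keeping both the $e^{-|\xi|s}=1-|\xi|s+\frac{\xi^2s^2}{2}-\dots$ factor and the $\xi^2$ terms of $P_1,P_2$. \textbf{The main obstacle} is finding the first nonvanishing coefficient and checking that it is not zero for every admissible $\beta$.

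At order $|\xi|$ the coefficient is proportional to $\int s\,\hat f(0,s)\,ds$. Using $\int gg''=-\int g'^2=-c_\kappa<0$, it equals
\[
-2c_\kappa\int s\bigl(\beta\beta''+\beta'^2\bigr)\,ds=-c_\kappa\int s\,(\beta^2)''\,ds=c_\kappa\int(\beta^2)'\,ds=0 .
\]
So this order vanishes too. At order $\xi^2$ there are two sources. The $s^2$ moment gives $-c_\kappa\int s^2(\beta^2)''\,ds=-2c_\kappa\int\beta^2\,ds$, up to a factor. The $\xi^2$-parts of $P_1,P_2$ multiply $\int\beta\beta''=-\int\beta'^2$ and $\int\beta'^2$. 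Computing the second moments $\int x^2gg''$ and $\int x^2g'^2$ should give a combination proportional to $\int\beta'^2\,ds$ with a nonzero constant. So the $\xi^2$ coefficient has the form $A\int\beta^2+B\int\beta'^2$ with explicit $A,B$ depending on $\kappa$.

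If $A$ and $B$ have the same strict sign, this coefficient is nonzero for every $\beta\ne0$, and the proof is done. If they do not, there is a fallback. I would avoid small $\xi$ altogether and argue by analyticity: $\widehat{\alpha_0}$ restricted to $\xi>0$ is real-analytic, so it suffices to exhibit a single $\xi$ where it is nonzero. The large-$\xi$ regime is one place to look, but the support gap $y_0>0$ makes the factor $e^{-|\xi|s}$ small, and the Gaussian factor $e^{-\xi^2/(8\kappa)}$ suppresses the transform as well. A cleaner alternative is the dilation $\beta(y)\to\beta(\lambda y)$, which changes $A\int\beta^2+B\int\beta'^2$ into $A\lambda^{-1}\int\beta^2+B\lambda\int\beta'^2$. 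At most one value of $\lambda$ makes this vanish. The proposition fixes $\beta$, however, so the sign computation for $A$ and $B$ is the essential point.
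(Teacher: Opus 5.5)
Your reduction is correct up to the step you flagged, apart from a harmless overall sign: $\partial_iu_j\partial_ju_i=2(\psi_{xy}^2-\psi_{xx}\psi_{yy})$. That step fails, however, so the proposal does not prove the statement.

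Carry out your computation with $\gamma=e^{-\kappa x^2}$, $\lambda=|\xi|$, $J_0=\int_0^\infty e^{-\lambda y}\beta^2\,dy$ and $J_1=\int_0^\infty e^{-\lambda y}(\beta')^2\,dy$. One has $\widehat{(\gamma')^2}=(\kappa-\xi^2/4)\widehat{\gamma^2}$, $\widehat{\gamma\gamma''}=-(\kappa+\xi^2/4)\widehat{\gamma^2}$, and $\int_0^\infty e^{-\lambda y}\beta\beta''\,dy=-J_1+\frac{\lambda^2}{2}J_0$. These give the exact formula
\[
\widehat{\alpha_0}(\xi)=-\mathrm{i}\,\mathrm{sgn}(\xi)\,\lambda^2\,\widehat{\gamma^2}(\xi)\int_0^\infty e^{-\lambda y}h(y)\,dy,
\qquad
h=\kappa\beta^2+\tfrac12\beta\beta''-\tfrac12(\beta')^2 .
\]
So your $\xi^2$ coefficient is a nonzero multiple of $\int_0^\infty h\,dy=\kappa\|\beta\|_2^2-\|\beta'\|_2^2$. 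Your $A$ and $B$ therefore have \emph{opposite} signs.

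By your own dilation remark, every bump has exactly one dilate $\beta(\mu\,\cdot)$ with $\|\beta'\|_2^2=\kappa\|\beta\|_2^2$, and this dilate is still supported in $(0,\infty)$. For such admissible data the $\xi^2$ coefficient vanishes, and the proposal offers nothing in its place. The analyticity remark only reduces matters to exhibiting one $\xi$ with $\widehat{\alpha_0}(\xi)\neq0$, which is the original problem.

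The missing idea is to not truncate the expansion. Since $\widehat{\gamma^2}>0$, the assumption $\alpha_0\equiv0$ forces $\int_0^\infty e^{-\lambda y}h\,dy=0$ for every $\lambda>0$. Equivalently, all the moments $\int_0^\infty y^nh\,dy$ vanish; these are exactly the Taylor coefficients you were computing one at a time. Uniqueness of the Laplace transform then gives $h\equiv0$, that is, $(\beta')^2-\beta\beta''=2\kappa\beta^2$.

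On each component of $\{\beta\neq0\}$ this reads $(\log|\beta|)''=-2\kappa$, so $\beta=Ce^{-\kappa y^2+cy}$ there. Such a function cannot vanish at a finite endpoint of the component, which contradicts the compact support of $\beta$.

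This is the paper's argument: it derives the relation $J_1=(\kappa+\lambda^2/4)J_0$ and then uses Laplace uniqueness and the ODE. It covers every admissible $\beta$ at once, including those for which your second-order coefficient degenerates.
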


\begin{proof}
Let $\gamma_\kappa(x)=\mathrm{e}^{-\kappa x^2}$. Since the support of \(\beta\) is separated from the boundary by a positive
distance, all boundary traces of \(\tilde u\) vanish. The source term in the
pressure equation \eqref{eq:4.21} is
\begin{equation}
 S_0
 =
 2\bigl(
 (\gamma_\kappa')^2(\beta')^2
 -\gamma_\kappa\gamma_\kappa''\beta\beta''
 \bigr).
\label{eq:S0-Gaussian}
\end{equation}
Suppose, for contradiction, that \(\alpha_0\equiv0\). Let
\(\lambda=|\xi|>0\). The Neumann Green function for the operator
\(-\partial_y^2+\lambda^2\) gives
\[
 \widehat{p^{I,0}(\cdot,\cdot,0)}(\xi,0)
 =
 \frac{1}{\lambda}
 \int_0^\infty
 \mathrm{e}^{-\lambda y}\widehat S_0(\xi,y)\,\,\mathrm{d} y.
\]
Since
\[
 -i\xi\,
 \widehat{p^{I,0}(\cdot,\cdot,0)}(\xi,0)
 =
 \widehat{\alpha_0}(\xi)
 =
 0,
\]
we obtain
\begin{equation}
 \int_0^\infty
 \mathrm{e}^{-\lambda y}\widehat S_0(\xi,y)\,\,\mathrm{d} y
 =
 0,
 \qquad \xi\neq0.
\label{eq:pressure-Laplace-zero}
\end{equation}
A direct computation using the Fourier transform of the Gaussian yields
\begin{equation}
 \widehat{(\gamma_\kappa')^2}
 =
 \left(
 \kappa-\frac{\xi^2}{4}
 \right)
 \widehat{\gamma_\kappa^2},
 \qquad
 \widehat{\gamma_\kappa\gamma_\kappa''}
 =
 -\left(
 \kappa+\frac{\xi^2}{4}
 \right)
 \widehat{\gamma_\kappa^2}.
\label{eq:Gaussian-Fourier}
\end{equation}
Define
\[
 J_0(\lambda)
 =
 \int_0^\infty\mathrm{e}^{-\lambda y}\beta^2\,\,\mathrm{d} y,
 \qquad
 J_1(\lambda)
 =
 \int_0^\infty\mathrm{e}^{-\lambda y}(\beta')^2\,\,\mathrm{d} y,
\]
and
\[
 J_2(\lambda)
 =
 \int_0^\infty\mathrm{e}^{-\lambda y}\beta\beta''\,\,\mathrm{d} y.
\]
Substituting \eqref{eq:S0-Gaussian},\eqref{eq:Gaussian-Fourier} into
\eqref{eq:pressure-Laplace-zero} and dividing by the strictly positive
quantity \(\widehat{\gamma_\kappa^2}\), we obtain
\begin{equation}
 \left(
 \kappa-\frac{\lambda^2}{4}
 \right)J_1
 +
 \left(
 \kappa+\frac{\lambda^2}{4}
 \right)J_2
 =
 0.
\label{eq:I-relation1}
\end{equation}
Integrating by parts twice, with all boundary terms vanishing, gives
\[
 J_2
 =
 -J_1+\frac{\lambda^2}{2}J_0.
\]
Hence, \cref{eq:I-relation1} reduces to
\begin{equation}
 J_1(\lambda)
 =
 \left(
 \kappa+\frac{\lambda^2}{4}
 \right)
 J_0(\lambda),
 \qquad \lambda>0.
\label{eq:I-relation2}
\end{equation}
Since both \(\beta^2\) and its first derivative vanish at \(y=0\), another
two integrations by parts yield
\[
 \lambda^2
 \int_0^\infty
 \mathrm{e}^{-\lambda y}\beta(y)^2\,\,\mathrm{d} y
 =
 \int_0^\infty
 \mathrm{e}^{-\lambda y}(\beta(y)^2)''\,\,\mathrm{d} y.
\]
By the uniqueness of the one-sided Laplace transform,
\cref{eq:I-relation2} implies
\[
 (\beta')^2
 =
 \kappa\beta^2
 +\frac14(\beta^2)''.
\]
Equivalently,
\begin{equation}
 (\beta')^2-\beta\beta''
 =
 2\kappa\beta^2.
\label{eq:B-contradiction-ode}
\end{equation}
On any connected component of the set \(\{\beta\neq0\}\), dividing
\cref{eq:B-contradiction-ode} by \(\beta^2\) and using
\[
 \left(\frac{\beta'}{\beta}\right)'
 =
 \frac{\beta\beta''-(\beta')^2}{\beta^2},
\]
we obtain
\[
 \frac{\beta'(y)}{\beta(y)}
 =
 -2\kappa y+c,
 \qquad
 \beta(y)
 =
 C\mathrm{e}^{-\kappa y^2+cy}.
\]
A nontrivial function of this form cannot vanish at a finite endpoint of
the connected component, contradicting the compact support of \(\beta\).
Therefore,
\[
 \alpha_0\not\equiv0.
\]
This completes the proof.
\end{proof}

\begin{remark}
A fully explicit choice is
\[
 \beta_*(y)=
 \begin{cases}
 \exp\!\left[-((y-1)(2-y))^{-1}\right], & 1<y<2,\\
 0, & \text{otherwise}.
 \end{cases}
\]
For every initial datum constructed in Proposition \ref{prop:Gaussian-data}, uniform
convergence without the boundary-layer correction fails. Indeed, the
Taylor expansion at the boundary, together with the embedding
\(H_x^1(\mathbb{R})\hookrightarrow L_x^\infty(\mathbb{R})\), gives
\[
 a(t)=t\alpha_0+O_{L_x^\infty}(t^2).
\]
Since \(\alpha_0\not\equiv0\), one may choose \(t_{\rm u}>0\) sufficiently
small such that \(a(t_{\rm u})\not\equiv0\). As the exact solution satisfies
the no-slip boundary condition,
\[
 0<\left\lVert a(t_{\rm u})\right\rVert_{L_x^\infty}
 \leq
 \left\lVert 
 u^\varepsilon(t_{\rm u})-u^{I,0}(t_{\rm u})\right\rVert_{L^\infty(\Omega)}
 \qquad\text{for every }\varepsilon>0.
\]
More precisely, Corollary \ref{cor:uncorrected-limits} identifies the positive limit as $\|u^{b,0}_1(t_{\rm u})\|_{L^\infty(\Omega_{\mathrm b})}$,whose boundary trace is \(-a(t_{\rm u})\).
\end{remark}

\subsection{Full Moments and Optimality of the Corrected Convergence Rate}
Set $M_0=\int_0^\infty u^{b,0}_1\,dz$ and $c=a_t-a_{xx}$.
Then
\[
 M_0(t)
 =-\frac{2}{\sqrt{\pi}}
 \int_0^t
 \sqrt{h}\,e^{h\partial_x^2}c(t-h)\,dh
 +\int_0^\infty r_a(z,t)\,dz,
\]
and
\[
 \left\|
 M_0(t)+\frac{4}{3\sqrt{\pi}}t^{3/2}\alpha_0
 \right\|_{H_x^1}
 \leq Ct^{5/2}.
\]
Using \eqref{r6:heat-second-taylor}, we further obtain
\[
 M_0(t)
 =
 -\frac{4}{3\sqrt{\pi}}t^{3/2}\alpha_0
 -\frac{t^{5/2}}{15\sqrt{\pi}}
 \bigl(8\beta_0+4\partial_x^2\alpha_0\bigr)
 +o_{H_x^1}(t^{5/2}).
\]
The divergence-free condition together with the matching relation gives
$ u^{I,1}_2|_{y=0}=-\partial_x M_0 $.
Hence,
\begin{equation}
 \left\|
 u^{I,1}_2(\cdot,0,t)
 -\frac{4}{3\sqrt{\pi}}t^{3/2}\partial_x\alpha_0
 \right\|_{L_x^2}
 \leq Ct^{5/2}.
 \label{eq:first-interior-nonzero}
\end{equation}
For any initial data satisfying \(\alpha_0\not\equiv0\), the fact that
\(\alpha_0\in L_x^2(\mathbb{R})\) implies \(\partial_x\alpha_0\not\equiv0\). Therefore,
for every sufficiently small fixed \(t>0\), $ u^{I,1}(t)\neq0$. The Gaussian family constructed above provides explicit initial data
satisfying this nondegeneracy condition. The following elementary
two-scale limit then identifies the exact pointwise coefficient.

\begin{lemma}
\label{lem:two-scale-sup}
Let \(F\) and \(G\) denote an outer field and an inner field, respectively.
Assume that \(F\in C_0(\overline\Omega;\mathbb{R}^d)\) is uniformly continuous, and that
\(G\in C(\mathbb{R}\times\mathbb{R}_+;\mathbb{R}^d)\) is bounded and satisfies
\[
 \sup_x |G(x,z)|\longrightarrow0
 \qquad\text{as } z\to\infty.
\]
Then
\begin{equation}
 \lim_{\varepsilon\downarrow0}
 \left\lVert F(x,y)+G(x,y/\varepsilon)\right\rVert_{L^\infty(\Omega)}
 =
 \max\left\{
 \left\lVert F\right\rVert_{L^\infty(\Omega)},\
 \left\lVert F(x,0)+G(x,z)\right\rVert_{L^\infty_{x,z}}
 \right\}.
\label{eq:two-scale-sup}
\end{equation}
\end{lemma}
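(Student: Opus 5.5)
The plan is to prove the two inequalities ``$\le$'' and ``$\ge$'' separately. Throughout, write $H_\varepsilon(x,y)=F(x,y)+G(x,y/\varepsilon)$, and set $A=\|F\|_{L^\infty(\Omega)}$ and $B=\|F(x,0)+G(x,z)\|_{L^\infty_{x,z}}$. Two properties of the data will be used repeatedly. First, $F\in C_0(\overline\Omega)$ is uniformly continuous, with modulus $\mu$ in the normal variable: $\sup_x|F(x,y)-F(x,0)|\le\mu(y)\to0$. Second, the tail $g(\rho)=\sup_{x,\,z\ge\rho}|G(x,z)|$ tends to $0$ as $\rho\to\infty$.

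For the upper bound, fix a scale $\delta_\varepsilon$ with $\delta_\varepsilon\to0$ and $\delta_\varepsilon/\varepsilon\to\infty$; for definiteness take $\delta_\varepsilon=\sqrt\varepsilon$. On the strip $0\le y\le\delta_\varepsilon$ we write
\[
 H_\varepsilon=F(x,0)+G(x,z)+\bigl(F(x,y)-F(x,0)\bigr),
\]
which gives $|H_\varepsilon|\le B+\mu(\delta_\varepsilon)$ there. On the complement $y\ge\delta_\varepsilon$ we have $z\ge\delta_\varepsilon/\varepsilon$, so $|H_\varepsilon|\le A+g(\delta_\varepsilon/\varepsilon)$. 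Taking the $\limsup$ as $\varepsilon\downarrow0$ then yields $\limsup_{\varepsilon\downarrow0}\|H_\varepsilon\|_\infty\le\max\{A,B\}$.

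For the lower bound, it suffices to show that $\liminf_{\varepsilon\downarrow0}\|H_\varepsilon\|_\infty\ge A-\eta$ and $\liminf_{\varepsilon\downarrow0}\|H_\varepsilon\|_\infty\ge B-\eta$ for every $\eta>0$.
\begin{itemize}
\item For $A$: choose $(x_0,y_0)$ with $|F(x_0,y_0)|>A-\eta$. If $A>0$, continuity of $F$ lets us take $y_0>0$, since boundary values of $F$ are limits of interior values. Then $H_\varepsilon(x_0,y_0)=F(x_0,y_0)+G(x_0,y_0/\varepsilon)$, and $G(x_0,y_0/\varepsilon)\to0$ because $y_0/\varepsilon\to\infty$. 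If $A=0$ there is nothing to prove.
\item For $B$: choose $(x_1,z_1)$ with $|F(x_1,0)+G(x_1,z_1)|>B-\eta$, and evaluate at the point $(x_1,\varepsilon z_1)$. This gives $H_\varepsilon(x_1,\varepsilon z_1)=F(x_1,\varepsilon z_1)+G(x_1,z_1)$, which converges to $F(x_1,0)+G(x_1,z_1)$ as $\varepsilon\downarrow0$ by continuity of $F$ at the wall.
\end{itemize}
Because $H_\varepsilon$ is continuous, its essential supremum coincides with its pointwise supremum, so these point evaluations do bound $\|H_\varepsilon\|_{L^\infty(\Omega)}$ from below. Combining the two bounds proves \eqref{eq:two-scale-sup}.

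The only delicate step is the choice of the intermediate scale $\delta_\varepsilon$ in the upper bound. It has to be small enough that the uniform continuity of $F$ up to the wall controls $F(x,y)-F(x,0)$ on the strip, and large compared with $\varepsilon$ so that the inner tail $g(\delta_\varepsilon/\varepsilon)$ vanishes; the choice $\delta_\varepsilon=\sqrt\varepsilon$ satisfies both. All other steps are elementary.
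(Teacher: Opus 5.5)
Your proof is correct and follows essentially the same route as the paper: the upper bound comes from splitting $\Omega$ into a thin wall strip and its complement, and the lower bound from evaluating at approximate maximizers, in particular at $(x_1,\varepsilon z_1)$ for the inner term. The differences are cosmetic: you cut at the single intermediate scale $y=\sqrt\varepsilon$ instead of at $y=\varepsilon R$ with $\eta$ fixed first, and you move the maximizer of $|F|$ off the wall by continuity instead of evaluating at $(x_0,\sqrt\varepsilon)$.
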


\begin{proof}
Fix \(\eta>0\). Choose \(R>0\) such that
\[
 \sup_x |G(x,z)|\leq \eta,
 \quad z\geq R,
\]
and then choose \(d>0\) such that
\[
 |F(x,y)-F(x,0)|\leq \eta,
 \quad 0\leq y\leq d,
\]
uniformly in \(x\). For \(\varepsilon R\leq d\), split the domain into
\(y\leq \varepsilon R\) and \(y\geq \varepsilon R\). On the first region,
\[
 |F(x,y)+G(x,y/\varepsilon)|
 \leq
 |F(x,0)+G(x,y/\varepsilon)|+\eta,
\]
whereas on the second,
\[
 |F(x,y)+G(x,y/\varepsilon)|
 \leq \|F\|_{L^\infty(\Omega)}+\eta.
\]
Hence,
\[
 \limsup_{\varepsilon\downarrow0}
 \|F(x,y)+G(x,y/\varepsilon)\|_{L^\infty(\Omega)}
 \leq
 \max\left\{
 \|F\|_{L^\infty(\Omega)},
 \|F(x,0)+G(x,z)\|_{L^\infty_{x,z}}
 \right\}.
\]

For the reverse inequality, choose approximate maximizers. To recover
\(\|F\|_{L^\infty(\Omega)}\), evaluate at an approximate maximizer
\((x_0,y_0)\) of \(|F|\). If \(y_0>0\), then
\(G(x_0,y_0/\varepsilon)\to0\). If \(y_0=0\), evaluate instead at
\((x_0,\sqrt{\varepsilon})\), for which
\[
 F(x_0,\sqrt{\varepsilon})\to F(x_0,0),
 \quad
 G(x_0,\varepsilon^{-1/2})\to0.
\]
To recover the second term, take an approximate maximizer
\((x_1,z_1)\) of \(|F(x,0)+G(x,z)|\) and evaluate at
\((x_1,\varepsilon z_1)\). Letting first \(\varepsilon\downarrow0\) and then
\(\eta\downarrow0\) proves \eqref{eq:two-scale-sup}.
\end{proof}

\begin{corollary}
\label{cor:corrected-rate-sharp}
Under the assumptions of Theorem \ref{thm:main}, suppose in addition that
\(\alpha_0\not\equiv0\). Then, for every sufficiently small fixed time
\(t_*>0\), there exists a constant \(C_*(t_*)>0\) such that
\begin{align}
&\lim_{\varepsilon\downarrow0}
 \varepsilon^{-1}
 \left\lVert 
 u^\varepsilon(t_*)-u^{I,0}(t_*)
 -(u^{b,0}_1(x,y/\varepsilon,t_*),0)\right\rVert_{L^\infty(\Omega)}
 \notag\\
&\quad=:C_*
 =\max\Bigl\{
 \left\lVert u^{I,1}(t_*)\right\rVert_{L^\infty(\Omega)},
 \notag\\
&\qquad\qquad
 \left\lVert 
 u^{I,1}(x,0,t_*)
 +(u^{b,1}_1(x,z,t_*),u^{b,1}_2(x,z,t_*))\right\rVert_{L^\infty_{x,z}}
 \Bigr\}
 >0.
\label{eq:corrected-sharp-Linf}
\end{align}
Equivalently, the corrected defect satisfies
\begin{equation}
 \left\lVert 
 u^\varepsilon(t_*)-u^{I,0}(t_*)
 -(u^{b,0}_1(x,y/\varepsilon,t_*),0)\right\rVert_{L^\infty(\Omega)}
 =
 C_*\varepsilon+o(\varepsilon).
\label{eq:corrected-sharp-asymptotic}
\end{equation}
Consequently, for the above nondegenerate initial data and every
sufficiently small fixed positive time \(t_*\), the corrected
\(L^\infty\)-convergence rate \(O(\varepsilon)\) is optimal. The nondegenerate
lower bound requires \(t_*>0\) to be fixed; at the initial time, the
defect vanishes.
\end{corollary}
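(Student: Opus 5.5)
The plan is to write the corrected defect as \(\varepsilon\) times an explicit two-scale field plus an \(o(\varepsilon)\) remainder, and then read off its sup norm with Lemma \ref{lem:two-scale-sup}. From the definition \eqref{eq:3.70} of \(u^a\), at the fixed time \(t_*\) we have
\[
u^\varepsilon-u^{I,0}-\bigl((u_1^{b,0})^\varepsilon,0\bigr)
=\varepsilon\Bigl(u^{I,1}(x,y,t_*)+\bigl(u_1^{b,1},u_2^{b,1}\bigr)(x,y/\varepsilon,t_*)\Bigr)+\mathcal E_\varepsilon .
\]
The remainder \(\mathcal E_\varepsilon\) collects \(\varepsilon^2u^{I,2}\), \(\varepsilon^2u^{b,2}\), \(\varepsilon^3u_2^{b,3}\), \(\varepsilon^3S\) and \(E^\varepsilon\). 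By the uniform bounds in Lemmas \ref{pro:6.1}--\ref{pro:6.4}, the sup norms of all the profiles in it are finite, so those terms are \(O(\varepsilon^2)\). Lemma \ref{lemma:4.7} gives \(\|E^\varepsilon\|_\infty\le C\varepsilon^{3/2}\). Hence \(\varepsilon^{-1}\|\mathcal E_\varepsilon\|_\infty\to0\), and it suffices to compute \(\lim_{\varepsilon\downarrow0}\|F+G(\cdot,\cdot/\varepsilon)\|_{L^\infty}\) with \(F=u^{I,1}(t_*)\) and \(G=(u_1^{b,1},u_2^{b,1})(t_*)\).

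Next I will verify the hypotheses of Lemma \ref{lem:two-scale-sup}.

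\begin{itemize}
\item \(F\in H^3(\Omega)\), so \(F\) is uniformly continuous and vanishes at infinity.
\item For \(G_1=u_1^{b,1}\), Lemma \ref{pro:6.3} gives \(\langle z\rangle^\ell u_1^{b,1}\in H_x^3L_z^2\) and \(\langle z\rangle^\ell\partial_zu_1^{b,1}\in H_x^2L_z^2\) at a.e. time. At the fixed time I expect to pick \(t_*\) in the full-measure set of such times, or use continuity in \(C([0,T];H_x^3L^2_{z,\ell})\) together with the weighted heat estimate. With these bounds, the one-dimensional Sobolev inequality in \(z\) gives continuity and \(\sup_x|G_1(x,z)|\le C\langle z\rangle^{-1}\to0\).
\item For \(G_2=u_2^{b,1}=\int_z^\infty\partial_xu_1^{b,0}\,ds\), the weighted bounds of Proposition \ref{pro:boundary-layer equation} give uniform continuity and decay in the same way.
\end{itemize}

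Lemma \ref{lem:two-scale-sup} then yields \eqref{eq:corrected-sharp-Linf}, with the limit equal to the stated maximum. The equivalent form \eqref{eq:corrected-sharp-asymptotic} follows immediately.

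The remaining point, which I expect to be the main one, is positivity: \(C_*>0\). The maximum is at least \(\|u^{I,1}(t_*)\|_\infty\), so it suffices that \(u^{I,1}(t_*)\not\equiv0\). This follows from \eqref{eq:first-interior-nonzero}:
\[
\bigl\|u_2^{I,1}(\cdot,0,t_*)\bigr\|_{L^2_x}\ge \tfrac{4}{3\sqrt\pi}\,t_*^{3/2}\|\partial_x\alpha_0\|_{L^2}-Ct_*^{5/2}>0
\]
for \(t_*\) small. Here \(\partial_x\alpha_0\not\equiv0\) because \(\alpha_0\in L^2\) is nonzero. A nonzero trace forces \(u^{I,1}(t_*)\neq0\), and since \(u^{I,1}(t_*)\) is continuous, its sup norm is positive.

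The delicate input is \eqref{eq:first-interior-nonzero} itself. It requires the moment expansion \(M_0=-\frac{4}{3\sqrt\pi}t^{3/2}\alpha_0+O_{H^1_x}(t^{5/2})\), obtained from the heat-history representation \eqref{r5:corner-heat-history} and the remainder bound \eqref{eq:5.14}, and the boundary relation \(u_2^{I,1}|_{y=0}=-\partial_xM_0\) from \eqref{eq:3.5}. Both are already recorded just before the statement, so I will simply invoke them. This is what determines the smallness threshold on \(t_*\).
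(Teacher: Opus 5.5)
Your proposal is correct and follows essentially the same route as the paper. You decompose the corrected defect through the composite approximation, discard the $O(\varepsilon^{3/2})$ error and the $O(\varepsilon^2)$ profiles, apply Lemma~\ref{lem:two-scale-sup} to $u^{I,1}+(u^{b,1})^\varepsilon$, and obtain $C_*>0$ from the nonvanishing wall trace \eqref{eq:first-interior-nonzero}. Your continuity alternative, rather than the almost-every-time selection, is the one to use, since it keeps every small $t_*$ admissible. It is justified by $\partial_z\theta\in C([0,T];H_x^{s-1}L^2_{z,\ell})$ in Lemma~\ref{lemma:2.13}, which the paper uses implicitly.
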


\begin{proof}
At the fixed positive time \(t_*\), the exact error estimate gives
\[
 \varepsilon^{-1}\left\lVert E^\varepsilon(t_*)\right\rVert_{L^\infty(\Omega)}
 \leq C_T\varepsilon^{1/2}\longrightarrow0.
\]
After subtracting the leading-order boundary-layer profile from the composite expansion and dividing by \(\varepsilon\), all second- and higher-order profile terms vanish in \(L^\infty\) as \(\varepsilon\downarrow0\). The remaining two fields are
\[
 u^{I,1}(x,y,t_*)
 +\bigl(u^{b,1}_1(x,y/\varepsilon,t_*),u^{b,1}_2(x,y/\varepsilon,t_*)\bigr).
\]
These fields satisfy the assumptions of Lemma \ref{lem:two-scale-sup}. Therefore, applying that lemma yields \cref{eq:corrected-sharp-Linf}. Since
\[
 u^{I,1}(t_*)\neq0,
\]
the first term in the maximum is strictly positive, and hence \(C_*>0\). Multiplying the limiting relation by \(\varepsilon\) then gives \cref{eq:corrected-sharp-asymptotic}.
\end{proof}

\subsection{Strong Convergence for Finite Exponents and the First-Order Interior Correction}
Throughout this subsection, write
\[
 E_{\rm lead}^\varepsilon
 =u^\varepsilon-u^{I,0}-((u^{b,0}_1)^\varepsilon,0).
\]

Fix $T>0$. The uniform $L^p$ estimate for $E^\varepsilon$, $2\leq p\leq\infty$,
follows from \eqref{eq:initial-error-p}. Using the normal rescaling identity
\[
 \|F(x,y/\varepsilon,t)\|_{L^p(\Omega)}
 =\varepsilon^{1/p}\|F(x,z,t)\|_{L^p(\Omega_{\mathrm b})},
\]
together with the exact decomposition
\begin{align*}
 E_{\rm lead}^\varepsilon-\varepsilon(u^{I,1}+(u^{b,1})^\varepsilon)
 &=
 \varepsilon^2u^{I,2}
 +\varepsilon^2(u^{b,2}_1,u^{b,2}_2)^\varepsilon
 +\varepsilon^3S
 +\varepsilon^3(0,(u^{b,3}_2)^\varepsilon)
 +E^\varepsilon,
\end{align*}
we obtain
\begin{equation}
\begin{aligned}
 \sup_{0\leq t\leq T}
 \|E_{\rm lead}^\varepsilon(t)
   -\varepsilon(u^{I,1}(t)+(u^{b,1})^\varepsilon(t))\|_{L^p(\Omega)}
 \leq
 C_{T,p}\bigl(\varepsilon^2+\varepsilon^{3/2+2/p}\bigr),
 \qquad 2\leq p\leq\infty.
\end{aligned}
\label{eq:shared-first-correction}
\end{equation}
Here \(0<\varepsilon\leq1\), with the convention \(1/\infty=0\).
The higher-order profile terms carry the factors
\[
 \varepsilon^2,\qquad
 \varepsilon^{2+1/p},\qquad
 \varepsilon^3,\qquad
 \varepsilon^{3+1/p},
\]
respectively, and hence are all bounded by \(C\varepsilon^2\).

\begin{corollary}
\label{cor:finite-lp}
Under the assumptions of Theorem \ref{thm:main} and with the notation
introduced above in this subsection, fix \(T>0\). For
\(2\leq p\leq\infty\), with the convention \(1/\infty=0\), we have
\begin{align}
 \sup_{0\leq t\leq T}\|E^\varepsilon(t)\|_{L^p(\Omega)}
 &\leq C_{T,p}\varepsilon^{3/2+2/p},
 \label{eq:finite-lp-full}\\
 \sup_{0\leq t\leq T}\|E_{\rm lead}^\varepsilon(t)\|_{L^p(\Omega)}
 &\leq C_{T,p}\varepsilon.
 \label{eq:finite-lp-leading}
\end{align}
Moreover, for every finite \(2\leq p<\infty\),
\begin{align}
 &\sup_{0\leq t\leq T}
 \left|
 \varepsilon^{-1/p}\|u^\varepsilon(t)-u^{I,0}(t)\|_{L^p(\Omega)}
 -\|u^{b,0}_1(t)\|_{L^p(\Omega_{\mathrm b})}
 \right|
 \leq C_{T,p}\varepsilon^{1-1/p},
 \label{eq:finite-lp-profile}\\
 &\sup_{0\leq t\leq T}
 \|\varepsilon^{-1}E_{\rm lead}^\varepsilon(t)-u^{I,1}(t)\|_{L^p(\Omega)}
 \leq C_{T,p}\varepsilon^{1/p}.
 \label{eq:finite-lp-first-interior}
\end{align}
In particular, the uncorrected convergence rate in \(L^p\), for finite
\(p\), is \(O(\varepsilon^{1/p})\). If a fixed time \(t_*>0\) satisfies
\[
 \|u^{b,0}_1(t_*)\|_{L^p(\Omega_{\mathrm b})}>0,
\]
then the leading-order coefficient is precisely this profile norm, and
hence the rate \(O(\varepsilon^{1/p})\) is sharp at that time.
\end{corollary}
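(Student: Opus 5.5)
The plan is to derive all four estimates from two ingredients already available: the uniform error bound \eqref{eq:initial-error-p} for $E^\varepsilon$, and the first-correction decomposition \eqref{eq:shared-first-correction}. Together with the normal rescaling identity $\|F(x,y/\varepsilon,t)\|_{L^p(\Omega)}=\varepsilon^{1/p}\|F(x,z,t)\|_{L^p(\Omega_{\mathrm b})}$, these should reduce everything to uniform $L^p$ bounds on the first-order profiles.

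\emph{Estimate \eqref{eq:finite-lp-full}.} This is \eqref{eq:initial-error-p} with the factor $t^{1/8+3/(4p)}$ bounded by $C_T$.

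\emph{Estimate \eqref{eq:finite-lp-leading}.} We write
\[
E_{\rm lead}^\varepsilon=\varepsilon\bigl(u^{I,1}+(u^{b,1})^\varepsilon\bigr)+\bigl[E_{\rm lead}^\varepsilon-\varepsilon(u^{I,1}+(u^{b,1})^\varepsilon)\bigr].
\]
The bracket is bounded by $C(\varepsilon^2+\varepsilon^{3/2+2/p})\le C\varepsilon$ by \eqref{eq:shared-first-correction}. For the first term we need $\sup_t\|u^{I,1}\|_{L^p}$ and $\sup_t\|(u^{b,1})^\varepsilon\|_{L^p}\le \varepsilon^{1/p}\sup_t\|u^{b,1}\|_{L^p(\Omega_{\mathrm b})}$ to be bounded uniformly in time.
\begin{itemize}
\item $u^{I,1}\in C([0,T];H^3)$ by Lemma~\ref{pro:6.1}, which gives $L^2\cap L^\infty$ control.
\item $u_1^{b,1}\in C([0,T];H_x^3L_{z,\ell}^2)$, and $\partial_z u_1^{b,1}\in L^\infty H_x^2L^2_{z,\ell}$ by Lemma~\ref{pro:6.3}. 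The anisotropic embedding in $x$ and $z$ then gives $L^2\cap L^\infty$ on $\Omega_{\mathrm b}$.
\item $u_2^{b,1}=\int_z^\infty\partial_x u_1^{b,0}$ is controlled by weighted Cauchy--Schwarz from \eqref{eq:5.11}.
\end{itemize}
Interpolating between $L^2$ and $L^\infty$ yields all $p\in[2,\infty]$, and hence $\|E_{\rm lead}^\varepsilon\|_{L^p}\le C\varepsilon$.

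\emph{Estimate \eqref{eq:finite-lp-first-interior}.} Dividing the decomposition by $\varepsilon$ gives
\[
\|\varepsilon^{-1}E_{\rm lead}^\varepsilon-u^{I,1}\|_{L^p}\le \|(u^{b,1})^\varepsilon\|_{L^p}+C\bigl(\varepsilon+\varepsilon^{1/2+2/p}\bigr).
\]
The first term is at most $C\varepsilon^{1/p}$ by the rescaling identity, and for $p<\infty$ the remaining powers dominate $\varepsilon^{1/p}$ up to constants. This step only needs the exponent check $1/2+2/p\ge 1/p$, which holds.

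\emph{Estimate \eqref{eq:finite-lp-profile}.} We use the triangle inequality for the $L^p$ norm:
\[
\Bigl|\|u^\varepsilon-u^{I,0}\|_{L^p}-\|((u_1^{b,0})^\varepsilon,0)\|_{L^p}\Bigr|\le\|E^\varepsilon_{\rm lead}\|_{L^p}\le C\varepsilon,
\]
and $\|((u_1^{b,0})^\varepsilon,0)\|_{L^p}=\varepsilon^{1/p}\|u_1^{b,0}\|_{L^p(\Omega_{\mathrm b})}$. Dividing by $\varepsilon^{1/p}$ gives the bound $C\varepsilon^{1-1/p}$. Finiteness of $\sup_t\|u_1^{b,0}\|_{L^p(\Omega_{\mathrm b})}$ follows from \eqref{eq:5.11} and \eqref{eq:5.27}: $u_1^{b,0}=f-\varphi a$, with $f$ and the compactly supported lifting $\varphi a$ lying in $L^2\cap L^\infty$ by the same embedding.

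\emph{Sharpness.} At a time $t_*$ with $\|u_1^{b,0}(t_*)\|_{L^p(\Omega_{\mathrm b})}>0$, \eqref{eq:finite-lp-profile} shows that $\varepsilon^{-1/p}\|u^\varepsilon-u^{I,0}\|_{L^p}$ converges to this positive number. Hence the rate $O(\varepsilon^{1/p})$ is exact at that time.

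The only point requiring care is the uniform-in-time $L^\infty$ (hence $L^p$) bound on $u^{b,1}$ from the profile regularity classes. Here we plan to use Lemma~\ref{lem:one-dimensional-Sobolev} twice, once in $z$ and once in $x$. Everything else is bookkeeping of exponents.
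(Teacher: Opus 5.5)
Your proposal is correct and follows the paper's proof essentially line by line. You obtain \eqref{eq:finite-lp-full} from \eqref{eq:initial-error-p}, you obtain \eqref{eq:finite-lp-leading} and \eqref{eq:finite-lp-first-interior} from \eqref{eq:shared-first-correction} together with the rescaling identity and uniform $L^p$ bounds on $u^{I,1}$ and $u^{b,1}$, and you obtain \eqref{eq:finite-lp-profile} from the reverse triangle inequality applied to $u^\varepsilon-u^{I,0}=((u_1^{b,0})^\varepsilon,0)+E_{\rm lead}^\varepsilon$. (One wording slip: in the step for \eqref{eq:finite-lp-first-interior}, the powers $\varepsilon$ and $\varepsilon^{1/2+2/p}$ are \emph{dominated by} $\varepsilon^{1/p}$, not the reverse; this is what your exponent check actually shows.)
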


\begin{proof}
The estimate \eqref{eq:finite-lp-full} follows from \eqref{eq:initial-error-p} by taking the supremum over $0\leq t\leq T$. By
\cref{eq:shared-first-correction}, the uniform \(L^p\)-bounds for
\(u^{I,1}\) and \(u^{b,1}\), and the normal rescaling identity, we obtain \eqref{eq:finite-lp-leading}.
Dividing \cref{eq:shared-first-correction} by \(\varepsilon\), for every finite
\(p\geq2\) we have
\[
 \sup_{0\leq t\leq T}
 \|\varepsilon^{-1}E_{\rm lead}^\varepsilon(t)-u^{I,1}(t)\|_{L^p(\Omega)}
 \leq
 C_{T,p}
 \bigl(
 \varepsilon^{1/p}
 +\varepsilon
 +\varepsilon^{1/2+2/p}
 \bigr)
 \leq C_{T,p}\varepsilon^{1/p}.
\]
This proves \cref{eq:finite-lp-first-interior}. Finally, using the decomposition
\[
 u^\varepsilon-u^{I,0}
 =
 ((u^{b,0}_1)^\varepsilon,0)+E_{\rm lead}^\varepsilon,
\]
together with the reverse triangle inequality and the normal rescaling
identity, we obtain \cref{eq:finite-lp-profile}.
\end{proof}
\begin{remark}
The convergence to the interior field in
\eqref{eq:finite-lp-first-interior} is restricted to finite \(p\).
At the \(L^\infty\)-endpoint, the norm of the first-order boundary-layer
correction \((u^{b,1}_1,u^{b,1}_2)^\varepsilon\) is not reduced by the normal rescaling.
Accordingly, the \(L^\infty\)-limit of
\(\varepsilon^{-1}E_{\rm lead}^\varepsilon\), without subtracting \(u^{I,1}\), is described
by the two-scale formula in \cref{eq:corrected-sharp-Linf}. After the
interior field \(u^{I,1}\) is subtracted, the coefficient associated with the
remaining thin-layer norm is given by the endpoint relation stated in
\cref{eq:thin-coefficient-infinity-endpoint}.
\end{remark}

\begin{corollary}
\label{cor:finite-p-coefficients}
Under the assumptions and notation of Corollary \ref{cor:finite-lp}, for every
fixed \(2\leq p<\infty\),
\begin{equation}
 \sup_{0\leq t\leq T}
 \left|
 \varepsilon^{-1}\|E_{\rm lead}^\varepsilon(t)\|_{L^p(\Omega)}
 -\|u^{I,1}(t)\|_{L^p(\Omega)}
 \right|
 \leq C_{T,p}\varepsilon^{1/p}.
 \label{eq:finite-p-leading-coefficient}
\end{equation}
Moreover, the boundary-layer coefficient associated with the first interior
limit satisfies
\begin{align}
 &\sup_{0\leq t\leq T}
 \left|
 \varepsilon^{-1/p}
 \|\varepsilon^{-1}E_{\rm lead}^\varepsilon(t)-u^{I,1}(t)\|_{L^p(\Omega)}
 -\|u^{b,1}(t)\|_{L^p(\Omega_{\mathrm b})}
 \right|
 \notag\\
 &\qquad\leq
 C_{T,p}\left(
 \varepsilon^{1-1/p}+\varepsilon^{1/2+1/p}
 \right).
 \label{eq:finite-p-thin-coefficient}
\end{align}
If \(\alpha_0\not\equiv0\), then, for every sufficiently small fixed
\(t_*>0\), both leading-order coefficients are strictly positive, and
\begin{align*}
 \|E_{\rm lead}^\varepsilon(t_*)\|_{L^p(\Omega)}
 &=
 \varepsilon\|u^{I,1}(t_*)\|_{L^p(\Omega)}
 +O_{T,p}(\varepsilon^{1+1/p}),\\
 \|\varepsilon^{-1}E_{\rm lead}^\varepsilon(t_*)-u^{I,1}(t_*)\|_{L^p(\Omega)}
 &=
 \varepsilon^{1/p}\|u^{b,1}(t_*)\|_{L^p(\Omega_{\mathrm b})}
 +o(\varepsilon^{1/p}).
\end{align*}
Consequently, at such nondegenerate times, the \(O(\varepsilon)\) convergence
rate after subtraction of the leading-order boundary-layer correction and
the \(O(\varepsilon^{1/p})\) rate in \cref{eq:finite-lp-first-interior} are both
sharp.
\end{corollary}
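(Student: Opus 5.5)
The plan is to read everything off the shared expansion \eqref{eq:shared-first-correction} together with the normal rescaling identity $\|F(x,y/\varepsilon,t)\|_{L^p(\Omega)}=\varepsilon^{1/p}\|F\|_{L^p(\Omega_{\mathrm b})}$. No new energy estimate should be needed.

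\emph{First coefficient.} For \eqref{eq:finite-p-leading-coefficient} I would apply the reverse triangle inequality to \eqref{eq:finite-lp-first-interior}. This gives
\[
\bigl|\varepsilon^{-1}\|E_{\rm lead}^\varepsilon(t)\|_{L^p}-\|u^{I,1}(t)\|_{L^p}\bigr|\le\|\varepsilon^{-1}E_{\rm lead}^\varepsilon(t)-u^{I,1}(t)\|_{L^p}\le C_{T,p}\varepsilon^{1/p},
\]
uniformly on $[0,T]$.

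\emph{Thin-layer coefficient.} For \eqref{eq:finite-p-thin-coefficient} I would divide \eqref{eq:shared-first-correction} by $\varepsilon$. This writes $\varepsilon^{-1}E_{\rm lead}^\varepsilon-u^{I,1}=(u^{b,1})^\varepsilon+\mathcal E^\varepsilon$, where $\sup_t\|\mathcal E^\varepsilon\|_{L^p}\le C_{T,p}(\varepsilon+\varepsilon^{1/2+2/p})$. The reverse triangle inequality and the rescaling identity then give
\[
\bigl|\|\varepsilon^{-1}E_{\rm lead}^\varepsilon-u^{I,1}\|_{L^p}-\varepsilon^{1/p}\|u^{b,1}\|_{L^p(\Omega_{\mathrm b})}\bigr|\le\|\mathcal E^\varepsilon\|_{L^p}.
\]
Multiplying by $\varepsilon^{-1/p}$ produces the right-hand side $\varepsilon^{1-1/p}+\varepsilon^{1/2+1/p}$. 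Implicitly this requires $\sup_t\|u^{b,1}(t)\|_{L^p(\Omega_{\mathrm b})}<\infty$. For $u_1^{b,1}$ I would get this from Lemma \ref{pro:6.3} (weighted $H_x^1L_z^2$ and $\partial_z$ bounds) via the two-dimensional Gagliardo--Nirenberg inequality of Lemma \ref{lem:two-dimensional-Sobolev}. For $u_2^{b,1}=\int_z^\infty\partial_xu_1^{b,0}$ I would use weighted Cauchy--Schwarz and the bounds \eqref{eq:5.11}, \eqref{eq:5.27} on $u_1^{b,0}$.

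\emph{Nondegenerate times.} I would first show both coefficients are positive when $\alpha_0\not\equiv0$ and $t_*$ is small.
\begin{itemize}
\item For $u^{I,1}(t_*)$: by \eqref{eq:first-interior-nonzero}, its wall trace equals $\frac{4}{3\sqrt\pi}t_*^{3/2}\partial_x\alpha_0+O_{L_x^2}(t_*^{5/2})$. This is nonzero because $\partial_x\alpha_0\not\equiv0$, so $\|u^{I,1}(t_*)\|_{L^p}>0$.
\item For $u^{b,1}(t_*)$: its normal component at $z=0$ is $\partial_xM_0(t_*)$. By the moment expansion for $M_0$ in the preceding subsection (equivalently \eqref{eq:N1-wall-asymptotic}), this equals $-\frac{4}{3\sqrt\pi}t_*^{3/2}\partial_x\alpha_0+O(t_*^{5/2})\neq0$. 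Since $u_2^{b,1}$ is continuous in $z$ with values in $L_x^2$, it is nonzero on a set of positive measure, so $\|u^{b,1}(t_*)\|_{L^p(\Omega_{\mathrm b})}>0$.
\end{itemize}
The two asymptotic formulas then follow by multiplication. From \eqref{eq:finite-p-leading-coefficient}, multiplying by $\varepsilon$ gives the error $O(\varepsilon^{1+1/p})$. From \eqref{eq:finite-p-thin-coefficient}, multiplying by $\varepsilon^{1/p}$ gives the error $O(\varepsilon+\varepsilon^{1/2+2/p})$. This is $o(\varepsilon^{1/p})$ because $1>1/p$ and $1/2+2/p>1/p$ for $p\ge2$. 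Sharpness of both rates follows immediately from positivity of the coefficients.

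I expect the main obstacle to be the nonvanishing of $u^{b,1}(t_*)$. That step needs the wall-trace identity $u_2^{b,1}|_{z=0}=\partial_xM_0$ and the short-time moment asymptotics, and it needs $t_*$ small enough that the $O(t_*^{5/2})$ remainder cannot cancel the leading $t_*^{3/2}\partial_x\alpha_0$ term.
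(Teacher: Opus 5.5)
Your proposal is correct and follows the paper's proof step for step: the reverse triangle inequality applied to \eqref{eq:finite-lp-first-interior}, then dividing \eqref{eq:shared-first-correction} by $\varepsilon^{1+1/p}$ together with the rescaling identity, and finally positivity from the wall-trace asymptotics \eqref{eq:first-interior-nonzero}. Your identity $u_2^{b,1}|_{z=0}=\partial_xM_0$ is the same one the paper writes as $u_2^{b,1}|_{z=0}=-u_2^{I,1}|_{y=0}$, and your continuity-in-$z$ remark supplies a detail the paper leaves implicit.
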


\begin{proof}
The first estimate follows directly from
\cref{eq:finite-lp-first-interior} and the reverse triangle inequality.
For the second estimate, divide \cref{eq:shared-first-correction} by
\(\varepsilon^{1+1/p}\), and use
\[
 \|(u^{b,1})^\varepsilon(t)\|_{L^p(\Omega)}
 =
 \varepsilon^{1/p}\|u^{b,1}(t)\|_{L^p(\Omega_{\mathrm b})}
\]
together with the reverse triangle inequality. This yields
\cref{eq:finite-p-thin-coefficient}.

By \cref{eq:first-interior-nonzero}, the boundary trace
\(u^{I,1}_{2}|_{y=0}\) is nonzero for every sufficiently small fixed
positive time. Indeed, if \(\alpha_0\in L_x^2(\mathbb{R})\) is not identically zero,
then \(\partial_x\alpha_0\not\equiv0\), while the leading-order term of this trace is $ \frac{4}{3\sqrt{\pi}}t^{3/2}\partial_x\alpha_0$.
It follows that $ u^{I,1}(t_*)\neq0$.
Moreover, since $ u^{b,1}_2|_{z=0}=-u^{I,1}_{2}|_{y=0}$,
we also have $ u^{b,1}(t_*)\neq0$.
Both fields possess the previously established finite-\(L^p\) regularity
and the corresponding traces; therefore, their \(L^p\)-norms are strictly
positive. Multiplying the two coefficient estimates by the corresponding
powers of \(\varepsilon\) yields the stated asymptotic formulas.
\end{proof}

\begin{remark}
\label{rem:linfty-corrector-objects}
For all \(0<\varepsilon\leq1\), after subtracting the first-order boundary-layer
correction, we have the uniform strong approximation
\begin{equation}
 \sup_{0\leq t\leq T}
 \|\varepsilon^{-1}E_{\rm lead}^\varepsilon(t)-u^{I,1}(t)-(u^{b,1})^\varepsilon(t)\|_{L^\infty(\Omega)}
 \leq C_T\varepsilon^{1/2}.
 \label{eq:thin-corrector-infinity-remainder}
\end{equation}
Consequently, the boundary-layer coefficient also satisfies the endpoint
formula
\begin{equation}
 \sup_{0\leq t\leq T}
 \left|
 \|\varepsilon^{-1}E_{\rm lead}^\varepsilon(t)-u^{I,1}(t)\|_{L^\infty(\Omega)}
 -\|u^{b,1}(t)\|_{L^\infty(\Omega_{\mathrm b})}
 \right|
 \leq C_T\varepsilon^{1/2}.
 \label{eq:thin-coefficient-infinity-endpoint}
\end{equation}
This is the norm-coefficient endpoint counterpart of
\cref{eq:finite-p-thin-coefficient}, corresponding formally to
\(1/\infty=0\).

Indeed, taking \(p=\infty\) in \cref{eq:shared-first-correction} and
dividing by \(\varepsilon\), the right-hand side becomes
\[
 C_T(\varepsilon+\varepsilon^{1/2})\leq C_T\varepsilon^{1/2},
\]
which proves \cref{eq:thin-corrector-infinity-remainder}. Since $\|(u^{b,1})^\varepsilon(t)\|_{L^\infty(\Omega)}
 =
 \|u^{b,1}(t)\|_{L^\infty(\Omega_{\mathrm b})}$,
the reverse triangle inequality then yields
\cref{eq:thin-coefficient-infinity-endpoint}.
If a fixed time \(t_*>0\) satisfies
\(u^{b,1}(t_*)\neq0\), then
\[
 \|\varepsilon^{-1}E_{\rm lead}^\varepsilon(t_*)-u^{I,1}(t_*)\|_{L^\infty(\Omega)}
 \longrightarrow
 \|u^{b,1}(t_*)\|_{L^\infty(\Omega_{\mathrm b})}
 >0.
\]
Thus, the strong convergence
\(\varepsilon^{-1}E_{\rm lead}^\varepsilon-u^{I,1}\to0\), which holds for finite \(p\), cannot in general be
extended to the endpoint \(p=\infty\). Without subtracting the interior
field, the quantity
\[
 \varepsilon^{-1}E_{\rm lead}^\varepsilon
\]
retains the superposition of the outer and inner fields. At each fixed
positive time, its limiting \(L^\infty\)-norm is therefore still described
by the two-scale maximum formula in \cref{eq:corrected-sharp-Linf}.
\end{remark}

\appendix
\crefalias{section}{appendix}
\section{Derivation of inner and outer profiles}
\label{sec:appendixA}
In this section, we will give a formal derivation of the inner and outer profiles with the corresponding initial and boundary conditions(see Chapter 4 of \cite{Holmes2013} or Appendix A of \cite{WangWen2024} for more detailed illustrations).

\textit{Step 1. The initial and boundary conditions.}

Substituting \eqref{eq:3.1} into initial and boundary conditions \eqref{eq:1.2}, we find the initial and boundary conditions should satisfy
\begin{equation}
	u^{I,0}|_{t=0} = \tilde{u}(x,y),\quad u^{I,j}|_{t=0} =0,j \geq 1, \quad  u^{b,i}|_{t=0}=0,i \geq 0.
	\label{eq:A1}
\end{equation}
and
\begin{equation}
	u^{I,j}(x,0,t) + u^{b,j}(x,0,t) = 0, j \geq 0.
	\label{eq:A2}
\end{equation}

\textit{Step 2. Equations of leading order profiles.}

Plugging \eqref{eq:3.1} into \eqref{eq:1.1}, we have
\begin{equation}
	\begin{aligned}
		&\partial_{t} \sum_{j=0}^{+\infty} \varepsilon^{j}(u_{1}^{I,j}+u_{1}^{b,j})+\left(\sum_{j=0}^{+\infty}\varepsilon^{j}(u^{I,j}+u^{b,j}) \cdot \nabla\right)\sum_{k=0}^{+\infty}\varepsilon^{k}(u_{1}^{I,k}+u_{1}^{b,k}) \\[0.2cm]
		&\qquad+\partial_{x} \sum_{j=0}^{+\infty} \varepsilon^{j}(p^{I, j}+p^{b, j})-\partial_{x}^{2}\sum_{j=0}^{+\infty}\varepsilon^{j}(u_{1}^{I,j}+u_{1}^{b,j})-\varepsilon^{2}\partial_{y}^{2}\sum_{j=0}^{+\infty}\varepsilon^{j}(u_{1}^{I,j}+u_{1}^{b,j})=0.
	\end{aligned}
	\label{eq:A3}
\end{equation}
\begin{equation}
	\begin{aligned}
		&\partial_{t} \sum_{j=0}^{+\infty} \varepsilon^{j}(u_{2}^{I,j}+u_{2}^{b,j})+\left(\sum_{j=0}^{+\infty}\varepsilon^{j}(u^{I,j}+u^{b,j}) \cdot \nabla\right)\sum_{k=0}^{+\infty}\varepsilon^{k}(u_{2}^{I,k}+u_{2}^{b,k}) \\[0.2cm]
		&\qquad+\partial_{y} \sum_{j=0}^{+\infty} \varepsilon^{j}(p^{I, j}+p^{b, j})-\partial_{x}^{2}\sum_{j=0}^{+\infty}\varepsilon^{j}(u_{2}^{I,j}+u_{2}^{b,j})-\varepsilon^{2}\partial_{y}^{2}\sum_{j=0}^{+\infty}\varepsilon^{j}(u_{2}^{I,j}+u_{2}^{b,j})=0.
	\end{aligned}
	\label{eq:A4}
\end{equation}
and
\begin{equation}
	\operatorname{div} \sum_{j=0}^{+\infty} \varepsilon^{j}\left(u^{I, j}+u^{b, j}\right)=0 .
	\label{eq:A5}
\end{equation}
Formally, let $z \rightarrow+\infty$, we get
\begin{equation}
	\left\{\begin{array}{l}
		\partial_t u_{1}^{I,j}+\displaystyle \sum_{\ell=0}^j u^{I, \ell} \cdot \nabla u_{1}^{I, j-\ell}  +\partial_{x} p^{I,j} -\partial_{x}^{2}u_{1}^{I,j}=\partial_{y}^{2}u_{1}^{I,j-2} \\[0.2cm]
		\partial_t u_{2}^{I,j}+\displaystyle \sum_{\ell=0}^j u^{I, \ell} \cdot \nabla u_{2}^{I, j-\ell}  +\partial_{y} p^{I,j} -\partial_{x}^{2}u_{2}^{I,j}=\partial_{y}^{2}u_{2}^{I,j-2} \\[0.2cm]
		\operatorname{div} u^{I, j}=0.
	\end{array}\right.
	\label{eq:A6}
\end{equation}
for $j \geq 0$, where $u_{1}^{I,-1}=u_{1}^{I,-2}=u_{2}^{I,-1}=u_{2}^{I,-2}=0$.
\begin{lemma}
	\label{lemma:A1}
	The zeroth order outer profiles $(u^{I,0},p^{I,0})$ satisfies the limit problem \eqref{eq:1.3}-\eqref{eq:1.4}. The zeroth order inner profiles $u_{2}^{b,0}$, $p^{b,0}$ vanish identically,i.e.,
	\begin{equation}
		u_{2}^{b,0}=0,\quad p^{b,0}=0.
		\label{eq:A7}
	\end{equation}
	The zeroth order inner profile $u_{1}^{b,0}$ satisfies problem \eqref{eq:3.3}.
\end{lemma}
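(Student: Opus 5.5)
The proof is a formal matched-asymptotics bookkeeping argument. I substitute the expansion \eqref{eq:3.1} into \eqref{eq:A3}--\eqref{eq:A5}, rewrite every outer quantity inside the layer by Taylor expansion at the wall, $f(x,\varepsilon z,t)=\sum_k \frac{(\varepsilon z)^k}{k!}\overline{\partial_y^k f}$, and equate the coefficients of each power of $\varepsilon$ separately in the outer region ($z\to\infty$, where all inner profiles decay) and in the inner region. In the inner variables $\partial_y=\varepsilon^{-1}\partial_z$, so the inner equations start at order $\varepsilon^{-1}$. Note that $\varepsilon^2\partial_y^2$ acting on an inner profile equals $\partial_z^2$, which is of order one.

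\emph{Outer problem.} Letting $z\to\infty$ at order $\varepsilon^0$ gives \eqref{eq:A6} with $j=0$, which is exactly the system \eqref{eq:1.3}. The initial condition $u^{I,0}|_{t=0}=\tilde u$ comes from \eqref{eq:A1}. The missing ingredient is the impermeability condition in \eqref{eq:1.4}, which I obtain from the inner divergence equation as follows.

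\emph{Inner problem.}
\begin{itemize}
\item Order $\varepsilon^{-1}$ of \eqref{eq:A5} gives $\partial_z u_2^{b,0}=0$. Decay as $z\to\infty$ then forces $u_2^{b,0}=0$, and \eqref{eq:A2} with $j=0$ yields $u_2^{I,0}(x,0,t)=0$, completing \eqref{eq:1.4}.
\item Order $\varepsilon^{-1}$ of \eqref{eq:A4}: the transport term is $(\overline{u_2^{I,0}}+u_2^{b,0})\partial_z u_2^{b,0}$, which vanishes by the previous step. What remains is $\partial_z p^{b,0}=0$, so decay gives $p^{b,0}=0$, which is \eqref{eq:A7}.
\item Order $\varepsilon^{-1}$ of \eqref{eq:A3} reads $(\overline{u_2^{I,0}}+u_2^{b,0})\partial_z u_1^{b,0}=0$. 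This holds automatically, so it is a consistency check rather than a new condition.
\end{itemize}

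\emph{The $u_1^{b,0}$ equation.} At order $\varepsilon^0$ of \eqref{eq:A3} inside the layer I subtract the outer equation evaluated at $y=0$. This removes $\partial_t a$, $a\partial_x a$, $\overline{\partial_x p^{I,0}}$ and $-\partial_x^2 a$, and leaves
\[
\partial_t u_1^{b,0}-\partial_x^2u_1^{b,0}-\partial_z^2u_1^{b,0}+(a+u_1^{b,0})\partial_xu_1^{b,0}+u_1^{b,0}\partial_xa+W\,\partial_zu_1^{b,0}=0 .
\]
Here $W$ is the order-$\varepsilon$ part of the total normal velocity divided by $\varepsilon$, namely
\[
W=z\overline{\partial_yu_2^{I,0}}+\overline{u_2^{I,1}}+u_2^{b,1}.
\]
Its three terms arise as follows:
\begin{itemize}
\item $z\overline{\partial_yu_2^{I,0}}$ is the first Taylor term of $u_2^{I,0}$ at the wall;
\item $\overline{u_2^{I,1}}$ is the wall value of the next outer profile;
\item $u_2^{b,1}$ is the next inner profile.
\end{itemize}
Because $W$ is multiplied by $\varepsilon^{-1}\partial_z u_1^{b,0}$ in the transport term, these order-$\varepsilon$ normal velocities feed into the order-one equation. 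I then simplify $W$ in three steps:
\begin{itemize}
\item Incompressibility of $u^{I,0}$ gives $\overline{\partial_yu_2^{I,0}}=-\partial_xa$.
\item Order $\varepsilon^0$ of \eqref{eq:A5} gives $\partial_xu_1^{b,0}+\partial_zu_2^{b,1}=0$, so by decay $u_2^{b,1}=\int_z^\infty\partial_xu_1^{b,0}\,ds$.
\item \eqref{eq:A2} with $j=1$ gives $\overline{u_2^{I,1}}=-u_2^{b,1}|_{z=0}$.
\end{itemize}
Combining them, $\overline{u_2^{I,1}}+u_2^{b,1}=-\int_0^z\partial_xu_1^{b,0}\,ds$, and hence
\[
W=-\Bigl(z\partial_xa+\int_0^z\partial_xu_1^{b,0}\,ds\Bigr),
\]
which is precisely the coefficient in \eqref{eq:3.3}.

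\emph{Side conditions.} The boundary value $u_1^{b,0}|_{z=0}=-a$ comes from \eqref{eq:A2} with $j=0$. The zero initial value comes from \eqref{eq:A1}. Decay as $z\to\infty$ is part of the ansatz.

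The step I expect to be the main obstacle is the bookkeeping at order $\varepsilon^0$. Contributions from different orders of the expansion mix there: the Taylor terms $z\overline{\partial_y u_2^{I,0}}$, the first-order profiles $u_2^{b,1}$ and $\overline{u_2^{I,1}}$, and the decay and matching conditions together determine $W$. Care is needed to confirm that no term of $\overline{u_1^{I,1}}$ or $p^{b,1}$ enters at this order.
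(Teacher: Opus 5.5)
Your proposal is correct and follows essentially the same route as the paper. The paper's proof also matches coefficients order by order: it computes $\mathcal{F}^{-1},\mathcal{F}^{0},\mathcal{G}^{-1}$ and uses the divergence relation \eqref{eq:A8} to obtain successively $u_2^{b,0}=0$, $\overline{u_2^{I,0}}=0$, the limit problem, $p^{b,0}=0$, and finally \eqref{eq:3.3}.

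Your explicit reduction of $W$ via $u_2^{b,1}=\int_z^\infty\partial_xu_1^{b,0}\,ds$ and $\overline{u_2^{I,1}}=-u_2^{b,1}|_{z=0}$ spells out what the paper leaves implicit when it cites \eqref{eq:A8}--\eqref{eq:A11}. The concern you flag at the end is harmless. Since $u_1^{I,1}$ is an outer function, it is never differentiated in $z$, and $p^{b,1}$ enters the tangential equation only through $\varepsilon\partial_xp^{b,1}$; hence neither contributes before order $\varepsilon$.
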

\begin{proof}
    Near the boundary, subtracting $(\ref{eq:A6})_1$ from \eqref{eq:A3},
    we identify the required coefficients using $y=\varepsilon z$ and
    the finite Taylor formula
    \[
    \begin{aligned}
    f(x,\varepsilon z,t)
    ={}&\sum_{\ell=0}^{m-1}\frac{(\varepsilon z)^\ell}{\ell!}
             \partial_y^\ell f(x,0,t)\\
     &+\frac1{(m-1)!}\int_0^{\varepsilon z}
        (\varepsilon z-r)^{m-1}\partial_y^mf(x,r,t)\,dr.
    \end{aligned}
    \]
    The following coefficient series is formal; only the finite
    remainders needed in Section~\ref{subsec:source terms} are used in
    the estimates. For the outer profiles $(u^{I,j},p^{I,j})$, we obtain
	$$
	\sum_{j=-1}^{+\infty} \varepsilon^{j} \mathcal{F}^{j}(x, z, t)=0,
	$$
	where
	$$
	\mathcal{F}^{-1} = (u_{2}^{I,0}(x,0,t)+u_{2}^{b,0}) \partial_{z} u_{1}^{b,0},
	$$
	$$
	\begin{aligned}
		\mathcal{F}^{0} = \partial_{t} u_{1}^{b,0} &+ (u_{1}^{I,0}(x,0,t)+u_{1}^{b,0}) \partial_{x} u_{1}^{b,0}+(u_{2}^{I,1}(x,0,t)+u_{2}^{b,1}+z \partial_{y}u_{2}^{I,0}(x,0,t))\partial_{z}u_{1}^{b,0} \\[0.2cm]
		& +u_{1}^{b,0} \partial_{x}u_{1}^{I,0}(x,0,t) + u_{2}^{b,0}\partial_{y}u_{1}^{I,0}(x,0,t) + \partial_{x}p^{b,0}-\partial_{x}^{2}u_{1}^{b,0} \\[0.2cm]
		& +(u_{2}^{I,0}(x,0,t)+u_{2}^{b,0})\partial_{z}u_{1}^{b,1} -\partial_{z}^{2}u_{1}^{b,0},
	\end{aligned}
	$$
	$$
	\begin{aligned}
		\mathcal{F}^{1}=&z\partial_{y}u_{1}^{I,0}(x,0,t)\partial_{x}u_{1}^{b,0}+zu_{1}^{b,0}\partial_{y}\partial_{x}u_{1}^{I,0}(x,0,t)+\frac{z^{2}}{2}\partial_{y}^{2}u_{2}^{I,0}(x,0,t)\partial_{z}u_{1}^{b,0} \\[0.2cm]
		&+zu_{2}^{b,0}\partial_{y}^{2}u_{1}^{I,0}(x,0,t) +\partial_{t}u_{1}^{b,1}+u_{1}^{I,1}(x,0,t)\partial_{x}u_{1}^{b,0}+u_{1}^{b,1}\partial_{x}u_{1}^{I,0}(x,0,t)+u_{1}^{b,1}\partial_{x}u_{1}^{b,0} \\[0.2cm]
		&+u_{1}^{I,0}(x,0,t)\partial_{x}u_{1}^{b,1}+u_{1}^{b,0}\partial_{x}u_{1}^{I,1}(x,0,t)+u_{1}^{b,0}\partial_{x}u_{1}^{b,1}+z\partial_{y}u_{2}^{I,1}(x,0,t)\partial_{z}u_{1}^{b,0} \\[0.2cm]
		&+u_{2}^{b,1}\partial_{y}u_{1}^{I,0}(x,0,t)+z\partial_{y}u_{2}^{I,0}(x,0,t)\partial_{z}u_{1}^{b,1}+u_{2}^{b,0}\partial_{y}u_{1}^{I,1}(x,0,t)+\partial_{x}p^{b,1}-\partial_{x}^{2}u_{1}^{b,1} \\[0.2cm]
		&+u_{2}^{I,2}(x,0,t)\partial_{z}u_{1}^{b,0}+u_{2}^{b,2}\partial_{z}u_{1}^{b,0}+u_{2}^{I,1}(x,0,t)\partial_{z}u_{1}^{b,1}+u_{2}^{b,1}\partial_{z}u_{1}^{b,1}+u_{2}^{I,0}(x,0,t)\partial_{z}u_{1}^{b,2} \\[0.2cm]
		&+u_{2}^{b,0}\partial_{z}u_{1}^{b,2}-\partial_{z}^{2}u_{1}^{b,1},
	\end{aligned}
	$$
	$$
	\begin{aligned}
		\mathcal{F}^{2}=&\frac{z^{2}}{2}\partial_{y}^{2}u_{1}^{I,0}(x,0,t)\partial_{x}u_{1}^{b,0}+\frac{z^{2}}{2}u_{1}^{b,0}\partial_{y}^{2}\partial_{x}u_{1}^{I,0}(x,0,t)+\frac{z^{3}}{6}\partial_{y}^{3}u_{2}^{I,0}(x,0,t)\partial_{z}u_{1}^{b,0}\\[0.2cm]
		&+\frac{z^{2}}{2}u_{2}^{b,0}\partial_{y}^{3}u_{1}^{I,0}(x,0,t)+z\partial_{y}u_{1}^{I,1}(x,0,t)\partial_{x}u_{1}^{b,0}+zu_{1}^{b,1}\partial_{y}\partial_{x}u_{1}^{I,0}(x,0,t) \\[0.2cm]
		&+z\partial_{y}u_{1}^{I,0}(x,0,t)\partial_{x}u_{1}^{b,1}+zu_{1}^{b,0}\partial_{y}\partial_{x}u_{1}^{I,1}(x,0,t)+\frac{z^{2}}{2}\partial_{y}^{2}u_{2}^{I,1}(x,0,t)\partial_{z}u_{1}^{b,0}\\[0.2cm]
		&+zu_{2}^{b,1}\partial_{y}^{2}u_{1}^{I,0}(x,0,t)+\frac{z^{2}}{2}\partial_{y}^{2}u_{2}^{I,0}(x,0,t)\partial_{z}u_{1}^{b,1}+zu_{2}^{b,0}\partial_{y}^{2}u_{1}^{I,1}(x,0,t)+\partial_{t}u_{1}^{b,2} \\[0.2cm]
		&+u_{1}^{I,2}(x,0,t)\partial_{x}u_{1}^{b,0}+u_{1}^{b,2}\partial_{x}u_{1}^{I,0}(x,0,t)+u_{1}^{b,2}\partial_{x}u_{1}^{b,0}+u_{1}^{I,1}(x,0,t)\partial_{x}u_{1}^{b,1}\\[0.2cm]
		&+u_{1}^{b,1}\partial_{x}u_{1}^{I,1}(x,0,t)+u_{1}^{b,1}\partial_{x}u_{1}^{b,1}+u_{1}^{I,0}(x,0,t)\partial_{x}u_{1}^{b,2}+u_{1}^{b,0}\partial_{x}u_{1}^{I,2}(x,0,t)\\[0.2cm]
		&+u_{1}^{b,0}\partial_{x}u_{1}^{b,2}+z\partial_{y}u_{2}^{I,2}(x,0,t)\partial_{z}u_{1}^{b,0}+u_{2}^{b,2}\partial_{y}u_{1}^{I,0}(x,0,t)+z\partial_{y}u_{2}^{I,1}(x,0,t)\partial_{z}u_{1}^{b,1}\\[0.2cm]
		&+u_{2}^{b,1}\partial_{y}u_{1}^{I,1}(x,0,t)+z\partial_{y}u_{2}^{I,0}(x,0,t)\partial_{z}u_{1}^{b,2}+u_{2}^{b,0}\partial_{y}u_{1}^{I,2}(x,0,t)+\partial_{x}p^{b,2}-\partial_{x}^{2}u_{1}^{b,2}\\[0.2cm]
		&+u_{2}^{I,3}(x,0,t)\partial_{z}u_{1}^{b,0}+u_{2}^{b,3}\partial_{z}u_{1}^{b,0}+u_{2}^{I,2}(x,0,t)\partial_{z}u_{1}^{b,1}+u_{2}^{b,2}\partial_{z}u_{1}^{b,1}+u_{2}^{I,1}(x,0,t)\partial_{z}u_{1}^{b,2}\\[0.2cm]
		&+u_{2}^{b,1}\partial_{z}u_{1}^{b,2}+u_{2}^{I,0}(x,0,t)\partial_{z}u_{1}^{b,3}+u_{2}^{b,0}\partial_{z}u_{1}^{b,3}-\partial_{z}^{2}u_{1}^{b,2},\\[0.2cm]
		&......
	\end{aligned}
	$$
	Similar to the derivation of $\mathcal{F}^{j}$, from \eqref{eq:A4} and $(\ref{eq:A6})_{2}$, we obtain
	$$
	\sum_{j=-1}^{+\infty} \varepsilon^{j} \mathcal{G}^j(x, z, t)=0,
	$$
	where
	$$
	\mathcal{G}^{-1}=(u_{2}^{I,0}(x,0,t)+u_{2}^{b,0}) \partial_{z}u_{2}^{b,0}+\partial_{z}p^{b,0},
	$$
	$$
	\begin{aligned}
		\mathcal{G}^{0}=&\partial_{t}u_{2}^{b,0}+(u_{1}^{I,0}(x,0,t)+u_{1}^{b,0})\partial_{x}u_{2}^{b,0}+(u_{2}^{I,1}(x,0,t)+u_{2}^{b,1}+z\partial_{y}u_{2}^{I,0}(x,0,t))\partial_{z}u_{2}^{b,0} \\[0.2cm]
		&+u_{1}^{b,0}\partial_{x}u_{2}^{I,0}(x,0,t)+u_{2}^{b,0}\partial_{y}u_{2}^{I,0}(x,0,t)-\partial_{x}^{2}u_{2}^{b,0}+(u_{2}^{I,0}(x,0,t)+u_{2}^{b,0})\partial_{z}u_{2}^{b,1} \\[0.2cm]
		&+\partial_{z}p^{b,1}-\partial_{z}^{2}u_{2}^{b,0},
	\end{aligned}
	$$
	$$
	\begin{aligned}
		\mathcal{G}^{1}=&z\partial_{y}u_{1}^{I,0}(x,0,t)\partial_{x}u_{2}^{b,0}+zu_{1}^{b,0}\partial_{y}\partial_{x}u_{2}^{I,0}(x,0,t)+\frac{z^{2}}{2}\partial_{y}^{2}u_{2}^{I,0}(x,0,t)\partial_{z}u_{2}^{b,0} \\[0.2cm]
		&+zu_{2}^{b,0}\partial_{y}^{2}u_{2}^{I,0}(x,0,t)+\partial_{t}u_{2}^{b,1}+u_{1}^{I,1}(x,0,t)\partial_{x}u_{2}^{b,0}+u_{1}^{b,1}\partial_{x}u_{2}^{I,0}(x,0,t)+u_{1}^{b,1}\partial_{x}u_{2}^{b,0}\\[0.2cm]
		&+u_{1}^{I,0}(x,0,t)\partial_{x}u_{2}^{b,1}+u_{1}^{b,0}\partial_{x}u_{2}^{I,1}(x,0,t)+u_{1}^{b,0}\partial_{x}u_{2}^{b,1}+z\partial_{y}u_{2}^{I,1}(x,0,t)\partial_{z}u_{2}^{b,0}\\[0.2cm]
		&+u_{2}^{b,1}\partial_{y}u_{2}^{I,0}(x,0,t)+z\partial_{y}u_{2}^{I,0}(x,0,t)\partial_{z}u_{2}^{b,1}+u_{2}^{b,0}\partial_{y}u_{2}^{I,1}(x,0,t)-\partial_{x}^{2}u_{2}^{b,1}\\[0.2cm]
		&+u_{2}^{I,2}(x,0,t)\partial_{z}u_{2}^{b,0}+u_{2}^{b,2}\partial_{z}u_{2}^{b,0}+u_{2}^{I,1}(x,0,t)\partial_{z}u_{2}^{b,1}+u_{2}^{b,1}\partial_{z}u_{2}^{b,1}+u_{2}^{I,0}(x,0,t)\partial_{z}u_{2}^{b,2}\\[0.2cm]
		&+u_{2}^{b,0}\partial_{z}u_{2}^{b,2}+\partial_{z}p^{b,2}-\partial_{z}^{2}u_{2}^{b,1},
	\end{aligned}
	$$
	$$
	\begin{aligned}
		\mathcal{G}^{2}=&\frac{z^{2}}{2}\partial_{y}^{2}u_{1}^{I,0}(x,0,t)\partial_{x}u_{2}^{b,0}+\frac{z^{2}}{2}u_{1}^{b,0}\partial_{y}^{2}\partial_{x}u_{2}^{I,0}(x,0,t)+\frac{z^{3}}{6}\partial_{y}^{3}u_{2}^{I,0}(x,0,t)\partial_{z}u_{2}^{b,0} \\[0.2cm]
		&+\frac{z^{2}}{2}u_{2}^{b,0}\partial_{y}^{3}u_{2}^{I,0}(x,0,t)+z\partial_{y}u_{1}^{I,1}(x,0,t)\partial_{x}u_{2}^{b,0}+zu_{1}^{b,1}\partial_{y}\partial_{x}u_{2}^{I,0}(x,0,t) \\[0.2cm]
		&+z\partial_{y}u_{1}^{I,0}(x,0,t)\partial_{x}u_{2}^{b,1}+zu_{1}^{b,0}\partial_{y}\partial_{x}u_{2}^{I,1}(x,0,t)+\frac{z^{2}}{2}\partial_{y}^{2}u_{2}^{I,1}(x,0,t)\partial_{z}u_{2}^{b,0} \\[0.2cm]
		&+zu_{2}^{b,1}\partial_{y}^{2}u_{2}^{I,0}(x,0,t)+\frac{z^{2}}{2}\partial_{y}^{2}u_{2}^{I,0}(x,0,t)\partial_{z}u_{2}^{b,1}+zu_{2}^{b,0}\partial_{y}^{2}u_{2}^{I,1}(x,0,t)+\partial_{t}u_{2}^{b,2} \\[0.2cm]
		&+u_{1}^{I,2}(x,0,t)\partial_{x}u_{2}^{b,0}+u_{1}^{b,2}\partial_{x}u_{2}^{I,0}(x,0,t)+u_{1}^{b,2}\partial_{x}u_{2}^{b,0}+u_{1}^{I,1}(x,0,t)\partial_{x}u_{2}^{b,1} \\[0.2cm]
		&+u_{1}^{b,1}\partial_{x}u_{2}^{I,1}(x,0,t)+u_{1}^{b,1}\partial_{x}u_{2}^{b,1}+u_{1}^{I,0}(x,0,t)\partial_{x}u_{2}^{b,2}+u_{1}^{b,0}\partial_{x}u_{2}^{I,2}(x,0,t) \\[0.2cm]
		&+u_{1}^{b,0}\partial_{x}u_{2}^{b,2}+z\partial_{y}u_{2}^{I,2}(x,0,t)\partial_{z}u_{2}^{b,0}+u_{2}^{b,2}\partial_{y}u_{2}^{I,0}(x,0,t)+z\partial_{y}u_{2}^{I,1}(x,0,t)\partial_{z}u_{2}^{b,1} \\[0.2cm]
		&+u_{2}^{b,1}\partial_{y}u_{2}^{I,1}(x,0,t)+z\partial_{y}u_{2}^{I,0}(x,0,t)\partial_{z}u_{2}^{b,2}+u_{2}^{b,0}\partial_{y}u_{2}^{I,2}(x,0,t)-\partial_{x}^{2}u_{2}^{b,2} \\[0.2cm]
		&+u_{2}^{I,3}(x,0,t)\partial_{z}u_{2}^{b,0}+u_{2}^{b,3}\partial_{z}u_{2}^{b,0}+u_{2}^{I,2}(x,0,t)\partial_{z}u_{2}^{b,1}+u_{2}^{b,2}\partial_{z}u_{2}^{b,1}+u_{2}^{I,1}(x,0,t)\partial_{z}u_{2}^{b,2} \\[0.2cm]
		&+u_{2}^{b,1}\partial_{z}u_{2}^{b,2}+u_{2}^{I,0}(x,0,t)\partial_{z}u_{2}^{b,3}+u_{2}^{b,0}\partial_{z}u_{2}^{b,3}+\partial_{z}p^{b,3}-\partial_{z}^{2}u_{2}^{b,2}, \\[0.2cm]
		&......
	\end{aligned}
	$$
	Moreover, from \eqref{eq:A5} and $(\ref{eq:A6})_{3}$, we have
	\begin{equation}
		\partial_{x}u_{1}^{b,j}+\partial_{z}u_{2}^{b,j+1}=0,\quad \forall j \geq0.
		\label{eq:A8}
	\end{equation}
	In particular, $\partial_{z}u_{2}^{b,0}=0$, combining with $\displaystyle \lim _{z \rightarrow +\infty} u_{2}^{b,0}(x,z,t)=0$ yield
	\begin{equation}
		u_{2}^{b,0}=0.
		\label{eq:A9}
	\end{equation}
	which together with the boundary condition \eqref{eq:A2} implies
	\begin{equation}
		u_{2}^{I,0}(x,0,t)=0.
		\label{eq:A10}
	\end{equation}
	Letting $j=0$ in \eqref{eq:A6}, combining with \eqref{eq:A1} and \eqref{eq:A10}, we find that $(u^{I,0},p^{I,0})$ satisfies the limit problem \eqref{eq:1.3}-\eqref{eq:1.4}. Next, from $\mathcal{G}^{-1}=0$ and \eqref{eq:A9}, we have
	$$
	\partial_{z}p^{b,0}=0,
	$$
	which implies
	\begin{equation}
		p^{b,0}=0.
		\label{eq:A11}
	\end{equation}
	From$\mathcal{F}^{0}=0$, \eqref{eq:A8}-\eqref{eq:A11},the initial condition \eqref{eq:A1} and the boundary condition \eqref{eq:A2}, we find that $u_{1}^{b,0}$ satisfies the problem \eqref{eq:3.3}.The proof is complete.
\end{proof}

\textit{Step 3. Equations of first order profiles.} From \eqref{eq:A8}, we have the following Corollary.
\begin{corollary}
	\label{corollary:A2}
	The first order boundary layer profile $u_{2}^{b,1}$ satisfies
	\begin{equation}
		u_{2}^{b,1}=\int_{z}^{+\infty} \partial_{x}u_{1}^{b,0}(x,s,t)ds.
		\label{eq:A12}
	\end{equation}
	As a consequence(using \eqref{eq:A2}), we have the following boundary conditions:
	\begin{equation}
		\label{eq:A13}
		u_{2}^{I,1}(x,0,t)=-\int_{0}^{+\infty}\partial_{x}u_{1}^{b,0}(x,s,t)ds.
	\end{equation}
\end{corollary}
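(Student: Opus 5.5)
The plan is to obtain \eqref{eq:A12} directly from the divergence relation \eqref{eq:A8} at index $j=0$, namely $\partial_x u_1^{b,0}+\partial_z u_2^{b,1}=0$. This relation is derived by collecting the $\varepsilon^{0}$ coefficient in the inner divergence identity \eqref{eq:A5} after the outer divergence $(\ref{eq:A6})_3$ has been subtracted. Recall that $\partial_y$ acting on an inner profile produces $\varepsilon^{-1}\partial_z$, so at order $\varepsilon^{j}$ the inner divergence pairs $\partial_x u_1^{b,j}$ with $\partial_z u_2^{b,j+1}$.

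First, I integrate the relation in $z$ from $z$ to $+\infty$. This gives
\[
 u_2^{b,1}(x,+\infty,t)-u_2^{b,1}(x,z,t)=-\int_z^{+\infty}\partial_x u_1^{b,0}(x,s,t)\,ds .
\]
The standing decay assumption for inner profiles gives $u_2^{b,1}\to0$ as $z\to\infty$, and this yields \eqref{eq:A12}.

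For the integral to be meaningful, the integrand $\partial_x u_1^{b,0}$ must be integrable in $z$. At the formal level this is part of the decay hypothesis. At the rigorous level it follows from the weighted bounds \eqref{eq:5.11}: weighted Cauchy--Schwarz gives $\int_0^\infty|\partial_x u_1^{b,0}|\,dz\le C\|\langle z\rangle\partial_x u_1^{b,0}\|_{L^2_z}$.

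Second, I evaluate the matching condition \eqref{eq:A2} at $j=1$, second component, which reads $u_2^{I,1}(x,0,t)+u_2^{b,1}(x,0,t)=0$. Substituting \eqref{eq:A12} at $z=0$ then gives \eqref{eq:A13}.

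There is no real obstacle here, since the statement is a direct consequence of \eqref{eq:A8}. The only point needing care is confirming that the index shift in \eqref{eq:A8} is correct. That shift comes from $y=\varepsilon z$, which makes the normal derivative of $\varepsilon^{j+1}u_2^{b,j+1}$ contribute at order $\varepsilon^{j}$. It is also consistent with $u_2^{b,0}=0$ from Lemma~\ref{lemma:A1}.
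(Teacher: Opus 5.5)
Your proposal is correct and follows the paper's route: the paper derives \eqref{eq:A12} directly from the divergence relation \eqref{eq:A8} at $j=0$, integrating from $z$ to $\infty$ and using the decay of $u_2^{b,1}$, and then reads off \eqref{eq:A13} from the second component of the matching condition \eqref{eq:A2} at $j=1$. Your remark on rigorous convergence of the $z$-integral is a harmless addition to this formal derivation. Note only that \eqref{eq:5.11} controls $f=u_1^{b,0}+\varphi a$ rather than $u_1^{b,0}$ itself; since $\varphi a$ has compact support in $z$, the weighted Cauchy--Schwarz bound still applies to $\partial_x u_1^{b,0}=\partial_x f-\varphi\,\partial_x a$.
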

\begin{lemma}
	\label{lemma:A3}
	The first order outer profiles $(u^{I,1},p^{I,1})$ satisfies problem \eqref{eq:3.5}. The first order inner profile $p^{b,1}=0$. The first order inner profile $u_{1}^{b,1}$ satisfies problem \eqref{eq:3.6}.
\end{lemma}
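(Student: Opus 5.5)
The plan is to read off the coefficient of $\varepsilon^{1}$ in the expansions \eqref{eq:A3}, \eqref{eq:A4}, \eqref{eq:A6}, after first inserting the zeroth-order information already established in Lemma \ref{lemma:A1} and Corollary \ref{corollary:A2}: namely $u_2^{b,0}=0$, $p^{b,0}=0$, $u_2^{I,0}(x,0,t)=0$, and $u_2^{b,1}=\int_z^\infty\partial_xu_1^{b,0}\,ds$.

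The steps are as follows.

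\emph{Outer profile.} I will take $j=1$ in \eqref{eq:A6}. The right-hand side $\partial_y^2u^{I,-1}$ vanishes, so the bilinear sum reduces to $u^{I,0}\cdot\nabla u^{I,1}+u^{I,1}\cdot\nabla u^{I,0}$, and $\operatorname{div}u^{I,1}=0$. Together with $u^{I,1}|_{t=0}=0$ from \eqref{eq:A1} and the boundary value \eqref{eq:A13}, this is exactly \eqref{eq:3.5}.

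\emph{Inner pressure.} I will use $\mathcal G^0=0$. Every term containing $u_2^{b,0}$ or $u_2^{I,0}(x,0,t)$ drops, and so does $u_1^{b,0}\partial_xu_2^{I,0}(x,0,t)$, since the trace $u_2^{I,0}|_{y=0}$ vanishes identically in $x$. What remains is $\partial_zp^{b,1}=0$. Decay as $z\to\infty$ then gives $p^{b,1}=0$.

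\emph{Tangential profile.} I will use $\mathcal F^1=0$, deleting all terms containing $u_2^{b,0}$, $u_2^{I,0}(x,0,t)$, or $p^{b,1}$. I then sort the remaining terms into three groups.
\begin{itemize}
\item The terms linear in $u_1^{b,1}$: $\partial_t-\partial_x^2-\partial_z^2$, the transport $(a+u_1^{b,0})\partial_x$, the factor $(\partial_xa+\partial_xu_1^{b,0})u_1^{b,1}$, and the normal drift.
\item The term $u_2^{b,2}\partial_zu_1^{b,0}$. By \eqref{eq:A8} with $j=1$ and decay, $u_2^{b,2}=\int_z^\infty\partial_xu_1^{b,1}\,ds$. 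This is the nonlocal coupling, and the boundary piece $u_2^{I,2}(x,0,t)$ pairs with it.
\item The forcing terms, which involve only known profiles.
\end{itemize}

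The main obstacle is the normal-velocity bookkeeping. One must rewrite the combination multiplying $\partial_zu_1^{b,1}$, namely $u_2^{I,1}(x,0,t)+u_2^{b,1}+z\partial_yu_2^{I,0}(x,0,t)$, using $\partial_yu_2^{I,0}|_{y=0}=-\partial_xa$ and $u_2^{I,1}(x,0,t)+u_2^{b,1}=-\int_0^z\partial_xu_1^{b,0}\,ds$. This produces the coefficient $-z\partial_xa-\int_0^z\partial_xu_1^{b,0}\,ds$ appearing in \eqref{eq:3.6}.

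In the same way, $u_2^{I,2}(x,0,t)+u_2^{b,2}=-\int_0^z\partial_xu_1^{b,1}\,ds$, using the boundary value $u_2^{I,2}(x,0,t)=-\int_0^\infty\partial_xu_1^{b,1}\,ds$ from \eqref{eq:A2}. This yields the term $-\bigl(\int_0^z\partial_xu_1^{b,1}\,ds\bigr)\partial_zu_1^{b,0}$.

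All remaining terms built from known profiles are then checked one by one against $-g^{b,1}$. Finally, the initial and boundary conditions come from \eqref{eq:A1} and \eqref{eq:A2}: $u_1^{b,1}|_{t=0}=0$ and $u_1^{b,1}|_{z=0}=-u_1^{I,1}|_{y=0}$, with decay at infinity. This gives \eqref{eq:3.6}.
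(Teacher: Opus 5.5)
Your proposal is correct and follows the paper's argument. For the outer problem you take $j=1$ in \eqref{eq:A6} together with \eqref{eq:A1} and \eqref{eq:A13}; you use $\mathcal G^0=0$ to get $p^{b,1}=0$; and you use $\mathcal F^1=0$ with \eqref{eq:A8}--\eqref{eq:A10} to get the equation for $u_1^{b,1}$. Your explicit regrouping of the drift coefficient, and of the nonlocal term via $u_2^{b,2}=\int_z^\infty\partial_xu_1^{b,1}\,ds$ and the trace of $u_2^{I,2}$ from \eqref{eq:A2}, is exactly what the paper's displayed equation leaves implicit; the paper records that trace only afterwards, as \eqref{eq:A17}.
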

\begin{proof}
	Taking $j=1$ in \eqref{eq:A6}, we have $(u^{I,1},p^{I,1})$ satisfies
	$$
	\left\{\begin{array}{l}
		\partial_t u^{I, 1}+u^{I, 1} \cdot \nabla u^{I,0}+u^{I,0} \cdot \nabla u^{I,1}+\nabla p^{I, 1}-\partial_{x}^{2} u^{I, 1}=0, \\[0.2cm]
		\partial_x u_1^{I, 1}+\partial_y u_2^{I, 1}=0.
	\end{array}\right.
	$$
	which together with \eqref{eq:A1}, \eqref{eq:A2}, and \eqref{eq:A13} implies that $(u^{I,1},p^{I,1})$ satisfies the linear problem \eqref{eq:3.5}. Next, from $\mathcal{G}^{0}=0$,\eqref{eq:A9},and \eqref{eq:A10} we have
	$$
	\partial_{z}p^{b,1}=0,
	$$
	which implies
	\begin{equation}
		\label{eq:A14}
		p^{b,1}=0
	\end{equation}
	Moreover, from $\mathcal{F}^{1}=0$, \eqref{eq:A8}-\eqref{eq:A10} and \eqref{eq:A14}, we have
	\begin{equation}
		\begin{aligned}
			\partial_t u_{1}^{b,1} &- \partial_{x}^{2} u_{1}^{b,1} - \partial_{z}^{2} u_{1}^{b,1} +\left(\overline{u_{1}^{I,0}} + u_{1}^{b,0}\right) \partial_x u_{1}^{b,1}\\[0.2cm]
			&+ \left( \overline{u_{2}^{I,1}} + u_{2}^{b,1} + z \overline{\partial_y u_{2}^{I,0}}\right) \partial_z u_{1}^{b,1}+\left( \partial_x u_{1}^{b,0}+\overline{\partial_x u_{1}^{I,0}}\right) u_{1}^{b,1} \\[0.2cm]
			&+\left(z \overline{\partial_{y} \partial_{x} u_{1}^{I,0}} u_{1}^{b,0} +u_{1}^{b,0} \overline{\partial_x u_{1}^{I,1}} +u_{2}^{b,1} \overline{\partial_y u_{1}^{I,0}} \right) \\[0.2cm]
			&+\left( \overline{u_{1}^{I,1}} + z \overline{\partial_y u_{1}^{I,0}}\right) \partial_x u_{1}^{b,0}+\left(z \overline{\partial_y u_{2}^{I,1}} + \frac{z^2}{2} \overline{\partial_{y}^{2} u_{2}^{I,0}}\right) \partial_z u_{1}^{b,0}\\[0.2cm]
			&+\left(\int_{z}^{+\infty}\partial_{x}u_{1}^{b,1}(x,s,t)ds - \int_{0}^{+\infty}\partial_{x}u_{1}^{b,1}(x,s,t)ds\right)\partial_z u_{1}^{b,0}=0.
		\end{aligned}
	\end{equation}
	which together \eqref{eq:A1} and \eqref{eq:A2} implies that $u_{1}^{b,1}$ satisfies problem \eqref{eq:3.6}. The proof is complete.
\end{proof}

\textit{Step 4. Equations of second order profiles.} From \eqref{eq:A8}, we have the following Corollary.
\begin{corollary}
	\label{corollary:A4}
	The second boundary layer profile $u_{2}^{b,2}$ satisfies
	\begin{equation}
		\label{eq:A16}
		u_{2}^{b,2} = \int_{z}^{+\infty} \partial_{x}u_{1}^{b,1}(x,s,t)ds.
	\end{equation}
	As a consequence(using \ref{eq:A2}), we have the following boundary conditions:
	\begin{equation}
		\label{eq:A17}
		u_{2}^{I,2}(x,0,t)=-\int_{0}^{+\infty}\partial_{x}u_{1}^{b,1}(x,s,t)ds.
	\end{equation}
\end{corollary}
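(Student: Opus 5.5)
The statement to be proved is Corollary~\ref{corollary:A4}. Both identities come from the divergence relation \eqref{eq:A8} at index $j=1$, together with the decay assumption on the inner profiles and the boundary matching condition \eqref{eq:A2}. The argument is the same as for Corollary~\ref{corollary:A2}, shifted by one order.

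For \eqref{eq:A16}, I would take $j=1$ in \eqref{eq:A8}, which gives $\partial_z u_2^{b,2}=-\partial_x u_1^{b,1}$. Next I would integrate in $z$ from $z$ to $R$ and let $R\to\infty$. The standing assumption $u_2^{b,2}(x,R,t)\to0$ as $R\to\infty$ from the ansatz \eqref{eq:3.1} then yields
\[
u_2^{b,2}(x,z,t)=\int_z^{\infty}\partial_x u_1^{b,1}(x,s,t)\,ds.
\]
At the rigorous level, convergence of this integral is supplied by the weighted bound $u_1^{b,1}\in C([0,T];H_x^3L^2_{z,\ell})$ from Lemma~\ref{pro:6.3}. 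Weighted Cauchy--Schwarz, $\int_z^\infty|f|\,ds\le C\|\langle s\rangle f\|_{L^2_s}$, makes the tail integral finite and decaying. Since the derivation is formal, only this decay is needed.

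For \eqref{eq:A17}, I would evaluate \eqref{eq:A2} with $j=2$ in its normal component, $u_2^{I,2}(x,0,t)+u_2^{b,2}(x,0,t)=0$, and substitute the formula just obtained at $z=0$. This gives
\[
u_2^{I,2}(x,0,t)=-\int_0^\infty\partial_x u_1^{b,1}(x,s,t)\,ds.
\]

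I see no real obstacle. The only point needing care is the justification that the boundary term at $z=\infty$ vanishes, and that is exactly the decay hypothesis built into the expansion together with the weighted integrability of $u_1^{b,1}$.
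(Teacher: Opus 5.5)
Your proposal is correct and follows the paper's route exactly: the paper obtains \eqref{eq:A16} by integrating the divergence relation \eqref{eq:A8} at $j=1$ from $z$ to $\infty$ using the decay of $u_2^{b,2}$. It then obtains \eqref{eq:A17} from the normal component of \eqref{eq:A2} at $j=2$, and your remark on the weighted integrability of $u_1^{b,1}$ via Lemma~\ref{pro:6.3} is a harmless rigorous supplement to what the paper treats as a formal derivation.
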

\begin{lemma}
	\label{lemma:A5}
	The second order outer profiles $(u^{I,2},p^{I,2})$ satisfies problem \eqref{eq:3.8}. The second order inner profiles $p^{b,2}$ and $u_{1}^{b,2}$ satisfy problem \eqref{eq:3.7},\eqref{eq:3.9}, respectively.
\end{lemma}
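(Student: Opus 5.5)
The plan is to read off the three families of coefficient identities $\mathcal F^2=0$, $\mathcal G^1=0$ and \eqref{eq:A6} with $j=2$, in the same way Lemmas \ref{lemma:A1} and \ref{lemma:A3} read off $\mathcal F^0,\mathcal G^{-1},\mathcal F^1,\mathcal G^0$. Every lower-order identity already established is taken as given: \eqref{eq:A7}, \eqref{eq:A9}--\eqref{eq:A11}, \eqref{eq:A14}, Corollaries \ref{corollary:A2} and \ref{corollary:A4}, and the boundary relation $\overline{u_2^{I,0}}=0$.

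\emph{Step 1: the outer problem.} Setting $j=2$ in \eqref{eq:A6} gives
\[
\partial_tu^{I,2}+u^{I,0}\cdot\nabla u^{I,2}+u^{I,2}\cdot\nabla u^{I,0}+u^{I,1}\cdot\nabla u^{I,1}+\nabla p^{I,2}-\partial_x^2u^{I,2}=\partial_y^2u^{I,0},
\]
together with $\operatorname{div}u^{I,2}=0$. Moving $u^{I,1}\cdot\nabla u^{I,1}$ to the right-hand side produces $g^{I,2}$. The zero initial datum comes from \eqref{eq:A1}, and the normal boundary condition is \eqref{eq:A17}. Together these give \eqref{eq:3.8}.

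\emph{Step 2: the pressure $p^{b,2}$.} In $\mathcal G^1$, delete every term containing $u_2^{b,0}$ or $\overline{u_2^{I,0}}$. Then solve $\mathcal G^1=0$ for $\partial_zp^{b,2}$. The remaining terms should regroup exactly into $\mathcal P_2$; this uses that $\overline{\partial_x u_2^{I,0}}=0$ and that $\partial_yu_2^{I,0}$ is expanded by Taylor's formula, which accounts for the $z\overline{\partial_y\partial_xu_2^{I,0}}$ and $z\overline{\partial_yu_2^{I,0}}$ terms. Integrating from $z$ to $\infty$ with the decay $p^{b,2}\to0$ yields $p^{b,2}=-\int_z^\infty\mathcal P_2\,ds$, which is \eqref{eq:3.7}. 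The first half of \eqref{eq:3.7}, the formula for $u_2^{b,2}$, is Corollary \ref{corollary:A4}.

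\emph{Step 3: the equation for $u_1^{b,2}$.} Start from $\mathcal F^2=0$ and again delete the vanishing terms.
\begin{itemize}
\item Write $\overline{\partial_xu_1^{I,0}}=\partial_xa$ and $\overline{\partial_yu_2^{I,0}}=-\partial_xa$.
\item For the transport coefficient, combine $\overline{u_2^{I,1}}+u_2^{b,1}$ using \eqref{eq:A12}--\eqref{eq:A13} to obtain $-\int_0^z\partial_xu_1^{b,0}\,ds$. Adding $z\overline{\partial_yu_2^{I,0}}$ then gives the drift $-z\partial_xa-\int_0^z\partial_xu_1^{b,0}$.
\item Likewise, combine $\overline{u_2^{I,3}}+u_2^{b,3}$ using \eqref{eq:3.10} and the corresponding trace relation. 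This produces $-\int_0^z\partial_xu_1^{b,2}\,ds$, which multiplies $\partial_zu_1^{b,0}$ and is the nonlocal term on the left of \eqref{eq:3.9}.
\end{itemize}
This requires the order-three matching condition $\overline{u_2^{I,3}}=-\int_0^\infty\partial_xu_1^{b,2}$, which follows from \eqref{eq:A8} with $j=2$ and \eqref{eq:A2}. All the other terms collect into $-g^{b,2}$. Finally, \eqref{eq:A1} supplies the zero initial value, \eqref{eq:A2} supplies $u_1^{b,2}|_{z=0}=-\overline{u_1^{I,2}}$, and the decay assumption supplies the condition at infinity.

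The main obstacle is the bookkeeping in Steps 2 and 3. There one must verify term by term that the surviving products in $\mathcal F^2$ and $\mathcal G^1$ match $g^{b,2}$ and $\mathcal P_2$ exactly, with correct signs and Taylor coefficients, and that the $u_2^{I,3}$ term is eliminated consistently using only boundary data.
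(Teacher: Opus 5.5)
Your proposal is correct and follows the paper's proof essentially line for line. Step 1 takes \eqref{eq:A6} with $j=2$ together with \eqref{eq:A1} and \eqref{eq:A17}. Step 2 reads $\mathcal P_2$ off $\mathcal G^1=0$ after deleting the terms with $u_2^{b,0}$ and $\overline{u_2^{I,0}}$. Step 3 reads \eqref{eq:3.9} off $\mathcal F^2=0$; there you state explicitly the order-three trace relation $\overline{u_2^{I,3}}=-\int_0^\infty\partial_xu_1^{b,2}\,ds$, which the paper uses only implicitly to produce the term $-\bigl(\int_0^z\partial_xu_1^{b,2}\,ds\bigr)\partial_zu_1^{b,0}$. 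The one combination you leave tacit is $\overline{u_2^{I,2}}+u_2^{b,2}=-\int_0^z\partial_xu_1^{b,1}\,ds$, obtained from Corollary \ref{corollary:A4}; it produces the corresponding $\partial_zu_1^{b,1}$ term in $g^{b,2}$.
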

\begin{proof}
	Taking $j=2$ in \eqref{eq:A6}, we find $(u^{I,2},p^{I,2})$ satisfies
	$$
	\left\{\begin{array}{l}
		\partial_t u^{I, 2}+u^{I, 2} \cdot \nabla u^{I,0}+u^{I,0} \cdot \nabla u^{I,2}+\nabla p^{I, 2}-\partial_{x}^{2} u^{I, 2}=\partial_{y}^{2}u^{I,0}-u^{I,1} \cdot \nabla u^{I,1}, \\[0.2cm]
		\partial_x u_1^{I, 2}+\partial_y u_2^{I, 2}=0.
	\end{array}\right.
	$$
	which together with \eqref{eq:A1} and \eqref{eq:A17} implies that $(u^{I,2},p^{I,2})$ satisfies the linear problem \eqref{eq:3.8}. Next from $\mathcal{G}^{1}=0$, \eqref{eq:A9}-\eqref{eq:A10} and $\displaystyle \lim _{z \rightarrow +\infty} p^{b,2}(x,z,t)=0$, we have 
	\begin{equation}
		\label{eq:A18}
		p^{b,2}=-\int_{z}^{+\infty}\mathcal{P}_{2}(x,s,t)ds.
	\end{equation}
	where
	$$
	\begin{aligned}
		\mathcal{P}_{2}(x,z,t)&=-\partial_{t}u_{2}^{b,1}+\partial_{x}^{2}u_{2}^{b,1}+\partial_{z}^{2}u_{2}^{b,1}-\overline{u_{1}^{I,0}}\partial_{x}u_{2}^{b,1} \\[0.2cm]
		&-(\partial_{x}u_{2}^{b,1}+\overline{\partial_{x}u_{2}^{I,1}})u_{1}^{b,0}-\overline{\partial_{y}u_{2}^{I,0}}u_{2}^{b,1}-(u_{2}^{b,1}+\overline{u_{2}^{I,1}})\partial_{z}u_{2}^{b,1} \\[0.2cm]
		&-z\overline{\partial_{y}\partial_{x}u_{2}^{I,0}}u_{1}^{b,0}-z\overline{\partial_{y}u_{2}^{I,0}}\partial_{z}u_{2}^{b,1}.
	\end{aligned}
	$$
	Moreover, from $\mathcal{F}^{2}=0$,\eqref{eq:A8}-\eqref{eq:A10}, we have
	\begin{equation}
		\begin{aligned}
			\partial_t u_{1}^{b,2} &- \partial_{x}^{2} u_{1}^{b,2} - \partial_{z}^{2} u_{1}^{b,2} +\left(\overline{u_{1}^{I,0}} + u_{1}^{b,0}\right) \partial_x u_{1}^{b,2}\\[0.2cm]
			&+ \left( \overline{u_{2}^{I,1}} + u_{2}^{b,1} + z \overline{\partial_y u_{2}^{I,0}}\right) \partial_z u_{1}^{b,2}+\left( \partial_x u_{1}^{b,0}+\overline{\partial_x u_{1}^{I,0}}\right) u_{1}^{b,2} \\[0.2cm]
			&+\left(\overline{u_{1}^{I,1}}+u_{1}^{b,1}+z\overline{\partial_{y}u_{1}^{I,0}}\right) \partial_{x} u_{1}^{b,1} +\left(z \overline{\partial_{y}u_{2}^{I,1}} +\frac{z^2}{2}\overline{\partial_{y}^{2}u_{2}^{I,0}}\right)\partial_{z}u_{1}^{b,1} \\[0.2cm] &+\left(\int_{z}^{+\infty}\partial_{x}u_{1}^{b,1}(x,s,t)ds-\int_{0}^{+\infty}\partial_{x}u_{1}^{b,1}(x,s,t)ds \right) \partial_{z}u_{1}^{b,1} \\[0.2cm]
			&+ \left(\overline{u_{1}^{I,2}}+z \overline{\partial_{y}u_{1}^{I,1}} + \frac{z^2}{2} \overline{\partial_{y}^{2}u_{1}^{I,0}}\right) \partial_{x} u_{1}^{b,0} + \left( z \overline{\partial_{y} u_{2}^{I,2}} + \frac{z^2}{2}\overline{\partial_{y}^{2}u_{2}^{I,1}} + \frac{z^3}{6} \overline{\partial_{y}^{3}u_{2}^{I,0}}\right) \partial_{z} u_{1}^{b,0} \\[0.2cm]
			&+\left(\int_{z}^{+\infty}\partial_{x}u_{1}^{b,2}(x,s,t)ds - \int_{0}^{+\infty}\partial_{x}u_{1}^{b,2}(x,s,t)ds\right) \partial_{z} u_{1}^{b,0} \\[0.2cm]
			&+\frac{z^2}{2}\overline{\partial_{y}^{2}\partial_{x}u_{1}^{I,0}} u_{1}^{b,0} + z \overline{\partial_{y}\partial_{x}u_{1}^{I,1}} u_{1}^{b,0} + \overline{\partial_{x} u_{1}^{I,2}} u_{1}^{b,0}+ z \overline{\partial_{y}^{2}u_{1}^{I,0}} u_{2}^{b,1}+ \overline{\partial_{y}u_{1}^{I,1}} u_{2}^{b,1} \\[0.2cm]
			&+z \overline{\partial_{y} \partial_{x} u_{1}^{I,0}} u_{1}^{b,1}+ \overline{\partial_{x}u_{1}^{I,1}} u_{1}^{b,1}+\overline{\partial_{y} u_{1}^{I,0}} u_{2}^{b,2} + \partial_{x} p^{b,2}=0.
		\end{aligned}
	\end{equation}
	Combining \eqref{eq:A1}-\eqref{eq:A2},we find that $u_{1}^{b,2}$ satisfies problem \eqref{eq:3.9}. The proof is complete.
\end{proof}

\textit{Step 5. Some higher order profiles.}
\begin{lemma}
	\label{lemma:A6}
    The terminal normal velocity profile $u_2^{b,3}$ satisfies \eqref{eq:3.10}.
\end{lemma}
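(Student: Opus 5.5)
The plan is to argue exactly as in Corollaries \ref{corollary:A2} and \ref{corollary:A4}, now one order higher. The only ingredient is the incompressibility constraint \eqref{eq:A5} in the inner variable, written out order by order. Substituting \eqref{eq:3.1} into \eqref{eq:A5} and using $\partial_y=\varepsilon^{-1}\partial_z$ on the inner terms gives
\[
\sum_{j\ge0}\varepsilon^j\operatorname{div}u^{I,j}
+\sum_{j\ge0}\varepsilon^j\partial_xu_1^{b,j}
+\sum_{j\ge0}\varepsilon^{j-1}\partial_zu_2^{b,j}=0 .
\]
By $(\ref{eq:A6})_3$ each outer profile is divergence free, so the outer sum drops out. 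Since the inner profiles and the outer profiles are treated as independent, the coefficient of $\varepsilon^{j}$ in the remaining inner expression must vanish separately. This is precisely \eqref{eq:A8}, and I would take it for $j=2$:
\[
\partial_xu_1^{b,2}+\partial_zu_2^{b,3}=0 .
\]

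Next I integrate this identity in $z$ over $[z,\infty)$. I use the standing decay assumption $u_2^{b,3}(x,z,t)\to0$ as $z\to\infty$, imposed on all inner profiles after \eqref{eq:3.1}. This yields
\[
u_2^{b,3}(x,z,t)=\int_z^{\infty}\partial_xu_1^{b,2}(x,s,t)\,ds ,
\]
which is \eqref{eq:3.10}.

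To see that the integral is meaningful I would appeal to the regularity of $u_1^{b,2}$ from Lemma \ref{pro:6.4}. The bound $\partial_x u_1^{b,2}\in L^2(0,T;H_x^1L^2_{z,\ell})$ holds with any weight $\ell$, and weighted Cauchy--Schwarz in $z$ then shows that the tail integral converges absolutely. It also shows that the integral tends to zero as $z\to\infty$, so the formula is consistent with the decay condition.

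As a by-product, the matching condition \eqref{eq:A2} at order $j=3$ would give the wall value $u_2^{I,3}(x,0,t)=-\int_0^\infty\partial_xu_1^{b,2}\,ds$. This is the quantity that the divergence-free lifting $\varepsilon^3S$ in \eqref{eq:S} is built to cancel, in place of actually solving for $u^{I,3}$. There is no genuine obstacle in this argument. The only point needing care is the justification of the decay of $u_2^{b,3}$ at infinity, which is part of the ansatz and is confirmed after the fact by the weighted bounds.
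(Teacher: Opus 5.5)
Your proof is correct and is the paper's argument: you take \eqref{eq:A8} with $j=2$, i.e.\ $\partial_xu_1^{b,2}+\partial_zu_2^{b,3}=0$, and integrate over $[z,\infty)$ using the decay of $u_2^{b,3}$; rederiving \eqref{eq:A8} from \eqref{eq:A5} and checking that the tail integral converges are harmless additions. One wording correction to your closing aside: $\varepsilon^3S_2$ reproduces the formal wall value of $\varepsilon^3u_2^{I,3}$, and in doing so cancels the wall trace of $\varepsilon^3u_2^{b,3}$; it does not cancel $u_2^{I,3}|_{y=0}$ itself.
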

\begin{proof}
    Integrating \eqref{eq:A8} for $j=2$ and using decay at infinity gives
    \begin{equation}
        \label{eq:A20}
        u_2^{b,3}(x,z,t)=\int_z^\infty
              \partial_xu_1^{b,2}(x,s,t)\,ds.
    \end{equation}
\end{proof}

\section{Expressions of some source terms.}
\label{app:residual}
In this section, we present the complete expressions of some source terms in \eqref{eq:3.39}, i.e.,
\begin{equation}
	\label{eq:B1}
	\begin{aligned}
		\mathcal{R}_{1}^{\varepsilon}=-\displaystyle \sum_{i=1}^{19} F_{i},
	\end{aligned}
\end{equation}
where 
\begin{align*}
-F_{1}&=\varepsilon^{2}\partial_{t}u_{1}^{b,2}+\varepsilon^{3}\partial_{t}S_{1}, \\[0.1cm]
-F_{2}&=(u_{1}^{I,0}-\overline{u_{1}^{I,0}}-y\overline{\partial_{y}u_{1}^{I,0}})\partial_{x}u_{1}^{b,0}+\varepsilon(u_{1}^{I,0}-\overline{u_{1}^{I,0}})\partial_{x}u_{1}^{b,1}, \\[0.1cm]
-F_{3}&=\varepsilon^{2}u_{1}^{I,0}\partial_{x}u_{1}^{b,2}+\varepsilon^{3}u_{1}^{I,0}\partial_{x}S_{1}+\varepsilon^{3}u_{1}^{I,1}\partial_{x}u_{1}^{I,2}, \\[0.1cm]
-F_{4}&=\varepsilon(u_{1}^{I,1}-\overline{u_{1}^{I,1}})\partial_{x}u_{1}^{b,0}, \\[0.1cm]
-F_{5}&=\varepsilon^{2}u_{1}^{I,1}\partial_{x}u_{1}^{b,1}+\varepsilon^{3}u_{1}^{I,1}\partial_{x}u_{1}^{b,2}+\varepsilon^{4}u_{1}^{I,1}\partial_{x}S_{1}, \\[0.1cm]
-F_{6}&=\varepsilon^{2}u_{1}^{I,2}\partial_{x}(\varepsilon u_{1}^{I,1}+\varepsilon^{2}u_{1}^{I,2}+u_{1}^{b,0}+\varepsilon u_{1}^{b,1}+\varepsilon^{2}u_{1}^{b,2}+\varepsilon^{3}S_{1}), \\[0.1cm]
-F_{7}&=u_{1}^{b,0}(\partial_{x}u_{1}^{I,0}-\overline{\partial_{x}u_{1}^{I,0}}-y\overline{\partial_{y}\partial_{x}u_{1}^{I,0}})+\varepsilon u_{1}^{b,0}(\partial_{x}u_{1}^{I,1}-\overline{\partial_{x}u_{1}^{I,1}}), \\[0.1cm]
-F_{8}&=u_{1}^{b,0}\partial_{x}(\varepsilon^{2}u_{1}^{I,2}+\varepsilon^{2}u_{1}^{b,2}+\varepsilon^{3}S_{1}), \\[0.1cm]
-F_{9}&=\varepsilon u_{1}^{b,1}(\partial_{x}u_{1}^{I,0}-\overline{\partial_{x}u_{1}^{I,0}}), \\[0.1cm]
-F_{10}&=\varepsilon u_{1}^{b,1}\partial_{x}(\varepsilon u_{1}^{I,1}+\varepsilon^{2}u_{1}^{I,2}+\varepsilon u_{1}^{b,1}+\varepsilon^{2}u_{1}^{b,2}+\varepsilon^{3}S_{1}), \\[0.1cm]
-F_{11}&=(\varepsilon^{2}u_{1}^{b,2}+\varepsilon^{3}S_{1})\partial_{x}u_{1}^{a}, \\[0.1cm]
-F_{12}&=\frac{1}{\varepsilon}(u_{2}^{I,0}-y\overline{\partial_{y}u_{2}^{I,0}}-\frac{y^{2}}{2}\overline{\partial_{y}^{2}u_{2}^{I,0}})\partial_{z}u_{1}^{b,0} +(u_{2}^{I,0}-y\overline{\partial_{y}u_{2}^{I,0}})\partial_{z}u_{1}^{b,1}, \\[0.1cm]
-F_{13}&=(u_{2}^{I,0}-\overline{u_{2}^{I,0}})\partial_{y}(\varepsilon^{2}u_{1}^{b,2}+\varepsilon^{3}S_{1})+\varepsilon^{3}u_{2}^{I,1}\partial_{y}u_{1}^{I,2}, \\[0.1cm]
-F_{14}&=(u_{2}^{I,1}-\overline{u_{2}^{I,1}}-y\overline{\partial_{y}u_{2}^{I,1}})\partial_{z}u_{1}^{b,0}+\varepsilon (u_{2}^{I,1}-\overline{u_{2}^{I,1}})\partial_{z}u_{1}^{b,1}, \\[0.1cm]
-F_{15}&=\varepsilon u_{2}^{I,1} \partial_{y}(\varepsilon^{2}u_{1}^{b,2}+\varepsilon^{3}S_{1}) +\varepsilon^{2}u_{2}^{I,2}\partial_{y}(\varepsilon u_{1}^{I,1}+\varepsilon^{2}u_{1}^{I,2}), \\[0.1cm]
-F_{16}&=\varepsilon (u_{2}^{I,2}-\overline{u_{2}^{I,2}}) \partial_{z} u_{1}^{b,0}+\varepsilon^{2}u_{2}^{I,2} \partial_{y}(\varepsilon u_{1}^{b,1}+\varepsilon^{2}u_{1}^{b,2}+\varepsilon^{3}S_{1}), \\[0.1cm]
-F_{17}&=\varepsilon u_{2}^{b,1}(\partial_{y}u_{1}^{I,0}-\overline{\partial_{y}u_{1}^{I,0}}) +\varepsilon u_{2}^{b,1}\partial_{y}(\varepsilon u_{1}^{I,1}+\varepsilon^{2}u_{1}^{I,2}+\varepsilon^{2}u_{1}^{b,2}+\varepsilon^{3}S_{1}), \\[0.1cm]
-F_{18}&=\varepsilon^{2}u_{2}^{b,2}\partial_{y}(u_{1}^{a}-u_{1}^{b,0})+(\varepsilon^{3}u_{2}^{b,3}+\varepsilon^{3}S_{2})\partial_{y}u_{1}^{a}+\varepsilon^{2}\partial_{x}p^{b,2}, \\[0.1cm]
-F_{19}&=-\partial_{x}^{2}(\varepsilon^{2}u_{1}^{b,2}+\varepsilon^{3}S_{1}) -\varepsilon^{2}\partial_{y}^{2}(\varepsilon u_{1}^{I,1}+\varepsilon^{2}u_{1}^{I,2}+\varepsilon^{2}u_{1}^{b,2}+\varepsilon^{3}S_{1}).
\end{align*} 
Finally, for the components of $\mathcal{R}_{2}^{\varepsilon}$, we have
\begin{equation}
	\label{eq:B2}
	\begin{aligned}
		\mathcal{R}_{2}^{\varepsilon}=-\sum_{i=1}^{20}G_{i},
	\end{aligned}
\end{equation}
where 
\begin{align*}
-G_{1}&=\varepsilon^{2}\partial_{t}u_{2}^{b,2}+\varepsilon^{3}\partial_{t}u_{2}^{b,3}+\varepsilon^{3}\partial_{t}S_{2}, \\[0.1cm]
-G_{2}&=\varepsilon(u_{1}^{I,0}-\overline{u_{1}^{I,0}})\partial_{x}u_{2}^{b,1}, \\[0.1cm]
-G_{3}&=u_{1}^{I,0}\partial_{x}(\varepsilon^{2}u_{2}^{b,2}+\varepsilon^{3}u_{2}^{b,3}+\varepsilon^{3}S_{2}) +\varepsilon^{3}u_{1}^{I,1}\partial_{x}u_{2}^{I,2}, \\[0.1cm]
-G_{4}&=\varepsilon u_{1}^{I,1}\partial_{x}(\varepsilon u_{2}^{b,1}+\varepsilon^{2}u_{2}^{b,2}+\varepsilon^{3}u_{2}^{b,3}+\varepsilon^{3}S_{2}), \\[0.1cm]
-G_{5}&=\varepsilon^{2}u_{1}^{I,2}\partial_{x}(\varepsilon u_{2}^{I,1}+\varepsilon^{2}u_{2}^{I,2}+\varepsilon u_{2}^{b,1}+\varepsilon^{2}u_{2}^{b,2}+\varepsilon^{3}u_{2}^{b,3}+\varepsilon^{3}S_{2}), \\[0.1cm]
-G_{6}&=(\partial_{x}u_{2}^{I,0}-\overline{\partial_{x}u_{2}^{I,0}}-y\overline{\partial_{y}\partial_{x}u_{2}^{I,0}})u_{1}^{b,0}+\varepsilon(\partial_{x}u_{2}^{I,1}-\overline{\partial_{x}u_{2}^{I,1}})u_{1}^{b,0}, \\[0.1cm]
-G_{7}&=u_{1}^{b,0}\partial_{x}(\varepsilon^{2}u_{2}^{I,2}+\varepsilon^{2}u_{2}^{b,2}+\varepsilon^{3}u_{2}^{b,3}+\varepsilon^{3}S_{2}), \\[0.1cm]
-G_{8}&=\varepsilon(\partial_{x}u_{2}^{I,0}-\overline{\partial_{x}u_{2}^{I,0}})u_{1}^{b,1}, \\[0.1cm]
-G_{9}&=\varepsilon u_{1}^{b,1}\partial_{x}(\varepsilon u_{2}^{I,1}+\varepsilon^{2}u_{2}^{I,2}+\varepsilon u_{2}^{b,1}+\varepsilon^{2}u_{2}^{b,2}+\varepsilon^{3}u_{2}^{b,3}+\varepsilon^{3}S_{2}), \\[0.1cm]
-G_{10}&=(\varepsilon^{2}u_{1}^{b,2}+\varepsilon^{3}S_{1})\partial_{x}(u_{2}^{a}-u_{2}^{b,0}), \\[0.1cm]
-G_{11}&=(u_{2}^{I,0}-\overline{u_{2}^{I,0}}-y\overline{\partial_{y}u_{2}^{I,0}})\partial_{z}u_{2}^{b,1}+\varepsilon(u_{2}^{I,0}-\overline{u_{2}^{I,0}})\partial_{z}u_{2}^{b,2}, \\[0.1cm]
-G_{12}&=u_{2}^{I,0}(\varepsilon^{2}\partial_{z}u_{2}^{b,3}+\varepsilon^{3}\partial_{y}S_{2})+\varepsilon^{3}u_{2}^{I,1}\partial_{y}u_{2}^{I,2}, \\[0.1cm]
-G_{13}&=\varepsilon(u_{2}^{I,1}-\overline{u_{2}^{I,1}})\partial_{z}u_{2}^{b,1}+\varepsilon u_{2}^{I,1}(\varepsilon \partial_{z}u_{2}^{b,2}+\varepsilon^{2}\partial_{z}u_{2}^{b,3}+\varepsilon^{3}\partial_{y}S_{2}), \\[0.1cm]
-G_{14}&=\varepsilon^{2}u_{2}^{I,2}\partial_{y}(\varepsilon u_{2}^{I,1}+\varepsilon^{2}u_{2}^{I,2}), \\[0.1cm]
-G_{15}&=\varepsilon^{2}u_{2}^{I,2}(\partial_{z}u_{2}^{b,1}+\varepsilon\partial_{z}u_{2}^{b,2}+\varepsilon^{2}\partial_{z}u_{2}^{b,3}+\varepsilon^{3}\partial_{y}S_{2}), \\[0.1cm]
-G_{16}&=\varepsilon u_{2}^{b,1}(\partial_{y}u_{2}^{I,0}-\overline{\partial_{y}u_{2}^{I,0}}), \\[0.1cm]
-G_{17}&=\varepsilon u_{2}^{b,1}\partial_{y}(\varepsilon u_{2}^{I,1}+\varepsilon^{2}u_{2}^{I,2})+\varepsilon u_{2}^{b,1}(\varepsilon \partial_{z}u_{2}^{b,2}+\varepsilon^{2}\partial_{z}u_{2}^{b,3}+\varepsilon^{3}\partial_{y}S_{2}), \\[0.1cm]
-G_{18}&=(\varepsilon^{2}u_{2}^{b,2}+\varepsilon^{3}u_{2}^{b,3}+\varepsilon^{3}S_{2})\partial_{y}(u_{2}^{a}-u_{2}^{b,0}), \\[0.1cm]
-G_{19}&=-\partial_{x}^{2}(\varepsilon^{2}u_{2}^{b,2}+\varepsilon^{3}u_{2}^{b,3}+\varepsilon^{3}S_{2}), \\[0.1cm]
-G_{20}&=-\varepsilon^{2}\partial_{y}^{2}(\varepsilon u_{2}^{I,1}+\varepsilon^{2}u_{2}^{I,2})-\varepsilon^{2}(\partial_{z}^{2}u_{2}^{b,2}+\varepsilon\partial_{z}^{2}u_{2}^{b,3}+\varepsilon^{3}\partial_{y}^{2}S_{2}). \\[0.1cm]
\end{align*} 

\paragraph{Research provenance and AI-assisted contributions.} 
An earlier version of the pointwise convergence result in Theorem \ref{thm:main} formed the principal result of Siwei Chen's undergraduate thesis. The original manuscript required $H^{20}$ initial regularity; the present $H^4$ formulation incorporates the subsequent developments described below.

The authors used ChatGPT (OpenAI), with \mbox{GPT-5.5} and \mbox{GPT-5.6}, in developing the fixed-time tail limit \eqref{eq:tail-limit} in Theorem \ref{thm:thickness} and the corner-profile results in Theorem \ref{thm:corner}. \mbox{GPT-6 Astra} provided the proof ideas and draft arguments that enabled the reduction of the initial regularity assumption from $H^{20}$ to $H^4$. \mbox{DeepSeek-V4} was used to organize and rewrite proofs generated with the GPT models.

The authors manually reviewed the AI-generated arguments for the results identified above and reorganized and rewrote them in their own words. The final organization and presentation of all proofs and text were determined by the authors. The authors take full responsibility for the mathematical correctness, originality, and integrity of the entire manuscript, including every argument developed with AI assistance.

\paragraph{Acknowledgments.}
 Y.~H. Wang was supported in part by the National Natural Science Foundation of China (Grant No.~12401274) and the Natural Science Foundation of Hunan Province (Grant No.~2024JJ6302).

\paragraph{Data availability.}
No new data were created or analyzed in this study.

\paragraph{Conflict of interest.}
The authors declare that they have no conflict of interest.

\end{document}